\setlist[description]{leftmargin=\parindent,labelindent=\parindent}
\newtheorem{theorem}{Theorem}
\numberwithin{theorem}{section}
\newtheorem{lemma}{Lemma}
\numberwithin{lemma}{section}
\newtheorem{proposition}{Proposition}
\numberwithin{proposition}{section}
\newtheorem{corollary}{Corollary}
\numberwithin{corollary}{section}
\numberwithin{equation}{section}
\theoremstyle{definition}
\newtheorem{definition}{Definition}
\numberwithin{definition}{section}
\theoremstyle{remark}
\newtheorem{remark}{Remark}
\numberwithin{remark}{section}
\newtheorem{example}{Example}
\numberwithin{example}{section}
\newtheorem{assumption}{Assumption}
\numberwithin{assumption}{section}
\title[Mirrors to lines in $\mathbb{P}^3$]{Cusped hyperbolic Lagrangians as mirrors to lines in three-space}
\author{Sebastian Haney}
\DeclareMathOperator\Hom{Hom}
\DeclareMathOperator\Ext{Ext}
\DeclareMathOperator\Log{Log}
\DeclareMathOperator\hol{hol}
\DeclareMathOperator\Int{Int}
\DeclareMathOperator\Coh{Coh}
\DeclareMathOperator\ev{ev}
\newcommand{\Lpants}{L_{\mathrm{pants}}}
\newcommand{\Limm}{L_{\mathrm{im}}}
\newcommand{\tLimm}{\widetilde{L}_{\mathrm{im}}}
\newcommand{\Lsing}{L_{\mathrm{sing}}}
\begin{document}
\begin{abstract}
We construct a Lagrangian submanifold in the cotangent bundle of a $3$-torus whose projection to the fiber is a neighborhood of a tropical curve with a single $4$-valent vertex. This Lagrangian submanifold has an isolated conical singular point, and its smooth locus is diffeomorphic to the minimally-twisted five component chain link complement, a cusped hyperbolic $3$-manifold. From this singular Lagrangian, we construct a Lagrangian immersion, and identify a moduli space of objects in the wrapped Fukaya category that it supports. We show that for a generic line in projective $3$-space, there is a local system on this Lagrangian immersion such that the resulting object of the wrapped Fukaya category is homologically mirror to an object of the derived category supported on the line. In the course of the proof, we construct a version of the wrapped Fukaya category with objects supported on Lagrangian immersions, which may be of independent interest.
\end{abstract}

\maketitle

\tableofcontents

\section{Introduction} Mirror symmetry is a family of conjectural equivalences originating in string theory between the symplectic geometry ($A$-model) and algebraic geometry ($B$-model) of so-called mirror pairs of spaces. One such duality is Kontsevich's homological mirror symmetry conjecture~\cite{KontsevichICM}, which says that for mirror pairs $(X,\check{X})$ of Calabi--Yau $n$-folds, there is an equivalence of triangulated categories between the (derived) Fukaya category $D^{\pi}\mathrm{Fuk}(X)$ and the derived category of coherent sheaves $D^b\mathrm{Coh}(\check{X})$ on the mirror. A geometric picture of mirror symmetry was proposed by Strominger, Yau, and Zaslow in~\cite{SYZ}, which is now known as SYZ mirror symmetry. Under SYZ mirror symmetry, certain mirror pairs should admit dual Lagrangian torus fibrations $X\to Q$ and $\check{X}\to Q$ over the same base manifold. In general the base $Q$ should carry a singular affine structure, and the fibration should have some singular fibers. From this perspective one can compare the symplectic geometry of $X$ with the algebraic geometry of $\check{X}$ via the tropical geometry of $Q$.

A relatively simple incarnation of the correspondence between complex and tropical geometry is that if one is given complex submanifold $C$ of $(\mathbb{C}^*)^n$, then for suitable collections of polynomials defining $C$, one can produce a collection of corresponding tropical polynomials, which define a tropical subvariety of $\mathbb{R}^n$. This tropical subvariety should be thought of as the image of $C$ under the valuation map $\Log_t\colon(\mathbb{C}^*)^n\to\mathbb{R}^n$ defined by
\begin{align*}
\Log_t(z_1,\ldots,z_n) = (\log_t|z_1|,\ldots,\log_t|z_n|)
\end{align*}
in the limit as $t\to\infty$.

One expects to have a Lagrangian submanifold of the mirror $T^*T^n$ which also projects to a neighborhood of this tropical subvariety, in this case under the map to the cotangent fiber. Such tropical Lagrangian lifts have been constructed for smooth tropical hypersurfaces in $\mathbb{R}^n$ in~\cite{Matessi2} and~\cite{HicksUnobstructed}  and for tropical curves with trivalent vertices in symplectic toric manifolds in~\cite{Mikhalkin}. See also~\cite{MakRuddat} and~\cite{HicksDimer} for constructions of tropical Lagrangian submanifolds of more general symplectic manifolds.

One of the main results of this paper is the construction of Lagrangian lifts of tropical curves that have $4$-valent vertices. A tropical curve with a single $4$-valent vertex is either contained in a $2$-dimensional affine subspace of $\mathbb{R}^n$, or else it is the image, under an affine transformation in $\mathbb{R}^n\rtimes GL(n,\mathbb{Z})$, of the $4$-valent curve depicted in~\ref{intro-fig}. This tropical curve cannot be written as the intersection of two tropical hyperplanes in three dimensions. From the $B$-model perspective, such curves arise as tropicalizations of lines in $(\mathbb{C}^*)^3$ which are symmetric with respect to the $\mathbb{Z}/3$-action given by cyclic permutations of the coordinates. Somewhat surprisingly, the Lagrangian lifts of such curves are not immersed Lagrangian submanifolds.
\begin{theorem}[Paraphrasing of Theorem~\ref{singularlift}]\label{mainthm0}
The $4$-valent tropical curve $V$ in $\mathbb{R}^3$ depicted in Figure~\ref{intro-fig} has a Lagrangian lift $\Lsing$ with one singular point whose neighborhood is homeomorphic to the cone over a $2$-torus. Away from the singular point, $\Lsing$ is diffeomorphic to a hyperbolic link complement in $S^3$, called the minimally twisted five-component chain link, which is also drawn in Figure~\ref{intro-fig}.
\end{theorem}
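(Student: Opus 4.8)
The plan is to build $\Lsing$ by patching together an explicit local model over each of the four rays of $V$ with a new local model over the $4$-valent vertex. Recall that $V$ has its single vertex at the origin of $\mathbb{R}^3$ and four rays in the primitive directions $e_1,e_2,e_3$ and $-(e_1+e_2+e_3)$, that the balancing condition holds, and that $V$ is the tropicalization of a line in $(\mathbb{C}^*)^3\subset\mathbb{P}^3$ invariant under the $\mathbb{Z}/3$-action cyclically permuting the coordinates; I would keep this cyclic symmetry, and the distinguished role of the fixed ray $-(e_1+e_2+e_3)$, in mind throughout. Away from the vertex a ray carries no transverse tropical data, so over a neighborhood of a ray in a primitive direction $v$ I would take the standard facet/edge model from the constructions of Matessi, Mikhalkin, and Hicks: an embedded cusp-like end modelled on $T^2\times\mathbb{R}$ inside $T^*T^3$, carried to the model over $e_3$ by a $GL_3(\mathbb{Z})$ change of coordinates. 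These four pieces will become the cusp ends of the resulting $3$-manifold.

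The crux is the local model over the $4$-valent vertex, where one must allow the Lagrangian to degenerate. In a Darboux chart about a point of the zero section one has $T^*T^3\cong\mathbb{C}^3$, and the natural candidate singularity is a Lagrangian cone over a $2$-torus of Harvey--Lawson type, $\{(e^{i\theta_1}r,e^{i\theta_2}r,e^{i\theta_3}r):\theta_1+\theta_2+\theta_3=0,\ r\ge 0\}$, which is $\mathbb{Z}/3$-symmetric and whose link is a flat $2$-torus; the tropical data at the vertex should dictate how the three conormal-type ends over $e_1,e_2,e_3$ and the fourth ray over $-(e_1+e_2+e_3)$ attach to this cone. The work is then: (i) to write the vertex model $L_{\mathrm{vert}}$ down so that it is Lagrangian on its smooth locus (a local computation); (ii) to show it has exactly one singular point, with link a $2$-torus, so that a neighborhood of the point is the cone $(T^2\times[0,\infty))/(T^2\times\{0\})$; and (iii) to check that the four ends of $L_{\mathrm{vert}}$ agree, up to compactly supported Lagrangian isotopy, with the four ray models above.

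Given these pieces, Weinstein neighbourhood theorems and the flexibility of Lagrangians near the matching $2$-tori let one glue $L_{\mathrm{vert}}$ to the four ray models to obtain $\Lsing\subset T^*T^3$, singular at one point and projecting to a neighborhood of $V$. Set $L^\circ=\Lsing\setminus\{\mathrm{pt}\}$. Excising the cone point introduces a $T^2$ boundary and truncating the four cusp ends introduces four more, so $L^\circ$ is the interior of a compact $3$-manifold with exactly five torus boundary components --- already the cusp count of the minimally twisted five-component chain link exterior. To identify it precisely I would assemble an explicit ideal triangulation (or handle decomposition) of $L^\circ$ from the cell structure of $V$ together with the local models --- the cone and the three conormal ends at the vertex contributing finitely many ideal tetrahedra, the four rays the remaining cusps --- and recognise the result, either by matching it against a known triangulation of the minimally twisted five-component chain link complement (a SnapPy/Regina-assisted comparison being the most direct route) or by exhibiting the $\mathbb{Z}/3$-symmetry inherited from $V$, computing $\pi_1(L^\circ)$ with its peripheral structure, and comparing with the standard presentation. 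Hyperbolicity is then inherited from the known hyperbolicity of that link's exterior.

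I expect the real difficulty to be concentrated in the vertex model: there is no off-the-shelf tropical Lagrangian lift of a non-smooth ($4$-valent) vertex, the failure of immersedness must be localised to a single point, and one must verify that this point is \emph{exactly} a cone on $T^2$ while simultaneously arranging that all four ray ends close up with it. The subsequent identification of the precise hyperbolic manifold is technically delicate, but given the cusp count and the cyclic symmetry it should reduce to careful bookkeeping.
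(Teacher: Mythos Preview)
Your outline correctly identifies the architecture --- one conical singular point, four cusp ends from the rays, a fifth from the link of the cone --- but it does not actually construct the vertex model $L_{\mathrm{vert}}$, and that construction is the entire content of the theorem. Saying ``write the vertex model down so that it is Lagrangian on its smooth locus'' is a restatement of the problem, not a step toward solving it: there is no a priori reason a Lagrangian with one Harvey--Lawson cone point and four cylindrical $T^2$-ends matching the conormals to the legs of $V$ should exist, and you give no mechanism for producing one. The gluing framework you describe (match four ray models to a vertex model via Weinstein neighborhoods) is fine once $L_{\mathrm{vert}}$ is in hand, but it presupposes exactly what needs to be built.

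The paper takes a quite different route, generalizing Matessi's coamoeba construction of the Lagrangian pair of pants rather than gluing local pieces. One writes down a coamoeba $\Delta\subset T^3$ --- the union of two $3$-simplices $\Delta_{\pm}$ meeting along their edges --- and an explicit function $g$ on $\Delta$ (a signed square root of a product of sines). The key computation is that $dg$, though singular along the edges of $\Delta$, extends smoothly to the real blowup $\widetilde{\Delta}$ of $\Delta$ along those edges; the Lagrangian is then the closure of the graph $\theta\mapsto(\theta,\widetilde{dg}(\theta))$ in $T^3\times\mathbb{R}^3$, which adds exactly one point (the origin), and one checks via a front-projection/weave argument that the link there is Legendrian isotopic to $\Lambda_{HL}$. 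The topological identification with the minimally twisted five-component chain link complement falls out of the same combinatorics: the two simplices $\Delta_{\pm}$ become two ideal cubes, each decomposing into five ideal tetrahedra, and this matches on the nose the known ten-tetrahedron triangulation of that link complement due to Dunfield--Thurston. No gluing step, no computer verification --- the coamoeba simultaneously produces the Lagrangian and the ideal triangulation identifying its topology.
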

\begin{remark} Under the embedding of the minimally twisted five-component chain link complement into $T^*T^3$ described in Theorem~\ref{mainthm0}, the cusps of the link complement labeled $1$, $2$, $3$, and $4$ are mapped to the conormal $2$-tori to the legs of $V$ and the cusp labeled $0$ is the link of the cone point. This labeling of the cusps is arbitrary in the sense that there are diffeomorphisms of the link complement which realize any permutation of the cusps~\cite[\S 1.2]{MPR}.
\end{remark}
\begin{figure}
\begin{tikzpicture}
\begin{scope}\
  \draw[thick] (0,0,0) -- (-4,0,0) node[anchor = north west]{$(-1,0,0)$};
  \draw[thick] (0,0,0) -- (0,-4,0) node[anchor = south west]{$(0,0,-1)$};
  \draw[thick] (0,0,0) -- (0,2,-3) node[anchor = north west]{$(1,1,1)$};
  \draw[thick] (0,0,0) -- (-4,-4,-4) node[anchor = north west]{$(0,-1,0)$};
\end{scope}

\begin{scope}[yshift = -250, scale = 1.5]
\node[] at (2,0) {1};
\node[] at (0.62,1.90) {0};
\node[] at (-1.62,1.17) {4};
\node[] at (-1.62,-1.17) {3};
\node[] at (0.62,-1.90) {2};

\draw(2,0)[line width = 0.25 mm] ellipse (0.5 and 1.5);
\draw[rotate = 72, line width = 0.25 mm] (2,0) ellipse (0.5 and 1.5);
\draw[rotate = 144, line width = 0.25 mm] (2,0) ellipse (0.5 and 1.5);
\draw[rotate = 216, line width = 0.25 mm] (2,0) ellipse (0.5 and 1.5);
\draw[rotate = 288, line width = 0.25 mm] (2,0) ellipse (0.5 and 1.5);

\draw (2.5,0)[white, double = black, ultra thick] arc (0:100:0.5 and 1.5);
\draw (2.5,0)[white, double= black, ultra thick] arc (0:-100:0.5 and 1.5);
\draw[white, double = black, ultra thick, rotate = 72] (2.5,0) arc (0:100:0.5 and 1.5);
\draw[white, double = black, ultra thick, rotate = 144] (1.5,0) arc (180:240:0.5 and 1.5);
\draw[white, double = black, ultra thick, rotate = 72] (1.5,0) arc (180:240:0.5 and 1.5);

\draw[white, double = black, ultra thick, rotate = 144] (1.5,0) arc (180:120:0.5 and 1.5);

\draw[white, double = black, ultra thick, rotate = 216] (2.5,0) arc (0:-100:0.5 and 1.5);

\draw[white, double = black, ultra thick, rotate = 216] (2.5,0) arc (0:100:0.5 and 1.5);

\draw[white, double = black, ultra thick, rotate = 288] (1.5,0) arc (180:240:0.5 and 1.5);
\draw[white, double = black, ultra thick, rotate = 288] (1.5,0) arc (180:120:0.5 and 1.5);
\end{scope}
\end{tikzpicture}

\caption{The $4$-valent tropical curve $V$ (top) and the minimally twisted five-component chain link (bottom).}\label{intro-fig}
\end{figure}

\begin{corollary}
Any tropical curve in $\mathbb{R}^n$ with vertices of valence at most $4$ has a Lagrangian lift in $T^*T^n$ which is immersed away from a discrete set of isolated conical singular points, which project to the $4$-valent vertices under projection to the cotangent fiber.
\end{corollary}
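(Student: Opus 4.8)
The plan is to construct the lift $L$ piece by piece over neighborhoods of the edges and vertices of the tropical curve $C$, and then to glue these pieces together along the edges. Over a neighborhood of an edge of weight $w$ in primitive direction $v$, I will take the weight-$w$ conormal Lagrangian of the line $\mathbb{R}v\subset\mathbb{R}^n$, which is smooth and embedded; this also serves as the fixed interface data along which all gluings are performed. A valence-one vertex is an unbounded ray and needs nothing new, and a valence-two vertex is, by the balancing condition, not a genuine vertex (its two incident edges are antiparallel with equal weight), so $C$ is locally a straight segment there. Only the trivalent and $4$-valent vertices require new local models.

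Near a trivalent vertex the balancing condition forces the three primitive directions to be coplanar, so after applying an element of $GL_n(\mathbb{Z})$ --- which acts on $T^*T^n$ by the induced fiberwise-linear symplectomorphism --- I may assume the vertex lies in $\mathbb{R}^2\times\{0\}^{n-2}$, and I will take as local model the product $\Lpants\times(\text{zero section of }T^*T^{n-2})$, with $\Lpants\subset T^*T^2$ the pair-of-pants Lagrangian of \cite{Mikhalkin} (equivalently the planar case of \cite{Matessi2,HicksUnobstructed}); this is immersed, and embedded when the vertex is primitive. For a $4$-valent vertex, since four vectors satisfying one linear relation span a subspace of dimension at most three, after a $GL_n(\mathbb{Z})$ transformation the vertex lies in $\mathbb{R}^k\times\{0\}^{n-k}$ with $k\le 3$. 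If $k\le 2$ the vertex is the local picture of a node of a planar tropical curve, and I will take the standard immersed lift assembled from two transversally superimposed pair-of-pants Lagrangians (times a zero-section torus in the remaining directions), which carries no conical point. If $k=3$, the aim is to reduce --- by a further $GL_3(\mathbb{Z})$ transformation and, where needed, subdivision of the incident edges --- to the configuration $V$ of Figure~\ref{intro-fig}, where Theorem~\ref{singularlift} furnishes the local model $\Lsing$: smooth away from a single conical point over the vertex, to be multiplied by the zero section of $T^*T^{n-3}$ when $n>3$.

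The last step is to glue the local models over the edges. Along each edge the two adjacent models both restrict to the same conormal Lagrangian, so the identifications are canonical, and since the gluing happens away from the conical points and away from all immersed double points, the resulting $L\subset T^*T^n$ is an immersed Lagrangian except at the finitely many conical points arising from the non-planar $4$-valent vertices, and it projects to a neighborhood of $C$ under the map to the cotangent fiber.

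I expect the $k=3$ case to be the main obstacle. Although $V$ is the ``most generic'' $4$-valent vertex, not every $3$-dimensional $4$-valent configuration is $GL_3(\mathbb{Z})$-equivalent to $V$ --- the determinants of the triples of edge vectors are already obstructions --- so I will need to argue that each such configuration either reduces, after subdivision, to the trivalent and planar cases (and hence to an immersed lift) or else lies within the reach of the construction behind Theorem~\ref{singularlift}. A secondary point is to check that the cone point of $\Lsing$ persists as an isolated conical point after gluing rather than being smoothed or absorbed into nearby structure, which should follow once the gluing regions are taken disjoint from a neighborhood of the cone point.
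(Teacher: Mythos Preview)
Your overall strategy---local models near vertices, conormal Lagrangians over edges, then glue---matches the paper's. The essential gap is exactly where you flag it: the non-planar $4$-valent case. You are right that a general such vertex is not $GL_3(\mathbb{Z})$-equivalent to $V$, and neither of your proposed fixes works. Subdividing incident edges does not change the combinatorics at the vertex, and ``reducing to the trivalent and planar cases'' amounts to replacing the $4$-valent vertex by one of its tropical smoothings, which lifts the wrong tropical curve. The paper's resolution is a different mechanism: one takes a finite \emph{cover} of $\Lsing$, following an argument of Matessi~\cite{Matessi}. The integer linear map sending the edge directions of $V$ to those of an arbitrary non-planar $4$-valent vertex need not be unimodular, but it induces a finite covering of $T^*T^3$, and pulling $\Lsing$ through this covering produces the required singular Lagrangian lift (with isolated conical singularities over the vertex).

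Your planar $4$-valent model is also not right. A general planar $4$-valent vertex is not the transverse superposition of two trivalent ones---that picture has six legs, and in any case only applies to reducible (nodal) configurations. The paper instead uses that any single-vertex tropical curve in $\mathbb{R}^2$ is a tropical \emph{hypersurface}, hence the tropicalization of some algebraic curve $C\subset(\mathbb{C}^*)^2$; the hyperK\"ahler rotate of $C$ is an immersed Lagrangian lift (cf.~\cite{Mikhalkin}). This uniformly handles both the planar $4$-valent and the trivalent cases, which is relevant because your reduction of a trivalent vertex to the standard $\Lpants$ via $GL_n(\mathbb{Z})$ likewise fails for non-primitive vertices, for the same lattice-index reason as in the $4$-valent case.
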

\begin{proof}
Any tropical curve with a single $4$-valent vertex is either 
\begin{itemize}
\item[(a)] contained in a $2$-dimensional affine subspace of $\mathbb{R}^n$, or;
\item[(b)] any three of its four edges are linearly independent.
\end{itemize}
We will explain how to construct Lagrangian submanifolds lifting these two types of curve separately.

Let $W\subset\mathbb{R}^n$ be a tropical curve with a single ($4$-valent) vertex as in case (a). Then the edges of $W$ span a $2$-dimensional affine subspace of $\mathbb{R}^n$, implying that we can view $W$ as a tropical hypersurface in $\mathbb{R}^2$. Lagrangian lifts of tropical hypersurfaces were constructed in~\cite[Definition 3.15]{HicksUnobstructed}, and by taking the product of this lift with an isotropic copy of $T^{n-2}$ dual to the plane containing $W$, we obtain the desired Lagrangian lift of the curve in $\mathbb{R}^n$.

Next let $W\subset\mathbb{R}^n$ be a tropical curve with a single vertex as in (b). In this case, the legs of $W$ span a three-dimensional affine subspace, so we view $W$ as a tropical curve in $\mathbb{R}^3$. By taking a cover of the Lagrangian lift from Theorem~\ref{mainthm0}, using the argument in~\cite[Example 5.3]{Matessi}, we obtain a lift of $W$. Crossing this (singular) tropical Lagrangian submanifold with an isotropic copy of $T^{n-3}$ gives us the desired lift of $W\subset\mathbb{R}^n$.

Finally suppose that $W\subset\mathbb{R}^n$ is a tropical curve with arbitrarily many vertices, all of which have valence at most $4$. We can produce a Lagrangian lift of $W$ by gluing together the lifts of curves with a single vertex obtained above over the edges.
\end{proof}

The link of the cone point of $\Lsing$, viewed as a Legendrian submanifold of $S^5$, has three (non-isotopic) smooth asymptotically conical Lagrangian fillings. We will show that all three of these fillings can be used to produce smooth Lagrangian submanifolds of $T^*T^3$ following~\cite{Joyce}, but these will not be Lagrangian lifts of $V$. Instead, the Lagrangian submanifolds obtained by desingularizing $\Lsing$ correspond to resolutions of the tropical curve $V$, where the $4$-valent vertex is replaced by a pair of trivalent vertices.

Consider the tropical curve $V_3\subset\mathbb{R}^3$ given by
\begin{align*}
V_3 \coloneqq \left\lbrace(-\epsilon-t,-\epsilon,0)\colon t\in[0,\infty)\right\rbrace &\cup\left\lbrace(-\epsilon,-\epsilon-t,0)\colon t\in[0,\infty)\right\rbrace \\
&\cup\left\lbrace(t,t,0)\colon t\in[-\epsilon,\epsilon]\right\rbrace \\
&\cup\left\lbrace(\epsilon,\epsilon,-t)\colon t\in[0,\infty)\right\rbrace \\
&\cup\left\lbrace(\epsilon+t,\epsilon+t,t)\colon t\in[0,\infty)\right\rbrace \, .
\end{align*}
One can define the other desingularizations $V_1$ and $V_2$ by reordering the coordinates on $\mathbb{R}^3$ appropriately. Lagrangian lifts $L_{V_i}$ of the tropical curves $V_i$ for $i = 1,2,3$ have been constructed in~\cite{HicksReal, Mikhalkin, MakRuddat}. Each $L_{V_i}$ is diffeomorphic to the graph manifold
\begin{equation}\label{graphmfd}
(P\times S^1)\bigcup_{\begin{pmatrix} 0 & 1 \\ 1 & 0 \end{pmatrix}}(P\times S^1),
\end{equation}
where $P\coloneqq\mathbb{CP}^1\setminus\lbrace 0,1,\infty\rbrace$ denotes the pair of pants and the matrix $\begin{pmatrix} 0 & 1 \\ 1 & 0 \end{pmatrix}$ signifies that the two copies of $P\times S^1$ are glued together along one of their $T^2$-boundary components using a diffeomorphism that interchanges the $S^1$-factor with the boundary circle of $P$. It will turn out that the smoothings of $\Lsing$ will share this topological type, and can be thought of as Lagrangian lifts of $V_i$ in a weak sense.
\begin{theorem}\label{smoothings}
There are three smooth Lagrangian submanifolds $L_i\subset T^*T^3$, for $i = 1,2,3$, obtained by replacing a neighborhood of the cone point in $\Lsing$ with a smooth filling of the Legendrian link. Each $L_i$ is diffeomorphic to the graph manifold~\eqref{graphmfd}. The projection of $L_i$ to the cotangent fiber agrees with $V_i$ away from a neighborhood of the edge of the finite edge. The Lagrangian submanifolds $L_1$, $L_2$, and $L_3$ can be identified with the Dehn fillings of the zeroth cusp of $L'$ with filling coefficient $\infty$, $1$, and $0$, respectively.
\end{theorem}
\begin{remark}
It turns out that these are the only three exceptional, i.e. non-hyperbolic, fillings of the minimally twisted five-chain link complement along one of its components~\cite{MPR}.
\end{remark}

One would expect a stronger relationship between the tropical Lagrangian lifts of a tropical subvariety and the corresponding complex subvariety $C$ of the mirror. Specifically, a suitable tropical Lagrangian lift should be mirror to the structure sheaf $\mathcal{O}_C$ under the equivalence between the wrapped Fukaya category of $T^*T^n$ and $D^b\mathrm{Coh}((\mathbb{C}^*)^n)$. For smooth tropical hypersurfaces, a mirror relation of this type is proven by Hicks~\cite{HicksUnobstructed}, using the fact that tropical Lagrangian hypersurfaces are mapping cones in the Fukaya category, essentially by construction.

To prove a similar result for the Lagrangian submanifold $\Lsing$ of Theorem~\ref{mainthm0}, one would have to make sense of Lagrangian Floer theory for Lagrangian submanifolds with conical singularities. Rather than taking this approach, we construct a Lagrangian immersion in $T^*T^3$ by using a `doubling' trick, similar to~\cite{seidelgenus2},~\cite{Sheridan},~\cite{AbouzaidSylvan}, in which Lagrangian immersions are morally constructed by gluing two copies of a singular Lagrangian submanifold together. 

Let $\mathbb{K}$ be a field, and identify $\Hom(\pi_1(T^3),GL(1,\mathbb{K}))$ with $(\mathbb{K}^*)^3$. Our other main result is the following.
\begin{theorem}\label{main}
There is an exact Lagrangian immersion $\tLimm\overset{\mathrm{im}}{\longrightarrow} T^*T^3$ whose image $\Limm$ is $C^0$-close to $\Lsing$, and with the property that for a generic line $C\subset\mathbb{P}^3$, there is a $GL(1)$-local system $\nabla_C$ on $\tLimm$ such that the set
\[ \lbrace\nabla_p\in\Hom(\pi_1(T^3),GL(1,\mathbb{K}))\colon HF^*((\Limm,\nabla_C),(T^3,\nabla_p))\neq0\rbrace\]
is equal to the affine part $C\cap(\mathbb{K}^*)^3$.
\end{theorem}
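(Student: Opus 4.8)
The strategy is to compute the Floer cohomology $HF^*((\Limm,\nabla_C),(T^3,\nabla_p))$ directly and match it against the cohomology of the derived pushforward of $\mathcal{O}_C$ (twisted by the point sheaf corresponding to $\nabla_p$) under the SYZ mirror map. The first step is to recall the general principle, going back to Abouzaid's work on the wrapped Fukaya category of $T^*T^n$ and its refinement to immersed Lagrangians, that morphisms from $(T^3,\nabla_p)$ to an exact Lagrangian brane are computed by a Morse–Bott-type model on the Lagrangian itself, with the local system $\nabla_p$ entering as a flat connection twisting the Morse complex: concretely, one expects $HF^*((\Limm,\nabla_C),(T^3,\nabla_p))\cong H^*(\tLimm;\nabla_C\otimes(\mathrm{im})^*\nabla_p)$, possibly corrected by the immersed points of $\Limm$. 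So I would first set up this twisted-cohomology model carefully, using exactness of the immersion to kill disk bubbling and using the $C^0$-closeness to $\Lsing$ to control the wrapping.

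**Key steps, in order.** (1) Identify $\tLimm$ and its immersion data precisely, using the doubling construction: since $\Limm$ is obtained by gluing two copies of (a desingularization-neighborhood of) $\Lsing$, its topology is built from two copies of the graph manifold in~\eqref{graphmfd}, and I would pin down $\pi_1(\tLimm)$ and the self-intersection locus, together with the map $(\mathrm{im})_*\colon\pi_1(\tLimm)\to\pi_1(T^3)=\mathbb{Z}^3$. (2) Fix the local system $\nabla_C$: for a generic line $C\subset\mathbb{P}^3$ with affine part $C\cap(\mathbb{K}^*)^3$, the embedding $C\cap(\mathbb{K}^*)^3\hookrightarrow(\mathbb{K}^*)^3=\Hom(\pi_1(T^3),GL(1,\mathbb{K}))$ is a one-parameter subgroup's coset; I would choose $\nabla_C$ so that the twisted local system $\nabla_C\otimes(\mathrm{im})^*\nabla_p$ has nonzero cohomology on $\tLimm$ exactly when $\nabla_p$ lies on this coset — this is where the pair-of-pants/$S^1$ structure does the work, since $H^*(P\times S^1;\text{flat connection})$ vanishes unless the connection is trivial on the $S^1$ and on enough of $\pi_1(P)$. (3) Use quilted Floer theory (as advertised in the abstract) to reduce the computation of $HF^*((\Limm,\nabla_C),(T^3,\nabla_p))$ to the twisted cohomology of $\tLimm$: the quilt with seam conditions given by $\Limm$ and the correspondence coming from the doubling identifies the Floer complex with a Morse complex on $\tLimm$ twisted by $\nabla_C\otimes(\mathrm{im})^*\nabla_p$, and one checks that the differential — including contributions from the immersed points — has no higher corrections for degree reasons (the immersed points contribute in a single degree, and genericity of $C$ ensures the relevant matrix coefficient is invertible precisely off the coset, mirroring the Dehn-filling/unobstructedness analysis). (4) Assemble: the support set is then literally the locus where the twisted cohomology is nonzero, which by step (2)–(3) is $C\cap(\mathbb{K}^*)^3$.

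**Main obstacle.** The hardest part will be step (3): making quilted Floer theory for this immersed, only-$C^0$-controlled Lagrangian rigorous enough to extract an honest chain-level model, and in particular showing that the immersed self-intersection points of $\Limm$ contribute to the differential in exactly the way predicted by the algebraic geometry — i.e. that the count of quilted strips through an immersed point, as $\nabla_p$ varies over $(\mathbb{K}^*)^3$, is governed by a single polynomial (the defining equation of $C$, tropicalized) whose vanishing cuts out $C\cap(\mathbb{K}^*)^3$. This requires both a transversality/gluing package for quilted strips with Lagrangian boundary passing through immersed points, and a careful bookkeeping of which homotopy classes of strips contribute, using the explicit tropical geometry of $V$ to enumerate them. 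The unobstructedness criterion from the earlier part of the paper should supply the input that makes $(\Limm,\nabla_C)$ a genuine object, but translating "unobstructed" into "the obstruction term equals the mirror polynomial" is the crux. A secondary difficulty is verifying the genericity hypothesis on $C$ is exactly what is needed: presumably it rules out lines meeting the toric boundary divisors non-transversally, so that $C\cap(\mathbb{K}^*)^3$ is a smooth affine curve of the expected combinatorial type, matching $\Lsing$'s tropicalization.
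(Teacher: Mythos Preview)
Your proposal has a genuine gap at its core: the expectation that
\[
HF^*((\Limm,\nabla_C),(T^3,\nabla_p))\cong H^*\bigl(\tLimm;\nabla_C\otimes(\mathrm{im})^*\nabla_p\bigr)
\]
(possibly corrected by the self-intersections) is not justified and in fact is not how the computation works. For two distinct exact Lagrangians $L$ and $T^n$ in $T^*T^n$, the Floer complex is generated by intersection points after a Hamiltonian perturbation, and the differential counts strips in the ambient manifold; there is no general mechanism reducing this to a Morse or twisted-cohomology model on $L$ itself. The paper never attempts a direct computation of $HF^*(\Limm,T^3)$ at all. You have also misidentified $\tLimm$: it is built from two copies of the hyperbolic link complement $L'$ glued along a $T^2$ near the cone point, not from two copies of the graph manifold~\eqref{graphmfd} (the latter arises only from Dehn fillings).

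The paper's route is quite different and you should see what it buys. One introduces three explicit embedded Lagrangian correspondences $\mathcal{L}_i\subset (T^*T^3)^{-}\times T^*T^2$, essentially ``slice $T^*T^3$ at $\theta_i=\tfrac12$''. The geometric composition $\Limm\circ\mathcal{L}_i$ is then two disjoint copies of the Lagrangian pair of pants in $T^*T^2$, whose Floer cohomology with $(T^2,\nabla_z)$ was already computed (Lemma~\ref{pantssupport}): it vanishes unless a single linear equation in the holonomies holds. Quilted Floer theory enters only to prove an adjunction isomorphism $HW^*(\Limm\circ\mathcal{L}_i,T^2)\cong HW^*(\Limm,T^2\circ\mathcal{L}_i^t)$, where $T^2\circ\mathcal{L}_i^t$ is a periodized conormal; this, combined with homological mirror symmetry for $T^*T^3$, pins down the support of the mirror sheaf as the common zero locus of three linear forms. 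The final and most striking step is that these three forms define a line in $\mathbb{P}^3$ precisely when a Pl\"ucker relation holds, and that relation turns out to be \emph{exactly} the unobstructedness equation $1+\mu_0^{-1}+\mu_0^{-1}\lambda_0^{-1}=0$ from the teardrop count. Your proposal anticipates that unobstructedness should encode ``the mirror polynomial'', but the actual mechanism---Pl\"ucker relation equals obstruction---is not visible from a twisted-cohomology model and requires the correspondence/reduction-to-pair-of-pants framework.
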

We call the set of such local systems the Floer-theoretic support of $\Limm$ following~\cite{HicksReal}. By a generic line, we mean one which lies in a certain Zariski open subset of the Grassmannian $Gr(2,4)$: if we embed $Gr(2,4)$ in $\mathbb{P}^5$ via the Pl{\"u}cker embedding, then the lines to which our theorem applies are those in $Gr(2,4)\cap(\mathbb{K}^*)^5$. This restriction comes from the fact that the holonomy of a local system on $\Limm$ must always be nonzero.

Under homological mirror symmetry, the Floer-theoretic support corresponds to the usual support of the mirror sheaf. This is closely related to the support of the mirror sheaf constructed using family Floer theory~\cite{AbouzaidFFT}, except that instead of considering families of Lagrangian tori in $T^*T^3$, most of which are not exact, we work with a single exact Lagrangian torus and equip it with non-unitary local systems. Consequently, our mirror is defined over $\mathbb{K}$, as opposed to the Novikov field. While significantly less general than family Floer theory, our technical setup does allow us to state Theorem~\ref{main} without reference to the SYZ fibration or to tropical geometry.

\begin{remark}\label{extendedms} Our motivation for working in this setting is to eventually consider Lagrangian submanifolds in the quintic $3$-fold mirror to lines, for which a Lagrangian torus fibration suitable for proving an analogue of Theorem~\ref{main} has not yet been constructed to the author's knowledge. It was first shown by van Geemen, and written in~\cite{Katz}, that the mirror quintic contains one-dimensional families of lines. Certain special lines in this family should tropicalize to $4$-valent tropical curves of the same combinatorial type as the one in Figure~\ref{intro-fig}. We expect to use Theorem~\ref{main} combined with Sheridan's proof of homological mirror symmetry for Calabi--Yau hypersurfaces~\cite{Sheridan2} to construct Lagrangian submanifolds in the quintic that are mirror to these families of lines. These Lagrangian submanifolds are constructed by lifting tropical curves in the intersection complex of the quintic threefold, and are thus related to, but distinct from, the Lagrangian submanifolds of mirror quintic threefolds constructed by~\cite{MakRuddat}.

Assuming certain structural properties of the Fukaya category related to the existence of certain chain-level enhancements of the proper Calabi--Yau structure, an appropriate extension of Theorem~\ref{main} to the quintic combined with computation of the normal function associated to these lines in~\cite{Walcher} should be sufficient to prove extended mirror symmetry, in the sense of~\cite{HahnWalcher}, for lines in the mirror quintic. In particular, this would determine the open Gromov--Witten potential for certain Lagrangian submanifolds in the quintic threefold, and the coefficients of this potential--the open Gromov--Witten invariants of these Lagrangian submanifolds-- would lie in $\mathbb{Q}(\sqrt{-3})$ (cf.~\cite[\S 6.1]{Walcher}).
\end{remark}

The rest of this article is structured as follows. Section~\ref{iwf} constructs a version of the wrapped Fukaya category whose objects are certain Lagrangian immersions equipped with local systems. In Section~\ref{backgroundSYZ} we will review SYZ mirror symmetry for $T^*T^n$ and the construction of a tropical Lagrangian pair of pants. This will serve as motivation for Section~\ref{singularconstruction}, where we prove Theorem~\ref{mainthm0}. The proof of Theorem~\ref{smoothings} is given in Section~\ref{nbdhsmthsect}, and makes use of a Lagrangian neighborhood theorem for (cylindrical at infinity) Lagrangians with conical singularities, also proved therein. In Section~\ref{IL} we construct the immersed Lagrangian submanifold $\Limm$, and study its associated moduli space of unobstructed local systems. Finally, in Section~\ref{supportsection}, we will prove Theorem~\ref{main}.
\subsection*{Acknowledgments} I would like to thank my advisor, Mohammed Abouzaid, for his support and encouragement, for many enlightening discussions, and for comments on earlier drafts of this paper. I would also like to thank Jeff Hicks, Cheuk Yu Mak, Grigory Mikhalkin, Mike Miller Eismeier, Nick Sheridan, Jake Solomon, Nicol{\'a}s Vilches, and Johannes Walcher for helpful conversations at various stages of this project. Finally, I thank the anonymous referee for a very thorough reading of this paper and many helpful comments and suggestions. This work was partially supported by the NSF through Abouzaid's grant DMS-2103805 and by the Simons Collaboration on Homological Mirror Symmetry.

\section{Immersed wrapped Floer theory}\label{iwf}
In this section, we will construct the wrapped Fukaya category of a Liouville manifold $M$, allowing objects to be supported on certain Lagrangian immersions. Intuitively, one might expect the wrapped Floer $A_{\infty}$-algebra of a Lagrangian immersion $\iota\colon\widetilde{L}\to M$ to have generators corresponding either to self-intersection points of the immersion or to Hamiltonian chords in the cylindrical end of $M$. In that case, the $A_{\infty}$-operations would need to count disks with both corners and boundary punctures asymptotic to Hamiltonian chords. To simplify matters, we will only consider Lagrangian immersions for which the curvature, as one would expect to see in the full curved $A_{\infty}$-structure as described above, vanishes. We will then define the Floer cochain complex of $\widetilde{L}$ using Hamiltonian perturbations, as in~\cite{SeidelBook}. This means that we will not need to consider, say, singular chains on the self-intersections of $\iota(\widetilde{L})$.

More precisely, for a Lagrangian immersion $\iota\colon\widetilde{L}\to M$ equipped with a rank-one local system $\nabla$, we will define a curvature term $\mathfrak{m}_0(\widetilde{L},\nabla)$ which counts pseudoholomorphic teardrops (weighted by holonomy) on $L$ of the appropriate index, and we will only take branes for which $\mathfrak{m}_0(\widetilde{L},\nabla) = 0$ to be objects of the wrapped Fukaya category. To rule out certain disk bubbles of nonpositive index, we must also impose a semi-positivity assumption, Definition~\ref{monotonicity}, on the immersed Lagrangian submanifolds that we consider. The construction of Floer theory for so-called \textit{cleanly positive} Lagrangian immersions is analogous to the construction of Floer theory of monotone Lagrangian embeddings. We will also only consider Lagrangian immersions which are embedded away from a compact subset of $M$, so that this first step can be carried out entirely in the interior part of $M$.

When we construct the wrapped $A_{\infty}$-operations, we will define them by counting inhomogeneous pseudoholomorphic polygons with boundary punctures asymptotic to Hamiltonian chords. The $1$-dimensional moduli spaces may have boundary strata arising from bubbling of pseudoholomorphic teardrops, but standard gluing results and the vanishing of $\mathfrak{m}_0(\widetilde{L},\nabla)$ imply that weighted counts of points in these $0$-dimensional strata cancel, and so we are able to prove the uncurved $A_{\infty}$-relations.

\subsection{Lagrangian immersions}\label{immersionbackground}
Let $M$ be a Liouville manifold, meaning that it is equipped with a one-form $\lambda$ such that $\omega\coloneqq d\lambda$ is a symplectic form, with the property that the \textit{Liouville vector field} $Z_{\lambda}$, characterized by the property that
\[ \omega(Z_{\lambda},\cdot)\equiv\lambda(\cdot) \]
generates a complete expanding flow. Outside of a compact subset of $M$, we assume that the flow is modeled on multiplication in the positive end of the symplectization of a contact manifold.
We will write
\[ M = M^{in}\cup_{\partial M}(\partial M\times[1,\infty)),\]
where $M^{in}$ is a (compact) Lioville domain. In the cylindrical end $M\times[1,\infty)$, the Liouville form is given by $r\lambda|_{\partial M}$, where $r$ is the $[1,\infty)$-factor, and hence the Liouville vector field is $\frac{\partial}{\partial r}$ in the cylindrical end. Also assume that $c_1(M) = 0$.

\begin{definition}
A Lagrangian immersion $\iota\colon\widetilde{L}\to M$ is said to have clean self-intersections if
\begin{itemize}
\item[(i)] The fiber product
\[ \widetilde{L}\times_{\iota}\widetilde{L} = \lbrace (p_{-},p_{+})\in\widetilde{L}\times\widetilde{L}\colon\iota(p_{-}) = \iota(p_{+})\rbrace \]
is a smooth submanifold of $\widetilde{L}\times\widetilde{L}$.

\item[(ii)] At each point $(p_{-},p_{+})$ of the fiber product, the tangent space is given by the fiber product of tangent spaces, i.e.
\[ T_{(p_{-},p_{+})}(\widetilde{L}\times_{\iota}\widetilde{L}) = \lbrace (v_{-},v_{+})\in T_{p_{-}}\widetilde{L}\times T_{p_{+}}\widetilde{L}\colon d\iota_{p_{-}}(v_{-}) = d\iota_{p_{+}}(v_{+})\rbrace.\]
\end{itemize}
\end{definition}
We will often let $L\coloneqq\iota(\widetilde{L})$ denote the image of such an immersion.
\begin{definition}
Suppose that $\iota\colon\widetilde{L}\to M$ is a Lagrangian immersion with clean self-intersections. We say that $\widetilde{L}$ is \textit{exact} if there is a smooth function $f_L\colon\widetilde{L}\to\mathbb{R}$ such that $df_L = \iota^*\lambda$.
\end{definition}
This definition of exactness is the same one used by Alston--Bao~\cite{AlstonBao}. Exactness rules out the existence of holomorphic disks with \textit{smooth} boundary on $L$, but it does not rule out the existence of holomorphic teardrops with boundary on $L$.
\begin{definition}
A Lagrangian immersion $\iota\colon\widetilde{L}\to M$ is said to be cylindrical at infinity if there is a closed embedded Legendrian $\Lambda\subset\partial M$ such that $L$ satisfies
\[ L\cap(\partial M\times[1,\infty)) = \Lambda\times[1,\infty) \]
and such that the restriction of $\iota$ to $\iota^{-1}(\Lambda\times[1,\infty))$ is an embedding.
\end{definition}
For a Lagrangian immersion with clean self-intersections, we write
\[ \widetilde{L}\times_{\iota}\widetilde{L} = \coprod_{a\in A}L_a = \Delta_{\widetilde{L}}\sqcup\coprod_{a\in A\setminus\lbrace 0\rbrace} L_a,\]
where $A$ is an index set with a choice of distinguished element $0\in A$. Here, $L_0 = \Delta_{\widetilde{L}}$ denotes the diagonal component of the fiber product. Note that $\Delta_{\widetilde{L}}$ is disconnected if $\widetilde{L}$ is disconnected. Each other component $L_a$ of the disjoint union is a non-diagonal connected component of the fiber product, which we will call switching components. Observe that there is a natural involution on $A\setminus\lbrace 0\rbrace$ induced by the function on $\widetilde{L}\times_{\iota}\widetilde{L}$ which swaps coordinates.

\subsection{Obstructions}~\label{obs}
In this subsection, we will define a certain count $\mathfrak{m}_0(\widetilde{L},\nabla)\in\mathbb{K}$ of pseudoholomorphic disks on $L$ with one corner weighted by holonomy, where $\mathbb{K}$ is a field and $\nabla$ is a $GL(1,\mathbb{K})$-local system on $\widetilde{L}$. 
\begin{definition}
Consider a Lagrangian immersion $\iota\colon\widetilde{L}\to M$ which is exact, cylindrical at infinity (or compact), and has at worst clean self-intersections. Additionally, we assume that all switching components of $\widetilde{L}\times_{\iota}\widetilde{L}$ are closed manifolds. A Lagrangian brane consists of such a Lagrangian immersion $\widetilde{L}$, together with a $GL(1,\mathbb{K})$-local system $\nabla$, a spin structure (and induced orientation), and a grading $\alpha^{\#}\colon\widetilde{L}\to\mathbb{R}$. We will denote such an object by $(\widetilde{L},\nabla)$. Sometimes we will also write this as $L$, which we have also used for the image of the immersion, in an abuse of notation.
\end{definition}

To show that the moduli spaces of pseudoholomorphic disks on $L$ are cut out transversely, we will impose a constraint on $\widetilde{L}$ which is similar to monotonicity. We will now recall the notion of index from~\cite[\S{2.3}]{AlstonBao} to state this assumption properly.

\begin{definition}
Let $(V,\omega)$ be a symplectic vector space with a compatible almost complex structure $J$, and let $\Lambda_0,\Lambda_1\subset V$ be Lagrangian subspaces. Choose a path $\Lambda_t$, for $t\in[0,1]$, of Lagrangian subspaces from $\Lambda_0$ to $\Lambda_1$ which satisfies the following:
\begin{itemize}
\item $\Lambda_0\cap\Lambda_1\subset\Lambda_t\subset\Lambda_0+\Lambda_1$ for all $t$, and
\item $\Lambda_t/(\Lambda_0\cap\Lambda_1)\subset(\Lambda_0+\Lambda_1)/(\Lambda_0\cap\Lambda_1)$ is a path of positive-definite subspaces (with respect to the metric induced from $\omega$ and $J$) from $\Lambda_0/(\Lambda_0\cap\Lambda_1)$ to $\Lambda_1/(\Lambda_0\cap\Lambda_1)$.
\end{itemize}
Next consider a path $\alpha_t$ in $\mathbb{R}$ such that
\[ \exp(2\pi i\alpha_t) = \mathrm{det}^2(\Lambda_t) \]
for all $t\in[0,1]$. Then we define the angle between $\Lambda_0$ and $\Lambda_1$ to be
\[ \mathrm{Angle}(\Lambda_0,\Lambda_1) = \alpha_1-\alpha_0 \]
\end{definition}
This definition gives rise to the appropriate notion of index for switching components of a Lagrangian immersion.
\begin{definition}\label{index}
Given a switching component $L_{a_{-}}$ of $\widetilde{L}\times_{\iota}\widetilde{L}$, let $L_{a_{+}}$ denote the corresponding switching component under the involution on $A$. Then for any points $p_{-}\in L_{a_{-}}$ and $p_{+}\in L_{a_{+}}$, we define the index
\[ \deg(p_{-},p_{+})\coloneqq n+\alpha^{\#}(p_{+})-\alpha^{\#}(p_{-})-2\cdot \mathrm{Angle}(d\iota(T_{p_{-}}L),d\iota(T_{p_{+}}L)). \]
Since this is independent of $(p_{-},p_{+})\in L_{a_{-}}\times L_{a_{+}}$, we set 
\[\deg(L_{a_{-}})\coloneqq \deg(L_{a_{-}},L_{a_{+}}) \coloneqq \deg(p_{-},p_{+}).\]
\end{definition}
The following definition should be thought of as an analogue of monotonicity for exact Lagrangian immersions in symplectic Calabi--Yau manifolds, which we will use to verify the wrapped $A_{\infty}$-relations.
\begin{definition}[Clean positivity]~\label{monotonicity}
Given an exact cylindrical Lagrangian immersion with clean self-intersections we say that it is \textit{cleanly positive} if for any \textit{positive-dimensional} switching components $L_{a_{+}}$ and $L_{a_{-}}$ of $\widetilde{L}\times_{\iota}\widetilde{L}$ which are ordered such that the action satisfies 
\[f_L(L_{a_{+}})-f_L(L_{a_{-}})>0\]
one also has that
\[\deg(L_{a_{-}})\geq 2.\]
\end{definition}
\begin{remark}
This condition precludes the existence of teardrops with a corner on a positive-dimensional switching component of degree at most $1$. Teardrops with a corner on a transverse double point of index less than $2$ generically do not exist, as Lemmata~\ref{simplicitylemma} and~\ref{teardropregularity} below show.

Our notion of clean positivity is weaker than~\cite[Condition 1.2]{AlstonBao}, since we do not impose any restrictions on the $0$-dimensional self-intersection points. We will not need such a condition since we will establish that all pseudoholomorphic teardrops under consideration are regular.
\end{remark}
 
Objects of the wrapped Fukaya category will be supported on Lagrangian submanifolds satisfying the following assumption.
\begin{assumption}\label{restrictionassumption}
All Lagrangian immersions $\iota\colon\widetilde{L}\to M$ under consideration are exact, cylindrical (at infinity), spin, have clean self-intersections, and are cleanly positive in the sense of Definition~\ref{monotonicity}. We assume moreover that for any point $p\in M$, the preimage $\iota^{-1}(p)$ consists of at most two points, and also that $\iota\colon\widetilde{L}\to L$ is not a double cover.
\end{assumption}
The last sentence of Assumption~\ref{restrictionassumption} allows us to appeal directly to the regularity and compactness results of~\cite{AlstonBao} (cf. Def. 3.2, \S{8.3}, and \S{9} therein).

We turn now to the construction of the curvature term. Let $S$ be a Riemann surface with one boundary puncture, denoted $\xi_0$, whose compactification is a disk. Consider the space $\mathcal{J}(M)$ of $\omega$-compatible almost complex structures on $M$ which are of contact type when restricted to the cylindrical end $\partial M\times[1,\infty)$, meaning that $\lambda\circ J = dr$.
\begin{definition}
Let $a\in A\setminus\lbrace 0\rbrace$ be a label for a switching component of $\iota\colon\widetilde{L}\to M$, and fix some $J\in\mathcal{J}(M)$. A pseudoholomorphic teardrop with corners on $L$ consists of the data $(S,u,\widetilde{\partial u})$, where
\begin{itemize}
\item[(i)] $u\colon S\to M$ is continuous and $u(\partial S)\subset L$.

\item[(ii)] $\widetilde{\partial u}\colon\partial S\to\widetilde{L}$ is a continuous map such that $\iota\circ\widetilde{\partial u} = u\vert_{\partial S}$ on $\partial S$.

\item[(iii)] The restriction of $u$ to the interior of $S$ is $J$-holomorphic, meaning that $(du)^{0,1} = 0$.

\item[(iv)] We have that
\[ \left(\lim_{z\to\xi_0^{-}}\widetilde{\partial u}(z),\lim_{z\to\xi_0^{+}}\widetilde{\partial u}(z)\right)\in L_{a}.\]
\end{itemize}
Let $\mathcal{M}_1(L,a;J)$ denote the moduli space of $J$-holomorphic teardrops with a corner on $L_a$ modulo automorphisms of the domain.
\end{definition}
We will write $\mathcal{M}_1(L,a)\coloneqq\mathcal{M}_1(L,a;J)$ when the choice of $J$ is understood from context. By adapting the main results of~\cite{Lazzarini} or~\cite{Perrier} to our situation, one can prove that pseudoholomorphic teardrops are simple for generic almost complex structures.
\begin{lemma}\label{simplicitylemma}
Assume that $\dim\widetilde{L}\geq3$. Then there is a second category subset of $\mathcal{J}(M)$ such that for any $J$ in this subset, all $J$-holomorphic teardrops with boundary on $L$ are simple. \qed
\end{lemma}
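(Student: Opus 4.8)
The plan is to adapt the structure theorems for $J$-holomorphic disks with Lagrangian boundary due to Lazzarini~\cite{Lazzarini} and Perrier~\cite{Perrier}. In the form we need, these say that the image of any non-constant $J$-holomorphic disk with boundary on a Lagrangian is a finite union of images of \emph{simple} (somewhere injective) $J$-holomorphic disks with boundary on the same Lagrangian, with the total symplectic area distributed with positive integer multiplicities among the pieces; in particular a disk which is not simple has its image covered by simple disks of strictly smaller area. The first step is to carry this over to the immersed, punctured setting at hand, following Perrier: one runs the decomposition for the pair $(u,\widetilde{\partial u})$ rather than for $u$ alone, so that the boundary lift is refined compatibly and each piece $v_i$ inherits a lift of its boundary to $\widetilde{L}$, i.e.\ is a simple $J$-holomorphic disk with corners on switching components of $\widetilde{L}$ (a teardrop being the case of a single corner). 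The single boundary puncture $\xi_0$ of a teardrop is an isolated point of the domain that plays no role in the interior structure theory, so the decomposition localizes away from it and $\xi_0$ is inherited by exactly one of the pieces. Because $J$ is of contact type at infinity and $L$ is cylindrical there, a maximum-principle argument confines all teardrops on $L$ to a fixed compact subset of $M$, so the noncompactness of $M$ and the Gromov compactness needed here cause no difficulty.

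Next I would exploit exactness. By Stokes' theorem and the identity $df_L=\iota^*\lambda$, the symplectic area of such a $J$-holomorphic disk with corners on $L$ equals the sum of the jumps of $f_L$ across its corners, which depends only on the switching components at the corners; in particular a genuine disk (no corners) has zero area, hence is constant, so only pieces with at least one corner can occur. Moreover a teardrop is never a multiple cover: if $u=v\circ\phi$ with $v$ simple and $\phi$ a branched cover of the disk of degree $d\ge 2$, then $\phi$ restricts to a degree-$d$ covering of the boundary circles, so each corner of $v$ would have $d$ distinct preimages on $\partial S$, forcing $u$ to have more than one corner; and if $v$ had no corner then $\widetilde{\partial u}=\widetilde{\partial v}\circ\phi$ would lie in a single sheet and $u$ would have no corner at all. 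Hence a teardrop that is not simple must, by the structure theorem, have image equal to a union of at least two \emph{distinct} simple such configurations on $L$ whose images overlap, since consecutive pieces of the decomposition are glued along boundary arcs or corners.

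It therefore suffices to show that for $J$ in a second-category subset of $\mathcal{J}(M)$ no two distinct simple $J$-holomorphic disks-with-corners on $L$ have intersecting images. Here one runs the standard Sard--Smale argument: for generic $J$ every such simple configuration is cut out transversely, and so is the universal moduli space of pairs of distinct ones carrying a common value, so that the moduli space of such a pair is empty for $J$ a regular value of the projection to $\mathcal{J}(M)$ once the corresponding virtual dimension is negative. Exactness again helps: since $c_1(M)=0$ and $L$ is graded, and since the area of a configuration is fixed by its corner labels, the indices of all configurations that can appear are uniformly bounded, and a dimension count using the hypothesis $\dim\widetilde{L}\ge 3$ then makes the relevant virtual dimension negative. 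The technical heart of the argument is the first step — adapting the Lazzarini--Perrier decomposition so that it simultaneously respects the Lagrangian immersion (tracking the boundary lift) and the corner, and checking that each piece is a simple disk-with-corners in the precise sense needed to feed into the transversality machinery; this is also, together with the dimension count above, where the hypothesis $\dim\widetilde{L}\ge 3$ is used. The subsequent genericity arguments are standard, as in the embedded case and in the treatment of immersed Floer theory in~\cite{AlstonBao}.
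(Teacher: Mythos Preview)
The paper does not actually prove this lemma: it is stated with a \qed and followed only by the remark that Perrier's proof for compact Lagrangians with transverse double points carries over unchanged to the clean-immersion, Liouville setting (the teardrops being confined to $M^{in}$ by the maximum principle). Your sketch is therefore an attempt to unpack the Lazzarini--Perrier argument rather than to match a proof the paper gives. Your overall strategy---decompose a non-simple teardrop via the Lazzarini/Perrier frame into simple pieces with corners, rule out honest multiple covers by the corner-counting argument, then kill the remaining configurations by a Sard--Smale dimension count---is indeed the shape of Perrier's proof, and your observation that a degree $d\ge 2$ branched cover of the boundary circle forces $d$ corners is a clean way to dispose of multiple covers.

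The gap is in your reduction step. The claim ``for generic $J$ no two distinct simple $J$-holomorphic disks-with-corners on $L$ have intersecting images'' is both stronger than what you need and not obviously true. Isolated intersections of two simple disks impose a codimension-$2n{-}4$ (interior) or codimension-$n{-}2$ (boundary) condition on the product of their moduli spaces; since those moduli spaces can have arbitrarily large dimension (there is no a priori bound on the index of a simple disk-with-corners on $L$, and your appeal to ``indices uniformly bounded'' via exactness and grading is not justified---the pieces of the frame decomposition can acquire many new corners), the resulting incidence space need not be empty for generic $J$. What the frame decomposition actually forces is that adjacent simple pieces agree along a one-dimensional arc (the image of a frame edge), not merely meet at a point; and for interior arcs unique continuation already forces the two simple pieces to coincide, reducing to the multiple-cover case you have handled. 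The genuine genericity input in dimension $\ge 3$ concerns the boundary behaviour: one shows that for generic $J$ simple disks-with-corners are embedded up to the boundary and that two distinct ones do not share a boundary arc in $L$. That is where $\dim\widetilde{L}\ge 3$ enters, and it is a more delicate statement than a naive point-incidence count. If you sharpen your sufficiency claim to this and carry out the corresponding transversality argument, the sketch goes through along Perrier's lines.
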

The proof of this lemma in~\cite[Cor. 4]{Perrier} is written for compact Lagrangians with transverse double-points, but it carries over to clean, cylindrical Lagrangian immersions in Liouville manifolds with no essential changes. Note that by our assumptions on $M$, all pseudoholomorphic teardrops are contained in the compact part $M^{in}$ of $M$. We expect that the same result can be proven if $\dim\widetilde{L}\leq 2$, but this would require a more intricate combinatorial argument along the lines of~\cite{BiranCornea}. It now follows from the standard Sard--Smale argument that we achieve regularity for teardrops. 
\begin{lemma}\label{teardropregularity}
If $\dim\widetilde{L}\geq3$ and $\widetilde{L}$ is equipped with a choice of spin structure, then for generic $J\in\mathcal{J}(M)$, the moduli spaces $\mathcal{M}_1(L,a)$, defined with respect to the domain-independent almost complex structure $J$, are smooth manifolds of dimension
\[ \deg(L_a)-2+\dim(L_a) \]
\qed
\end{lemma}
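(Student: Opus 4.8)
The plan is to run the standard Sard--Smale universal moduli space argument, adapted to teardrops with a clean corner, and to extract compactness from exactness together with the cylindrical structure of $M$ and $L$. First I would fix $p>2$ and a large integer $k$, and form the Banach manifold $\mathcal{B}$ of $W^{k,p}_{\mathrm{loc}}$ maps $u\colon S\to M$ with $u(\partial S)\subset L$ that admit a $W^{k,p}$ lift $\widetilde{\partial u}\colon\partial S\to\widetilde{L}$ with $\iota\circ\widetilde{\partial u}=u|_{\partial S}$, and which converge exponentially at the puncture $\xi_0$ with the two one-sided limits of $\widetilde{\partial u}$ lying in the fixed switching component $L_a$. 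Over $\mathcal{B}\times\mathcal{J}^{\ell}(M)$, where $\mathcal{J}^{\ell}(M)$ is the space of $C^{\ell}$ almost complex structures that are of contact type on $\partial M\times[1,\infty)$, I would form the Banach bundle with fibre $W^{k-1,p}(\Omega^{0,1}_S\otimes u^*TM)$ and take the section $\bar\partial_J u=\tfrac12(du+J\circ du\circ j)$; its zero set is the universal moduli space $\widetilde{\mathcal{M}}_1(L,a)$.

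The key step is transversality: I would show that at every $(u,J)\in\widetilde{\mathcal{M}}_1(L,a)$ the full linearization $D\bar\partial_J(u)+\bigl(Y\mapsto\tfrac12\,Y\circ du\circ j\bigr)$ is surjective. The linearized Cauchy--Riemann operator with the clean Lagrangian boundary condition and the corner condition at $\xi_0$ is Fredholm by the index package of~\cite{AlstonBao}. If the full linearization were not surjective, its cokernel would contain a nonzero $\eta$ solving the formal adjoint equation, hence smooth in the interior by elliptic regularity; by the preceding lemma the teardrop $u$ is simple, so there is a point $z_0$ in the interior of $\partial S$, disjoint from $\xi_0$, with $du(z_0)\neq0$ and $u^{-1}(u(z_0))=\{z_0\}$ --- this is where the hypothesis $\dim\widetilde{L}\geq3$ is used. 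Choosing $Y\in C^{\ell}(M,\mathrm{End}(TM))$ supported in a small ball around $u(z_0)$, anti-commuting with $J$, and leaving the contact end untouched (legitimate since all teardrops lie in $M^{in}$) makes $\langle\eta,Y\circ du\circ j\rangle\neq0$, a contradiction. Thus $\widetilde{\mathcal{M}}_1(L,a)$ is a Banach manifold and the projection to $\mathcal{J}^{\ell}(M)$ is Fredholm of index the expected dimension of $\mathcal{M}_1(L,a)$; Sard--Smale yields a comeager set of regular $J$, and the Taubes trick of intersecting over all $\ell$ produces a second category subset of $\mathcal{J}(M)$. Over such $J$, the moduli space is a smooth manifold, and a spin structure and grading on $\widetilde{L}$ orient the determinant lines of the linearized operators --- including the corner contributions --- exactly as in~\cite{AlstonBao}, giving the orientation of $\mathcal{M}_1(L,a)$.

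For compactness, the contact-type condition on $J$ and the fact that $L$ is cylindrical at infinity confine every teardrop to $M^{in}$ by the maximum principle, and exactness of $\iota$ bounds the energy of a teardrop with corner on $L_a$ by $f_L(p_+)-f_L(p_-)$, a quantity depending only on $a$. Gromov compactness then exhibits any sequence as converging to a stable teardrop configuration. Since $\omega=d\lambda$ is exact there are no nonconstant holomorphic spheres, and every holomorphic disk with boundary lifting to $\widetilde{L}$ and no corner has zero area and is therefore constant; so the only possible limiting pieces are further teardrops and bigons attached along $\partial S$. The positivity assumption of Definition~\ref{monotonicity} forces each such extra piece to have nonnegative expected dimension, so on the components of $\mathcal{M}_1(L,a)$ relevant to defining $\mathfrak{m}_0(\widetilde{L},\nabla)$ --- those of expected dimension zero --- no breaking occurs and $\mathcal{M}_1(L,a)$ is honestly compact.

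I expect the transversality step to be the main obstacle: everything hinges on locating an injective boundary point of a simple teardrop that avoids the corner and the rest of the image $L$, which is precisely the content of the preceding simplicity lemma and the reason for requiring $\dim\widetilde{L}\geq3$. The only other point needing care is that perturbations of $J$ must preserve the contact-type condition at infinity, which is automatic here because all teardrops are contained in the compact piece $M^{in}$.
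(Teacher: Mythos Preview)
Your proposal is correct and follows exactly the approach the paper indicates: the paper gives no proof beyond the sentence ``It now follows from the standard Sard--Smale argument that we achieve regularity for teardrops'' together with a reference to \cite{FOOO} and \cite{Fukaya} for orientations, so you have simply spelled out that standard argument in detail. Your compactness discussion treats only the zero-dimensional components, whereas the lemma as stated asserts compactness in all dimensions, but the paper itself gives no compactness argument whatsoever and only the zero-dimensional case is ever used (to define $\mathfrak{m}_0$), so there is no discrepancy with the paper's treatment.
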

\begin{proof}
The proof that these moduli spaces are oriented uses the presence of a spin structure on $\widetilde{L}$, as discussed in~\cite{FOOO} or~\cite{Fukaya}. The computation of the dimension formula is similar to the one in~\cite[(33), (141)]{AkahoJoyce}.
\end{proof}

The moduli spaces $\mathcal{M}_{1}(L,a)$ admits a natural evaluation maps
\[ \ev\colon\mathcal{M}_1(L,a)\to L_{a}\]
which has the value $\left(\lim_{z\to\xi_0^{-}}\widetilde{\partial u}(z),\lim_{z\to\xi_0^{+}}\widetilde{\partial u}(z)\right)$ at $(u,\widetilde{\partial u})$.
When $\deg(L_a) = 2$, the moduli space $\mathcal{M}_1(L,a)$ is compact, so we can push the fundamental class forward to obtain
\[ \ev_*[\mathcal{M}_1(L,a)]\in H_*(L_a)\]
of top dimension. Thus by Poincar{\'e} duality we can interpret this as the count of pseudoholomorphic teardrops with a corner on $L_a$. This justifies the following definition.
\begin{definition}\label{tautologicallyunobstructed}
Let $(\widetilde{L},\nabla)$ be a pair consisting of a Lagrangian immersion and rank-one local system. For any relative homology class $\beta\in H_2(M,L;\mathbb{Z})$, write $\mathcal{M}(L,a;\beta)$ for the connected component of $\mathcal{M}(L,a)$ consisting of all teardrops representing the class $\beta$. Then define the curvature term
\[ \mathfrak{m}_0(\widetilde{L},\nabla)\coloneqq\sum_{a\in A\setminus\lbrace 0\rbrace}\sum_{\beta\in H_2(M,L;\mathbb{Z})}\hol_{\nabla}(\partial\beta) PD(\ev_*[\mathcal{M}_1(L,a,\beta)] )\in\bigoplus_{a\in A}H^0(L_a;\mathbb{K}) \,.\]
We say that the Lagrangian brane $(\widetilde{L},\nabla)$ is \textit{unobstructed} if $\mathfrak{m}_0(\widetilde{L},\nabla) = 0$.
\end{definition}
We need to show that the unobstructedness of $(\widetilde{L},\nabla)$ does not depend on the choice of almost complex structure. This is similar to the proof, in the curvature term in the Fukaya $A_{\infty}$-algebra of a monotone Lagrangian submanifold is well-defined. 
\begin{lemma}
Assuming that $\dim\widetilde{L}\geq 3$, let $J_0$ and $J_1$ be two almost complex structures satisfying the conclusion of Lemma~\ref{teardropregularity}. Then a generic path of tame almost complex structures determines an oriented cobordism between the moduli spaces $\mathcal{M}_1(L,a;J_0)$ and $\mathcal{M}_1(L,a;J_1)$, provided that $\deg(L_a) = 2$.
\end{lemma}
\begin{proof}
For a path $\lbrace J_t\rbrace_{t\in[0,1]}$ in $\mathcal{J}(M)$, we consider the moduli space $\mathcal{M}^*_1(L;a;J_t)$ of \textit{simple} $J_t$-holomorphic teardrops with a corner at $a\in A$. Again using the Sard--Smale argument, this moduli space is a smooth $1$-dimensional manifold for generic choices of $\lbrace J_t\rbrace$ with the given endpoints. This moduli space is compact, with boundary given by $\mathcal{M}_1(L,a;J_0)\cup\mathcal{M}_1(L,a;J_1)$, where $\mathcal{M}_1(L,a;J_0)$ is given the opposite orientation. If there were any other boundary strata in the Gromov compactification of $\mathcal{M}^*_1(L;a;J_t)$, they would necessarily consist of nodal $J_t$-holomorphic disks containing, as an irreducible component, a teardrop with a corner on a switching component of degree $1$. But such teardrops cannot exist, either by regularity (for $0$-dimensional switching components) or by Definition~\ref{monotonicity} (for positive-dimensional switching components).
\end{proof}

\subsection{The wrapped Fukaya category}~\label{wrappedFloer}
Consider a finite collection $\mathrm{Ob}(\mathcal{W})$ of unobstructed Lagrangian branes $(\widetilde{L},\nabla)$ in $M$. Here $\widetilde{L}\to M$ is an exact Lagrangian immersion which is cylindrical at infinity and has clean self-intersections, and $\nabla$ is a rank one local system. Each brane in $\mathrm{Ob}(\mathcal{W})$ is also equipped with a grading and spin structure. If $\dim\widetilde{L}\leq 2$, we also assume that $L = \iota(\widetilde{L})$ can be written as a union of embedded Lagrangians in $M$ intersecting each other cleanly. Apart from the discussion of curvature, this subsection follows~\cite{AbouzaidGen} closely. Since proofs of most results in this section are standard, we will omit many details, emphasizing the new issues that appear in the presence of Lagrangian immersions. 

Let $\mathcal{H}(M)$ denote the set of all Hamiltonians which are of the form
\[ H(r,y) = r^2\]
away from a compact subset of $M$. Fix some $H\in\mathcal{H}(M)$. Let $X$ denote the Hamiltonian flow of $H$ and, for a pair $L_0,L_1\in\mathrm{Ob}(\mathcal{W})$, let $\mathcal{X}(L_0,L_1)$ denote the set of time-one flow lines of $X$ from $L_0$ to $L_1$, which as usual denote the images of the immersions. To define a graded cochain complex, we should assume that:
\begin{assumption}\label{nondegchords}
All chords in $\mathcal{X}(L_0,L_1)$ are nondegenerate.
\end{assumption}
Note that this assumption includes the case that $L_0 = L_1$. Let $\deg(x)$ denote the Maslov index of the chord $x\in\mathcal{X}(L_0,L_1)$.

\begin{definition}
Let $Z\coloneqq(-\infty,\infty)\times[0,1]$ with coordinates $(s,t)$. Given $x_0,x_1\in\mathcal{X}(L_0,L_1)$, define the spaces $\widehat{\mathcal{M}}(x_0,x_1)$ to be the set of all maps $u\colon Z\to M$ which converge exponentially to $x_0$ at the negative end and $x_1$ at the positive end, satisfying the boundary conditions
\begin{align*}
u(s,0) &\in L_0 \\
u(s,1) &\in L_1
\end{align*}
and which satisfy Floer's equation
\[ (du-X\otimes dt)^{0,1} = 0, \]
with respect to a generic almost complex structure of contact type.
\end{definition}

Similarly to~\cite[Lemma 2.3]{AbouzaidGen}, we have the following regularity statement for \textit{uncompactified} moduli spaces of strips.
\begin{lemma}
For a generic almost complex structure $J$ satisfying the conclusion of Lemma~\ref{teardropregularity}, the moduli space $\widehat{\mathcal{M}}(x_0,x_1)$ is a smooth manifold of dimension $\deg(x_0)-\deg(x_1)$. Whenever $\deg(x_0)-\deg(x_1)>0$, the natural $\mathbb{R}$-action induced by translations in the $s$-direction is smooth and free. \qed
\end{lemma}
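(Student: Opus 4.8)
The plan is to run the standard transversality argument for Floer trajectories with a family of almost complex structures that is domain-dependent in the $t$-variable. First I would set up the Fredholm package. Fixing a small exponential weight $\delta>0$ (legitimate since the chords are nondegenerate), realize $\widehat{\mathcal{M}}(x_0,x_1)$ as the zero locus of the section $u\mapsto(du-X\otimes dt)^{0,1}$ of a Banach bundle over a suitable weighted Sobolev space of maps $Z\to M$ with boundary values on $L_0$ and $L_1$ and exponential convergence to $x_0$, $x_1$ at the two ends. At a solution $u$ the linearization $D_u$ is a real-linear Cauchy--Riemann operator on the strip with totally real boundary conditions and asymptotics at the nondegenerate chords $x_0,x_1$; the Riemann--Roch theorem for such operators shows it is Fredholm with $\mathrm{ind}(D_u)=\deg(x_0)-\deg(x_1)$, where the integer Maslov indices $\deg(x_i)$ are well defined because of the gradings $\alpha^{\#}$ and the hypothesis $c_1(M)=0$ (this is the chord analogue of the index formula entering Definition~\ref{index}; cf.~\cite{AlstonBao}).

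Next I would prove transversality by the Sard--Smale method, following~\cite{AbouzaidGen}. Consider the universal moduli space fibered over the Banach manifold of families $\lbrace J_t\rbrace_{t\in[0,1]}$ of contact-type $\omega$-compatible almost complex structures with $J_0=J_1=J$, completed in a Floer $C^{\varepsilon}$-norm. One shows the associated extended linearized operator is surjective at every solution: if $\eta$ lies in the cokernel of $D_u$, i.e. in the kernel of its formal adjoint, then by elliptic regularity and unique continuation $\eta$ vanishes on no open set, while a non-constant solution (which is automatic whenever $x_0\neq x_1$, in particular when $\deg(x_0)>\deg(x_1)$, since exactness forces $s$-independent solutions to be constant chords) possesses a point $(s_0,t_0)$ with $t_0\in(0,1)$, $\partial_s u(s_0,t_0)\neq 0$, $\eta(s_0,t_0)\neq 0$, and $u(s_0,t_0)\notin u(\mathbb{R}\setminus\lbrace s_0\rbrace,t_0)$, by the somewhere-injectivity argument of Floer--Hofer--Salamon. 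Since $t_0$ is interior, a variation $Y$ of $J_{t_0}$ supported near $u(s_0,t_0)$ in $M$ can be arranged to pair nontrivially with $\eta(s_0,t_0)$, contradicting the assumption that $\eta$ annihilate the range of the extended operator. Sard--Smale then produces a second category subset of the space of families for which $\widehat{\mathcal{M}}(x_0,x_1)$ is cut out transversely, hence is a smooth manifold of dimension $\deg(x_0)-\deg(x_1)$. The clean-immersed nature of $L_0,L_1$ is irrelevant here: all perturbations occur in the interior of the strip away from its boundary, and bubbling of teardrops and breaking of strips are compactness phenomena that do not affect this smoothness statement (as with teardrops, a maximum principle for the quadratic Hamiltonian confines the relevant solutions to $M^{in}$).

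Finally, for the $\mathbb{R}$-action by translations $u(\cdot,\cdot)\mapsto u(\cdot+c,\cdot)$: smoothness is immediate from smooth dependence on the domain coordinate. For freeness when $\deg(x_0)-\deg(x_1)>0$, note that the stabilizer of a solution $u$ is a closed subgroup of $\mathbb{R}$, hence $\lbrace 0\rbrace$, $c\mathbb{Z}$, or $\mathbb{R}$; a $c$-periodic $u$ with limits at $s\to\pm\infty$ must be $s$-independent, and an $s$-independent $u$ is a Hamiltonian chord $x(t)$, giving $x_0=x_1$ and $\deg(x_0)-\deg(x_1)=0$, contrary to hypothesis. So the stabilizer is trivial and the action is free.

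I expect the only genuinely delicate point to be the transversality step under the constraint $J_0=J_1=J$: one must verify that pinning the family at the two endpoints still leaves enough freedom to perturb. This is resolved by the observation above that a non-constant Floer strip has a somewhere-injective point whose $t$-coordinate lies in the open interval $(0,1)$, where $J_t$ is unconstrained, so the classical perturbation argument goes through unchanged.
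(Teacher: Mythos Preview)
Your proposal is correct and is precisely the standard argument; the paper itself omits the proof entirely (the lemma ends with a bare \qed), treating the statement as well known from the literature it cites (notably \cite{AbouzaidGen} and \cite{AlstonBao}). Your sketch supplies exactly what is implicitly being invoked, including the key observation that the constraint $J_0=J_1=J$ is harmless because the Floer--Hofer--Salamon somewhere-injective point can be taken with $t$-coordinate in the open interval $(0,1)$.
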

\begin{proof}
This follows from the argument of~\cite[Prop. 25]{Perrier}. In more detail, let $\phi^1_H$ denote the time-$1$ flow of $X$, and note that elements of $\widehat{\mathcal{M}}(x_0,x_1)$ correspond to pseudoholomorphic strips with a boundary component on $L_0$ and another boundary component on $(\phi^1_H)^{-1}(L_1)$. Any such pseudoholomorphic strip can be thought of as a pseudoholomorphic disk with corners on the Lagrangian immersion $L_0\cup(\phi^1_H)^{-1}(L_1)$, and since this strip has boundary components on \textit{different} Lagrangian submanifolds, it is not a double cover of a teardrop. Applying~\cite[Cor. 1]{Perrier} completes the proof. Although the results of op. cit. are stated for compact Lagrangian submanifolds, they apply in our setting for similar reasons as discussed before Lemma~\ref{teardropregularity}. More precisely, Perrier deduces~\cite[Cor. 1]{Perrier} from a factorization of $J$-holomorphic polygons into simple $J$-holomorphic polygons in relative homology~\cite[Theorem 1]{Perrier}. The latter result applies to any almost complex structure, and thus we can choose $J$ generically among the subspace of contact structures which are of contact type at infinity.
\end{proof}
\begin{remark}
Strictly speaking, achieving regularity for teardrops with domain-independent almost complex structures is slightly more than we will need. In the gluing arguments for teardrops below, however, we will make use of domain-dependent almost complex structures which agree with a fixed almost complex structure on the boundary. This will make it possible to choose perturbation data inductively, while still allowing for the attaching of pseudoholomorphic teardrops. We can still achieve regularity for generic such choices of domain-dependent almost complex structures, by modifying the standard transversality arguments appearing in~\cite[\S{9}]{SeidelBook} (see~\cite[\S{4}]{PalmerWoodward} as well).
\end{remark}
\begin{definition}
Define $\mathcal{M}(x_0,x_1)$ to be $\widehat{\mathcal{M}}(x_0,x_1)/\mathbb{R}$ whenever $\deg(x_0)-\deg(x_1)>0$, and set $\mathcal{M}(x_0,x_1) = \emptyset$ if not.
\end{definition}

Having fixed perturbation data for inhomogeneous strips, we can now explain how to choose perturbation data for disks with more boundary punctures consistently with the choices made thus far. In the following definition, let $Z_{\pm}$ denote the positive and negative half-strips of $Z$, respectively. To discuss disks with corners on a Lagrangian immersion, we use some terminology from~\cite{AlstonBao} for boundary punctures. We will divide the boundary punctures on a disk into two types. Type 1 punctures will be asymptotic to Hamiltonian chords, and Type 2 punctures will be asymptotic to switching components of the Lagrangian immersion.
\begin{definition}\label{perturbationdatum}
Let $d\geq1$ and $m\geq0$ be integers such that $d+m\geq2$, and let $S$ be a stable disk with one negative strip-like end, $d$ positive strip-like ends which are said to be of Type 1, and $m$ positive strip-like ends which are said to be of Type 2. Let $\zeta_0,\zeta_1,\ldots,\zeta_d$ denote the Type 1 marked points and $\eta_1,\ldots,\eta_m$ denote the Type 2 marked points. We assume that both sets of marked points are ordered cyclically on the boundary of $S$. A \textit{perturbation datum} for $S$ consists of:
\begin{itemize}
\item[(i)] Strip-like ends $\epsilon_i^1\colon Z_{+}\to S$ and $\epsilon_j^{2}\colon Z_{+}\to S$ for $i = 1,\ldots,d$ and $j=1,\ldots,m$, and $\epsilon_0^{1}\colon Z_{-}\to S$ near the corresponding boundary punctures.

\item[(ii)] A time-shifting function $\rho_S\colon\partial S\to[1,\infty)$ which takes the value $1$ away from the Type 1 marked points, and is constant of value $w_{i,S}$ near the Type 1 marked points for $i = 0,1,\ldots,d$.

\item[(iii)] A one-form $\alpha_S$ which vanishes along $\partial S$, and domain-dependent Hamiltonian $H_S\colon S\to\mathcal{H}(M)$ defining a Hamiltonian vector field $X_S$ such that $X_S\otimes\alpha_S$ pulls back to $X_{H/w_{i,S}\circ\phi^{w_{i,S}}}\otimes dt$ in each strip-like end near the Type 1 marked points.

\item[(iv)] A domain-dependent almost-complex structure which agrees with $J$ along the boundary, and pulls back to $J_t$ in the strip-like ends near the Type 1 marked points, and pulls back to $J$ in the ends near the Type 2 marked points.
\end{itemize}
\end{definition}
\begin{figure}
\begin{tikzpicture}[scale  = 2]
\draw (0,0) circle (1.0);

\draw[fill = white] (0,1) circle (0.25ex);
\draw[fill = white] (1,0) circle (0.25ex);
\draw[fill = white] (0,-1) circle (0.25ex);
\draw[fill = black] (-1,0) circle (0.25ex);
\draw[fill = black] (0.707,-0.707) circle (0.25ex);
\draw[fill = white] (-0.707,-0.707) circle (0.25ex);

\node[anchor = south] at (0,1) {$\zeta_0$};
\node[anchor = west] at (1,0) {$\zeta_3$};
\node[anchor = north west] at (0.707,-0.707) {$\eta_2$};
\node[anchor =  north] at (0,-1) {$\zeta_2$};
\node[anchor = east] at (-1,0) {$\eta_1$};
\node[anchor = north east] at (-0.707,-0.707) {$\zeta_1$};
\end{tikzpicture}
\caption{A stable disk with $3$ positive punctures of Type 1 and $2$ negative punctures of Type 2.}
\end{figure}
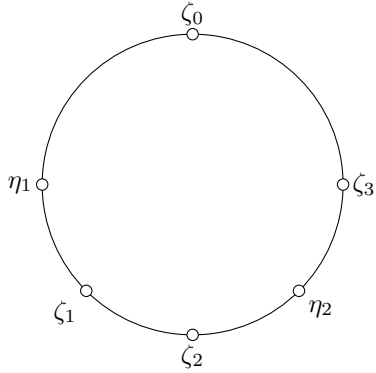

Our definition of the wrapped $A_{\infty}$-operations will count disks whose domains only have Type 1 marked points. Let $(\widetilde{L}_0,\nabla_0),\ldots,(\widetilde{L}_d,\nabla_d)$, where $d\geq2$, be a sequence of Lagrangian branes in $\mathrm{Ob}(\mathcal{W})$. It could be the case that $\widetilde{L}_i = \widetilde{L}_j$ for some $1\leq i\neq j\leq d$, but by Assumption~\ref{nondegchords}, all Hamiltonian chords between these two Lagrangian immersions are nondegenerate. Consider a sequence of Hamiltonian chords $x_i\in\mathcal{X}(L_{i-1},L_i)$ for all $i = 1,\ldots,d$, and a chord $x_0\in\mathcal{X}(L_0,L_d)$.

\begin{definition}
Define $\mathcal{M}_{d+1}(x_0,x_1,\ldots,x_d)$ to be the moduli space consisting of all maps $u\colon S\to M$, where 
\begin{itemize}
\item[(i)] $S$ is a stable disk with $d$ positive ends of Type 1, one negative end, and no ends of Type 2, and which is equipped with a choice of perturbation data.
\item[(ii)] $u\colon S\to M$ satisfying the boundary conditions
\begin{align*}
\begin{cases}
u(z)\in\phi^{\rho_S(z)}L_i & \text{for} \; z \; \text{between} \; \zeta_i \text{ and }\zeta_{i+1}\\
u(z)\in\phi^{\rho_S(z)}L_d & \text{for} \; z \; \text{between} \; \zeta_d \text{ and }\zeta_{0}\\
\lim_{s\to\pm\infty} u\circ\epsilon^1_i(s,\cdot) = \phi^{\rho_S(z)}x_i
\end{cases}
\end{align*}
and
\item[(iii)] $u$ satisfies the inhomogeneous Cauchy--Riemann equation
\[ (du-X_S\otimes\alpha_S)^{0,1}.\]
\end{itemize}
\end{definition}
Similarly to~\cite[Lemma 4.4]{AbouzaidGen}, we can choose perturbation data consistently. More precisely, we say that two perturbation data $(\rho_S^i,\alpha_S^i,H_S^i,J_S^i)$ on $S$ (where $i = 1,2$) are \textit{conformally consistent} if there is some constant $C>1$ such that
\begin{itemize}
\item[(i)] $\rho_S^2 = C\rho_S^1$
\item[(ii)] $\alpha_S^2 = C\alpha_S^1$
\item[(iii)]
\[ H_S^2 = \frac{1}{C^2}H_S^1\circ\psi^C \]
\item[(iv)] $J_S^2 = (\psi^C)^*J_S^1$
\end{itemize}
where $\psi^C$ is the time $\log(C)$ Liouville flow.
\begin{lemma}
There is a universal and conformally consistent, in the sense of~\cite{AbouzaidGen}, choice of perturbation data. This choice of data has the property that all of the moduli spaces $\mathcal{M}(x_0,x_1,\ldots,x_d)$ defined above are smooth oriented manifolds of the expected dimension.
\end{lemma}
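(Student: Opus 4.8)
The plan is to follow the inductive construction of perturbation data in~\cite{AbouzaidGen}, adapted to account for the switching components of the $\widetilde{L}_i\times_\iota\widetilde{L}_i$ and the teardrop bubbling they permit. Write $\overline{\mathcal{R}}_{d+1}$ for the compactified moduli space of stable disks with one negative and $d$ positive boundary punctures of Type~I; its codimension-one boundary strata are products $\overline{\mathcal{R}}_{d_1+1}\times\overline{\mathcal{R}}_{d_2+1}$ recording a single boundary node. Fixing once and for all a smoothly varying choice of strip-like ends over $\overline{\mathcal{R}}_{d+1}$, a perturbation datum in the sense of the definition above (time-shifting function $\rho_S$, one-form $\alpha_S$, domain-dependent Hamiltonian $H_S$, and almost complex structure $J_S$) is to be chosen for every domain $S$ and required to be universal and conformally consistent: it should vary smoothly in $S$, agree with the fixed $J$ of Lemma~\ref{teardropregularity} along $\partial S$, restrict to the previously chosen data $\{J_t\}$ and $X_{H/w_{i,S}\circ\phi^{w_{i,S}}}\otimes dt$ in the Type~I strip-like ends (with weights $w_{i,S}$ compatible with $\rho_S$), and, near each boundary stratum of $\overline{\mathcal{R}}_{d+1}$, coincide under the gluing map with the datum induced from the two lower-dimensional pieces.

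First I would build these data by induction on $d$, the base cases $d=1$ being the perturbation data already fixed for Floer strips. At each stage conformal consistency prescribes the datum on a neighborhood of $\partial\overline{\mathcal{R}}_{d+1}$; since the relevant spaces of admissible data are nonempty and contractible, this extends to all of $\overline{\mathcal{R}}_{d+1}$, exactly as in~\cite{AbouzaidGen}. For transversality I would form the universal moduli space fibred over the space of such perturbation data and show its linearization is surjective by the standard argument of perturbing $J_S$ near a generic point in the image of a solution $u$; this requires that $u$ not be multiply covered away from the switching locus and the Hamiltonian chords, which is the content of the simpleness results underlying Lemma~\ref{teardropregularity} when $\dim\widetilde{L}\geq3$, and which follows from the hypothesis built into $\mathrm{Ob}(\mathcal{W})$ that $L$ is a union of cleanly intersecting embedded Lagrangians when $\dim\widetilde{L}\leq2$. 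The Sard--Smale theorem then yields a second category subset of regular data, and the index computation — combining the Maslov indices $\deg(x_i)$ of the chords, the angle contributions entering Definition~\ref{index}, and the dimension of the domain moduli — shows $\mathcal{M}(x_0,x_1,\ldots,x_d)$ has the expected dimension. Orientations are induced from the spin structures and gradings on the $\widetilde{L}_i$ together with the orientation local systems on the switching components, with compatibility across boundary strata checked as in~\cite{FOOO} or~\cite{Fukaya}.

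The main obstacle is not transversality, which is routine given the simpleness results of the previous subsection, but rather organizing the conformal consistency across all strata of $\overline{\mathcal{R}}_{d+1}$ simultaneously with the rescaling data $\rho_S$ and $w_{i,S}$ forced by the quadratic-at-infinity Hamiltonians; this is handled by the same inductive scheme as in~\cite{AbouzaidGen}. The one new feature to keep in mind is that the compactification of a one-dimensional $\mathcal{M}(x_0,\ldots,x_d)$ acquires, besides the usual strip-breaking and boundary-node strata, configurations in which a pseudoholomorphic teardrop with a corner on some switching component $L_a$ bubbles off. These strata are irrelevant to the present statement, but they are precisely why the positivity condition of Definition~\ref{monotonicity} is imposed: it forces such teardrops to have nonnegative index, hence to be rigid in the relevant dimension, so that their signed count is the curvature term $\mathfrak{m}_0(\widetilde{L},\nabla)$ and the uncurved $A_{\infty}$-relations can be recovered once $\mathfrak{m}_0$ vanishes.
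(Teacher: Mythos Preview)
Your proposal is correct and matches the paper's approach: the paper gives no proof at all, simply prefacing the lemma with ``By the same argument as in~\cite{AbouzaidGen}'' and marking it with a \qed. Your sketch of the inductive construction of perturbation data, the Sard--Smale transversality argument, and the orientation discussion is exactly the content implicitly deferred to~\cite{AbouzaidGen}, so there is nothing to compare.
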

\begin{proof}
The only salient difference between our situation and that of~\cite{AbouzaidGen} is that we need to achieve regularity using domain-dependent almost complex structures on $S$ which are constant along the boundary $\partial S$. We sketch the argument for the existence of regular perturbation data alluded to in~\cite[Lemma 4.4]{AbouzaidGen}, which itself follows~\cite[(9k)]{SeidelBook}, as well as the required modifications.

In~\cite{SeidelBook}, perturbation data are associated to a universal family $\mathcal{S}^{d+1}\to\mathcal{R}^{d+1}$ of $(d+1)$-pointed disks (see~\cite[(9.6)]{SeidelBook} for a definition of this family). One can make a universal choice of strip-like ends for this family~\cite[(9g)]{SeidelBook}. A \textit{universal choice of perturbation data} is the choice, for every $d\geq 2$ and every $(d+1)$-tuple $(L_0,\ldots,L_d)$ of exact Lagrangian branes of perturbation data on the family $\mathcal{S}^{d+1}\to\mathcal{R}^{d+1}$ which restrict to the given choice of perturbation data over the strip-like ends. One can show (cf.~\cite[Lemma 4.3]{AbouzaidGen} or~\cite[Lemma 9.5]{SeidelBook}) that universal choices of conformally consistent perturbation data exist. We remark that we can take these universal choices of perturbation data to satisfy Definition~\ref{perturbationdatum}(iv), with no changes to the arguments.

One now proves the existence of regular universal choices of conformally consistent perturbation data by induction on the number of (Type 1) marked points. So suppose that we have already chosen perturbation data for all $d\leq d_0$, and consider some Lagrangian labels $(L_0,\ldots,L_{d_0})$. We will construct an infinitesimal deformation of the perturbation datum $(H_{L_0,\ldots,L_{d_0}},J_{L_0,\ldots,L_{d_0}})$ associated to this tuple in such a way that the resulting moduli spaces are regular. By construction, these deformations only change $(H_{L_0,\ldots,L_{d_0}},J_{L_0,\ldots,L_{d_0}})$ in some open subset $\Omega\subset\mathcal{S}^{d_0+1}$. For each $r\in\mathcal{R}^{d_0+1}$, the intersections $\Omega_r = \Omega\cap\mathcal{S}^{d_0+1}_r$ must be nonempty and must satisfy
\begin{description}
\item[(i)] $\Omega_r$ is disjoint from the strip-like ends; and

\item[(ii)] for certain $r\in\mathcal{R}^{d_0+1}$, the intersection $\Omega_r$ is disjoint from the strip-like ends of $\mathcal{S}^{d_0+1}_r$
\end{description}
as detailed in~\cite[p. 128]{SeidelBook}. The infinitesimal perturbation data considered vanish outside $\Omega$, and hence the perturbation data restricted to each fiber vanish outside of $\Omega_r$. Finally, note that we can choose $\Omega$ so that each $\Omega_r$ is disjoint from the boundary of $\mathcal{S}^{d_0+1}_r$. From here, the argument of~\cite{SeidelBook} applies through with no changes.
\end{proof}
The $A_{\infty}$-operations will count elements of $\mathcal{M}_{d+1}(x_0,x_1,\ldots,x_d)$ as in~\cite{AbouzaidGen}. To show that these operations are well-defined and satisfy the $A_{\infty}$-relations, we need to discuss the Gromov compactifications of these moduli spaces. Elements of the boundary of these moduli spaces involve disks with corners on the switching components of immersed Lagrangian submanifolds, as described in the following definition.
\begin{definition}
Let $S$ be a disk with $d\geq1$ positive ends of Type 1 and $m\geq0$ positive ends of Type 2. A disk with corners on a sequence of Lagrangian immersions $\iota_i\colon\widetilde{L}_i\to M$ for $i = 0,1,\ldots,d$ and with inputs $x_i\in\mathcal{X}(L_i,L_{i+1})$ and output $x_0\in\mathcal{X}(L_0,L_d)$ is a tuple $(S,u,\widetilde{\partial u},\gamma)$, where
\begin{itemize}
\item[(i)] $u\colon S\to M$ is continuous.
\item[(ii)] $u(z)\in \phi^{\rho_S(z)}L_i$ for all $z$ between $\zeta_i$ and $\zeta_{i+1}$ and $u(z)\in\phi^{\rho_S(z)}L_d$ for all $z$ between $\zeta_d$ and $\zeta_0$.
\item[(iii)] $\widetilde{\partial u}\colon\partial S\to\coprod_{i=0}^d\widetilde{L}_i$ is a continuous map whose composition with the Lagrangian immersion $\coprod_{i=0}^d\iota_i$ coincides with the restriction of $u$ to $\partial S$.
\item[(iv)] The restriction of $u$ to the interior of $S$ satisfies the inhomogeneous Cauchy--Riemann equation $(du-X_S\otimes\alpha_S)^{0,1}$, which is defined with respect to the perturbation data already chosen.
\item[(v)] $\gamma$ is a function whose domain is a subset $I\subset\lbrace 1,\ldots,m\rbrace$, and whose value at $j$ is a switching component $L_{i,\gamma(j)}$ of the immersion $\widetilde{L}_i\to M$, where the marked point $\eta_j$ is between $\zeta_i$ and $\zeta_{i+1}$ (or it is a switching component of $L_d$ if $\eta_j$ is between $\zeta_d$ and $\zeta_0$).
\item[(vi)] Near the Type 1 ends, $u$ converges exponentially to $\phi^{\rho_S(z)}x_i(\cdot)$, where  for $i = 0,1,\ldots,d$.
\item[(vii)] Near the Type 2 ends, we have that
\[ \left(\lim_{z\to\eta_j^{-}}\widetilde{\partial u}(z),\lim_{z\to\eta_j^{+}}\widetilde{\partial u}(z)\right)\in L_{i,\gamma(j)}\]
if $j\in I$, and if $j\not\in I$ then $u$ extends smoothly over the puncture.
\end{itemize}
We write $\mathcal{M}(x_0,\ldots,x_d,\gamma)$ for the moduli space of all such stable disks. In the special case where $d=1$ and $m=0$, this definition reduces to the definition of inhomogenous strips from before, otherwise all such disks are stable.
\end{definition}
By~\cite[Prop.  7.5 and Prop. 8.5]{AlstonBao}, these moduli spaces are smooth manifolds of the expected dimension
\begin{align}
\deg x_0 - \sum_{i=1}^d\deg x_i-\sum_{j=1}^m\deg\gamma(j)+d+m-2
\end{align}
for generic choices of perturbation data. They are also oriented, by the arguments of~\cite{FOOO, Fukaya}, since by rescaling by Hamiltonian isotopies, we can treat inhomogeneous polygons as pseudoholomorphic polygons with corners on cleanly intersecting Lagrangian submanifolds. Following~\cite{AlstonBao}, we can think of elements in the compactification of $\mathcal{M}(x_0,x_1,\ldots,x_d)$ in terms of trees whose vertices are labeled by elements of the moduli spaces described above with teardrops glued to each Type 2 end.
\begin{lemma}[{\cite[Prop. 7.6]{AlstonBao}}]
The moduli spaces $\mathcal{M}(x_0,x_1,\ldots,x_d)$ admit Gromov compactifications denoted $\overline{\mathcal{M}}(x_0,x_1,\ldots,x_d)$. Elements of the compactified moduli space can be written as
\[ (T,\lbrace u_i,F_i,v_1^i,\ldots,v_{m_i}^i\rbrace_{i\in\mathrm{Vert}(T)}) \]
where 
\begin{itemize}
\item[(i)] $T$ is a tree with vertex set $\mathrm{Vert}(T)$ and a distinguished root
\item[(ii)] $u_i\in\mathcal{M}(x_0^i,\ldots,x_{d_i}^i;\gamma_i)$
\item[(iii)] $F_i\subset\lbrace 1,\ldots,d_i\rbrace$ for nonroot vertices, and $F_r\subset\lbrace0,\ldots,d_r\rbrace$ for the root vertex $r\in\mathrm{Vert}(T)$. The sets $F_i$ label the chords which are among the original chords $x_0,x_1,\ldots,x_d$.
\item[(iv)] $v_1,\ldots,v_{m_i}$ are elements of moduli spaces $\mathcal{M}_1(L,\gamma_i(j))$.
\end{itemize}
These data should satisfy additional constraints as detailed in~\cite[Prop. 7.6]{AlstonBao} which say that the components $u_i$ and $v_1^i,\ldots,v_{m_i}^i$ can be glued along their domains.
\qed
\end{lemma}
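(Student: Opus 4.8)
The proof combines a Gromov-type compactness statement with a gluing analysis, following the template of \cite{AbouzaidGen} for the domain degenerations and \cite{AlstonBao}, \cite{FOOO}, \cite{Fukaya} for the degenerations at clean self-intersections. First I would record the two a priori estimates that make Gromov compactness applicable. Exactness of the branes gives a uniform energy bound: for $u\in\mathcal{M}(x_0,\ldots,x_d)$ the geometric energy is a topological quantity, determined by the actions of the chords $x_0,\ldots,x_d$, the primitives $f_{L_i}$, and the integrals of the Hamiltonian perturbation terms, hence it is locally constant on the moduli space. The contact-type hypothesis on the family $\{J_t\}$ together with the Hamiltonians being quadratic at infinity confines every such curve to a fixed compact subset of $M^{in}$, by the integrated maximum principle exactly as in \cite{AbouzaidGen} (teardrops are automatically confined to $M^{in}$, as observed after Lemma~\ref{teardropregularity}).

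Next I would classify the possible Gromov limits. A sequence in $\mathcal{M}(x_0,\ldots,x_d)$ subconverges, in the Gromov topology, to a stable configuration assembled from: strip-breaking at the Hamiltonian chords, which produces the tree $T$ together with the main components $u_i$, each still solving the inhomogeneous equation for the conformally consistent perturbation data restricted to the corresponding boundary stratum of the domain moduli space; teardrop bubbling at switching components, which produces the components $v_1^i,\ldots,v_{m_i}^i\in\mathcal{M}_1(L,\gamma_i(j))$ attached at the Type~II marked points of $u_i$ that lie in the domain of $\gamma_i$; and nodal degenerations of the domains, which are subsumed into the tree $T$. Sphere bubbles do not occur because $M$ is exact; disk bubbles with empty corner set are excluded by exactness of the immersions; and non-simple teardrops are excluded for generic $J$ by Lemma~\ref{teardropregularity}, where the hypothesis $\dim\widetilde{L}\geq3$ (or, for surfaces, the clean-decomposition hypothesis) is used. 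The positivity assumption of Definition~\ref{monotonicity} rules out teardrop bubbles of nonpositive index, so teardrop degenerations occur only in positive codimension; in the moduli spaces of dimension at most one relevant to the $A_\infty$-relations, this leaves only the codimension-one degenerations recorded by $\mathfrak{m}_0$. This yields the set-theoretic description of $\overline{\mathcal{M}}(x_0,\ldots,x_d)$ as a union over combinatorial types $T$ of fiber products of lower-dimensional moduli spaces over the evaluation maps at the Type~I and Type~II punctures.

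Finally I would promote this to the structure of a smooth manifold with corners. For each edge of $T$ one has the usual gluing of inhomogeneous strips and polygons, and for each teardrop bubble one has a gluing of a pseudoholomorphic teardrop onto a Type~II marked point; both are constructed by pregluing and a Newton iteration solving the inhomogeneous Cauchy--Riemann equation, using surjectivity of the linearized operators (transversality, achieved generically in the construction of the moduli spaces), the exponential convergence at the strip-like ends, and the local normal form near a clean self-intersection for the teardrop gluing, as in \cite{FOOO}, \cite{Fukaya}. Since the glued perturbation data agrees with the choice already fixed, the glued curves lie in $\overline{\mathcal{M}}(x_0,\ldots,x_d)$, the gluing maps are diffeomorphisms onto neighborhoods of the strata, and the number of gluing parameters equals the number of edges plus the number of teardrop bubbles, which gives the corner stratification indexed by the trees $T$. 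I expect the main obstacle to be the interaction between teardrop gluing and the inductive choice of perturbation data: gluing in a teardrop decreases the number of boundary punctures on a disk, so the consistency conditions for the perturbation data must be arranged with respect to a partial order that also accounts for teardrop degeneration, not merely the usual order by the number of inputs. This is precisely the subtlety flagged in the introduction to this section, and it is what necessitates the a priori analysis of $\mathfrak{m}_0$ in Section~\ref{obs} before the full $A_\infty$-category can be constructed.
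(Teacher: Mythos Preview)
Your proposal is correct and follows the same approach the paper indicates: the paper omits the proof entirely, remarking only that it is ``similar to the proof of Gromov compactness in~\cite{AkahoJoyce}'' and referring to~\cite{AlstonBao} for the exact case, so your sketch is in fact considerably more detailed than what the paper provides.

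One small conflation worth flagging: you invoke the positivity assumption (Definition~\ref{monotonicity}) to rule out teardrop bubbles of nonpositive index within the compactness argument itself. In the paper's organization, positivity is not used in this lemma---the Gromov compactification exists and has the stated description regardless of the indices of the teardrop strata. Positivity enters only in the \emph{next} lemma, where it is used to bound $|\mathrm{Vert}(T)|$ and the number of teardrop components in the boundary of the zero- and one-dimensional moduli spaces, thereby verifying the $A_\infty$-relations. Separating these two steps keeps the compactness statement independent of the positivity hypothesis.
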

This lemma is stated for Lagrangian submanifolds with transverse double points, which itself is a modification of the compactness argument in~\cite{AkahoJoyce}, but the proof given in~\cite{AlstonBao} extends to the clean self-intersection case as well~\cite[Prop. 8.5]{AlstonBao}. We remark that the proofs of Prop. 8.5 and its analogues for Lagrangian immersions with only transverse double points in~\cite{AlstonBao} do not use the positivity Condition 1.2 therein.

\begin{definition}
For each pair of unobstructed Lagrangian branes $(\widetilde{L}_0,\nabla_0)$ and $(\widetilde{L}_1,\nabla_1)$ which are cylindrical at infinity, we define the wrapped Floer cochain complex to be
\[ CW^*(L_0,L_1;H;J_t)\coloneqq\bigoplus_{x\in\mathcal{X}(L_0,L_1)}\mathbb{K}\langle x\rangle\]
as a graded vector space, where the grading of the $\mathbb{K}\langle x\rangle$ summand is $\deg(x)$.
\end{definition}
Let $\beta\in H_2(M,L_0\cup L_1)$, and let $\mathcal{M}(x_0,x_1;\beta)$ denote the elements of $\mathcal{M}(x_0,x_1)$ in this relative homology class. We define the differential
\begin{align*} \mathfrak{m}_1\colon CW^*(L_0,L_1) &\to CW^*(L_0,L_1) \\
\mathfrak{m}_1(x_0) &= (-1)^{\deg(x_0)}\sum_{\substack{\deg(x_0)-\deg(x_1) = 1, \\ u\in\mathcal{M}(x_0,x_1)}}\hol(\partial u)\mathrm{sgn}(u)x_1
\end{align*}
where the holonomy $\hol(\partial u)\in\mathbb{K}^*$ is taken with respect to the flat connections on $L_0$ and $L_1$ and the sign $\mathrm{sgn}(u)\in\lbrace\pm1\rbrace$ is determined by the orientation of the moduli space. Similarly, we define the higher compositions
\begin{align*}
\mathfrak{m}_d\colon CW^*(L_{d-1},L_d)\otimes\cdots\otimes CW^*(L_0,L_1) \to CW^*(L_0,L_d)
\end{align*}
by
\begin{align*}
\mathfrak{m}_d(x_d\otimes\cdots\otimes x_1) = (-1)^{\sum i\deg(x_i)}\sum_{\substack{\deg(x_0) = 2-d+\sum\deg(x_i) \\ u\in\mathcal{M}(x_0,\ldots,x_d)}}\hol(\partial u)\mathrm{sgn}(u)x_0,
\end{align*}
where $\hol(\partial u)$ and $\mathrm{sgn}(u)$ are defined as before. Note that there is a canonical isomorphism
\[ CW^*(L_i,L_j;H,J_t)\cong CW^*\left(\phi^{\rho}L_i,\phi^{\rho}L_j;\frac{H}{\rho}\circ\phi^{\rho},(\phi^{\rho})^*J_t\right) \]
which we use to ensure that $\mathfrak{m}_d$ has the correct domain and codomain. For more details on this point, see~\cite{AbouzaidGen}.

Showing that these operations are well-defined amounts to showing that the moduli spaces of disks with no Type 2 punctures are compact.
\begin{lemma}
The moduli spaces $\mathcal{M}_{d+1}(x_0,\ldots,x_d)$ are compact when $\deg x_0 = \sum\deg x_i+2-d$. Consequently, the operations $\mathfrak{m}_d$ are well-defined for all $d\geq1$.
\end{lemma}
\begin{proof}
\textit{A priori}, an element in the compactification of $\mathcal{M}_{d+1}(x_0,x_1,\ldots,x_d)$ is described by a treed disk
\begin{align*}
(T,\lbrace u_i,F_i,v_1^i,\ldots,v_{m_i}^i\rbrace_{i\in\mathrm{Vert}(T)})
\end{align*}
where, by regularity, each of the $u_i$'s must belong to a moduli space of nonnegative dimension. This implies that
\begin{align}\label{0dregineq}
\deg(x_{i,0})-\sum_{j=1}^{d_i}\deg(x_{i,j})-\sum_{j=1}^{m_i}\deg(\gamma_i(j))+d_i+m_i-2\geq0 \,.
\end{align}
We claim that $\deg(\gamma_i(j))\geq2$ for all $j = 1,\ldots,m_i$. To see this, observe that if $L_{\gamma_i(j)}$ is $0$-dimensional, then, the fact that $\deg\gamma_i(j)\geq2$ follows from the regularity of $v^i_j$, which must lie in a moduli space of nonnegative dimension, which itself follows from Lemma~\ref{teardropregularity}. If $L_{\gamma_i(j)}$ is positive-dimensional, this is the content of Definition~\ref{monotonicity}. We can now rewrite~\eqref{0dregineq} to obtain
\[ \deg(x_{i,0})-\sum_{j=1}^{d_i}\deg(x_{i,j}) = \sum_{j=1}^{m_i}\deg\gamma_i(j)-d_i-m_i+2 \geq m_i-d_i+2 \,.\]

By summing over the verties of $T$, it follows that for any element in the boundary of the Gromov compactification $\overline{\mathcal{M}}(x_0,\ldots,x_d)$, we have the inequality
\[ 2-d = \deg(x_0)-\sum_{j=1}^d\deg(x_j)\geq \sum_{i=1}^{|\mathrm{Vert}(T)|}m_i+2|\mathrm{Vert}(T)|-\sum_{i=1}^{|\mathrm{Vert}(T)|}d_i.\]
Using that $|\mathrm{Vert}(T)|-1+d = \sum_{i=1}^{|\mathrm{Vert}(T)|}d_i$, this becomes
\[ 2-d\geq\sum_{i=1}^{|\mathrm{Vert}(T)|} m_i+|\mathrm{Vert}(T)|+1-d\]
or equivalently
\[ 1\geq\sum_{i=1}^{|\mathrm{Vert}(T)|} m_i+|\mathrm{Vert}(T)|.\]
Since $|\mathrm{Vert}(T)|$ is always at least $1$, it follows that $m_i = 0$ for all $i = 1,\ldots,|\mathrm{Vert}(T)|$. In other words, any element in the compactification of $\mathcal{M}_{d+1}(x_0,\ldots,x_d)$ is represnted by a inhomogeneous disk with no Type 2 marked points.
\end{proof}
To show that these operations satisfy the $A_{\infty}$-relations, we will use the compactness of the moduli spaces of virtual dimension $1$.
\begin{proposition}
The operations $\lbrace\mathfrak{m}_d\rbrace_{d\geq1}$ defined above satisfy the $A_{\infty}$-relations
\[ \sum_{\substack{d_1+d_2 = d+1 \\ 0\leq k <d_1}}(-1)^{\star_k}\mathfrak{m}_{d_1}(x_d,\ldots,x_{k+d_2+1},\mathfrak{m}_{d_2}(x_{k+d_2},\ldots,x_{k+1}),\ldots,x_k,\ldots,x_1) = 0 \]
where the sign is determined by setting $\star_k = \sum_{i=1}^{k=1}(\deg(x_i)+1)$.
\end{proposition}
\begin{proof}
As usual, we need to describe the boundaries of the $1$-dimensional moduli spaces $\overline{\mathcal{M}}(x_0,\ldots,x_d)$. The argument is similar to the argument of the previous lemma. Suppose we are given an element
\[ (T,\lbrace u_i,F_i,v_1^i,\ldots,v_{m_i}^i\rbrace_{i\in\mathrm{Vert}(T)}) \]
in the boundary of $\overline{\mathcal{M}}(x_0,\ldots,x_d)$. Each of the $u_i$'s must belong to a moduli space of nonnegative dimension, whence
\[ \deg(x_{i,0})-\sum_{j=1}^{d_i}\deg(x_{i,j})-\sum_{j=1}^{m_i}\deg(\gamma_i(j))+d_i+m_i-2\geq0.\]
Just as in the proof of the previous lemma, we can rewrite this (using the fact that regularity and Assumption~\ref{restrictionassumption} imply that $\deg\gamma_i(j)\geq 2$) as
\[ \deg(x_{i,0})-\sum_{j=1}^{d_i}\deg(x_{i,j}) = \sum_{j=1}^{m_i}\deg\gamma_i(j)-d_i-m_i+2 \geq m_i-d_i+2 \,.\]
Since we are considering $1$-dimensional moduli spaces, we have that $x_0,\ldots,x_d$ satisfy $\deg(x_0)-\sum\deg(x_j)-1= 2-d$. Summing over the vertices of $T$ and rearranging as above, we obtain the inequality\
\[ 2\geq\sum_{i=1}^{|\mathrm{Vert}(T)|}m_i+|\mathrm{Vert}(T)|.\]
Since $\mathrm{Vert}(T)\geq1$, we have that $\sum_i m_i \leq1$. This means that either $|\mathrm{Vert}(T)| = 1$ and $m_1 = 1$, or that $|\mathrm{Vert}(T)| = 2$. The latter case corresponds to boundary strata consisting of broken strips. 

The former case corresponds to boundary strata consisting of an inhomogeneous disk $u$ with a corner on a switching component $L_{\gamma}$, and a pseudoholomorphic teardrop attached. A gluing argument now shows that any teardrop $v$ with a corner on $L_{\gamma}$ gives rise to a boundary stratum of the form
\[ (T,\lbrace u,F,v\rbrace)\]
where $T$ is the tree with one vertex and $F = \lbrace x_0,\ldots,x_d\rbrace$. The gluing arguments in this case reduce to standard gluing arguments for strips, as explained on~\cite[p. 43]{PalmerWoodward}. Since $\mathfrak{m}_0(\widetilde{L},\nabla)$ vanishes, it follows that the algebraic count, weighted by holonomy, of all strata of this form vanishes.
\end{proof}

\begin{definition}
Given the choice of object $\mathrm{Ob}(\mathcal{W})$ from before, we define the wrapped Fukaya category $\mathcal{W}(M)$ to be the $A_{\infty}$-category with this set of objects and composition maps $\lbrace\mathfrak{m}_d\rbrace_{d=1}^{\infty}$.
\end{definition}

\section{Background on SYZ mirror symmetry}\label{backgroundSYZ}
In this section, we will review the setup of SYZ mirror symmetry for Lagrangian torus fibrations without singularities, mainly for the purpose of fixing notation. We will then collect some definitions from tropical geometry and review the construction of the Lagrangian pair-of-pants following~\cite{Matessi}, ending with a computation of its Floer-theoretic support.
\subsection{Lagrangian torus fibrations}
Let $Q_{\mathbb{Z}}\cong\mathbb{Z}^n$ be an integer lattice, and define $Q = Q_{\mathbb{R}}\coloneqq Q_{\mathbb{Z}}\otimes_{\mathbb{Z}}\mathbb{R}$. Then there is a local system $T^*_{\mathbb{Z}}Q$ of integral $1$-forms. Given this data, there is an associated symplectic manifold $X\coloneqq T^*Q/T^*_{\mathbb{Z}}Q$ which admits a Lagrangian $T^n$-fibration $\pi\colon X\to Q$ induced by the bundle projection $T^*Q\to Q$. Notice that we can identify $X$ with $T^*T^n$, and that under this identification $\pi\colon T^*T^n\to Q$ is the projection map to the cotangent fiber. Let $\theta_i\coloneqq dq_i$, for $i = 1,\ldots,n$, denote the dual coordinates on the cotangent fiber of $T^*Q$. These descend to coordinates on the zero-section of $T^*T^n$, after identifying it with $T^*Q/T^*_{\mathbb{Z}}Q$, which we also refer to as $\lbrace\theta_i\colon i = 1,\ldots,n\rbrace$.

If $W\subset Q$ is an integral affine subspace, then there is a Lagrangian submanifold $L_W\coloneqq N^*W/N^*_{\mathbb{Z}}W\subset X$ called the periodized conormal bundle with the property that $\pi(L_W) = W$. If $\dim W = k$, then $L_W$ is a $T^{n-k}$-bundle over $W$. In particular, the Lagrangian lift of a point $q\in Q$ is just $L_q = \pi^{-1}(q)$.

There is a dual complex manifold $\check{X}\coloneqq TQ/T_{\mathbb{Z}}Q$, were $T_{\mathbb{Z}}Q$ is the local system of integral tangent vectors. The complex structure arises by identifying $TQ$ with $\mathbb{C}^n$ via
\begin{align*}
TQ &\to \mathbb{C}^n \\
\left(x,\sum_{j=1}^n y_j\frac{\partial}{\partial x_j}\right) &\mapsto (z_j)_{j=1,\ldots,n} = (x_j+iy_j)_{j=1,\ldots,n}
\end{align*}
The complex structure on $TQ$ descends to one on $\check{X}$ which respect to which the fibers $\check{\pi}(q)$ are totally real tori. Notice that $\check{X}$ comes with a natural map $\check{\pi}\colon\check{X}\to Q$ induced by the tangent bundle projection. Under the identification $TQ\cong\mathbb{C}^n$, we see that $X$ is identified with $(\mathbb{C}^*)^n$, and the map $\check{\pi}$ coincides with $\Log\colon(\mathbb{C}^*)^n\to\mathbb{R}^n$, which is given by $\Log(z_1,\ldots,z_n) = (\log|z_1|,\ldots,\log|z_n)$.

To an integral affine subspace $W\subset Q$, there is an associated complex submanifold $C_W\coloneqq TC/T_{\mathbb{Z}}C\subset\check{X}$ such that $\check{\pi}(C_W) = W$. This construction associates, to a point $q\in Q$, a point $\check{X}$, namely the zero vector in $T_q Q$. This is not the only complex submanifold of $\check{X}$ mapped to $q$ under $\check{\pi}$: any other point in $\check{\pi}^{-1}(q)$ has this property.

On the other hand, $q\in Q$ only has one Lagrangian lift in $X$. To obtain a family of $A$-model objects corresponding to the points of $\check{\pi}^{-1}(q)$, we can equip $L_q$ wit local systems. There is a bijection between pairs $(L_q,\nabla)$, where $\nabla$ is a $U(1)$-local system on $L_q$, and points $z\in\check{X}$. In particular, $\check{\pi}^{-1}(q)$ is identified with $\Hom(\pi_1(L_q),U(1))$, so we can think of $\check{\pi}^{-1}(q)$ as the dual torus to $L_q$. Similarly, equipping $L_W\subset X$ with $U(1)$-local systems corresponds to taking different almost complex lifts of $W$ in $\check{X}$.

We will need to consider exact Lagrangians equipped with local systems over a field $\mathbb{K}$ of arbitrary characteristic. With respect to the standard choice of primitive, the zero section $T^n\subset T^*T^n$ is exact. The space of $GL(1)$-local systems on $T^n$ is $\Hom(\pi_1(T^n),\mathbb{K}^*)\cong(\mathbb{K}^*)^n$, which can be thought of as a copy of the mirror space $(\mathbb{K}^*)^n$. We can make this more precise as follows.

Consider the wrapped Fukaya category $\mathcal{W}(T^*T^n)$ whose objects consist of exact Lagrangians which are cylindrical at infinity equipped with $GL(1)$-local systems. In this setting, we can appeal to homological mirror symmetry.
\begin{theorem}\label{hms}
There is an equivalence of derived categories
\[ \mathcal{F}\colon D^{\pi}\mathcal{W}(T^*T^n)\to D^b\Coh((\mathbb{K}^*)^n)\]
where the wrapped Fukaya category has coefficients in $\mathbb{K}$. \qed
\end{theorem}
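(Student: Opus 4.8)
The plan is to realize $\mathcal{F}$ as the functor attached to a single split-generator, a cotangent fiber. Fix a point $x_0\in T^n$ and let $F\coloneqq T^*_{x_0}T^n\subset T^*T^n$ be the corresponding exact, cylindrical Lagrangian, regarded as an object of $\mathcal{W}(T^*T^n)$ with trivial local system. The first step is to verify the hypotheses of the generation criterion of~\cite{AbouzaidGen} --- equivalently, to invoke the by-now classical fact that a cotangent fiber split-generates the wrapped Fukaya category of a cotangent bundle --- so that the Yoneda-type functor
\[ \mathcal{F}\colon D^{\pi}\mathcal{W}(T^*T^n)\longrightarrow\operatorname{Perf}(\mathcal{A}),\qquad L\longmapsto CW^*(F,L),\]
into perfect modules over the $A_\infty$-algebra $\mathcal{A}\coloneqq CW^*(F,F)$ is an equivalence. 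It then remains to identify $\operatorname{Perf}(\mathcal{A})$ with $D^b\Coh((\mathbb{K}^*)^n)$.

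Second, I would compute $\mathcal{A}$ using Abouzaid's identification of the wrapped Floer complex of a cotangent fiber with the chains on the based loop space: $CW^*(F,F)$ is quasi-isomorphic, as an $A_\infty$-algebra, to $C_{-*}(\Omega_{x_0}T^n;\mathbb{K})$ with the Pontryagin product. As $T^n$ is a $K(\mathbb{Z}^n,1)$, the based loop space $\Omega_{x_0}T^n$ is homotopy equivalent, as an $H$-space, to the discrete group $\mathbb{Z}^n$; hence this chain algebra is formal and quasi-isomorphic to the group ring $\mathbb{K}[\mathbb{Z}^n]\cong\mathbb{K}[z_1^{\pm1},\dots,z_n^{\pm1}]$, concentrated in degree $0$ with all higher $A_\infty$-operations vanishing. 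Here one must check that the $\mathbb{Z}$-grading available in Section~\ref{iwf} (present since $c_1(T^*T^n)=0$ and $T^n$ carries a grading) and the sign conventions used there agree with the topological ones; this is bookkeeping, but wants some care over a field of arbitrary characteristic.

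Third, the remaining identification is pure algebra: $(\mathbb{K}^*)^n=\operatorname{Spec}\mathbb{K}[z_1^{\pm1},\dots,z_n^{\pm1}]$ is a smooth affine variety, so $D^b\Coh((\mathbb{K}^*)^n)$ is the category of perfect complexes of $\mathbb{K}[z_1^{\pm1},\dots,z_n^{\pm1}]$-modules, which is precisely $\operatorname{Perf}$ of the one-object $A_\infty$-category with endomorphism algebra $\mathbb{K}[z_1^{\pm1},\dots,z_n^{\pm1}]$ in degree $0$. Composing the equivalences of the three steps produces $\mathcal{F}$. For later use I would also record, by a direct Floer computation, where the relevant objects go: the cotangent fiber $F$ maps to the structure sheaf $\mathcal{O}_{(\mathbb{K}^*)^n}$, and the zero section $T^n$ equipped with a $GL(1)$-local system of holonomy $p\in(\mathbb{K}^*)^n$ maps to the skyscraper $\mathcal{O}_p$ --- using that $HF^*((T^n,\nabla_p),(T^n,\nabla_q))\cong H^*(T^n;\nabla_p^{-1}\otimes\nabla_q)$ vanishes for $p\neq q$ and equals $\Lambda^*\mathbb{K}^n\cong\Ext^*_{(\mathbb{K}^*)^n}(\mathcal{O}_p,\mathcal{O}_p)$ for $p=q$. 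This last identification is exactly the input needed in Section~\ref{supportsection}.

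The hard part will be the first step together with the compatibility in the second: checking that a cotangent fiber split-generates $\mathcal{W}(T^*T^n)$ in the precise model built in Section~\ref{iwf} (quadratic Hamiltonians, contact-type almost complex structures, the chosen orientations and spin structures), and that the resulting $A_\infty$-algebra is identified with $C_{-*}(\Omega T^n)$ compatibly with gradings and signs. Once these geometric inputs are in place, the rest is formal homological algebra.
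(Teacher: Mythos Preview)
Your proposal is correct and follows essentially the same approach as the paper: both argue that a cotangent fiber split-generates $\mathcal{W}(T^*T^n)$, identify its wrapped endomorphism algebra with the Laurent polynomial ring, and match this with the endomorphisms of the structure sheaf which generates $D^b\Coh((\mathbb{K}^*)^n)$. You supply more detail than the paper's one-paragraph sketch---in particular the based loop space computation and the formality argument---but the underlying strategy is identical.
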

To prove this, one checks that $\mathcal{W}(T^*T^n)$ is generated by a cotangent fiber $L_Q$, and that there are algebra isomorphisms 
\[ HW^*(L_Q,L_Q)\cong\mathbb{K}[t_1,\ldots,t_n,t_1^{-1},\ldots,t_n^{-1}]\cong\Hom_{D^b\Coh}(\mathcal{O}_{(\mathbb{K}^*)^n},\mathcal{O}_{(\mathbb{K}^*)^n}).\]
Because $\mathcal{O}_{(\mathbb{K}^*)^n}$ generates the $dg$-enhancement of $D^b\Coh(\mathbb{K}^*)^n)$, the theorem follows. Under this equivalence, objects $(T^n,\nabla)$ of $\mathcal{W}(T^*T^n)$, where $\nabla$ is a $GL(1)$-local system on $T^n$, correspond to skyscraper sheaves over points in $(\mathbb{K}^*)^n$. 
\begin{definition}\label{pointlocalsystems}
Let $p\in(\mathbb{K}^*)^n$. Define $\nabla_p\in\Hom(\pi_1(T^n),\mathbb{K}^*)$ to be the rank one $\mathbb{K}$-local system on the zero-section $T^n\subset T^*T^n$ for which the Lagrangian brane $(T^n,\nabla_p)$ corresponds, under Theorem~\ref{hms}, to the skyscraper sheaf $\mathcal{O}_p$.
\end{definition}
Given this, we can attempt to match objects of $D^{\pi}\mathcal{W}(T^*T^n)$ supported on $L_W$ to their images under the mirror functor. We remark that $L_W$ is an exact Lagrangian submanifold of $T^*T^n$ if and only if $W\subset\mathbb{R}^n$ is a \textit{linear} subspace.

If $W\subset Q$ is the $k$-dimensional linear subspace spanned by $\lbrace q_{n-k+1},\ldots,q_n\rbrace$, then we can write $L_W$ as the product of $W$ with the isotropic subtorus $T^{n-k}$ of $T^*Q/T^*_{\mathbb{Z}}Q$ spanned by $\lbrace\theta_1,\ldots,\theta_{n-k}\rbrace$. Said differently, $L_W$ is the conormal bundle $N^*T^{n-k}$ of this subtorus. We can determine the support of the mirror sheaf to $L_W$ following the discussion in ~\cite[Ex. 2.4.3]{HicksReal}.
\begin{lemma}\label{conormalsupport}
Let $\nabla$ be a $GL(1)$ local system on $N^*T^{n-k}$ with holonomy $\alpha_j$ about the circle in the $\theta_j$-direction for $1\leq j\leq n-k$. Then the mirror sheaf $\mathcal{F}(N^*T^{n-k},\nabla)$ on $(\mathbb{K}^*)^n$ is supported on the $k$-dimensional subtorus $\lbrace z_i = \alpha_i\vert 1\leq i\leq n-k\rbrace$.
\end{lemma}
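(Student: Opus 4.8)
The plan is to compute the wrapped Floer cohomology $HW^*((N^*T^{n-k},\nabla),(T^n,\nabla_p))$ directly and read off for which local systems $\nabla_p\in(\mathbb{K}^*)^n$ it is nonzero; by the homological mirror symmetry equivalence $\mathcal{F}$ and the fact that $(T^n,\nabla_p)$ corresponds to a skyscraper sheaf at the point $p\in(\mathbb{K}^*)^n$, this Floer-theoretic support is precisely the support of the mirror sheaf $\mathcal{F}(N^*T^{n-k},\nabla)$. So it suffices to show that $HW^*((N^*T^{n-k},\nabla),(T^n,\nabla_p))\neq 0$ if and only if the first $n-k$ coordinates of $p$ agree with the holonomies $\alpha_1,\ldots,\alpha_{n-k}$.

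First I would set up the model: write $T^*T^n = T^*T^{n-k}\times T^*T^k$ with the splitting induced by the coordinates, so that $N^*T^{n-k} = T^{n-k}_{\mathrm{zero}}\times L_{\mathrm{fib}}$, where $L_{\mathrm{fib}}\subset T^*T^k$ is a cotangent fiber, and $T^n = T^{n-k}_{\mathrm{zero}}\times T^k_{\mathrm{zero}}$. A K\"unneth-type argument for wrapped Floer cohomology (available since everything is cylindrical/conical and the Hamiltonian can be taken to split) reduces the computation to the product of two factors: the self-Floer cohomology of the zero section $T^{n-k}$ with two local systems $\nabla|_{T^{n-k}}$ and $\nabla_p|_{T^{n-k}}$ in the \emph{compact} Fukaya-type setting, tensored with $HW^*(L_{\mathrm{fib}},(T^k,\nabla_p|_{T^k}))$ in $T^*T^k$. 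The second factor is always $\mathbb{K}$ concentrated in a single degree: a cotangent fiber intersects the zero section transversally in one point and there are no nonconstant Floer strips, so the local system on the $T^k$ factor only shifts gradings and rescales, never kills the group. The first factor is the Floer cohomology of $T^{n-k}$ against itself with a \emph{difference} local system whose holonomies are $\alpha_i^{-1}(\nabla_p)_i$ (or $\alpha_i/(\nabla_p)_i$, depending on orientation conventions); this is computed by the Morse-Bott / Pozniak model as the cohomology of $T^{n-k}$ with coefficients in the flat line bundle of that holonomy, which is $H^*(T^{n-k};\mathbb{K})\neq 0$ when the holonomy is trivial, i.e. $(\nabla_p)_i = \alpha_i$ for all $i\leq n-k$, and is zero otherwise (twisted cohomology of a torus by a nontrivial character vanishes).

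Concretely the steps are: (1) choose a split Hamiltonian quadratic at infinity and a split almost complex structure adapted to the product decomposition, and invoke the K\"unneth isomorphism for wrapped Floer cohomology (citing~\cite{Gao0}); (2) identify the $T^*T^k$ factor computation with the standard cotangent-fiber-versus-zero-section calculation and observe the local system contributes only an invertible twist; (3) identify the $T^*T^{n-k}$ factor with a compact Lagrangian Floer computation of $T^{n-k}$ against a Hamiltonian pushoff of itself, use a perfect Morse function on $T^{n-k}$ and the fact that the only rigid strips are constant (exactness plus $\pi_2(T^*T^{n-k},T^{n-k}) = 0$), so the complex is the Morse complex of $T^{n-k}$ with the differential twisted by holonomy $\alpha_i(\nabla_p)_i^{-1}$; (4) conclude via the elementary fact that $H^*(T^{n-k};\mathcal{L}_\chi) = 0$ for a nontrivial character $\chi$ and $\cong H^*(T^{n-k};\mathbb{K})$ for the trivial one. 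Then (5) translate back: the nonvanishing locus is $\{(\nabla_p)_i = \alpha_i,\ 1\le i\le n-k\}\cong(\mathbb{K}^*)^k$, which by HMS is the support of $\mathcal{F}(N^*T^{n-k},\nabla)$, proving the lemma.

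The main obstacle is making the K\"unneth reduction and the twisted-differential bookkeeping precise with local systems over a field of arbitrary characteristic, rather than unitary local systems over $\mathbb{C}$: I need to check that the holonomy weighting is multiplicative under the product decomposition and that the resulting twisted Morse differential on $T^{n-k}$ is genuinely the de Rham/simplicial differential twisted by the character $\alpha\cdot\nabla_p^{-1}$, so that the characteristic-$p$ subtleties (e.g. torsion) are correctly handled. The rest is standard once the model is fixed; following~\cite{HicksReal}, where the analogous computation is carried out, should make this routine.
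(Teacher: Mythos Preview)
Your approach is correct and is essentially the paper's argument reorganized through a K\"unneth lens. The paper chooses the Hamiltonian $H=\sum_{i=1}^{n-k} a\cos(\pi\theta_i)$, which moves only the first $n-k$ coordinates and so implicitly performs your product split; it then identifies the $2^{n-k}$ intersection points with Morse critical points on $T^{n-k}$, exhibits the two Floer strips between index-adjacent generators, and reads off the differential $\langle d x_I,x_J\rangle = z_i-\alpha_i$, which is exactly your holonomy-twisted Morse complex.

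One wording slip to fix: in step (3) you say ``the only rigid strips are constant.'' Taken literally that would force the differential to vanish and hence give nonzero Floer cohomology for every $\nabla_p$, the opposite of what you want. The rigid strips are the nonconstant ones lying over Morse flow lines on $T^{n-k}$; what exactness and $\pi_2(T^*T^{n-k},T^{n-k})=0$ buy you is that there are no \emph{other} strips, so the Floer differential coincides with the holonomy-twisted Morse differential. With that correction your argument goes through and matches the paper's.
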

\begin{proof}
Our strategy will be to find all local systems $\nabla_p\in\Hom(\pi_1(T^n),\mathbb{K}^*)$ for which
\[ HW^*((N^*T^{n-k},\nabla),(T^n,\nabla_p))\neq0.\]
Since $\mathcal{F}$ takes Floer cohmology algebras to Ext-algebras, this characterizes the support of the mirror sheaf.

For a small positive real constant $a$, consider the Hamiltonian
\[ H = \sum_{i=1}^{n-k} a\cos(\pi\theta_i) \]
on $T^*T^n$, and let $\phi$ denote its time-one Hamiltonioan flow. Then the intersection of $\phi(N^*T^{n-k})$ with $T^n$ is
\[ \phi(N^*T^{n-k})\cap T^n = \lbrace(\epsilon_1,\ldots,\epsilon_{n-k},0,\ldots,0)\mid\epsilon_i\in\lbrace0,1\rbrace\rbrace \]
and the index of each intersection point is $\sum_{i=1}^{n-k}\epsilon_i$. We index these intersection points $x_I$ by subsets $I\subset\lbrace 1,\ldots,n-k\rbrace$. We also write $x_I <_1 x_J$ whenever $J = I\cup\lbrace i\rbrace$ for some $i\in\lbrace 1,\ldots, n-k\rbrace$.

As a vector space, $CW^*(\phi(N^*T^{n-k}),T^n)$ coincides with the Morse complex $CM^*(T^{n-k})$ defined using the Morse function $H$ restricted to $T^{n-k}$. In particular, it is the exterior algebra of an $(n-k)$-dimensional vector space. The holomorphic strips connecting $x_I$ to $x_J$ correspond to Morse flowlines between critical points of $H$. When $x_I<_1 x_J$, there are exactly two holomorphic strips, which we denote by $u_{IJ}^{+}$ and $u_{IJ}^{-}$, by the sign they appear with in the Floer differential.

Let $\nabla_p$ be a local system on $T^n$ with holonomy $z_i$ in the $\theta_i$-direction. Suppose that $I<_1 J$ differ only in the $i$th factor. We can normalize $\nabla$ and $\nabla_p$ such that the intersection of $u_{IJ}^+$ with $N^*T^{n-k}$ is a path of holonomy $1$ and its intersection with $T^n$ is a path of holonomy $z_i$. Hence $u_{IJ}^{-}$ should intersect $N^*T^{n-k}$ in a path with holonomy $\alpha_i^{-1}$ and $T^n$ in a path of holonomy $1$. Therefore the Floer differential is characterized as follows.
\[ 
\langle d(x_I),x_J\rangle = \begin{cases} 
z_i - \alpha_i & I<_1 J \\
0 & I\not<_1 J
\end{cases}
\]
We now see that the Floer homology group will vanish unless $z_i = \alpha_i$ for all $i\in\lbrace 1,\ldots,n-k\rbrace$.
\end{proof}
Notice that when $\mathbb{K} = \mathbb{C}$ and $\alpha_j = 1$ for all $1\leq j\leq n-k$, the subvariety appearing in Lemma~\ref{conormalsupport} coincides with complex lift $C_W$ of $W\subset Q$. Equipping $L_W = N^*T^{n-k}$ with a nontrivial local system then corresponds to taking a translate of $C_W$ under the action of $(\mathbb{C}^*)^n$ on itself by multiplication. We will introduce the following notation for local systems on $L_W$.
\begin{definition}\label{curvelocalsystem}
Fix a subset $J\subset\lbrace 1,\ldots,n\rbrace$, and let $W\subset Q$ denote the subspace spanned by $\lbrace q_i\colon i \not\in J\rbrace$. Setting $C\coloneqq\lbrace z_j = \alpha_j\colon j\in J\rbrace$, let $\nabla_C$ denote the local system on $L_W^*$ whose holonomy in the $\theta_j$-direction is $\alpha_j$.
\end{definition}

\subsection{The Lagrangian pair of pants}
To achieve a geometric description of mirror symmetry in full generality, one should consider tropical subvarieties of $Q \cong\mathbb{R}^n$ rather than just affine subspaces.

A tropical curve $W\subset Q$ is a collection of $1$-dimensional rational convex polyhedral domains $\lbrace W_s\subset Q\rbrace$ and weights $\lbrace w_s\in\mathbb{Z}_{>0}\rbrace$ which are required to satisfy the following conditions.
\begin{description}
\item[(i)] The intersection $W_s\cap W_t$ is either empty or a boundary point of both $W_s$ and $W_t$.
\item[(ii)] At each boundary point $v\in W_s$, let $u_s\in T_{\mathbb{Z},v}Q$ denote the primitive integral vector tangent to $W_s$ at $v$. We require that
\[ \sum_{\lbrace s\mid v\in W_s\rbrace} w_s u_s = 0.\]
\end{description}

\begin{example} Consider the following rays in $\mathbb{R}^2$.
\begin{align*}
W_1 &= \lbrace(-t,0)\mid t\in\mathbb{R}_{\geq0}\rbrace \\
W_2 &= \lbrace(0,-t)\mid t\in\mathbb{R}_{\geq0}\rbrace \\
W_3 &= \lbrace(t,t)\mid t\in\mathbb{R}_{\geq0}\rbrace
\end{align*}
Their union $W$ is a tropical curve which we call the tropical pair of pants.
\end{example}
\begin{figure}
\begin{tikzpicture}
\begin{scope}[xshift = -30]
\draw[thick] (0,-2) -- (0,0) -- (-2,0);
\draw[thick] (0,0) -- (1.41,1.41);

\node[anchor = north west] at (1.44,1.44) {$(1,1)$};
\node[anchor = west] at (0,-2) {$(0,-1)$};
\node[anchor = south] at (-2,0) {$(-1,0)$};

\end{scope}

\begin{scope}[xshift = 60, yshift = -60, scale = 0.5]
\draw[thick] (0,0) -- (6,0) -- (6,6) -- (0,6) -- (0,0);
\filldraw[ultra thick, gray!45] (0,0) -- (3,0) -- (0,3);
\filldraw[ultra thick, gray!45] (6,6) -- (6,3) -- (3,6);

\node[] at (1,1) {$\Delta_+$};
\node[] at (5,5) {$\Delta_{-}$};
\end{scope}

\end{tikzpicture}
\caption{The tropical pair of pants (left) and its coamoeba (right).}\label{tropicalpants}
\end{figure}
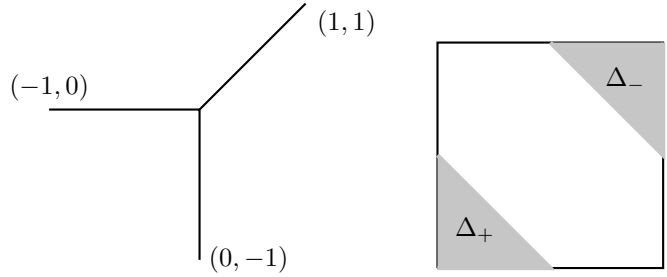
On the $B$-side, this tropical curve arises from the standard pair of pants 
\[C = \lbrace 1+z_1+z_2 = 0\rbrace\]
in $(\mathbb{C}^*)^2$ as the limit of the amoebae $\Log_t(C)\subset\mathbb{R}^2$ as $t\to\infty$. There is a corresponding Lagrangian submanifold in $T^*T^2$. In the following, let $\pi_{SYZ}\colon T^*T^2\to Q$ denote projection to the cotangent fiber.
\begin{theorem}[\cite{Matessi, HicksUnobstructed, Mikhalkin}]
For any $\epsilon>0$, there is a Lagrangian pair of pants $\Lpants\subset T^*T^2$ which agrees with the conormal lifts of the cones $W_i$ when restricted to the subset $\pi^{-1}_{SYZ}(\mathbb{R}^2\setminus B_{\epsilon}(0))$.
\end{theorem}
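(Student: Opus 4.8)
The plan is to assemble $\Lpants$ from two kinds of local pieces. Over $Q\setminus B_{2\epsilon}(0)$ I would take $\Lpants$ to be the union $L_{W_1}\cup L_{W_2}\cup L_{W_3}$ of the three periodized conormal bundles of the rays; each $L_{W_i}\cong W_i\times S^1$ is an exact Lagrangian cylinder for the standard primitive on $T^*T^2$, and over $W_i\cap\partial B_{2\epsilon}(0)$ it restricts to a circle sitting in a single torus fiber of $\pi_{SYZ}$. Over $B_{2\epsilon}(0)$ I need a \emph{cap}: an embedded Lagrangian thrice-punctured sphere whose three cylindrical ends coincide with the pieces $L_{W_i}\cap\pi_{SYZ}^{-1}(B_{2\epsilon}(0)\setminus B_\epsilon(0))$. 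A pair of pants is exactly the surface one expects to fill three circles lying over a $Y$-shaped spine, so the content is purely geometric: produce the cap as a Lagrangian and check that it matches the conormal cylinders smoothly along $\pi_{SYZ}^{-1}(\partial B_\epsilon(0))$.

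For the cap I would use the complex curve $C=\{1+z_1+z_2=0\}\subset(\mathbb C^*)^2$ as the organizing device. It carries the nowhere-vanishing holomorphic volume form $\Omega=d\log z_1\wedge d\log z_2$, and since $C$ is a complex curve in a complex surface we have $\Omega|_C\equiv 0$; hence $C$ is Lagrangian for the real symplectic form $\omega'\coloneqq\operatorname{Im}\Omega$. The fibers of $\Log\colon(\mathbb C^*)^2\to\mathbb R^2$ are $\omega'$-Lagrangian tori and the positive real locus is an $\omega'$-Lagrangian section, so $((\mathbb C^*)^2,\omega')$ with the $\Log$-fibration is fiberwise symplectomorphic over $\mathbb R^2$ to $(T^*T^2,\omega_{\mathrm{std}})$ with $\pi_{SYZ}$, after the evident linear change of coordinates matching the two affine structures; transporting $C$ produces a Lagrangian $L\subset T^*T^2$ whose image under $\pi_{SYZ}$ is the amoeba of the line, a neighborhood of the tropical pair of pants. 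For the linear curve the tentacles of this amoeba narrow to zero width along $W_1,W_2,W_3$: near the end $z_1\to 0$ one has $z_2=-1-z_1\to -1$ with $|z_2+1|=|z_1|\to 0$, so $C$ is exponentially $C^1$-close to the totally real plane $\{z_2=-1\}$, which is carried to a periodized conormal cylinder over the corresponding leg, up to a translation in the fiber torus dictated by the signs of the coefficients; the other two ends behave analogously. Thus $L$, restricted to $\pi_{SYZ}^{-1}(Q\setminus B_\epsilon(0))$, is $C^1$-exponentially close to the conormal cylinders, and a Hamiltonian isotopy supported in $\pi_{SYZ}^{-1}(B_{2\epsilon}(0)\setminus B_\epsilon(0))$---which exists by the Lagrangian neighborhood theorem, since the two pieces are graphs of closed one-forms agreeing outside a compact region---makes them agree exactly there. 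Truncating $L$ along $\pi_{SYZ}^{-1}(\partial B_\epsilon(0))$ and gluing in the conormal cylinders over $Q\setminus B_\epsilon(0)$ produces $\Lpants$. Exactness away from $B_\epsilon(0)$ comes from the standard primitive; over the cap it is inherited from $C$ once one checks that $\operatorname{Im}(\log z_1\,d\log z_2)$ restricts to an exact one-form on $C$, and the primitives match across the gluing region because $H^1$ of the pair of pants is free.

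The main obstacle is the asymptotic matching near the vertex: showing that the ends of $L$ are honestly periodized conormal cylinders rather than merely Hamiltonian-isotopic-at-infinity to them, and---more delicately---that the truncation and gluing keeps the submanifold \emph{embedded}. Three sheets meeting pairwise must be resolved so that the resulting pair of pants acquires no new self-intersection, and this forces a compatible choice of interpolation at all three legs at once; that compatibility is precisely the content of Matessi's explicit local model near the vertex, which I would import if the soft argument above does not close cleanly. As a cross-check, and as the natural route to unobstructedness, one can instead build the cap by Lagrangian (Polterovich) surgery of the three Lagrangian sections of $T^*T^2$ mirror to the monomials $1,z_1,z_2$: these meet pairwise in one point each, and surgering all three intersections with consistently oriented handles turns three planes into a pair of pants, recovering the same $\Lpants$ up to Hamiltonian isotopy. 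Finally, independence of $\Lpants$ from $\epsilon$ up to Hamiltonian isotopy is immediate, since two choices of $\epsilon$ yield Lagrangians that agree outside a compact set and are $C^1$-close there.
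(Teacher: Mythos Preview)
Your argument is essentially correct, but it follows a genuinely different route from the one the paper presents. The paper does not prove this theorem itself---it is cited to Matessi and Hicks---but immediately afterwards it reviews Matessi's construction in detail, because that is the method generalized in Section~\ref{singularconstruction}. Matessi's approach is purely combinatorial and local: one writes down the coamoeba $\Delta\subset T^2$ (two triangles meeting at their vertices, as in Definition~\ref{coamoebapants}), defines the explicit function
\[
g(\theta)=\pm\sqrt{\cos(\theta_1+\theta_2)\sin\theta_1\sin\theta_2}
\]
on $\Delta$, proves that $dg$ extends smoothly to the real blowup $\widetilde\Delta$ of $\Delta$ at its vertices, and takes $\Lpants$ to be a Hamiltonian deformation of the graph $\theta\mapsto(\pi(\theta),\widetilde{dg}(\theta))$. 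The asymptotic agreement with the conormal cylinders is then read off directly from the formula for $\widetilde{dg}$.

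Your construction---hyperK\"ahler-rotate the complex line $C=\{1+z_1+z_2=0\}$ via $\omega'=\operatorname{Im}\Omega$ and then Hamiltonian-interpolate to the conormals near infinity---is precisely the Mikhalkin construction the paper alludes to in the sentence following the theorem. It is more conceptual and ties directly to the $B$-side, but it is less suited to the paper's aims: the $4$-valent vertex of Section~\ref{singularconstruction} has no smooth complex curve to rotate, so one needs a construction that begins from the coamoeba and an explicit potential. Two small points: the set $\{z_2=-1\}$ is a complex line, not a totally real plane---what you mean is that after the rotation it becomes a Lagrangian cylinder which, up to the fiber translation you note, is the periodized conormal; and your closing surgery remark needs care, since three Lagrangian \emph{sections} of $T^*T^2\to T^2$ are disjoint graphs and do not meet pairwise (Hicks's mapping-cone construction, which is what you are gesturing at, is set up differently).
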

A Lagrangian lift of this tropical curve can be constructed by applying a hyperK{\"a}hler rotation to $C$~\cite{Mikhalkin}, or as a mapping cone in the Fukaya category corresponding to a mapping cone of vector bundles in $D^b\mathrm{Coh}((\mathbb{C}^*)^2)$ defining $\mathcal{O}_C$~\cite{HicksUnobstructed}. We will briefly review Matessi's~\cite{Matessi} construction of the Lagrangian pair of pants, since this is the method of construction we will generalize later, and because the embedding $\Lpants\hookrightarrow T^*T^2$ can be made particularly explicit in this setup. This construction starts by considering the expected image of the projection $\Lpants\to T^2$ to the SYZ fiber.
\begin{definition}\label{coamoebapants}
Identifying $T^2 \cong \mathbb{R}^2/\mathbb{Z}^2$ with the usual quotient of the unit cube $[0,1]^2$. Consider the simplices $\overline{\Delta}_{\pm}\subset[0,1]^2$ defined as
\begin{align*}
\overline{\Delta}_{+} = \left\lbrace(\theta_1,\theta_2)\in[0,1]^2\mid\theta_1+\theta_2\leq\frac{1}{2}\right\rbrace \\
\overline{\Delta}_{-} = \left\lbrace(\theta_1,\theta_2)\in[0,1]^2\mid\theta_1+\theta_2\geq\frac{3}{2}\right\rbrace
\end{align*}
Let $\Delta_{\pm}$ denote the union of the $0$-cells and $2$-cells of $\overline{\Delta}_{\pm}$. Then the coamoeba $\Delta$ of $W$ is the union of the images of $\Delta_{\pm}$ in $T^2$.
\end{definition}
The Lagrangian $\Lpants$ can be thought of as the closure of the graph of an exact $1$-form on the interior of $\Delta$. To make this more precise, we will replace the coamoeba with a smooth pair of pants before attempting to define a Lagrangian embedding.

The real blowup $\widetilde{\Delta}$ of $\Delta$ is the space obtained by blowing up $\Delta$ at each of its vertices. More precisely, if $\theta_0$ is a vertex of $\Delta$ and $U_{\theta_0}\subset\Delta$ is a small neighborhood of $\theta_0$, we can form
\[\widetilde{U}_{\theta_0} = \lbrace(\theta,\ell)\in U_{\theta_0}\times\mathbb{RP}^{1}\mid \theta-\theta_0\in\ell\rbrace,\]
which comes with a natural projection map $\widetilde{U_{\theta_0}}\to U_{\theta_0}$. We construct $\widetilde{\Delta}^n$ by gluing $\widetilde{U_{\theta_0}}$ to $\mathrm{Int}(\Delta^n)$, which is just the complement of the vertices in $\Delta^n$. There is an obvious projection $\pi\colon\widetilde{\Delta}^n\to\Delta^n$.

Consider the function $g:\Delta\to\mathbb{R}$ defined by
\begin{align}\label{coamoebaprimitivepants}
g(\theta) = \begin{cases}
-\sqrt{\cos(\theta_1+\theta_2)\sin(\theta_1)\sin(\theta_2)}, \; \theta\in\Delta_+ \\
\sqrt{\cos(\theta_1+\theta_2)\sin(\theta_1)\cdots\sin(\theta_2)}, \; \theta\in \Delta_{-}.
\end{cases}
\end{align}
The following Lemma is proven by Matessi~\cite{Matessi}.
\begin{lemma}
The function $dg = \left(\frac{\partial g}{\partial\theta_1},\frac{\partial g}{\partial\theta_2}\right)$ extends to a smooth map $\widetilde{dg}\colon\widetilde{\Delta}\to\mathbb{R}^2$.
\end{lemma}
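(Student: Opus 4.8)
The plan is to reduce the statement to a local computation near each of the three vertices of $\Delta$. Away from the vertices, $\widetilde{\Delta}$ coincides with the interior of $\Delta$, that is with $\Delta_+^\circ\sqcup\Delta_-^\circ$, and on each of these pieces $g$, and hence $dg$, is smooth by inspection (the radicand is smooth and of one sign on each open simplex); the only question is smoothness of the extension across the exceptional arcs. The symmetries of the tropical pair of pants induce an $S_3$-action permuting the three vertices of $\Delta$, and there is in addition the involution $\iota(\theta)=(1,1)-\theta$, which interchanges $\overline{\Delta}_+$ and $\overline{\Delta}_-$ and under which $g$ is odd; consequently $\widetilde{dg}$ is $\iota$-invariant, and it suffices to treat a neighborhood of a single vertex, say $(0,0)\in T^2=\mathbb{R}^2/\mathbb{Z}^2$. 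Near this vertex $\Delta$ is the union of the open first-quadrant sector, coming from the corner $(0,0)$ of $\Delta_+^\circ$, and the open third-quadrant sector, coming (via $\iota$) from the corner $(1,1)$ of $\Delta_-^\circ$, the two sectors meeting only at the vertex. I would work in the blowup chart in which the exceptional line has finite nonzero slope, so that $\widetilde{\Delta}$ is locally modeled by coordinates $(s,m)$ with $s$ near $0$, $m\in(0,\infty)$, exceptional divisor $\{s=0\}$, and $\theta=(s,ms)$ on the $\Delta_+$-sheet $\{s>0\}$, $\theta-(1,1)=(s,ms)$ on the $\Delta_-$-sheet $\{s<0\}$.

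The computation itself is short. A trigonometric manipulation gives, for $F=\cos(\theta_1+\theta_2)\sin\theta_1\sin\theta_2$,
\[
\frac{\partial F}{\partial\theta_1}=\sin\theta_2\cos(2\theta_1+\theta_2),\qquad \frac{\partial F}{\partial\theta_2}=\sin\theta_1\cos(\theta_1+2\theta_2),
\]
so on $\Delta_+$ one has $\partial g/\partial\theta_i=-(\partial F/\partial\theta_i)/(2\sqrt F)$. Writing $\sin t=t\,\sigma(t)$ with $\sigma$ smooth, even, and $\sigma(0)=1$, substituting $\theta_1=s$, $\theta_2=ms$, and pulling $s^2$ out of $F$ and $s$ out of each $\partial F/\partial\theta_i$, the factors of $s$ combine to yield
\[
\frac{\partial g}{\partial\theta_1}=-\frac{\sqrt m\,\sigma(ms)\cos((2+m)s)}{2\sqrt{\cos((1+m)s)\,\sigma(s)\,\sigma(ms)}},\qquad \frac{\partial g}{\partial\theta_2}=-\frac{\sigma(s)\cos((1+2m)s)}{2\sqrt m\,\sqrt{\cos((1+m)s)\,\sigma(s)\,\sigma(ms)}} .
\]
Each right-hand side is a smooth function of $(s,m)$ for $s$ near $0$ and $m\in(0,\infty)$, and it is even in $s$; combined with the $\iota$-invariance of $\widetilde{dg}$ — which is exactly what forces the two oppositely-signed branches of the square root defining $g$ to match up — this shows that the same formulas describe $\widetilde{dg}$ on the $\Delta_-$-sheet and that $\widetilde{dg}$ extends smoothly across the interior of the exceptional arc. (It is worth emphasizing that $g$ itself does \emph{not} extend smoothly: the necessity of the sign choice in its definition is visible precisely here.)

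The step I expect to be the main obstacle is the behavior at the two endpoints of each exceptional arc, the blowup directions of slope $0$ and $\infty$. These are exactly the directions of the conormal circles of the legs $W_2$ and $W_1$ of $W$, and the chart $(s,m)$ degenerates there ($\sqrt m$, respectively $1/\sqrt m$, ceases to be smooth). One must either pass to the complementary blowup chart, with $\theta_2$ as base coordinate and slope $1/m$, where the identical cancellation recurs by the symmetry $\theta_1\leftrightarrow\theta_2$, or argue more structurally that near those directions the graph of $\widetilde{dg}$ coincides with the manifestly smooth, embedded conormal lift $L_{W_i}$ forming the corresponding cylindrical end of $\Lpants$, using the explicit description of the periodized conormal bundle. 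The genuinely delicate bookkeeping is in pinning down how the two triangular sheets of $\Delta$ attach to the exceptional $\mathbb{RP}^1$ over each vertex and verifying that the resulting identifications make $\widetilde{\Delta}$ a smooth surface on which $\widetilde{dg}$ is smooth; with that in hand, the rest is just the explicit formulas above.
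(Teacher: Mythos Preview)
Your computation is correct and is exactly the method the paper uses---though note that the paper does not actually prove this 2-dimensional lemma (it cites Matessi), it carries out the identical cancellation argument in detail for the 3-dimensional analogue in Section~\ref{singularconstruction}: write the partial derivatives explicitly, substitute blowup coordinates $(\text{slope},\text{radius})$, and observe that the factors of the radial variable coming from $\sin(t)/t$ cancel against those in the square root, leaving a function that is smooth and even in the radial variable; then invoke the symmetry group to cover the remaining edges/vertices. Your use of the involution $\iota$ and evenness in $s$ to match the $\Delta_+$ and $\Delta_-$ sheets is exactly the paper's sign-tracking via $\epsilon=t/\sqrt{t^2}$.

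The worry in your final paragraph is unfounded, however, and stems from a slight misreading of what $\widetilde{\Delta}$ is. The coamoeba $\Delta$ is by definition the union of the open $2$-cells and the $0$-cells of $\overline{\Delta}_\pm$; the open edges are \emph{not} included. Consequently, near the vertex $(0,0)$ the only directions that lie in the tangent cone of $\Delta$ are those with slope $m\in(0,\infty)$, and the exceptional arc in $\widetilde\Delta$ is the corresponding \emph{open} interval. Your chart $(s,m)$ with $m\in(0,\infty)$ already covers a full neighborhood of the exceptional locus; there is nothing at $m=0$ or $m=\infty$ to extend over. Indeed your own formulas show this cannot be done: as $m\to 0^+$ one has $\partial g/\partial\theta_2\to-\infty$, and as $m\to\infty$ one has $\partial g/\partial\theta_1\to-\infty$. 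This divergence is not a defect---it is precisely the asymptotic approach of $\Lpants$ to the periodized conormals $L_{W_i}$ over the legs of $W$, which is why $\widetilde\Delta$ is an \emph{open} pair of pants with three cylindrical ends rather than a compact surface. So the ``main obstacle'' you anticipate does not exist, and neither of your proposed fixes (complementary chart, identification with the conormal) is needed---or would work, since $\widetilde{dg}$ genuinely has no finite limit there.
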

Matessi's Lagrangian embedding is $\Phi\colon\widetilde{\Delta}\to  T^*T^2 = T^2\times\mathbb{R}^2$ defined by $\Phi(\theta) = (\pi(\theta),\widetilde{dg}(\theta))$. 

Since $\Phi(\widetilde{\Delta})$ is exact and approaches the conormals to the top-dimensional cones of $W$ asymptotically, one can deform it via Hamiltonian isotopy to lie in tropical position. This means that it coincides with the conormal lifts to the top-dimensional cones of $W_n$ away from the $(n-2)$-skeleton of $W_n$. Since the vertex of $W_n$ is placed at the origin, this means that $W_n$ becomes an object of $\mathcal{W}(T^*T^n)$ when equipped with a $GL(1)$-local system. We can now compute the support of the mirror sheaf using the same strategy as in Lemma~\ref{conormalsupport}.

Observe that by construction $\Lpants$ is homotopy equivalent to $\widetilde{\Delta}$. Hence we can identify a set of generators for $H_1(\Lpants)$ with the generators $[\theta_1],\ldots,[\theta_n]$ of $H_1(T^n)$. Th compute Floer homology, one should also equip $\Lpants$ with a spin structure, but we leave this unspecified for now.
\begin{lemma}\label{pantssupport}
Suppose that $\Lpants$ is equipped with a choice of spin structure. Let $\nabla$ be a $GL(1)$-local system on $\Lpants$ with holonomy $\rho_i$ along $[\theta_i]$. If $\nabla_z\in\Hom(\pi_1(T^2),\mathbb{K}^*)$ has holonomy $z_i$ along $[\theta_i]$, then $HW^*((\Lpants,\nabla),(T^2,\nabla_z))$ is nonzero, in which case it is isomorphic to $H^*(S^1)$ as a graded vector space, if and only if 
\begin{equation}\label{pantsdiff}
\pm\rho_1^{-1}z_1\pm\rho_2^{-1}z_2\pm1 = 0.
\end{equation}
\end{lemma}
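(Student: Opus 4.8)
The plan is to compute $HW^*((\Lpants,\nabla),(T^2,\nabla_z))$ by reducing it to a two-term complex whose single differential is the (twisted) defining polynomial of the pair-of-pants curve evaluated at the point $\nabla_z$. Following the pattern of Lemma~\ref{conormalsupport}, I would work in Matessi's explicit model: after putting $\Lpants$ in tropical position it coincides with the periodized conormal lifts of the three legs of $W$, in directions $(-1,0)$, $(0,-1)$, $(1,1)$, outside a small ball about the origin, so one may choose a Hamiltonian $H$ quadratic at infinity that is adapted to this decomposition. First I would check that the resulting complex $CW^*((\Lpants,\nabla),(T^2,\nabla_z))$, after a Morse-theoretic reduction that cancels an acyclic subcomplex supported near the three ends, is quasi-isomorphic to a two-term complex $\mathbb{K}\langle x_0\rangle\xrightarrow{\,\mathfrak{m}_1\,}\mathbb{K}\langle x_1\rangle$ with $x_0$ in degree $0$ and $x_1$ in degree $1$; the two surviving generators should correspond to the two triangular components $\Delta_{\pm}$ of the coamoeba, equivalently to the two critical points of the generating function $g$. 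This is forced by the expected mirror statement: the mirror object is the structure sheaf of a smooth affine plane curve, whose $\Ext$-groups against a skyscraper sheaf have total dimension at most two.

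The heart of the argument is then the identification of $\mathfrak{m}_1$. As in Lemma~\ref{conormalsupport}, I would match the rigid holomorphic strips contributing to $\mathfrak{m}_1$ --- equivalently, after degenerating $H$ near the legs, rigid triangles with a corner on the core and one end asymptotic to a short chord near a leg --- with a combinatorial model built from the coamoeba, producing exactly one family of curves for each of the three legs of $W$. Each family contributes a term weighted by the holonomy of $\nabla$ along the edge of $\widetilde\Delta$ it sweeps across and by the holonomy of $\nabla_z$ along the corresponding class in $H_1(T^2)$; normalizing $\nabla$ and $\nabla_z$ on one of the three channels as in the proof of Lemma~\ref{conormalsupport} and bookkeeping over the other two, I expect
\[
\langle\mathfrak{m}_1(x_0),x_1\rangle \;=\; \epsilon_0 \;+\; \epsilon_1\,\rho_1^{-1}z_1 \;+\; \epsilon_2\,\rho_2^{-1}z_2,
\]
where the signs $\epsilon_0,\epsilon_1,\epsilon_2\in\{\pm1\}$ are determined by the chosen spin structure on $\Lpants$ and the orientation conventions on the moduli spaces. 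Granting this, $\mathfrak{m}_1$ is an isomorphism --- so $HW^*=0$ --- exactly when the right-hand side does not vanish at $(z_1,z_2)$, and when it does vanish $\mathfrak{m}_1=0$ and $HW^*\cong\mathbb{K}\langle x_0\rangle\oplus\mathbb{K}\langle x_1\rangle\cong H^*(S^1)$ as a graded vector space. Rearranging the vanishing condition gives exactly~\eqref{pantsdiff}.

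To make the coefficient computation rigorous while avoiding the most delicate part of the holomorphic curve count, I would run the same argument through the mapping-cone description of $\Lpants$. By~\cite{HicksUnobstructed}, $\Lpants$ is a mapping cone in $\mathcal{W}(T^*T^2)$ mirror to the length-one resolution $\mathcal{O}_C\simeq[\mathcal{O}\xrightarrow{\,1+z_1+z_2\,}\mathcal{O}]$ of the structure sheaf of the pair-of-pants curve $C=\{1+z_1+z_2=0\}$. Twisting by the local system $\nabla$ translates the mirror by the corresponding point of $(\mathbb{K}^*)^2$, so $(\Lpants,\nabla)$ is mirror to $\mathcal{O}_{C_\nabla}$ with $C_\nabla=\{\epsilon_0+\epsilon_1\rho_1^{-1}z_1+\epsilon_2\rho_2^{-1}z_2=0\}$; applying $\Hom^*(-,\mathcal{F}^{-1}(T^2,\nabla_z))$ to the two-term resolution and using that the equivalence $\mathcal{F}$ carries $HW^*$ to $\Ext^*$ recovers the same two-term complex with the same differential, hence the same conclusion.

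The main obstacle, in either version, is precisely this identification: showing that the reduced complex has exactly the two generators claimed and that its single differential is the twisted defining polynomial of $C$ evaluated at $\nabla_z$, including the precise dependence of the signs on the spin structure. Everything else --- the Morse/Gromov reduction, the vanishing criterion, and the computation of the graded dimension when $HW^*\neq0$ --- is routine once this is in place. I would therefore present the mapping-cone computation as the main line of proof and use the coamoeba strip count, in the style of Lemma~\ref{conormalsupport}, as a consistency check.
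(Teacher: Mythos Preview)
Your proposal arrives at the right answer and the right geometric picture --- two generators corresponding to the components $\Delta_\pm$ of the coamoeba, three rigid strips corresponding to the three legs, and a differential equal to the twisted linear polynomial --- but it takes a longer route than the paper and inserts an unnecessary intermediate step.

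The paper's proof is more direct than your first version. Because $T^2$ is compact, there is no wrapping to manage: $HW^*((\Lpants,\nabla),(T^2,\nabla_z))$ is computed by the ordinary Floer complex of the pair. From Matessi's explicit formula for $g$, the intersection $\Lpants\cap T^2$ already consists of exactly two transverse points, namely the barycenters $x_+$ and $x_-$ of $\Delta_+$ and $\Delta_-$ (the unique interior critical points of $g$ on each simplex). So there is no ``acyclic subcomplex supported near the three ends'' to cancel; the complex is two-dimensional from the outset. The paper then exhibits the three holomorphic strips concretely as pieces of holomorphic cylinders over the lines through the origin in directions $-r_1-r_2$, $-r_1+2r_2$, $2r_1-r_2$, whose $T^2$-boundary arcs are the three geodesic paths from $x_+$ to $x_-$, and invokes exactness to conclude these are all the strips. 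The holonomy bookkeeping is exactly what you describe.

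Your alternative via the mapping-cone description of $\Lpants$ from~\cite{HicksUnobstructed} is a genuinely different and perfectly valid argument. It trades the explicit strip enumeration for an appeal to an external theorem, which is cleaner algebraically but less self-contained; the paper's direct count has the advantage of being the template reused later (in Lemma~\ref{gc-support}) for the geometric compositions $\Limm\circ\mathcal{L}_i$, where no such mapping-cone shortcut is available. If you keep the mapping-cone argument as your main line, you should still be prepared to do the direct strip count, since the paper relies on it downstream.
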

\begin{proof}
We identify $T^*T^2$ with $(\mathbb{C}^*)^2$ symplectically via $(q_i,\theta_i)\mapsto z_i\coloneqq(\log|q_i|,\theta_i)$, and we write $r_i\coloneqq\log|q_i|$. From the formula for $g$ it is easy to see that $\Lpants$ intersects $T^2$ transversely in two points corresponding to the barycenters of $\Delta_+$ and $\Delta_{-}$. Call these two intersection points $x_+$ and $x_{-}$. There are $3$ obvious holomorphic strips connecting $x_{+}$ to $x_{-}$, which are obtained as portions of holomorphic cylinders lying over lines in $Q$ through the origin in the directions of $-r_1-r_2$, $-r_1+2r_2$, and $2r_1-r_2$. These holomorphic strips intersect $T^2$ in the three geodesic paths connecting $x_+$ to $x_{-}$.
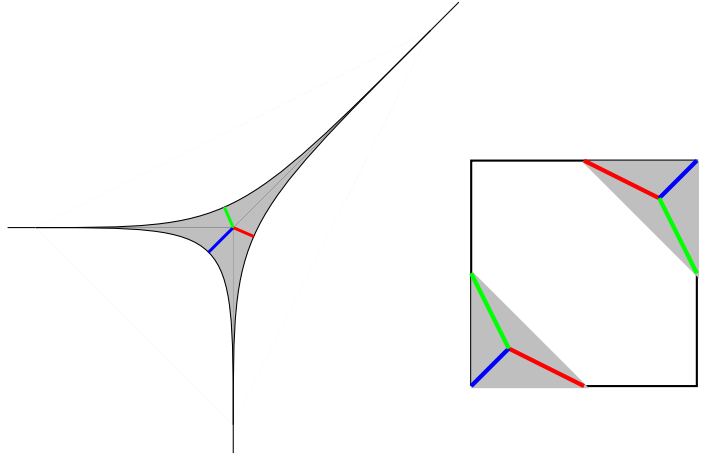
\begin{figure}
\begin{tikzpicture}
\begin{scope}[xshift = -30, scale = 0.75]
 \draw (0,0) -- (-4,0);
 \draw (0,0) -- (0,-4);
 \draw (0,0) -- (4,4);

\newcommand{\pathA}{(-3.5,0) .. controls (0,0) .. (0,-3.5)}
\newcommand{\pathB}{(-3.5,0) .. controls (0,0) .. (3.5,3.5)}
\newcommand{\pathC}{(0,-3.5) .. controls (0,0) .. (3.5,3.5)}
 
\fill[gray!50] \pathA -- (-3.5,0) -- (0,0) -- (0,-3.5) -- cycle;
\fill[white] \pathA;
\draw[white] (-3.5,0) -- (0,-3.5);

\fill[gray!50] \pathB -- (-3.5,0) -- (0,0) -- (3.5,3.5) -- cycle;
\fill[white] \pathB;
\draw[white] (-3.5,0) -- (3.5,3.5);

\fill[gray!50] \pathC -- (0,-3.5) -- (0,0) -- (3.5,3.5) -- cycle;
\fill[white] \pathC;
\draw[white] (0,-3.5) -- (3.5,3.5);

 \draw \pathA;
 \draw \pathB;
 \draw \pathC;
 
\draw[blue, line width = 0.4 mm] (0,0) -- (-0.4375,-0.4375);
\draw[red, line width = 0.4 mm] (0,0) -- (0.36338,-0.157689);
\draw[green, line width = 0.4 mm] (0,0) -- (-0.157689,0.36338);
\end{scope}

\begin{scope}[xshift = 60, yshift = -60, scale = 0.5]
\draw[thick] (0,0) -- (6,0) -- (6,6) -- (0,6) -- (0,0);
\filldraw[ultra thick, gray!50] (0,0) -- (3,0) -- (0,3);
\filldraw[ultra thick, gray!50] (6,6) -- (6,3) -- (3,6);

\node[] at (1,1) {};
\node[] at (5,5) {};

\draw[ultra thick, red] (1,1) -- (3,0);
\draw[ultra thick, red] (5,5) -- (3,6);
\draw[ultra thick, blue] (1,1) -- (0,0);
\draw[ultra thick, blue] (5,5) -- (6,6);
\draw[ultra thick, green] (1,1) -- (0,3);
\draw[ultra thick, green] (5,5) -- (6,3);
\end{scope}
\end{tikzpicture}
\caption{The boundary of the three holomorphic strips contributing to the Floer differential on $CW^*(\Lpants,T^2)$, projected to the amoeba (left), and the coamoeba (right).}\label{strips}
\end{figure}

We claim that the holomorphic strips described above are the only ones which contribute to the Floer differential. The Lagrangian submanifold $\Lpants$ constructed by Matessi, is exact, since it is obtained as the graph of an exact $1$-form on the coamoeba (cf.~\cite[\S 5.4]{Matessi}). The $0$-section $T^2$ is also an exact Lagrangian submanifold of $T^*T^2$. There is a Reimannian metric on $T^*T^2$ induced from the symplectic form and almost complex structure, and this pulls back to Riemannian metrics on $\Lpants$ and $T^2$. The metric on $T^2$ obtained this way is just the standard flat metric.

Any holomorphic strip $u\colon(-\infty,\infty)\times[0,1]\to T^*T^2$ with an input at $x_{+}$ and an output at $x_{-}$ has two boundary components: an arc $u(s,0)\subset T^2$ on $T^2$ and an arc on $u(s,1)\subset\Lpants$ on $\Lpants$. By the exactness of $T^2$ and $\Lpants$, it follows from Stokes' theorem that the area of any such strip is given by the sum of the lengths of these arcs. It also follows from exactness and Stokes' theorem that the area is determined by the values of the primitives for $T^2$ and $\Lpants$ at $x_{\pm}$. More precisely, if the primitives are denoted $f_{T^2}$ and $f_{\Lpants}$, then
\[ \int u^*\omega = (f_{\Lpants}(x_{+})-f_{T^2}(x_{+}))-(f_{\Lpants}(x_{-})-f_{T^2}(x_{-})) \,.\]
The paths on $T^2$ and $\Lpants$ depicted in Figure~\ref{strips} are the shortest paths connecting $x_{+}$ and $x_{-}$ in the respective metrics, meaning that any holomorphic strip joining $x_{+}$ to $x_{-}$ has the same boundary as one of the holomorphic strips already identified. By the maximum principle, there cannot be any other holomorphic strips.

Thus we need only compute the holonomy of the local systems along the boundaries of these strips. We can normalize the local systems on $\Lpants$ and $T^2$ such that parallel transport along the blue arc in Figure~\ref{strips} is multiplication by $1\in\mathbb{K}^*$. In that case, we must have that parallel transport along the orange and green arcs act as multiplication by $z_1$ and $\rho_1$ and by $z_2$ and $\rho_2$, respectively. Thus, without loss of generality, the Floer differential is
\[ x_+\mapsto\pm(\rho_1^{-1}z_1\pm\rho_2^{-1}z_2\pm1)x_{-}, \]
which completes the proof.
\end{proof}
\begin{remark}
For different choices of spin structure on $L_{pants}$, the differential in this chain complex can be any of $\pm\rho_1^{-1}z_1\pm\cdots\pm\rho_n^{-1}z_n\pm 1$ up to an overall sign.
\end{remark}
\begin{remark}
A similar computation in two dimensions using non-exact Lagrangian tori is carried out in~\cite{HicksReal}. The Floer differential we computed above also appears in~\cite{HicksDimer}.
\end{remark}
\section{Singular Lagrangian lift of a 4-valent vertex}\label{singularconstruction}
This section is devoted to the construction of a Lagrangian lift of a certain genus $0$ tropical curve $V\subset\mathbb{R}^3$. Specifically, $V$ is defined to be the union of the following four one-dimensional cones
\begin{align*}
V_1 = \lbrace(-t,0,0)\mid t\in\mathbb{R}_{\geq0}\rbrace, \\
V_2 = \lbrace(0,-t,0)\mid t\in\mathbb{R}_{\geq0}\rbrace, \\
V_3 = \lbrace(0,0,-t)\mid t\in\mathbb{R}_{\geq0}\rbrace, \\
V_4 = \lbrace(t,t,t)\mid t\in\mathbb{R}_{\geq0}\rbrace.
\end{align*}
This has one $4$-valent vertex at the origin. We will prove that
\begin{theorem}\label{singularlift}
There is a family of Hamiltonian isotopic singular Lagrangian submanifolds $\Lsing^{\epsilon}\subset T^*T^3$ for sufficiently small $\epsilon>0$, each with one singular point. The image of $\Lsing^{\epsilon}$ under projection to the cotangent fiber is $\epsilon$-close to $V$ in the Hausdorff metric. The link of the singular point of $\Lsing^{\epsilon}$ is Legendrian isotopic to the link of the Harvey--Lawson cone. For each $\Lsing^{\epsilon}$, there is a small open ball $B\subset\mathbb{R}^3$ centered at the origin such that
\[ \Lsing^{\epsilon}\mid_{T^*T^3\setminus\pi_{SYZ}^{-1}(B)} = \bigcup_{j=1}^4 L_{V_j}\mid_{T^*T^3\setminus\pi_{SYZ}^{-1}(B)}. \]
Away from the cone points, $\Lsing^{\epsilon}$ is diffeomorphic to the minimally twisted five-component chain link complement in $S^3$, drawn in Figure~\ref{intro-fig}.
\end{theorem}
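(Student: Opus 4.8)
The plan is to generalize Matessi's construction of the Lagrangian pair of pants $\Lpants$ reviewed in Section~\ref{backgroundSYZ}: the Lagrangian $\Lsing^\epsilon$ will be presented as the closure of the graph of an exact one-form over a space $\widetilde{\Delta}$ lying over the ``coamoeba'' $\Delta$ of $V$ inside the $T^3$-factor of $T^*T^3 = T^3\times\mathbb{R}^3$, so that exactness is automatic and only the primitive and the combinatorics of $\Delta$ require work. First I would write down $\Delta\subset T^3$, the set which should be the image of $\Lsing^\epsilon$ under the projection $\mathrm{pr}\colon T^*T^3\to T^3$ to the torus factor. Since $V$ tropicalizes a $\mathbb{Z}/3$-symmetric line in $(\mathbb{C}^*)^3$, the set $\Delta$ is modeled on the closure of the argument image of such a line; concretely it is a union of finitely many rational polytopes in $T^3 = \mathbb{R}^3/\mathbb{Z}^3$ --- with only their top-dimensional and $0$-dimensional faces retained, exactly as in Definition~\ref{coamoebapants} --- glued together along their $0$-cells, with one ``leg'' region for each cone $V_j$ and a central configuration encoding the $4$-valent vertex. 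As in Matessi's handling of the three vertices of the planar coamoeba, I would then form the real blowup $\widetilde{\Delta}\to\Delta$ along its non-manifold locus: blowing up the leg strata produces four ends diffeomorphic to $T^2\times[0,\infty)$ (these become the periodized conormals $L_{V_j}$), but the central stratum is genuinely three-dimensional in character and cannot be resolved to a manifold --- a local computation shows its blowup has a neighborhood homeomorphic to the cone over $T^2$, with cone point $p_0$.

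Next, following the shape of Matessi's function $g$, I would define $g\colon\Delta\to\mathbb{R}$ piecewise on the top cells of $\Delta$ as a signed square root of a product of sines and cosines of linear forms in the angular coordinates, with the signs chosen so that $g$ is continuous across the $0$-cells, and then verify that $dg$ extends to a smooth map $\widetilde{dg}\colon\widetilde{\Delta}\setminus\{p_0\}\to\mathbb{R}^3$. The Lagrangian $\Lsing^\epsilon$ is then the closure in $T^*T^3 = T^3\times\mathbb{R}^3$ of the graph of $\widetilde{dg}$; by construction it is exact, its smooth locus is an embedded Lagrangian, and $\pi_{SYZ}(\Lsing^\epsilon)$ is $\epsilon$-close to $V$ once the overall scale is fixed. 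An exact, hence Hamiltonian, isotopy supported inside $\pi_{SYZ}^{-1}(B)$ interpolating between $\widetilde{dg}$ near the legs and the defining primitives of the conormal lifts $L_{V_j}$ puts $\Lsing^\epsilon$ in tropical position, i.e.\ makes it agree with $\bigcup_{j=1}^4 L_{V_j}$ outside $\pi_{SYZ}^{-1}(B)$; taking the support of this isotopy to have diameter on the order of $\epsilon$ produces the family $\lbrace\Lsing^\epsilon\rbrace_{\epsilon>0}$, whose members are mutually Hamiltonian isotopic by construction.

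It then remains to pin down the singularity and the smooth locus. Near $p_0$ I would write out an explicit local Lagrangian chart and compare it, after a Hamiltonian perturbation localized near the origin, with a truncation of the Harvey--Lawson cone $\lbrace(re^{i\theta_1},re^{i\theta_2},re^{i\theta_3}) : r\geq 0,\ \theta_1+\theta_2+\theta_3 = c\rbrace\subset\mathbb{C}^3$, which is precisely a cone over a $2$-torus; this yields both that the singular point of $\Lsing^\epsilon$ is homeomorphic to the cone over $T^2$ and that its link is Legendrian isotopic in $S^5$ to the link of the Harvey--Lawson cone. Deleting $p_0$ converts it into a fifth torus end, so $\Lsing^\epsilon\setminus\{p_0\}$ is an open $3$-manifold with five torus ends --- the four conormal $2$-tori over $V_1,\dots,V_4$ together with the link of $p_0$. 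To identify it with the minimally twisted five-component chain link complement, I would read an ideal cell decomposition off of $\widetilde{\Delta}$ (equivalently, off the combinatorics of how the four pieces $T^2\times[0,\infty)$ are assembled through the central region), or exhibit the complement directly as that of a five-component link in $S^3$, and match it with a standard presentation such as the one in~\cite{MPR}; as a consistency check, the three exceptional Dehn fillings of the cusp at $p_0$ must recover the smooth tropical lifts $L_{V_1},L_{V_2},L_{V_3}$, which are the graph manifolds of~\eqref{graphmfd} and are exactly the exceptional fillings of the five-chain link complement along that cusp~\cite{MPR}.

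The main obstacle is the construction of $g$ together with the local analysis at $p_0$: one must choose the piecewise square-root primitive compatibly on the several top cells abutting the origin --- not merely on two cells, as in the planar case --- so that $dg$ genuinely extends over all of $\widetilde{\Delta}\setminus\{p_0\}$, while simultaneously arranging that the local model of $\Lsing^\epsilon$ at $p_0$ is precisely the Harvey--Lawson cone rather than some other conical Lagrangian. By comparison, the topological identification of $\Lsing^\epsilon\setminus\{p_0\}$ with the chain link complement, while lengthy, is an essentially routine combinatorial computation once the cell structure of $\widetilde{\Delta}$ is available.
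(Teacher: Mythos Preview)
Your overall strategy matches the paper's: build $\Lsing$ as the closure of the graph of an exact one-form $\widetilde{dg}$ over a real blowup $\widetilde{\Delta}$ of a coamoeba $\Delta\subset T^3$, then Hamiltonian isotope into tropical position. However, your description of the coamoeba and of where the singularity originates contains a genuine error that would derail the construction as written.

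The coamoeba is not a collection of leg regions plus a central piece glued at $0$-cells. It consists of exactly \emph{two} $3$-simplices $\Delta_{+}$ and $\Delta_{-}$ in $T^3$, and in $T^3$ these tetrahedra meet along their entire $1$-skeleta, not merely at vertices. Consequently, in three dimensions one retains the $1$-cells and $3$-cells of $\overline{\Delta}_{\pm}$ (not the $0$-cells and top cells as in the planar case), and the real blowup is performed along the six open edges. The resulting $\widetilde{\Delta}$ is a \emph{smooth} open $3$-manifold with five ends; there is no cone point $p_0$ in the domain. Four of the ends arise from the removed $2$-faces of $\overline{\Delta}_{\pm}$ and become the conormals $L_{V_j}$; the fifth end arises from the removed common vertex. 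The conical singularity of $\Lsing$ is created not in $\widetilde{\Delta}$ but by the map: one verifies $\widetilde{dg}\to 0$ along the fifth end, so the \emph{closure} of $\Phi(\widetilde{\Delta})$ in $T^*T^3$ adds the single point $0$, and the link of this point is the image of that end. This distinction is what makes the topological identification work: the paper shows $\widetilde{\Delta}\cong L'$ by reading each $\Delta_{\pm}$ as an ideal cube (edges of the tetrahedron become faces of the cube, with the removed vertex and four removed faces becoming the five ideal vertices labeled $0,1,2,3,4$) and matching the gluing to the known ideal cubulation of $L'$ from~\cite{DT}. Your route through a cone point already present in $\widetilde{\Delta}$ does not produce this structure.

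For the Legendrian link, the paper does not write down a local model and compare directly; instead it computes the front projection of $\Lambda_\epsilon = \Lsing\cap S^5(\epsilon)$ to $S^2\subset\mathbb{R}^3$, observes that the caustic is the edge-graph of a tetrahedron, and identifies this (after an explicit $GL(3,\mathbb{Z})$ coordinate change) with the weave presentation of $\Lambda_{HL}$ from~\cite{TZ,CZ}.
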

\subsection{The minimally twisted five-component chain link}\label{chainlinkdescription} We begin by collecting some topological facts about the minimally-twisted five component chain link complement, hereafter denoted $L'$. The link complement $L'$ is a hyperbolic $3$-manifold, which one can see by exhibiting an ideal triangulation for it, following Dunfield--Thurston~\cite{DT}. Fix the labeling of the cusps of $M_5$ shown in Figure~\ref{orbifold}. The labeling we choose does not matter, since the group of hyperbolic isometries of $L'$ acts transitively on the cusps of $L'$.
\begin{figure}
\begin{tikzpicture}
\begin{scope}[xshift = -100]
\node[] at (3,0) {1};
\node[] at (0.93,2.86) {0};
\node[] at (-2.43,1.77) {4};
\node[] at (-2.43,-1.77) {3};
\node[] at (0.93,-2.86) {2};

\node[] at (0,-3.5) {$L'$};

\draw[gray, ultra thick] (0,0) circle (2.3);
\draw(2,0)[line width = 0.25 mm] ellipse (0.5 and 1.5);
\draw[rotate = 72, line width = 0.25 mm] (2,0) ellipse (0.5 and 1.5);
\draw[rotate = 144, line width = 0.25 mm] (2,0) ellipse (0.5 and 1.5);
\draw[rotate = 216, line width = 0.25 mm] (2,0) ellipse (0.5 and 1.5);
\draw[rotate = 288, line width = 0.25 mm] (2,0) ellipse (0.5 and 1.5);

\draw (2.5,0)[white, double = black, ultra thick] arc (0:100:0.5 and 1.5);
\draw (2.5,0)[white, double= black, ultra thick] arc (0:-100:0.5 and 1.5);
\draw[white, double = black, ultra thick, rotate = 72] (2.5,0) arc (0:100:0.5 and 1.5);
\draw[white, double = black, ultra thick, rotate = 144] (1.5,0) arc (180:240:0.5 and 1.5);
\draw[white, double = black, ultra thick, rotate = 72] (1.5,0) arc (180:240:0.5 and 1.5);

\draw[white, double = black, ultra thick, rotate = 144] (1.5,0) arc (180:120:0.5 and 1.5);

\draw[white, double = black, ultra thick, rotate = 216] (2.5,0) arc (0:-100:0.5 and 1.5);

\draw[white, double = black, ultra thick, rotate = 216] (2.5,0) arc (0:100:0.5 and 1.5);

\draw[white, double = black, ultra thick, rotate = 288] (1.5,0) arc (180:240:0.5 and 1.5);
\draw[white, double = black, ultra thick, rotate = 288] (1.5,0) arc (180:120:0.5 and 1.5);
\end{scope}

\begin{scope}[xshift = 75]
\node[] at (0,-3.5) {$N$};

 \coordinate (a) at (3,0,0);
 \coordinate (b) at (-1.5,2.598,0);
 \coordinate (c) at (-1.5,-2.598,0);
 \coordinate (d) at (0,0,2);
 \coordinate (e) at (0,0,0);
 
\draw[gray] (a) -- (b) -- (c) -- (a);
\draw[gray] (a) -- (d);
\draw[gray] (a) -- (e);
\draw[gray] (b) -- (d);
\draw[gray] (b) -- (e);
\draw[gray] (c) -- (d);
\draw[gray] (c) -- (e);
\draw[gray] (d) -- (e);

\draw[white, double = gray, ultra thick] (3,0,0) -- (0,0,2);

\filldraw[black] (a) circle  (0.1);
\filldraw[black] (b) circle  (0.1);
\filldraw[black] (c) circle  (0.1);
\filldraw[black] (d) circle  (0.1);
\filldraw[black] (e) circle  (0.1);

\node[anchor = west] at (a) {2};
\node[anchor = south east] at (b) {1};
\node[anchor = east] at (c) {4};
\node[anchor = east] at (d) {3};
\node[anchor = south west] at (e) {0};
\end{scope}

\end{tikzpicture}
\caption{The circle of symmetry (left) and its image in the quotient orbifold (right).}\label{orbifold}
\end{figure}
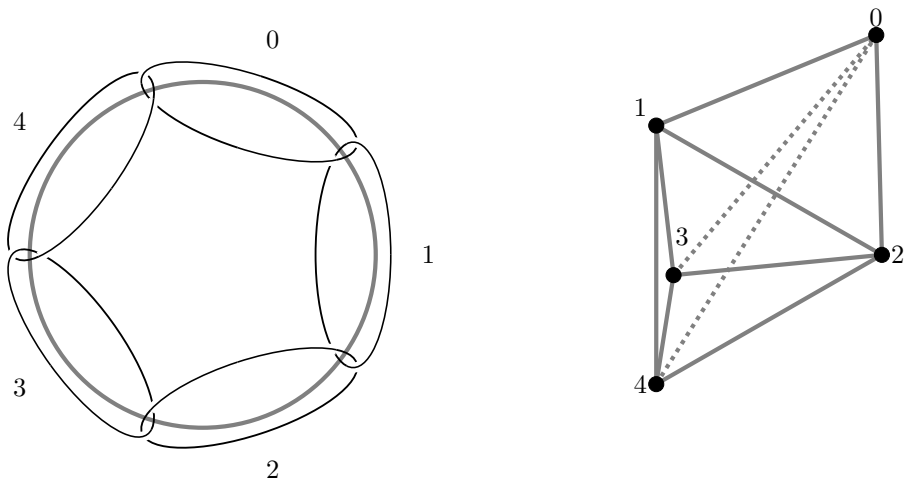

Observe that there is an involution of $L'$ given by rotation about the circle depicted in Figure~\ref{orbifold}. Note that this circle intersects the chain-link in $S^3$, so in $L'$ the fixed locus is a union of $10$ arcs between the cusps. Inspecting the diagram shows that any two cusps of $L'$ are joined by one of these arcs. The quotient $N$ of $L'$ by this involution is an orbifold whose underlying $3$-manifold is $S^3$ with five points removed, with each puncture in $S^3$ corresponding to the hyperelliptic quotient of a cusp of $L'$. The orbifold locus consists of an unknotted arc in $S^3$ connecting each pair of punctures, and all orbifold points are cone points of order $2$.

Viewing $N$ as the boundary $3$-sphere of a standard $4$-simplex carrying the induced triangulation with its vertices deleted gives us an ideal triangulation of $N$. The union of edges of the triangulation is the orbifold locus of $N$. It is shown in~\cite{DT} that these can be thought of as regular ideal tetrahedra in hyperbolic $3$-space $\mathbb{H}^3$, so this gives $N$ the structure of a hyperbolic orbifold decomposed into five regular ideal tetrahedra. We can lift this to an ideal triangulation of $L'$ consisting of ten regular ideal tetrahedra.

Since we will build $\Lsing^{\epsilon}$ using a coamoeba, it will be convenient to recast this ideal triangulation as a decomposition of $L'$ into two ideal cubes, which will correspond to the two pieces of the coamoeba. From our description of the ideal triangulation of $L'$, two of the ideal tetrahedra, which we call $T_{0,\pm}$, in the triangulation will have ideal vertices at the cusps labeled $1$ through $4$. These are the black tetrahedra in Figure~\ref{triangulation}. At each face of $T_{0,+}$, the adjacent tetrahedron will have one ideal vertex at the $0$th cusp. The union of these five ideal tetrahedra in $L'$ is an ideal cube. Similar remarks apply to $T_{0,-}$, implying that the union of the other five ideal tetrahedra is also an ideal cube.
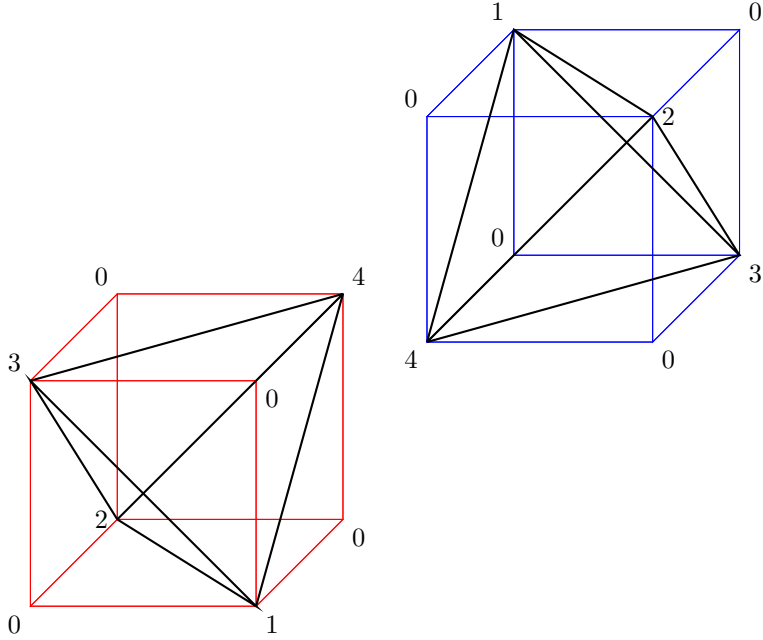
\begin{figure}
\begin{tikzpicture}
\begin{scope}[xshift =  75, yshift = 50, scale = 3]
\draw[blue] (0,0,0) -- (1,0,0) -- (1,1,0) -- (0,1,0) -- (0,0,0);
\draw[blue] (0,0,0) -- (0,1,0) -- (0,1,1) -- (0,0,1) -- (0,0,0);
\draw[blue] (0,0,0) -- (0,0,1) -- (1,0,1) -- (1,0,0) -- (0,0,0);
\draw[blue] (1,0,0) -- (1,1,0) -- (1,1,1) -- (1,0,1) -- (1,0,0);
\draw[blue] (0,1,0) -- (1,1,0) -- (1,1,1) -- (0,1,1) -- (0,1,0);
\draw[blue] (0,0,1) -- (1,0,1) -- (1,1,1) -- (0,1,1) -- (0,0,1);

\draw[thick, black] (0,0,1) -- (0,1,0) -- (1,0,0) -- (0,0,1);
\draw[thick, black] (0,1,0) -- (1,1,1);
\draw[thick, black] (1,0,0) -- (1,1,1);
\draw[white, double = black, ultra thick] (0,0,1) -- (1,1,1);

\node[anchor = west] at (1,1,1) {$2$};
\node[anchor = north east] at (0,0,1) {$4$};
\node[anchor = south east] at (0,1,0) {$1$};
\node[anchor = north west] at (1,0,0) {$3$};

\node[anchor = south east] at (0,0,0) {$0$};
\node[anchor = south west] at (1,1,0) {$0$};
\node[anchor = north west] at (1,0,1) {$0$};
\node[anchor = south east] at (0,1,1) {$0$};
\end{scope}

\begin{scope}[xshift = -75, yshift = -50, scale = 3]
\draw[red] (0,0,0) -- (1,0,0) -- (1,1,0) -- (0,1,0) -- (0,0,0);
\draw[red] (0,0,0) -- (0,1,0) -- (0,1,1) -- (0,0,1) -- (0,0,0);
\draw[red] (0,0,0) -- (0,0,1) -- (1,0,1) -- (1,0,0) -- (0,0,0);
\draw[red] (1,0,0) -- (1,1,0) -- (1,1,1) -- (1,0,1) -- (1,0,0);
\draw[red] (0,1,0) -- (1,1,0) -- (1,1,1) -- (0,1,1) -- (0,1,0);
\draw[red] (0,0,1) -- (1,0,1) -- (1,1,1) -- (0,1,1) -- (0,0,1);

\draw[thick, black] (0,0,0) -- (1,1,0);
\draw[thick, black] (0,0,0) -- (1,0,1);
\draw[white, double = black, ultra thick] (1,0,1) -- (0,1,1);
\draw[thick, black] (0,1,1) -- (0,0,0);
\draw[thick, black] (1,0,1) -- (1,1,0);
\draw[thick, black] (0,1,1) -- (1,1,0);
\node[anchor = east] at (0,0,0) {$2$};
\node[anchor = south west] at (1,1,0) {$4$};
\node[anchor = north west] at (1,0,1) {$1$};
\node[anchor = south east] at (0,1,1) {$3$};

\node[anchor = north west] at (1,1,1) {$0$};
\node[anchor = north east] at (0,0,1) {$0$};
\node[anchor = south east] at (0,1,0) {$0$};
\node[anchor = north west] at (1,0,0) {$0$};
\end{scope}
\end{tikzpicture}
\caption{The two ideal cubes in the decomposition of $L'$, decomposed into five ideal tetrahedra each. The edges of the tetrahedra $T_{0,\pm}$ are the black lines.}\label{triangulation}
\end{figure}
The next lemma summarizes our discussion thus far.
\begin{lemma}
$L'$ admits a decomposition glued into two ideal cubes glued together as shown in Figure~\ref{triangulation}, where opposite faces identified by reflections along the black lines, so that vertices with the same label are identified. \qed
\end{lemma}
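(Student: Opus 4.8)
The existence of the two-cube decomposition is already implicit in the discussion preceding the lemma, so the real work is to pin down the face identifications and check that they are the ones drawn in Figure~\ref{triangulation}. The plan is to carry out all of the bookkeeping on $\partial\Delta^4$ and then transport it through the double cover $L'\to N$.

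First I would fix notation, identifying the five cusps of $L'$ with the vertices $0,1,2,3,4$ of $\Delta^4$ and the ten ideal tetrahedra of $L'$ with the two lifts $F_i^{\pm}$ of each facet $F_i=\{0,1,2,3,4\}\setminus\{i\}$, so that $T_{0,\pm}=F_0^{\pm}$ are exactly the two tetrahedra with ideal vertices at cusps $1,2,3,4$. I would then record the gluings of $\partial\Delta^4$: the facets $F_i$ and $F_j$ meet along the triangle $T_{ij}=\{0,1,2,3,4\}\setminus\{i,j\}$, identified by the identity on that common triangle. In particular the face of $T_{0,\pm}$ opposite its cusp-$i$ vertex is $T_{0i}$, which is glued to a lift of $F_i$; since any lift of $F_i$ carries exactly one ideal vertex at cusp $0$, the five tetrahedra $T_{0,\pm},F_1^{\pm},F_2^{\pm},F_3^{\pm},F_4^{\pm}$ realize the standard decomposition of a combinatorial cube into an inscribed regular tetrahedron $T_{0,\pm}$ together with its four corner tetrahedra, the four new corners all lying at cusp $0$. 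This is the promised pair of ideal cubes, and one checks as part of this step that two distinct corner tetrahedra of the same cube share only an edge, never a triangular face.

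It remains to determine how the two cubes are glued, and this is the step demanding care. Each boundary triangle of a cube is a copy of some $T_{ij}$ with $i,j\in\{1,2,3,4\}$, and the two triangles with vertex set $\{0\}\cup(\{1,2,3,4\}\setminus\{i,j\})$ assemble into one square face, cut along the diagonal joining its two non-$0$ vertices; this diagonal is an edge of the inscribed tetrahedron, namely one of the dashed lines of Figure~\ref{triangulation}. Downstairs $T_{ij}$ is shared by $F_i$ and $F_j$, so each of its two lifts in $L'$ is shared by one lift of $F_i$ and one lift of $F_j$; since a corner tetrahedron of a cube cannot be glued along a face to another corner tetrahedron of the same cube, lifting the identification of $T_{ij}\subset\partial\Delta^4$ forces every boundary triangle of one cube to be glued to a boundary triangle of the other, and the deck involution of $L'\to N$ (which fixes all cusps and exchanges $F_i^{+}\leftrightarrow F_i^{-}$) shows these cross-cube gluings occur in $\tau$-symmetric pairs, namely the pairs visible in Figure~\ref{triangulation}. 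Since the identity map on $\partial\Delta^4$ preserves vertices and the covering preserves cusps, the resulting identification of square faces is the label-preserving one that fixes the two non-$0$ vertices and swaps the two cusp-$0$ vertices, i.e.\ the reflection across the dashed diagonal, which is exactly the gluing asserted. The only real obstacle is keeping this last identification straight: it is a routine but error-prone check, and the cleanest way to carry it out is to start from the explicit face pairings of the ten-tetrahedron Dunfield--Thurston triangulation, group the tetrahedra into the two cubes as above, read off the induced square-face identifications, and compare them (including orientations and spin) with the labels in Figure~\ref{triangulation}.
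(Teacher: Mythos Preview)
Your proposal is correct and follows essentially the same route as the paper. In fact the paper gives no proof at all: the lemma carries a bare \qed and is explicitly labeled as a summary of the preceding discussion (the $\partial\Delta^4$ model of $N$, the double cover $L'\to N$, and the observation that $T_{0,\pm}$ together with its four neighbors assemble into a cube). Your write-up fleshes out exactly that discussion, and your care about why the boundary triangles of one cube must glue to the other cube---rather than self-glue---together with the fallback of reading off the explicit Dunfield--Thurston face pairings, is more than the paper itself provides.
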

The edges of the tetrahedra $T_{0,\pm}$ are the black lines in Figure~\ref{triangulation}.

It will be convenient to describe the spin structures on $L'$ that we will use in terms of the ideal triangulation, following Benedetti and Petronio~\cite{BP}, as we will now quickly review. Their description of ideal triangulations makes use of so-called special spines, which are certain $2$-dimensional complexes dual to a (possibly ideal) triangulation of a $3$-manifold. The special spine is the union $P$ of compact $2$-dimensional polyhedra dual to each maximal cell in the ideal triangulation. The singular set $S(P)$ of $P$ is a $4$-valent graph whose edges are dual to the $2$-dimensional faces of the triangulation on $L'$. 

To specify a spin structure, we will first need to smooth $P$ to obtain a new spine $Y$ which has a well-defined normal direction. A choice of smoothing is determined by the following combinatorial data associated to a triangulation. A branching on an oriented tetrahedron $T$ is an orientation of its edges so that none of its faces are cycles. In particular, each vertex of $T$ should have a different number of incoming edges. Each face of $T$ is given the orientation that restricts to the prevailing orientation of its edges.
\begin{lemma}\label{spinchoice}
The triangulation of $L'$ admits a branching.
\end{lemma}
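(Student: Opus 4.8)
The plan is to produce a branching directly from the labeling of the five cusps of $L'$ by the integers $0,1,2,3,4$. Recall that a branching is the same thing as an orientation of every edge of the triangulation which, restricted to each tetrahedron, comes from a total order of its four vertices; once that holds, the conditions that no face be a directed cycle and that the four vertices have pairwise distinct in-degrees are automatic, since the vertex of local rank $\ell\in\{0,1,2,3\}$ has exactly $\ell$ incoming edges and a face on vertices of ranks $i<j<k$ has edges $i\to j$, $j\to k$, $i\to k$. So it suffices to exhibit a global total order on the vertex set of the triangulation that restricts to a strict total order on the four ideal vertices of each of the ten tetrahedra.

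The key observation is that each of the ten ideal tetrahedra in the decomposition of Figure~\ref{triangulation} meets four \emph{distinct} cusps of $L'$. Indeed, the two central tetrahedra $T_{0,\pm}$ have ideal vertices at the cusps $1,2,3,4$, and each of the remaining eight tetrahedra is a corner tetrahedron of one of the two ideal cubes: it has one ideal vertex at a cube vertex labeled $0$ and its other three ideal vertices at the three cube vertices adjacent to it, which carry three distinct labels from $\{1,2,3,4\}$. (This distinctness is inherited from the fact that $N$ is triangulated as $\partial\Delta^4$, whose facets are honest $3$-simplices on four of the five vertices, together with the fact that the covering $L'\to N$ is a homeomorphism on each tetrahedron and induces a bijection on cusps.) In particular no edge of the triangulation is a loop at a single cusp, so every edge is determined, as far as its endpoints go, by the unordered pair of cusps it joins. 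Fixing the order $0<1<2<3<4$ and orienting each edge from its lower-labeled to its higher-labeled endpoint is therefore unambiguous and independent of the ambient tetrahedron, and by the previous paragraph it defines a branching.

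I expect the only real content to be the verification that each tetrahedron touches four distinct cusps — equivalently, that the triangulation has no edge with both endpoints on the same cusp — which one reads off from Figure~\ref{triangulation} (or derives from the $\partial\Delta^4$ description of the triangulation of $N$). Once that is in place the global order $0<1<2<3<4$ does everything, and there is nothing delicate left: edge orientations induced by a global vertex order automatically agree on shared edges, and the induced face orientations are determined by the edge orientations, so no further compatibility check is needed.
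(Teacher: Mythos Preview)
Your argument is correct and follows essentially the same approach as the paper: both exploit the fact that each of the ten tetrahedra has ideal vertices at four \emph{distinct} cusps, and orient every edge via a total order on the cusp labels $\{0,1,2,3,4\}$. The only cosmetic difference is that the paper effectively uses the order $1<2<3<4<0$ (declaring all edges at the $0$th vertex to be incoming) rather than your $0<1<2<3<4$, which is immaterial for the lemma as stated.
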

\begin{proof}
Fix a branching of $T_{0,{\pm}}$ for which the $i$th vertex, where $i = 1,2,3,4$, has $i-1$ incoming edges. Any other tetrahedron in the ideal triangulation has a $0$th vertex, and we declare that all edges adjacent to this vertex are incoming. With these choices, it is easy to see that all edges have been given an orientation, and that no face in this triangulation is a cycle.
\end{proof}
As we will see momentarily, this choice of branching determines a spin structure, and when we compute Floer homology, we will always assume that $L'$ is equipped with this spin structure.
\begin{remark}
For an arbitrary $3$-manifold, not every triangulation admits a branching, so in general one requires so-called weak branchings~\cite{BP} to describe spin structures.
\end{remark}
Given a branching of the triangulation, one can orient the $2$-dimensional faces of $P$ so that they are positively transverse to the dual edges of the triangulation.
\begin{lemma}\label{spine}
The spine $P$ can be smoothed in such a way that it gains a well-defined normal vector field $\nu$. Let $Y$ denote the smoothed spine. \qed
\end{lemma}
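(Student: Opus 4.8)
The plan is to invoke the smoothing procedure of Benedetti--Petronio~\cite{BP}, using the branched ideal triangulation of $L'$ produced in Lemma~\ref{spinchoice}, and to observe that the branching axiom is precisely what makes the procedure go through globally. Recall the two strata of the singular set: the triple edges, along which three $2$-cells of $P$ meet and which are dual to the triangular faces of the triangulation, and the true vertices, where four triple edges meet in the standard butterfly model and which are dual to the ideal tetrahedra.

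First I would smooth a neighborhood of each triple edge $e$. If $e$ is dual to a face $f$, the three $2$-cells incident to $e$ are dual to the three edges of $f$; the branching orients those edges, and together with the orientation of $L'$ this orients the three sheets so that each is positively transverse to its dual edge, as in the paragraph preceding the statement. Because the branching forces $f$ not to be an oriented cycle, the three induced transverse orientations are never cyclically arranged: two of the sheets combine into a single smoothly embedded sheet passing through $e$, and the third meets it tangentially along a branch curve on a well-defined side. Smoothing $e$ by this combinatorial recipe replaces it by a branch curve of a branched surface and produces a normal vector field on a neighborhood of $e$ in $P$.

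The substantive step is the compatibility at a true vertex $v$ dual to a tetrahedron $T$: one must check that the four edge-smoothings chosen on the four triple edges emanating from $v$ (dual to the four faces of $T$) fit together to a smooth branched surface near $v$ carrying a normal vector field. This is exactly where the branching hypothesis is used. Requiring that no face of $T$ be an oriented cycle is equivalent to the four vertices of $T$ carrying pairwise distinct in-valences $0,1,2,3$, and Benedetti--Petronio show that under this condition the standard $2$-complex neighborhood of $v$ admits a canonical smoothing restricting to the one already fixed on each of its four triple edges; since Lemma~\ref{spinchoice} gives a genuine branching rather than a weak one, no further choices are needed at any vertex. Finally the local normal vector fields near the triple edges and vertices agree with the transverse orientation on the interiors of the $2$-cells of $P$, as all of them descend from that single datum, so they assemble into a globally defined nowhere-zero normal vector field $\nu$ on the smoothed spine $Y$. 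I expect the vertex-compatibility check to be the only real obstacle; the rest is a local, essentially forced construction.
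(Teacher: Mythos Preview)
Your proposal is correct and follows the same approach as the paper: both invoke the Benedetti--Petronio smoothing procedure, using the branching of Lemma~\ref{spinchoice} to orient the $2$-cells of $P$ and then smooth along the singular set. The paper in fact gives no proof at all---the lemma is stated with a bare \qed\ as a citation of~\cite{BP}---so your sketch of the triple-edge and vertex smoothings is considerably more detailed than what appears there.
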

This lemma tells us that there is a canonically defined nonvanishing vector field $\nu$ along $Y$. We can choose another vector field $\eta$ defined along $Y$ which is tangent to $Y$, the \textit{descending vector field} of~\cite[\S{2.1}]{BP}. These two vector fields are always linearly independent, so they specify a spin structure on $M$. 

\subsection{Construction} As a starting point for the construction of $\Lsing^{\epsilon}$, we define a coamoeba dual to $V$. Identify $T^3 = \mathbb{R}^3/\mathbb{Z}^3$ with a quotient of $[0,1]^3$.
\begin{definition}
Let $\overline{\Delta}_{+}\subset[0,1]^3$ denote the $3$-simplex with vertices at $(0,0,0)$, $(1,0,0)$, $(0,1,0)$, and $(0,0,1)$. Let $\overline{\Delta}_{-}\subset[0,1]^3$ denote the $3$-simplex with vertices at $(0,1,1)$, $(1,0,1)$, $(1,1,0)$, and $(1,1,1)$. Define $\Delta_{\pm}$ to be the union of the $1$-cells and $3$-cells in $\overline{\Delta}_{\pm}$, and define the coamoeba to be $\Delta\coloneqq\Delta_+\cup\Delta_{-}$.
\end{definition}
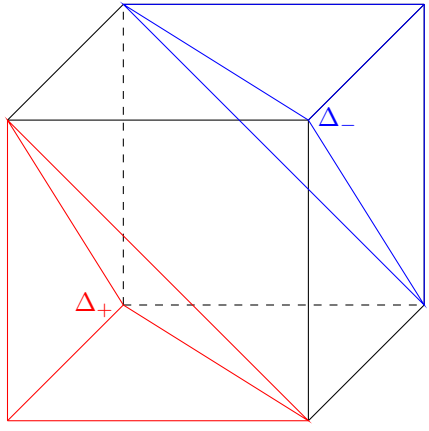
\begin{figure}
\begin{tikzpicture}[scale =  4]
 \draw[dashed] (0,0,0) -- (1,0,0);
 \draw[dashed] (0,0,0) -- (0,1,0);
 \draw (1,0,0) -- (1,1,0);
 \draw (1,0,0) -- (1,0,1);
 \draw (0,1,0) -- (1,1,0);
 \draw (0,1,0) -- (0,1,1);
 \draw (0,1,1) -- (1,1,1);
 \draw (1,0,1) -- (1,1,1);
 \draw (1,1,0) -- (1,1,1);

 \draw[red] (0,0,0) -- (1,0,1) -- (0,1,1) -- cycle;
 \draw[red] (0,0,0) -- (0,0,1);
 \draw[red] (1,0,1) -- (0,0,1);
 \draw[red] (0,1,1) -- (0,0,1);
 
 \draw[blue] (1,1,1) -- (1,0,0) -- (0,1,0) -- cycle;
 \draw[blue] (1,1,1) -- (1,1,0);
 \draw[blue] (1,0,0) -- (1,1,0);
 \draw[blue] (0,1,0) -- (1,1,0);

 \node[anchor = west] at (1,1,1) {\color{blue} $\Delta_{-}$};
 \node[anchor = east] at (0,0,0) {\color{red} $\Delta_{+}$};
\end{tikzpicture}
\caption{The coamoeba $\Delta\subset[0,1]^3$ associated to the $4$-valent tropical vertex.}\label{Coamoeba}
\end{figure}
The images of these simplices in $T^3$ intersect each other along their edges, which are either of the form $E_{k}\coloneqq\lbrace\theta_i = \theta_j = 0\rbrace\subset[0,1]^3$, where $\lbrace i,j,k\rbrace = \lbrace 1,2,3\rbrace$ or $E_{ij}\coloneqq\lbrace \theta_i+\theta_j = 0, \text{ and } \theta_k = 0\rbrace\subset[0,1]^3$, where $\lbrace i,j,k\rbrace = \lbrace 1,2,3\rbrace$. All vertices of $\overline{\Delta}_{+}$ and $\overline{\Delta}_{-}$ project to the same point in $T^3$. Define $E_{k}^{\circ}$ and $E_{ij}^{\circ}$ to be the $1$-cells of $\Delta$ (which do not include the vertices).

It will be helpful to understand the symmetries of $\Delta$ for our construction. Let $\theta_0,\ldots,\theta_3$ denote the vertices of $\Delta_+\subset[0,1]^3$ as in Figure~\ref{triangulation}. For $k = 1,2,3$, there is a unique affine automorphism of $\mathbb{R}^3$ which fixes $\Delta_+$, switches $\theta_0$ with $\theta_k$, and leaves the other vertices of $\Delta_+$ fixed. This descends to an affine automorphism of $T^3$ which we call $R_k$. Let $G$ be the group generated by $\lbrace R_1,R_2,R_3\rbrace$. Note that these automorphisms also fix $\Delta_{-}$, so $G$ acts on $\Delta$. The obvious action of $G$ on $\lbrace0,1,2,3\rbrace$ corresponds to its action on the edges and faces of $\Delta$.

As in the construction of the Lagrangian pair of pants, we need to find an appropriate smoothing of $\Delta$. Choose neighborhoods $U_k$ of $E_k^{\circ}$ and $U_{ij}$ of $E_{ij}^{\circ}$ in $\Delta$ which are diffeomorphic to subsets of $\mathbb{R}^2$ crossed with open intervals. We can form the real blowups $\widetilde{U}_k$ and $\widetilde{U}_{ij}$ by blowing up along the interiors of the edges $E_k^{\circ}$ and $E_{ij}^{\circ}$, respectively. 

To explain this more precisely, fix a neighborhood $V$ of a vertex in the $2$-dimensional coamoeba for a pair of pants. We can identify the neighborhoods $U_{ij}$ and $U_k$  with the product $(0,1)\times V$. In these coordinates, $\widetilde{U}_{ij}$ and $\widetilde{U}_k$ are identified with a product of the real blowup of $V$ at the origin with an open interval.

The real blowups along the edges come with natural maps $\widetilde{U}_k\to U_k$ and $\widetilde{U}_{ij}\to U_{ij}$.
\begin{definition}
The real blowup $\widetilde{\Delta}$ is the space obtained from $\mathrm{Int}(\Delta)$ by gluing $\widetilde{U}_k$ and $\widetilde{U}_{ij}$ to $\mathrm{Int}(\Delta)$ using the projection maps. Let $\pi\colon\widetilde{\Delta}\to\Delta$ denote the blowup map.
\end{definition}
We emphasize that $\Delta$ does not include the vertices of the two tetrahedra, and in the construction of the real blowup, we have not glued anything in at the vertices. This means that $\widetilde{\Delta}$ is smooth.

\begin{lemma}\label{topology}
The real blowup $\widetilde{\Delta}$ is a smooth $3$-manifold diffeomorphic to $L'$.
\end{lemma}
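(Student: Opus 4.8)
The plan is to prove the lemma in two stages: first that $\widetilde{\Delta}$ is a smooth $3$-manifold without boundary, by writing down local models near the exceptional loci, and then that it is diffeomorphic to $L'$, by matching its natural decomposition into two pieces with the decomposition of $L'$ into two ideal cubes recorded in Subsection~\ref{chainlinkdescription} and Figure~\ref{triangulation}. (In dimension three smooth and topological equivalence coincide, so it suffices to produce a homeomorphism.)

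For smoothness, note that away from the blown-up edges $E_k^{\circ}$ and $E_{ij}^{\circ}$ the space $\widetilde{\Delta}$ is an open subset of $\mathrm{Int}(\Delta)\subset T^3$ and hence already smooth, so it is enough to exhibit a smooth chart near each exceptional locus. Over an interior point of such an edge $E$, the space $\Delta$ is locally a product of $E$ with a transverse planar slice, and this slice is the union of the origin with the two open sectors cut out by $\Delta_{+}$ and $\Delta_{-}$, whose bounding walls are the traces of the $2$-faces of $\overline{\Delta}_{+}$ and of $\overline{\Delta}_{-}$ adjacent to $E$. The key point, verified case by case for the three edges of type $E_k$ and the three of type $E_{ij}$, is that the two $2$-faces of $\overline{\Delta}_{+}$ meeting $E$ lie in the same pair of affine $2$-subtori of $T^3$ as the two $2$-faces of $\overline{\Delta}_{-}$ meeting the edge of $\overline{\Delta}_{-}$ identified with $E$; for instance along $E_1$ both pairs of faces lie in $\{\theta_2 = 0\}$ and $\{\theta_3 = 0\}$, while along $E_{12}$ all four faces lie in $\{\theta_3 = 0\}$ and $\{\theta_1+\theta_2+\theta_3 = 0\}$. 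Hence the two open sectors are complementary in the transverse plane and share their walls, so blowing up the origin (adjoining the full $\mathbb{RP}^1$ of directions, as in the construction of $\widetilde{U}_k$ and $\widetilde{U}_{ij}$) turns the slice into a smooth surface---a neighborhood of the zero section of the tautological line bundle over $\mathbb{RP}^1$---with no boundary. Taking the product with $E$ gives the charts, so $\widetilde{\Delta}$ is a smooth $3$-manifold without boundary.

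For the diffeomorphism with $L'$, write $\widetilde{\Delta} = \widetilde{\Delta}_{+}\cup\widetilde{\Delta}_{-}$ for the closures in $\widetilde{\Delta}$ of the preimages of $\mathrm{Int}(\overline{\Delta}_{+})$ and $\mathrm{Int}(\overline{\Delta}_{-})$; these two pieces meet along the exceptional loci over the six edges and along the interiors of the four $2$-faces of $\overline{\Delta}_{+}$, which coincide in $T^3$ with those of $\overline{\Delta}_{-}$. The plan is to identify each $\widetilde{\Delta}_{\pm}$ with one of the two ideal cubes $C_{\pm}$ in the decomposition of $L'$ and to check that the resulting face identifications agree with the reflections prescribed in Figure~\ref{triangulation}. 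Concretely, one decomposes $\widetilde{\Delta}_{+}$ into five closed regions---a central one spanning the four blown-up vertices of $\overline{\Delta}_{+}$, matching the tetrahedron $T_{0,+}$, together with one region adjacent to each $2$-face of $\overline{\Delta}_{+}$, matching the four remaining tetrahedra of $C_{+}$, each of which has a vertex at the cusp labelled $0$. Under the blowdown the four vertices of $\overline{\Delta}_{+}$ all map to the single point $0\in T^3$ but are separated by the exceptional circles; labelling these four blown-up vertices by the cusps $1,2,3,4$ and the center of the coamoeba (the link of the cone point, cf.\ the remark following Theorem~\ref{mainthm0}) by the cusp $0$ identifies $\widetilde{\Delta}_{+}$ with $C_{+}$, and likewise $\widetilde{\Delta}_{-}$ with $C_{-}$, as ideal polyhedra, with the five regular ideal tetrahedra in each piece matching those of the Dunfield--Thurston triangulation~\cite{DT}. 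It then remains to match the six face identifications; here the symmetry group $G = \langle R_1,R_2,R_3\rangle$ acting on $\Delta$, and hence on $\widetilde{\Delta}$, realizes the permutations of the cusps $1,2,3$ and is compared with the transitive symmetry action on the cusps of $L'$ used in Subsection~\ref{chainlinkdescription}, which pins down the gluing up to the allowed symmetries. Since $\widetilde{\Delta}$ and $L'$ are then built by gluing the same two ideal cubes in the same pattern, they are diffeomorphic.

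The two local computations in the first stage are finite and routine. The main obstacle is the combinatorial bookkeeping in the second stage: producing the homeomorphisms $\widetilde{\Delta}_{\pm}\to C_{\pm}$ explicitly and verifying that all six face identifications---with the orientations and reflections of Figure~\ref{triangulation}---really match, while keeping careful track of the projectivized directions introduced by the blowups along the edges.
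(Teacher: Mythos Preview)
Your overall strategy---identify each piece $\widetilde{\Delta}_{\pm}$ with one of the two ideal cubes $C_{\pm}$ in the decomposition of $L'$ and check that the face gluings match Figure~\ref{triangulation}---is the same as the paper's. But the execution contains a factual error that undermines the argument.

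You claim that the two pieces $\widetilde{\Delta}_{+}$ and $\widetilde{\Delta}_{-}$ meet ``along the interiors of the four $2$-faces of $\overline{\Delta}_{+}$, which coincide in $T^3$ with those of $\overline{\Delta}_{-}$.'' Neither part of this is true. First, the $2$-faces are not in $\Delta$ at all: by definition $\Delta_{\pm}$ is the union of the $1$-cells and $3$-cells of $\overline{\Delta}_{\pm}$, so the open $2$-faces (and the vertices) are deleted, and nothing in the blowup construction restores them. Second, the $2$-faces of $\overline{\Delta}_{+}$ and $\overline{\Delta}_{-}$ do \emph{not} coincide in $T^3$; for instance the face $\{\theta_3=0\}$ of $\overline{\Delta}_{+}$ is the triangle on $(0,0),(1,0),(0,1)$ in the $(\theta_1,\theta_2)$-torus, whereas the face $\{\theta_3=1\}$ of $\overline{\Delta}_{-}$ is the complementary triangle on $(0,1),(1,0),(1,1)$. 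Consequently the only locus along which $\widetilde{\Delta}_{+}$ and $\widetilde{\Delta}_{-}$ meet is the exceptional set over the six edges, and your proposed decomposition of $\widetilde{\Delta}_{+}$ into ``a central region plus four regions adjacent to the $2$-faces'' does not make sense as stated.

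The paper's argument avoids this by going straight to the cube picture: identify $\Int\Delta_{\pm}$ with the interior of a solid cube so that the six \emph{edges} of the tetrahedron correspond to the six \emph{faces} of the cube; under this identification the four vertices of $\Delta_{\pm}$ become the cube vertices labeled $0$ and the four $2$-faces of $\Delta_{\pm}$ become the cube vertices labeled $1,2,3,4$. The real blowup along the edges then literally attaches the missing cube faces (the exceptional loci), and one reads off that the two cubes are glued along their faces by exactly the reflections of Figure~\ref{triangulation}. Your five-tetrahedron decomposition of each cube is a consequence of this identification, not a route to it; once you have the edge-to-face correspondence the rest is immediate, and the smoothness discussion you give (which is more explicit than the paper's) becomes unnecessary since it is built into the cube model.
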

\begin{proof} Identify $\Int\Delta_{+}$ and $\Int\Delta_{-}$ with the interiors of solid cubes in such a way that the edges of $\Delta_{+}$ and $\Delta_{-}$ correspond to faces of the cube. Consequently, vertices of $\Delta_{\pm}$ will correspond to the vertices of the cubes labeled $0$, and the faces of $\Delta_{\pm}$ will correspond to the vertices of the cubes labeled $1$, $2$, $3$, and $4$. Under this identification, the real blowup will correspond to gluing the two cubes together by reflections along the black lines in Figure~\ref{triangulation}.
\end{proof}
Define the function $g\colon\Delta\to\mathbb{R}$ by
\begin{align}\label{coamoebaprimitive}
g(\theta_1,\theta_2,\theta_3) = \begin{cases}
-\sqrt{\sin(\pi\theta_1+\pi\theta_2+\pi\theta_3)\sin(\pi\theta_1)\sin(\pi\theta_2)\sin(\pi\theta_3)}, \; \theta\in\Delta_{+} \\
\sqrt{\sin(\pi\theta_1+\pi\theta_2+\pi\theta_3)\sin(\pi\theta_1)\sin(\pi\theta_2)\sin(\pi\theta_3)}, \; \theta\in\Delta_{-}.
\end{cases}
\end{align}
Observe that $g$ vanishes on the boundary of $\overline{\Delta}$, and that it is invariant under the involution $\theta\mapsto-\theta$ on $T^3$.
\begin{lemma}
The differential $dg$ restricted to $\Int\Delta$ extends to a smooth function $\widetilde{dg}\colon\widetilde{\Delta}\to\mathbb{R}$.
\end{lemma}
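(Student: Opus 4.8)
The plan is to reduce the question, transverse to each edge of $\Delta$, to the local situation already analysed by Matessi in~\cite{Matessi} for the two-dimensional construction. First, on $\Int\Delta$ all four of the factors $\sin(\pi(\theta_1+\theta_2+\theta_3)),\sin(\pi\theta_1),\sin(\pi\theta_2),\sin(\pi\theta_3)$ are strictly positive: on $\Int\Delta_+$ because $\theta_i\in(0,1)$ and $\theta_1+\theta_2+\theta_3\in(0,1)$, and on $\Int\Delta_-$ because $\theta_i\in(0,1)$ and $\theta_1+\theta_2+\theta_3\in(2,3)$. Hence the radicand is a positive smooth function there, so $g$, and therefore $dg$, is already smooth on $\Int\Delta$, and it only remains to extend $dg$ over the exceptional loci of the blowups $\widetilde{U}_k\to U_k$ and $\widetilde{U}_{ij}\to U_{ij}$. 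Since the group $G$ permutes the edges $E_k$ among themselves and the edges $E_{ij}$ among themselves while fixing $g$ up to sign, and the involution $\theta\mapsto-\theta$ interchanges $\Delta_{\pm}$ and fixes $dg$, it suffices to work near one edge of each type, say $E_1^{\circ}$ and $E_{12}^{\circ}$.

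Near such an edge exactly two of the four sine factors vanish, each to first order and each along one of the two $2$-faces of the simplex that meet along the edge; let $\ell_1,\ell_2$ be affine functions cutting out those faces (so $\ell_1=\theta_2$, $\ell_2=\theta_3$ near $E_1^{\circ}$, and $\ell_1=\theta_3$, $\ell_2=1-\theta_1-\theta_2-\theta_3$ near $E_{12}^{\circ}$), chosen so that $\ell_1,\ell_2>0$ on $\Delta$ near the edge and the edge is $\{\ell_1=\ell_2=0\}$. Writing the two vanishing sines as $\ell_i$ times a smooth positive function and collecting the remaining two positive factors, one gets $g^2=\kappa\cdot\ell_1\ell_2$ with $\kappa$ smooth and positive near the edge, so $g=\pm\sqrt{\kappa}\,\sqrt{\ell_1\ell_2}$ — precisely Matessi's local model, with the single coordinate running along the edge a spectator. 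Passing to a blowup chart $\ell_2=\ell_1\sigma$, one has $\sqrt{\ell_1\ell_2}=|\ell_1|\sqrt{\sigma}$, hence $g=\mp\ell_1\sqrt{\sigma}\,\sqrt{\kappa}$, which is a smooth function of the edge coordinate, $\ell_1$, and $\sigma$: the $\sqrt{\sigma}$ is smooth because the exceptional divisor is the \emph{open} arc $\{\ell_1=0,\ \sigma>0\}$ — open because $\Delta$ omits the $2$-faces of the simplices, so directions tangent to a face are never limits of directions from $\Int\Delta$ — and the sign flip of $g$ across the edge is absorbed into the sign of $\ell_1$, so that the extensions from the $\Delta_+$ and $\Delta_-$ sides match automatically, as do the charts $\ell_2=\ell_1\sigma$ and $\ell_1=\ell_2\tau$ on their overlap. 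Thus $dg$ extends smoothly over each $\widetilde{U}_k$ and $\widetilde{U}_{ij}$, and by construction these extensions agree with $dg$ on $\Int\Delta$, giving a smooth $\widetilde{dg}\colon\widetilde{\Delta}\to\mathbb{R}$.

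The point needing care is global smoothness, as opposed to smoothness on each patch. One has to check that the blowup charts above genuinely cover all of $\widetilde{U}_k$ and $\widetilde{U}_{ij}$ — this is exactly where one uses that the exceptional arcs are open, equivalently that $\widetilde{\Delta}$ is a boundaryless manifold — and to keep track of the fact that $\widetilde{dg}$ is \emph{not} bounded: it diverges along the ends of the edges $E_k^{\circ}$ and $E_{ij}^{\circ}$, where a third sine factor degenerates and which are exactly the five cusps of $\widetilde{\Delta}\cong L'$. This divergence is what will force $\Lsing^{\epsilon}$ to acquire its conical singularity in Theorem~\ref{singularlift}. Apart from this bookkeeping, the argument is Matessi's verbatim once the reduction above is in place, the only new feature — the second type of edge $E_{ij}$ — being handled identically to the first.
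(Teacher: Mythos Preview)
Your argument is correct and proceeds somewhat differently from the paper. The paper writes out the three partial derivatives $\partial g/\partial\theta_i$ explicitly, passes to one blowup chart along $E_3^\circ$, and checks by hand that each extends; it then invokes the $G$-symmetry for the remaining edges. You instead factor $g^2=\kappa\,\ell_1\ell_2$ with $\kappa>0$ smooth, observe that in a blowup chart this gives the smooth lift $\widetilde g=\mp\ell_1\sqrt{\sigma\kappa}$, and deduce the extension of $dg$; this packages the computation into Matessi's two-variable local model with one spectator direction, which is cleaner and makes the role of the missing $2$-faces (forcing $\sigma>0$) transparent. One small point worth making explicit: smoothness of $\widetilde g$ alone does not immediately give smoothness of the $\theta$-partials, because the vector fields $\partial_{\theta_i}$ do \emph{not} lift to smooth vector fields on the blowup (e.g.\ $\partial_{\ell_2}=\ell_1^{-1}\partial_\sigma$). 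What saves you is precisely the factor of $\ell_1$ in $\widetilde g$: applying $\ell_1^{-1}\partial_\sigma$ to $\ell_1 h$ returns $\partial_\sigma h$, which is smooth. You have this form explicit, so the argument goes through, but the inference ``$g$ extends, hence $dg$ extends'' deserves this one sentence of justification.

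Two minor remarks. First, $G\cong S_4$ acts transitively on all six edges of $\Delta_+$ (e.g.\ $R_1$ swaps $E_2\leftrightarrow E_{12}$), so there is no need to treat $E_k$ and $E_{ij}$ separately; one edge suffices, as in the paper. Second, your closing comments are garbled: the ends of the open edges lie at the \emph{vertices} of $\Delta_\pm$, and these account for only four of the five cusps of $\widetilde\Delta\cong L'$ --- the cusps over the legs of $V$, where indeed $\widetilde{dg}$ is unbounded. The fifth cusp (labelled $0$) corresponds instead to the missing $2$-faces, and there one has $\widetilde{dg}\to 0$; it is this \emph{vanishing}, not a divergence, that produces the conical singularity of $\Lsing$. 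This does not affect the proof of the lemma, but you should straighten it out before using it elsewhere.
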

\begin{proof}
One easily computes the components of the differential $dg$ on $\Delta_{\pm}$ to be
\begin{align}\label{coamoeba1form}
\frac{\partial g}{\partial\theta_1} &= \mp\frac{\pi\sin(\pi\theta_2)\sin(\pi\theta_3)\sin(2\pi\theta_1+\pi\theta_2+\pi\theta_3)}{2\sqrt{\sin(\pi\theta_1)\sin(\pi\theta_2)\sin(\pi\theta_3)\sin(\pi\theta_1+\pi\theta_2+\pi\theta_3)}} \\
\frac{\partial g}{\partial\theta_2} &= \mp\frac{\pi\sin(\pi\theta_1)\sin(\pi\theta_3)\sin(\pi\theta_1+2\pi\theta_2+\pi\theta_3)}{2\sqrt{\sin(\pi\theta_1)\sin(\pi\theta_2)\sin(\pi\theta_3)\sin(\pi\theta_1+\pi\theta_2+\pi\theta_3)}} \nonumber \\
\frac{\partial g}{\partial\theta_3} &= \mp\frac{\pi\sin(\pi\theta_1)\sin(\pi\theta_2)\sin(\pi\theta_1+\pi\theta_2+2\pi\theta_3)}{2\sqrt{\sin(\pi\theta_1)\sin(\pi\theta_2)\sin(\pi\theta_3)\sin(\pi\theta_1+\pi\theta_2+\pi\theta_3)}}. \nonumber
\end{align}
First we will show that $dg$ extends over the edge $E_3^{\circ}$. In the neighborhood $\widetilde{U_3}$, the real blowup has coordinates $(\theta_1,t,\theta_3)$, where $t\in\mathbb{R}$, and the map $\widetilde{U_3}\to U_3$ is $(\theta_1,t,\theta_3)\mapsto(t\theta_1,t,\theta_3)$. Rewriting $dg$ in these coordinates, we get
\begin{align*}
\frac{\partial g}{\partial\theta_1} &= \mp\frac{\pi\sin(\pi t)\sin(\pi\theta_3)\sin(2\pi t\theta_1+\pi t+\pi\theta_3)}{2\sqrt{\sin(\pi t\theta_1)\sin(\pi t)\sin(\pi\theta_3)\sin(\pi t\theta_1+\pi t+\pi\theta_3)}} \\
\frac{\partial g}{\partial\theta_2} &= \mp\frac{\pi\sin(\pi t\theta_1)\sin(\pi\theta_3)\sin(\pi t\theta_1+2\pi t+\pi\theta_3)}{2\sqrt{\sin(\pi t\theta_1)\sin(\pi t)\sin(\pi\theta_3)\sin(\pi t\theta_1+\pi t+\pi\theta_3)}} \\
\frac{\partial g}{\partial\theta_3} &= \mp\frac{\pi\sin(\pi t\theta_1)\sin(\pi t)\sin(\pi t\theta_1+\pi t+2\pi\theta_3)}{2\sqrt{\sin(\pi t\theta_1)\sin(\pi t)\sin(\pi\theta_3)\sin(\pi t\theta_1+\pi t+\pi\theta_3)}}.
\end{align*}
Notice that nonzero values of $t$ correspond to points $(t\theta_1,t,\theta_3)$ in the interior of $\Delta$, so checking whether these functions extend smoothly across the blowup amounts to checking whether or not they extend smoothly to $t=0$. To check whether $\frac{\partial g}{\partial\theta_1}$ extends to the blowup, we rewrite it as
\begin{align*}
\frac{\partial g}{\partial\theta_1} &= \mp\frac{\pi\sin(\pi t)\sin(\pi\theta_3)\sin(2\pi t\theta_1+\pi t+\pi\theta_3)}{2\sqrt{\sin(\pi t\theta_1)\sin(\pi t)\sin(\pi\theta_3)\sin(\pi t\theta_1+\pi t+\pi\theta_3)}} \\
&= \mp\epsilon\frac{\pi\frac{\sin(\pi t)}{t}\sin(\pi\theta_3)\sin(2\pi t\theta_1+\pi t+\pi\theta_3)}{2\sqrt{\frac{\sin(\pi t\theta_1)}{t}\frac{\sin(\pi t)}{t}\sin(\pi\theta_3)\sin(\pi t\theta_1+\pi t+\pi\theta_3)}},
\end{align*}
where $\epsilon$ is the sign $\frac{t}{\sqrt{t^2}}$, which is $(-1)$ when $t<0$ and $1$ when $t>0$. Because the value of $\theta_3$ will always be nonzero, since we are considering the blowup along $E_3^{\circ}$. Since $\theta_1$ is close to $0$ or $1$, depending on whether the point lies in $\Delta_+$ or $\Delta_{-}$, all factors in these expressions involving $\theta_3$ are never $0$. Thus it is clear that $\frac{\partial g}{\partial\theta_1}$ extends smoothly across $t=0$, with the extension given by
\begin{align*}
\widetilde{dg}_1 = -\frac{\pi\frac{\sin(\pi t)}{t}\sin(\pi\theta_3)\sin(2\pi t\theta_1+\pi t+\pi\theta_3)}{2\sqrt{\frac{\sin(\pi t\theta_1)}{t}\frac{\sin(\pi t)}{t}\sin(\pi\theta_3)\sin(\pi t\theta_1+\pi t+\pi\theta_3)}}.
\end{align*}

The sign $-1$ comes from comparing the sign in the definition of $g$ and the sign $\frac{t}{\sqrt{t^2}}$. A similar computation shows that $\frac{\partial g}{\partial\theta_2}$ extends smoothly to the blowup, and the extension has the formula
\begin{align*}
\widetilde{dg}_2 = -\frac{\pi\frac{\sin(\pi t\theta_1)}{t}\sin(\pi\theta_3)\sin(\pi t\theta_1+2\pi t+\pi\theta_3)}{2\sqrt{\frac{\sin(\pi t\theta_1)}{t}\frac{\sin(\pi t)}{t}\sin(\pi\theta_3)\sin(\pi t\theta_1+\pi t+\pi\theta_3)}}.
\end{align*}

Treating $\frac{\partial g}{\partial\theta_3}$ in the same way, we find that the third component of the extension is
\begin{align*}
\widetilde{dg}_3 = -t\frac{\pi\frac{\sin(\pi t\theta_1)}{t}\frac{\sin(\pi t)}{t}\sin(\pi t\theta_1+\pi t+2\pi\theta_3)}{2\sqrt{\frac{\sin(\pi t\theta_1)}{t}\frac{\sin(\pi t)}{t}\sin(\pi\theta_3)\sin(\pi t\theta_1+\pi t+\pi\theta_3)}}.
\end{align*}
We note that this vanishes when $t=0$, so points in $E_3$ are mapped to the hyperplane $\lbrace q_3 = 0\rbrace$ in $\mathbb{R}^3$. Since $g$ is symmetric under the group $G$ of symmetries of $\Delta$, it follows that $dg$ extends over the blowups along all other edges of $\Delta$.
\end{proof}
Given a finite subset $E  =\lbrace e_1,\ldots,e_k\rbrace\subset\mathbb{R}^3$, let
\[\mathrm{Cone}(E)\coloneqq\left\lbrace\sum_{j=1}^k t_j e_j\mid t_j\in\mathbb{R}_{\geq0}\right\rbrace\]
denote the cone over $E$. The following is a consequence of the formula for $\widetilde{dg}$.

\begin{lemma}\label{coneimages}
Let $\lbrace i,j,k\rbrace = \lbrace 1,2,3\rbrace$. The map $\widetilde{dg}\colon\widetilde{\Delta}\to\mathbb{R}^3$ maps points lying over the edge $E_i$ in $\Delta$ to points in $\mathrm{Cone}\lbrace u_j,u_k\rbrace$. It maps points lying over the edge $E_{jk}$ to points in $\mathrm{Cone}\lbrace u_0,u_i\rbrace$. \qed
\end{lemma}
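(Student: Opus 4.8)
The plan is to read the statement off the explicit local formulas for the smooth extension $\widetilde{dg}$ obtained in the proof that $dg$ extends over $\widetilde{\Delta}$, together with the group $G$ of symmetries of $\Delta$.

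\emph{Step 1: the edges $E_i$.} I would first treat one edge of the type $E_k=\{\theta_i=\theta_j=0\}$, say $E_3$, in the blow-up chart $\widetilde{U}_3$, with coordinates $(\theta_1,t,\theta_3)$ and blow-down map $(\theta_1,t,\theta_3)\mapsto(t\theta_1,t,\theta_3)$. The exceptional fibres lie over $\{t=0\}$; the three expressions for $\widetilde{dg}_1,\widetilde{dg}_2,\widetilde{dg}_3$ derived there are valid throughout the chart (for $t>0$, the $\Delta_+$ side, and $t<0$, the $\Delta_-$ side, the overall sign works out the same), so I simply set $t=0$. Using $\tfrac{\sin\pi t}{t}\to\pi$ and $\tfrac{\sin\pi t\theta_1}{t}\to\pi\theta_1$, this gives
\[ \widetilde{dg}_1\big|_{t=0}=-\frac{\pi\sin\pi\theta_3}{2\sqrt{\theta_1}},\qquad \widetilde{dg}_2\big|_{t=0}=-\frac{\pi\sqrt{\theta_1}\,\sin\pi\theta_3}{2},\qquad \widetilde{dg}_3\big|_{t=0}=0, \]
where $\theta_3\in(0,1)$, so $\sin\pi\theta_3>0$, and $\theta_1>0$ since $(t\theta_1,t)$ must lie in one of the two opposite quadrant sectors forming the transverse slice of $\Delta$ near $E_3^{\circ}$; the directions not seen by this chart are covered by the complementary chart obtained by exchanging $\theta_1$ and $\theta_2$, and by the symmetry of $g$ in $\theta_1,\theta_2$ they give the same signs. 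Hence every point of $\widetilde{\Delta}$ over $E_3$ has vanishing third coordinate and nonpositive first two coordinates, so it lies in $\mathrm{Cone}\{(-1,0,0),(0,-1,0)\}=\mathrm{Cone}\{u_1,u_2\}$; the cases $E_1,E_2$ follow by applying suitable elements of $G$.

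\emph{Step 2: the edges $E_{jk}$.} For these I would use $G$-equivariance rather than a second blow-up computation. Since $g\circ R=g$ for every $R\in G$, the one-form $dg$, hence its extension, satisfies $\widetilde{dg}\circ R=(DR)^{-T}\circ\widetilde{dg}$ with $DR\in GL_3(\mathbb{Z})$ the linear part of $R$. Checking the generators $R_1,R_2,R_3$ — for instance $(DR_3)^{-T}$ fixes $u_1,u_2$ and swaps $u_0=(1,1,1)$ with $u_3=(0,0,-1)$ — shows that $R\mapsto(DR)^{-T}$ induces on $\{u_0,u_1,u_2,u_3\}$ the same $S_4$-action that $G$ has on the simplex vertices $\{\theta_0,\theta_1,\theta_2,\theta_3\}$, under which $E_k=[\theta_0,\theta_k]$ is the pair $\{0,k\}$ and $E_{ij}=[\theta_i,\theta_j]$ the pair $\{i,j\}$. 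As $S_4$ is $2$-transitive, some $R\in G$ carries $E_3$ to $E_{jk}$, so the image over $E_{jk}$ is $(DR)^{-T}$ applied to the image over $E_3$, giving
\[ \widetilde{dg}\bigl(\{\text{points over }E_{jk}\}\bigr)\subseteq(DR)^{-T}\mathrm{Cone}\{u_1,u_2\}=\mathrm{Cone}\{u_0,u_i\}, \]
where $\{i\}=\{1,2,3\}\setminus\{j,k\}$, because $R$ maps the complement $\{1,2\}$ of $\{0,3\}$ onto the complement $\{0,i\}$ of $\{j,k\}$. (A direct check via the blow-up near $E_{12}$, using normal coordinates $\delta=1-\theta_1-\theta_2$ and $\theta_3$, instead gives $\widetilde{dg}_1=\widetilde{dg}_2\ge0$ and $\widetilde{dg}_3\le\widetilde{dg}_1$, i.e.\ a point of $\mathrm{Cone}\{(1,1,1),(0,0,-1)\}$, confirming the symmetry argument.)

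I do not expect any real analytic difficulty here: given that $\widetilde{dg}$ extends smoothly, the lemma is just a matter of taking limits along the exceptional fibres and reading off signs. The points that need care are bookkeeping ones — interpreting ``points lying over the edge'' as the exceptional fibres of $\widetilde{\Delta}\to\Delta$ (so the computed values are actually attained), making sure the chart parameters cover exactly the normal directions into $\Delta$ so that the component signs are forced, and remembering to act on the target $\mathbb{R}^3$ by the inverse transpose rather than by $DR$. That last verification — matching the $G$-action on $\{u_m\}$ with its action on $\{\theta_m\}$ — is the only step where a small explicit computation cannot be avoided.
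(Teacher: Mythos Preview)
Your proposal is correct and follows essentially the same approach as the paper, which states only that the lemma ``is a consequence of the formula for $\widetilde{dg}$'' and marks it with a \qed; you have simply supplied the details, namely evaluating the explicit formulas for $\widetilde{dg}_1,\widetilde{dg}_2,\widetilde{dg}_3$ at $t=0$ in the chart near $E_3$ and then invoking the $G$-symmetry already used at the end of the preceding lemma to handle the remaining edges. Your equivariance computation $(DR_3)^{-T}\colon u_0\leftrightarrow u_3$, $u_1,u_2$ fixed, and the $2$-transitivity argument are exactly the right way to make that symmetry step precise.
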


We will use this to study the front projection of the Legendrian link of the cone point.
\begin{definition}\label{closuredef}
Define $\Lsing$ to be the closure of the embedding
\begin{align*}
\Phi\colon\widetilde{\Delta}&\to T^*T^3 = T^3\times\mathbb{R}^3 \\
\theta &\mapsto(\pi(\theta),\widetilde{dg}(\theta))
\end{align*}
\end{definition}
Since $\lim_{\theta\to0} dg(\theta) = 0$, this is just the union $\Lsing = \Phi(\widetilde{\Delta})\cup\lbrace0\rbrace\subset T^*T^3$. Clearly $\Lsing$ is smooth everywhere except at $0\in T^*T^3$, so for the rest of this subsection, we will study the behavior of $\Lsing$ near this point. By the construction of the diffeomorphism between $\widetilde{\Delta}$ and the minimally twisted five component chain link complement, we know that the intersection of $\Lsing$ with a small sphere centered $0\in T^*T^3$ is a (Legendrian) $2$-torus. We now prove that this link is the Legendrian link of the Harvey--Lawson cone.
\begin{definition}
The Harvey--Lawson cone $C_{HL}$ is defined~\cite{HL} to be the subset
\[ C_{HL} \coloneqq \lbrace(z_1,z_2,z_3)\in\mathbb{C}^3\mid |z_1| = |z_2| = |z_3|, \; z_1z_2z_3\in\mathbb{R}_{\geq0}\rbrace. \]
Let $\Lambda_{HL} \coloneqq C_{HL}\cap S^5(\epsilon)$ denote the link of $C_{HL}$.
\end{definition}
The link $\Lambda_{HL}$ is the image of a map $S^1\times S^1 \to S^5(\epsilon)\subset\mathbb{C}^3$ defined to be
\begin{equation}\label{parametrization}
(s,t) \mapsto\left(\sqrt{\frac{\epsilon}{3}}e^{is},\sqrt{\frac{\epsilon}{3}}e^{it},\sqrt{\frac{\epsilon}{3}}e^{-is-it}\right).
\end{equation}
We write the coordinates on $\mathbb{C}^3$ in terms of real and imaginary parts as $z = x+iy$, where $z = (z_1,z_2,z_3)$. Then if we define $n\coloneqq\frac{1}{|x|} x\in S^2$ and $f\coloneqq n\cdot y$, the front projection of $\Lambda_{HL}$ is
\begin{equation}\label{frontformula}
(s,t)\mapsto e^f n\in\mathbb{R}^3\setminus\lbrace 0\rbrace\cong S^2\times\mathbb{R}.
\end{equation}
The caustic set of this projection is the edge graph of a tetrahedron embedded on $S^2$, where the vertices are at the points $\left(\sqrt{\frac{\epsilon}{3}},\sqrt{\frac{\epsilon}{3}},\sqrt{\frac{\epsilon}{3}}\right)$, $\left(-\sqrt{\frac{\epsilon}{3}},\sqrt{\frac{\epsilon}{3}},\sqrt{\frac{\epsilon}{3}}\right)$, $\left(-\sqrt{\frac{\epsilon}{3}},-\sqrt{\frac{\epsilon}{3}},\sqrt{\frac{\epsilon}{3}}\right)$, and $\left(-\sqrt{\frac{\epsilon}{3}},-\sqrt{\frac{\epsilon}{3}},-\sqrt{\frac{\epsilon}{3}}\right)$.

It will be helpful to think of $T^*T^3$ as $(\mathbb{C}^*)^3$ using the identification $(q_i,\theta_i) = (\log(r_i),\theta_i)$, where $q_i$ is the coordinate on the cotangent fiber of $T^*T^3$. Hence, if $U$ is a small neighborhood of $0\in T^*T^3$, it is identified with a small neighborhood of $0\in\mathbb{C}^3$ under the exponential map $\mathbb{C}^3\to(\mathbb{C}^*)^3$. Under these identifications, the projection $U\to\mathbb{R}^3$ to the cotangent fiber can be thought of as a restriction of the projection $\mathbb{C}^3\to\mathbb{R}^3$ to the real axis.
\begin{lemma}\label{legiso}
For a sufficiently small sphere $S^5(\epsilon)\subset\mathbb{C}^3$, the intersection $\Lambda_{\epsilon}\coloneqq S^5(\epsilon)\cap \Lsing$ is Legendrian isotopic to $\Lambda_{HL}$.
\end{lemma}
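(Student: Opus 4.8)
The plan is to identify the tangent cone of $\Lsing$ at $0$ with the Harvey--Lawson cone $C_{HL}$, up to a linear symplectomorphism of $\mathbb{C}^3$, and then to invoke the stability of compact Legendrian embeddings under $C^1$-small perturbations. To set this up, work in the local coordinates on a neighborhood of $0\in T^*T^3$ identifying it with a neighborhood of $0\in\mathbb{C}^3$ so that the real axis is the cotangent fiber and the imaginary axis is the $T^3$-direction, in which $\Lsing$ is the closure of the image of $\theta\mapsto(\widetilde{dg}(\theta),\theta)$. Since $\sin(\pi u)=\pi u+O(u^3)$, on each $\Delta_{\pm}$ the function $g$ agrees with its degree-two part $g_0(\theta)=\mp\pi^2\sqrt{(\theta_1+\theta_2+\theta_3)\,\theta_1\theta_2\theta_3}$ up to an $O(|\theta|^3)$ error, and likewise $\widetilde{dg}$ agrees with the analogous extension $\widetilde{dg_0}$ of $dg_0$ on each blow-up chart $\widetilde{U}_k$, $\widetilde{U}_{ij}$, because the extensions computed above are smooth functions of $(\theta,t)$. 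Hence, after rescaling $\mathbb{C}^3$ by $\epsilon^{-1}$, the Lagrangians $\epsilon^{-1}\Lsing$ converge in $C^1$ on compact subsets of $\mathbb{C}^3\setminus\lbrace 0\rbrace$ to the Lagrangian cone $C_0\Lsing$ generated by $\widetilde{dg_0}$; equivalently, rescaling $S^5(\epsilon)$ to the unit sphere, $\Lambda_\epsilon$ converges in $C^1$ to $\Lambda_0\coloneqq C_0\Lsing\cap S^5(1)$, a smooth embedded Legendrian $2$-torus by the diffeomorphism $\widetilde{\Delta}\cong L'$ of Lemma~\ref{topology}.

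The core step is then to identify $C_0\Lsing$ with $C_{HL}$. Both are Lagrangian cones in $\mathbb{C}^3$ whose link is an embedded $2$-torus, so it suffices to match their parametrizations after a linear symplectomorphism. On the real axis the projection of $C_{HL}$, read off from~\eqref{parametrization}, is $\mathrm{Cone}\lbrace(1,1,1),(-1,0,0),(0,-1,0),(0,0,-1)\rbrace$ — for instance $(s,t)=(0,0)$ gives $(1,1,1)$ and $(s,t)=(\pi,\tfrac{\pi}{2})$ a positive multiple of $(-1,0,0)$ — while the formula for $\widetilde{dg_0}$ together with the description above of the images of the edges $E_i,E_{jk}$ under $\widetilde{dg}$ shows that the cotangent-fiber projection of $C_0\Lsing$ is the cone over the same primitive vectors $u_0,u_1,u_2,u_3$, with $E_i$ mapping onto $\mathrm{Cone}\lbrace u_j,u_k\rbrace$ and $E_{jk}$ onto $\mathrm{Cone}\lbrace u_0,u_i\rbrace$, i.e.\ onto the edges of a tetrahedral graph on $S^2$, exactly as for the caustic of the front of $\Lambda_{HL}$. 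The images in the $T^3$-direction also agree: the tangent cones of $\Delta_{\pm}$ at the common vertex are the closed positive and negative octants, which under the same linear map correspond to the locus $\lbrace\sum_j\theta_j\in 2\pi\mathbb{Z}\rbrace$ cut out by the condition $z_1z_2z_3\in\mathbb{R}_{>0}$. The symmetry group $G$ of $\Delta$ acts compatibly with the $\mathbb{Z}/3$-symmetry of~\eqref{parametrization}, which pins down the linear identification, and matching the two parametrizations then yields $C_0\Lsing=C_{HL}$ up to a symplectomorphism of $\mathbb{C}^3$; in particular $\Lambda_0=\Lambda_{HL}$ as Legendrians in $S^5(1)$.

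To conclude, by the two previous steps the link $\Lambda_\epsilon$ rescaled to $S^5(1)$ is $C^1$-close to $\Lambda_{HL}$ once $\epsilon$ is sufficiently small; since it is itself Legendrian, it is the image of a $C^1$-small exact section $df$ of a $1$-jet neighborhood of $\Lambda_{HL}$, so scaling $f$ to $0$ produces a Legendrian isotopy from $\Lambda_{HL}$ to it, and composing with the contactomorphism $S^5(1)\to S^5(\epsilon)$ given by the Liouville flow gives the stated Legendrian isotopy.

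The main obstacle is the middle step: producing the explicit linear symplectomorphism that carries $C_0\Lsing$ to $C_{HL}$ and verifying that the two parametrizations agree on the nose, the delicate point being the comparison of the $T^3$-projections — the positive and negative octants for the coamoeba of $\Lsing$ versus the argument torus $\lbrace\sum_j\theta_j\in 2\pi\mathbb{Z}\rbrace$ for $C_{HL}$. An alternative that avoids the explicit change of coordinates is to argue that $C_0\Lsing$ and $C_{HL}$ are both Lagrangian cone lifts of the same $4$-valent tropical vertex asymptotic to the same conormal tori, and are therefore Lagrangian isotopic rel infinity; but then the burden shifts to showing that the associated family of fronts can be chosen to keep its caustic a tetrahedral edge graph throughout, avoiding swallowtail degenerations, so that the family lifts to a Legendrian isotopy.
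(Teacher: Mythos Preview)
Your approach differs from the paper's, and the step you yourself flag as the main obstacle is a genuine gap that cannot be closed as written. The paper does not compute the tangent cone at all; it works directly with $\Lambda_\epsilon$ for finite $\epsilon$ and appeals to the theory of Legendrian weaves. Since $\widetilde{dg}$ is invariant under $\theta\mapsto-\theta$, the front projection $\Lambda_\epsilon\to S^2\subset\mathbb{R}^3$ is generically two-to-one with caustic equal to the tetrahedral edge graph $\bigcup_{i\neq j}\mathrm{Cone}\{u_i,u_j\}\cap S^2$, and by the results cited as~\cite{CZ} and~\cite{TZ} this weave alone determines the Legendrian isotopy class; it is the weave known to give $\Lambda_{HL}$. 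The matrix $\left(\begin{smallmatrix}-1&1&1\\1&-1&1\\1&1&-1\end{smallmatrix}\right)$ in the paper serves only to carry one tetrahedral graph on $S^2$ to the other, not to map one Lagrangian cone onto the other.

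Your attempted verification that $C_0\Lsing=C_{HL}$ by matching the $T^3$-projections under a linear map cannot succeed: the argument projection of $C_0\Lsing$ is the pair of solid octants, a three-dimensional set, whereas that of $C_{HL}$ is the two-torus $\{\theta_1+\theta_2+\theta_3\equiv0\}$. No invertible linear map on the base $\mathbb{R}^3$---and hence no cotangent-lifted symplectomorphism, which is what ``the same linear map'' must mean if the real and imaginary projections are to be compared separately---can carry one to the other. If you instead allow a general element of $Sp(6,\mathbb{R})$ mixing real and imaginary parts, then the two projections can no longer be matched independently, so your matching strategy collapses and you would still owe an explicit map. The $C^1$-convergence to the homogenized cone and the Legendrian-stability step are fine as far as they go, but they are also unnecessary: the caustic of $\Lambda_\epsilon$ is already the tetrahedral graph for every small $\epsilon$, so the weave argument applies directly without ever passing to the homogeneous limit.
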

\begin{figure}
\begin{tikzpicture}
\begin{scope}
  \draw[thick, dotted] (0,0,0) -- (-4,0,0) node[anchor = north west]{$(-1,0,0)$};
  \draw[thick, dotted] (0,0,0) -- (0,-4,0) node[anchor = south west]{$(0,0,-1)$};
  \draw[thick, dotted] (0,0,0) -- (0,0,-7) node[anchor = north west]{$(1,1,1)$};
  \draw[thick, dotted] (0,0,0) -- (-5,-3,-3) node[anchor = north west]{$(0,-1,0)$};

  \draw[fill = gray, fill opacity = 0.6] (-2,0,0) -- (0,-2,0) -- (0,0,-3.5) -- (-2,0,0);
  \draw[fill = gray, fill opacity = 0.4] (-2,0,0) -- (-2.5,-1.5,-1.5) -- (0,-2,0) -- (-2,0,0);
  \draw[fill = gray, fill opacity = 0.1] (-2.5,-1.5,-1.5) -- (0,0,-3.5) -- (0,-2,0) -- (-2.5,-1.5,-1.5);
\end{scope}
\end{tikzpicture}

\caption{The image in $Q$ of a link of the cone point of $\Lsing$. The dotted lines are the edges tropical curve $V$.}\label{front}
\end{figure}
\begin{proof} We will begin by describing the image of $\Lambda_{\epsilon}$ in $T^*T^3$ under $\widetilde{dg}$, which, by Definition~\ref{closuredef}, is the composition of $\Phi\colon\widetilde{\Delta}\to T^*T^3$ with projection onto the cotangent fiber. Since the cone point of $\Lsing$ is mapped to $0\in\mathbb{R}^3$ under this map, it follows that a sufficiently small neighborhood of the cone point will be mapped to an open ball in $\mathbb{R}^3$ centered at the origin. Without loss of generality, we can assume that $\Lambda_{\epsilon}$ is mapped to the boundary of this open ball, which we denote by $S^2(\epsilon)$. We can think of this ball as a smoothing of the tetrahedron in Figure with vertices on the legs of $V$, shown in~\ref{front}. In particular, there is a tetrahedral graph embedded in $S^2(\epsilon)$. The vertices lying on the tropical curve $V$, and the edges lie on the cones spanned by pairs of edges of $V$, as described in Lemma~\ref{coneimages}.

We claim that $\widetilde{dg}$ is $2$-to-$1$ over all points of $S^2(\epsilon)$ not contained in the tetrahedral graph. Indeed, Lemma~\ref{coneimages} implies that the preimage of any such point must lie in (the preimage of) $\Int\Delta$. Since $\widetilde{dg}$ is invariant under the involution on $T^3$, it follows that there are two preimages of any such point, which lie in the same orbit under the involution. Inspecting the formula for $\widetilde{dg}$ shows that it is $1$-to-$1$ over the edges of the graph.

Now observe that the matrix
\begin{align}\label{matrixbase}
\frac{1}{4}\begin{pmatrix} -1 & 1 & 1 \\ 1 & -1 & 1 \\ 1 & 1 & -1 \end{pmatrix}
\end{align}
maps the caustic set of~\eqref{frontformula} to the edge graph on $S^2(\epsilon)$, as depicted in Figure~\ref{front}. Comparing with the formula in~\eqref{frontformula}, and using the identification of a neighborhood of $0$ in $\mathbb{C}^3$ with a neighborhood of $0\in T^*T^3$, shows that the front projection of $\Lambda_{\epsilon}$ has caustic set given by the tetrahedral graph. Said differently, $\Lambda_{\epsilon}$ is the Legendrian surface constructed from the weave associated to this tetrahedral graph, as in~\cite{CZ}. It follows that $\Lambda_{HL}$ and $\Lambda_{\epsilon}$ are Legendrian isotopic. Concretely, this isotopy is obtained by lifting a path of matrices in $GL(3,\mathbb{R})$ joining the identity to the matrix of~\eqref{matrixbase} to a path of symplectic automorphisms of $T^*\mathbb{R}^3$.
\end{proof}
This implies the following description of $L$ near the singular point.
\begin{corollary}\label{conenbhd}
There is a small ball $B\subset\mathbb{C}^3$ centered at the origin such that $\Lsing\cap(B\setminus\lbrace0\rbrace)$ is diffeomorphic to $T^2\times(0,\epsilon)$. There is a Hamiltonian isotopy of $B$ taking the Lagrangian cone $L\cap B$ to the Harvey--Lawson cone.
\end{corollary}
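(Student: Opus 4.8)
\medskip
\noindent\textbf{Proof proposal.} I would prove the two assertions separately; the second rests on the Legendrian isotopy $\Lambda_\epsilon\simeq\Lambda_{HL}$ furnished by the previous lemma.

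For the diffeomorphism type, the point is that $\Lsing$ is \emph{asymptotically conical} at the origin, with tangent cone an honest Lagrangian cone. Working in the identification of a neighbourhood $U$ of $0\in T^*T^3$ with a neighbourhood of $0\in\mathbb{C}^3$, replace each factor $\sin(\pi\theta_i)$ (and $\sin(\pi\theta_1+\pi\theta_2+\pi\theta_3)$) in the formula for $g$ by its linearization to obtain the homogeneous degree-two function $g_2$; then $g-g_2=O(|\theta|^4)$, so that $\widetilde{dg}-\widetilde{dg_2}=O(|\theta|^3)$ while $\widetilde{dg_2}=O(|\theta|)$. Hence the rescalings of $\Lsing$ about $0$ converge $C^\infty_{\mathrm{loc}}$ to the Lagrangian cone $\hat C:=\overline{\{(\pi(\theta),\widetilde{dg_2}(\theta))\}}$. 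On $\hat C\setminus\{0\}$ the radial vector field of $\mathbb{C}^3$ is tangent, and the squared distance $r^2$ to $0$ satisfies $dr^2(\text{radial})=r^2>0$, so $r^2$ restricts to a submersion on $\hat C\setminus\{0\}$; by scale-invariance of $\hat C$ together with the uniform convergence of the rescalings, $r^2$ also restricts to a proper submersion on $\Lsing\cap(B\setminus\{0\})$ once $B$ is small. Its fibres $\Lambda_{\epsilon'}=\Lsing\cap S^5(\epsilon')$ are all $2$-tori, so Ehresmann's theorem over the contractible base $(0,\epsilon)$ gives $\Lsing\cap(B\setminus\{0\})\cong T^2\times(0,\epsilon)$.

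For the Hamiltonian isotopy I would first straighten $\Lsing$ to its tangent cone $\hat C$, then straighten $\hat C$ to $C_{HL}$. For the first step, consider the Hamiltonian $H(\theta,q)=\chi(|(\theta,q)|)\cdot\big((g_2-g)\circ\pi\big)(\theta)$ on $U$, where $\chi$ is a radial cutoff equal to $1$ on a small ball $B'\subset B$ and supported in $B$. Since $g-g_2$ vanishes to order $4$ at $\theta=0$, $H$ is smooth and its flow is compactly supported; and on $B'$, where $H=(g_2-g)\circ\pi$ depends only on the base coordinate, the time-one flow sends $\{(\theta,q)\}\mapsto\{(\theta,q-d(g_2-g)(\theta))\}$, carrying $\hat C\cap B'=\{(\theta,\widetilde{dg_2}(\theta))\}$ onto $\Lsing\cap B'$. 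For the second step, the link of $\hat C$ (the rescaling to $S^5(1)$ of $\Lambda_\epsilon$) is Legendrian isotopic to $\Lambda_{HL}$ in the standard contact $S^5$ by the previous lemma; a Legendrian isotopy $\{\Lambda^t\}$ between them induces the Lagrangian isotopy of cones $\{\mathrm{Cone}(\Lambda^t)\}$ in $\mathbb{C}^3$, which is exact since cones over Legendrians are exact, hence is generated by a Hamiltonian isotopy of $\mathbb{C}^3\setminus\{0\}$; multiplying its generating Hamiltonian by a radial cutoff supported in $B'$ produces a compactly supported Hamiltonian isotopy of $B'$ taking $\hat C\cap B'$ to $C_{HL}\cap B'$. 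Composing the inverse of the first isotopy with the second, and replacing $B$ by $B'$ (which only strengthens the first assertion), gives a Hamiltonian isotopy of $B$ carrying $L\cap B$ onto $C_{HL}\cap B$.

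The main obstacle is controlling the first step of the second part at the cone point: one must check that the interpolation between $\hat C$ and $\Lsing$ — equivalently, the generating Hamiltonian $(g_2-g)\circ\pi$ — extends over $0$ with enough regularity that the resulting flow genuinely fixes the cone point and is supported in $B'$. This is exactly where the order-four vanishing of $g-g_2$ is used: it forces the associated deformation and its generating Hamiltonian to vanish to order strictly greater than the order-one homogeneity of the cone $\hat C$, leaving room to run the standard Moser/Weinstein argument in the presence of the conical singularity. A secondary, purely bookkeeping point is to extract from the proof of the preceding lemma an explicit Legendrian isotopy $\{\Lambda^t\}$ from the link of $\hat C$ to $\Lambda_{HL}$ — e.g.\ by following the $S^2$-isotopy of tetrahedral weaves induced there by the cotangent lift of $\left(\begin{smallmatrix}-1&1&1\\1&-1&1\\1&1&-1\end{smallmatrix}\right)$ — rather than merely knowing the links are Legendrian isotopic.
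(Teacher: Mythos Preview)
Your argument for the first assertion is correct, though one could shortcut it: since $\Phi$ embeds $\widetilde\Delta$ onto the smooth locus of $\Lsing$, and the zeroth cusp of $\widetilde\Delta\cong L'$ is already known from Lemma~\ref{topology} to be a $T^2$-cusp, the product structure near the cone point is immediate.

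There is, however, a genuine gap in the first step of your Hamiltonian isotopy. The Hamiltonian $H(\theta,q)=\chi\cdot\big((g_2-g)\circ\pi\big)$ is not a function on the ball $B$: both $g$ and $g_2$ are only defined on the coamoeba $\Delta\subset T^3$, not on a full neighborhood of $0$ in $T^3$, and even on $\overline\Delta$ the difference $g-g_2$ fails to be smooth across the boundary faces. Indeed near $\{\theta_1=0\}$ one computes
\[
g-g_2=-\sqrt{\theta_1}\left(\sqrt{\tfrac{\sin\pi\theta_1}{\theta_1}\,a}-\sqrt{\pi b}\right),
\]
with $a,b$ smooth and the bracket generically nonzero, so $g-g_2$ behaves like $\sqrt{\theta_1}$ and admits no smooth extension across $\partial\Delta$. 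The order-four vanishing you invoke is vanishing \emph{at the single point} $\theta=0$ along rays in $\Int\Delta$; it says nothing about regularity in the directions transverse to $\partial\Delta$. Consequently your $H$ does not generate a flow on $B$, and the isotopy $\hat C\leadsto\Lsing$ is not established.

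The repair is to replace the ambient base function by an intrinsic one on the cone. Using the conical Lagrangian neighborhood theorem (Lemma~\ref{Weinsteinconical}, after Joyce), write $\Lsing$ near the cone point as the graph of a closed $1$-form $\alpha$ in a dilation-invariant neighborhood $U_{\hat C}\subset T^*(\Sigma\times(0,\epsilon))$. Your estimate $\widetilde{dg}-\widetilde{dg_2}=O(|\theta|^3)$ translates to $\alpha=O(r^3)$ in the radial coordinate, and exactness of both $\Lsing$ and $\hat C$ gives $\alpha=df$ with $f=O(r^4)$. Then $f\circ\pi_{\hat C}$, cut off in the fibre direction of $U_{\hat C}$ and extended by zero, is a smooth Hamiltonian on $B$ --- smoothness at the origin now genuinely follows from the decay rate together with the $t\cdot(\sigma,s,\tau,u)=(\sigma,ts,t^2\tau,tu)$ scaling --- whose flow takes $\hat C$ to $\Lsing$. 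Your second step then goes through, and is in fact simpler than you suggest: since the Legendrian isotopy of the lemma is induced by a \emph{linear} symplectomorphism of $T^*\mathbb R^3$, the generating Hamiltonian of the cone isotopy is a quadratic form, hence automatically smooth at the origin before any cutoff.
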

Extending this Hamiltonian isotopy by the identity outside of a slightly larger neighborhood of the singular point lets us identify the cone point of $\Lsing$ with the Harvey--Lawson cone.

We can similarly deform $\Lsing$ over the legs of $V$. Since $\Lsing$ is exact in the appropriate sense, it can be put in tropical position by a Hamiltonian isotopy.
\begin{lemma} Let $\pi\colon T^*T^3\to\mathbb{R}^3$ denote the projection. For any $\epsilon>0$, there is a Hamiltonian isotopy which takes $\Lsing$ to a (singular) Lagrangian submanifold $\Lsing^{\epsilon}$ for which $\pi_{SYZ}(\Lsing^{\epsilon})$ is within Hausdorff distance $\epsilon$ of $V$. Moreover, $\Lsing^{\epsilon}$ agrees with the conormal lifts of the $1$-dimensional faces $V_i$ of $V$ outside of an arbitrarily small ball $B$ centered at the origin in $\mathbb{R}^3$, i.e.
\begin{align}\label{conormalregion}
\Lsing^{\epsilon}\mid_{T^*T^3\setminus\pi^{-1}_{SYZ}(B)} = \bigcup_{i=0}^3 L_{V_i}\mid_{T^*T^3\setminus\pi_{SYZ}^{-1}(B)} \, .
\end{align}
\qed
\end{lemma}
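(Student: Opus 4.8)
The plan is to run the argument used for the Lagrangian pair of pants (cf.\ the discussion above and~\cite{Matessi},~\cite{HicksUnobstructed}): away from its cone point $\Lsing$ is an exact Lagrangian that agrees asymptotically along each leg with a conormal lift $L_{V_i}$, and any such Lagrangian can be pushed into tropical position by a Hamiltonian isotopy supported away from the singular point. First I would record exactness: $\Phi$ is the graph of the closed $1$-form $\widetilde{dg}$, so the canonical $1$-form pulls back under $\Phi$ to $d(g\circ\pi)$, where $\pi\colon\widetilde{\Delta}\to\Delta$ is the blowup map, and $g\circ\pi$ extends continuously by $0$ over the cone point because $g$ vanishes on $\partial\overline{\Delta}$; hence $\Lsing\setminus\{0\}$ is exact. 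I would then introduce the rescaled maps $\Phi_c(\theta)=(\pi(\theta),c\,\widetilde{dg}(\theta))$ for $c>0$: each $L_c:=\Phi_c(\widetilde{\Delta})\cup\{0\}$ is again an exact (singular) Lagrangian with primitive $c(g\circ\pi)$, fixing the cone point, and the path $\{L_{c(t)}\}$ from $\Lsing$ to $L_c$ is an isotopy through exact Lagrangians, hence a Hamiltonian isotopy (stationary at the cone point). Since the image $\widetilde{dg}(\widetilde{\Delta})$ lies within a bounded Hausdorff distance $\kappa$ of the cone $V$, and $cV=V$, one has $\pi_{SYZ}(L_c)=c\cdot\widetilde{dg}(\widetilde{\Delta})$ within Hausdorff distance $c\kappa$ of $V$; so fixing $c<\epsilon/\kappa$ already yields the Hausdorff statement and reduces the lemma to putting $L_c$ into tropical position.

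For the straightening I would work near each of the four cusps of $\widetilde{\Delta}\cong L'$ that are sent to infinity (those other than the cone-point cusp). Using the explicit formulas for $\widetilde{dg}$ together with the $G$-action, which reduces the discussion to a single representative cusp limiting onto $V_1$, one checks: there is a radius $r$ tending to $0$ as $c\to0$ such that for $|q|\ge r$ the set $L_c$ lies in a disjoint union of four ``tubes,'' one inside a fixed Weinstein neighborhood of each $L_{V_i}$, and over the $i$-th tube $L_c$ is the graph of a closed $1$-form $\beta_i$ on $L_{V_i}$. The relevant pinching estimate --- that the component of $c\,\widetilde{dg}$ transverse to $V_i$ is $O(c^2/|q_{\parallel}|)$ --- follows from the $1/\sqrt{\theta_j}$ blow-up of $\widetilde{dg}$ along the cusp already visible in the computation of $\widetilde{dg}$ above, and it also gives $\beta_i\to0$ along the leg. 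Exactness of $L_c$ forces $[\beta_i]=0$ in $H^1(L_{V_i})$, so $\beta_i=dh_i$ with $h_i\to0$. Taking $H=-\sum_i\chi_i\,(h_i\circ\mathrm{proj}_i)$ for cutoffs $\chi_i$ equal to $1$ on the $i$-th tube, supported in a slightly larger tube, and identically $0$ on $\pi_{SYZ}^{-1}(B_r)$, the time-one flow of $H$ carries $L_c$ to the desired $\Lsing^\epsilon$: on each tube the flow sends $\mathrm{graph}(dh_i)$ exactly to the zero section $L_{V_i}$, nothing of $L_c$ lies over the angular gaps where $\chi_i$ transitions (for $|q|\ge r$), and near $0$ the Harvey--Lawson model is untouched since $H\equiv0$ there.

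The step I expect to be the main obstacle is the asymptotic analysis in the second paragraph: matching the cusps of the hyperbolic manifold $L'\cong\widetilde{\Delta}$ with the four legs of $V$ (consistently with the image computation for the edges $E_i$ and $E_{jk}$), and verifying that along each such cusp $\Phi_c$ is genuinely graphical over $L_{V_i}$ with decaying primitive. The subtlety is that a single blown-up edge-neighborhood $\widetilde{U}_k$ or $\widetilde{U}_{jk}$ covers only a proper subset of a cusp $2$-torus, so the graphical estimates must be established over each of the several edge-neighborhoods surrounding a given cusp and then glued. Everything else --- exactness, the rescaling trick, the cutoff-Hamiltonian straightening, and compatibility with the cone point --- is routine and parallels the case of smooth tropical curves treated in~\cite{Matessi},~\cite{HicksUnobstructed}.
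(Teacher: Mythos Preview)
Your proposal is correct and follows essentially the same approach as the paper: rescale $\widetilde{dg}$ by a small constant to achieve the Hausdorff bound, then straighten over each leg using the standard argument for smooth tropical Lagrangians. The paper is considerably terser---it simply observes that $\Lsing$ is smooth outside $\pi_{SYZ}^{-1}(B_0(\epsilon))$ and invokes the argument of~\cite{HicksDimer} verbatim for the straightening, then notes that the Hausdorff and ball-size statements follow by rescaling $\widetilde{dg}$---whereas you unpack exactness, the rescaling isotopy, and the cutoff-Hamiltonian construction explicitly; but the underlying strategy is identical.
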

The argument in the proof of~\cite{HicksDimer} applies in our situation with no changes, since $L\mid_{T^*T^3\setminus\pi^{-1}(B_0(\epsilon))}$ is smooth. Therefore we can construct a Lagrangian lift which agrees with the periodized conormals to the legs of $V$ outside of a ball in $\mathbb{R}^3$ centered at the origin. The statements about Hausdorff distance and the size of $B$ can be proven by rescaling $\widetilde{dg}$ by an arbitrarily small real constant. From now on, we will use $\Lsing$ to refer interchangeably to the closure of $\Phi(M)$ constructed using $\lambda\widetilde{dg}$ for any $\lambda>0$, or to any of the Lagrangian submanifolds $\Lsing^{\epsilon}$ in tropical position.

\subsection{First homology}\label{firsthomology} In this section, we will discuss $H_1(L')$ and describe the induced map $H_1(L')\to H_1(T^*T^3) = H_1(T^3)$ in preparation for the proof of Theorem~\ref{main}. Since $L'$ is a link complement, we know that $H_1(L')$ is generated by the meridians $m_0,\ldots,m_4$ to the components of the minimally twisted five-component chain link. It will also be helpful for us to consider the longitudes of the link components, denoted by $\ell_0,\ldots,\ell_4$. We choose orientations on the $m_i$'s and $\ell_i$'s which are shown in Figure~\ref{loops}. Examining this figure has the following immediate consequence.
\begin{figure}
\tikzset{
 redarrows/.style={postaction={decorate},decoration={markings,mark=at position 0 with {\arrow[draw=red]{<}}},
           }}
 \tikzset{
  redarrows2/.style={postaction={decorate},decoration={markings,mark=at position -0.4 with {\arrow[draw=red]{<}}},
  			}}
\tikzset{
 bluearrows/.style={postaction={decorate},
 decoration={markings,mark=at position 0.2 with {\arrow[draw=blue]{>}}},
           }}
\begin{tikzpicture}[scale = 1.5]
\draw[gray, ultra thick] (0,0) circle (2.3);
\draw(2,0)[line width = 0.25 mm] ellipse (0.5 and 1.5);

\draw(2,0)[red, line width = 0.25 mm, redarrows2] ellipse (0.7 and 1.7);
\draw[rotate = 72, line width = 0.25 mm] (2,0) ellipse (0.5 and 1.5);
\draw(2,0)[rotate = 72, red, line width = 0.25 mm, redarrows] ellipse (0.7 and 1.7);

\draw[rotate = 144, line width = 0.25 mm] (2,0) ellipse (0.5 and 1.5);
\draw(2,0)[rotate = 144, red, line width = 0.25 mm, redarrows] ellipse (0.7 and 1.7);

\draw[rotate = 216, line width = 0.25 mm] (2,0) ellipse (0.5 and 1.5);
\draw(2,0)[rotate = 216, red, line width = 0.25 mm, redarrows2] ellipse (0.7 and 1.7);

\draw[rotate = 288, line width = 0.25 mm] (2,0) ellipse (0.5 and 1.5);
\draw(2,0)[rotate = 288, red, line width = 0.25 mm, redarrows] ellipse (0.7 and 1.7);

\draw (2.5,0)[white, double = black, ultra thick] arc (0:100:0.5 and 1.5);
\draw (2.7,0)[white, double = red, ultra thick] arc (0:100:0.7 and 1.7);

\draw (2.5,0)[white, double= black, ultra thick] arc (0:-100:0.5 and 1.5);
\draw (2.7,0)[white, double= red, ultra thick] arc (0:-100:0.7 and 1.7);

\draw[white, double = black, ultra thick, rotate = 72] (2.5,0) arc (0:100:0.5 and 1.5);
\draw[white, double = red, ultra thick, rotate = 72] (2.7,0) arc (0:100:0.7 and 1.7);

\draw[white, double = black, ultra thick, rotate = 144] (1.5,0) arc (180:240:0.5 and 1.5);
\draw[white, double = red, ultra thick, rotate = 144] (1.3,0) arc (180:240:0.7 and 1.7);

\draw[white, double = black, ultra thick, rotate = 72] (1.5,0) arc (180:240:0.5 and 1.5);
\draw[white, double = red, ultra thick, rotate = 72] (1.3,0) arc (180:240:0.7 and 1.7);

\draw[white, double = black, ultra thick, rotate = 144] (1.5,0) arc (180:120:0.5 and 1.5);
\draw[white, double = red, ultra thick, rotate = 144] (1.3,0) arc (180:120:0.7 and 1.7);

\draw[white, double = black, ultra thick, rotate = 216] (2.5,0) arc (0:-100:0.5 and 1.5);
\draw[white, double = red, ultra thick, rotate = 216] (2.7,0) arc (0:-100:0.7 and 1.7);

\draw[white, double = black, ultra thick, rotate = 216] (2.5,0) arc (0:100:0.5 and 1.5);
\draw[white, double = red, ultra thick, rotate = 216] (2.7,0) arc (0:100:0.7 and 1.7);

\draw[white, double = black, ultra thick, rotate = 288] (1.5,0) arc (180:240:0.5 and 1.5);
\draw[white, double = red, ultra thick, rotate = 288] (1.3,0) arc (180:240:0.7 and 1.7);

\draw[white, double = black, ultra thick, rotate = 288] (1.5,0) arc (180:120:0.5 and 1.5);
\draw[white, double = red, ultra thick, rotate = 288] (1.3,0) arc (180:120:0.7 and 1.7);

\draw[blue, thick, bluearrows] (1.5,0) ellipse (0.4 and 0.2);
\draw[white, double = blue, ultra thick] (1.1,0) arc (180:360:0.4 and 0.2);

\draw[white, double = black, thick] (1.5,0) arc (180:160:0.5 and 1.5);
\draw[white, double = red, thick] (1.3,0) arc (180:160:0.7 and 1.7);

\draw[blue, thick, rotate = 72, bluearrows] (1.5,0) ellipse (0.4 and 0.2);
\draw[white, double = blue, ultra thick, rotate = 72] (1.1,0) arc (180:360:0.4 and 0.2);

\draw[white, double = black, thick, rotate = 72] (1.5,0) arc (180:160:0.5 and 1.5);
\draw[white, double = red, thick, rotate = 72] (1.3,0) arc (180:160:0.7 and 1.7);

\draw[blue, thick, rotate = 144, bluearrows] (1.5,0) ellipse (0.4 and 0.2);
\draw[white, double = blue, ultra thick, rotate = 144] (1.1,0) arc (180:360:0.4 and 0.2);

\draw[white, double = black, thick, rotate = 144] (1.5,0) arc (180:160:0.5 and 1.5);
\draw[white, double = red, thick, rotate = 144] (1.3,0) arc (180:160:0.7 and 1.7);

\draw[blue, thick, rotate = 216, bluearrows] (1.5,0) ellipse (0.4 and 0.2);
\draw[white, double = blue, ultra thick, rotate = 216] (1.1,0) arc (180:360:0.4 and 0.2);

\draw[white, double = black, thick, rotate = 216] (1.5,0) arc (180:160:0.5 and 1.5);
\draw[white, double = red, thick, rotate = 216] (1.3,0) arc (180:160:0.7 and 1.7);

\draw[blue, thick, rotate = 288, bluearrows] (1.5,0) ellipse (0.4 and 0.2);
\draw[white, double = blue, ultra thick, rotate = 288] (1.1,0) arc (180:360:0.4 and 0.2);

\draw[white, double = black, thick, rotate = 288] (1.5,0) arc (180:160:0.5 and 1.5);
\draw[white, double = red, thick, rotate = 288] (1.3,0) arc (180:160:0.7 and 1.7);

\node[] at (3,0) {\color{red}$\ell_1$};
\node[] at (0.93,2.86) {\color{red}$\ell_0$};
\node[] at (-2.43,1.77) {\color{red}$\ell_4$};
\node[] at (-2.43,-1.77) {\color{red}$\ell_3$};
\node[] at (0.93,-2.86) {\color{red}$\ell_2$};

\node[] at (0.9,0) {\color{blue}$m_1$};
\node[] at (0.28,0.86) {\color{blue}$m_0$};
\node[] at (-0.73,0.53) {\color{blue}$m_4$};
\node[] at (-0.63,-0.53) {\color{blue}$m_3$};
\node[] at (0.28,-0.86) {\color{blue}$m_2$};

\end{tikzpicture}
\caption{$m_i$'s (blue) and $\ell_i$'s (red).}\label{loops}
\end{figure}
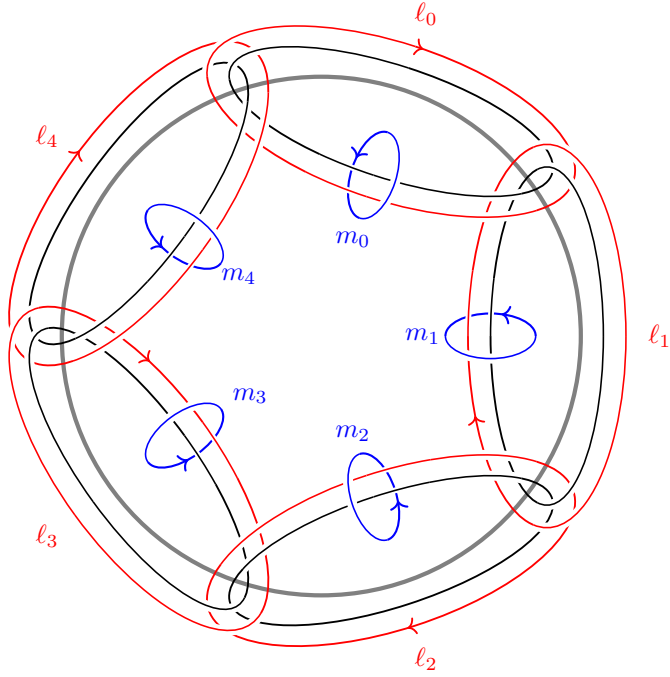
\begin{lemma}\label{relations}
The longitudes can be expressed in terms of the meridians in $H_1(L')$ by the following formulas.
\begin{align*}
\ell_0 &= -m_1-m_4, \\
\ell_1 &=-m_0+m_2, \\
\ell_2 &= m_1-m_3, \\
\ell_3 &= -m_2+m_4, \\
\ell_4 &= -m_0+m_3.
\end{align*}
\qed
\end{lemma}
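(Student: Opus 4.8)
The plan is to reduce Lemma~\ref{relations} to the standard homological description of longitudes in a link complement, and then to extract the needed numbers by inspecting the diagram. First I would record that, since $L'$ is the complement of a five-component link $K = K_0\cup\cdots\cup K_4\subset S^3$, Alexander duality (or a small Mayer--Vietoris argument) gives $H_1(L';\mathbb{Z})\cong\mathbb{Z}^5$ with the meridians $m_0,\ldots,m_4$ as a free basis. The key general fact I would invoke is that the canonical, i.e.\ Seifert-framed, longitude of $K_i$ satisfies
\[ [\ell_i] \;=\; \sum_{j\neq i}\mathrm{lk}(K_i,K_j)\,[m_j]\qquad\text{in } H_1(L'). \]
This is proven in the usual way: pick a Seifert surface $\Sigma_i\subset S^3$ for $K_i$, make it transverse to the remaining components, and delete a small disk from $\Sigma_i$ around each intersection point of $\Sigma_i$ with $K_j$; the resulting surface lies in $L'$, and its oriented boundary is $\ell_i-\sum_{j\neq i}\mathrm{lk}(K_i,K_j)\,m_j$, so this class vanishes in $H_1(L')$.

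Next I would verify that the curves $\ell_i$ drawn in Figure~\ref{loops} are indeed the canonical longitudes and not some other framing. Each component $K_i$ is drawn as an unknotted planar ellipse with no self-crossings, and $\ell_i$ is the parallel pushoff lying in the plane of that ellipse; the blackboard pushoff of a planar unknot is the $0$-framed longitude, so $\ell_i$ cobounds an embedded annulus with $K_i$ disjoint from $K\setminus K_i$, hence is isotopic in $S^3\setminus K_i$ to the canonical longitude. Equivalently, the absence of an $m_i$-term on the right-hand side of each displayed formula is exactly the statement that the framing is $0$, so this can be read off the figure together with everything else. With the orientations of the $m_i$'s and $\ell_i$'s fixed as in Figure~\ref{loops}, one then counts signed crossings of $K_i$ against $K_j$ in the diagram (Figure~\ref{intro-fig}, equivalently Figure~\ref{loops}) and finds that the only nonzero pairwise linking numbers occur for cyclically adjacent indices, namely
\[ \mathrm{lk}(K_0,K_1)=\mathrm{lk}(K_2,K_3)=\mathrm{lk}(K_4,K_0)=-1,\qquad \mathrm{lk}(K_1,K_2)=\mathrm{lk}(K_3,K_4)=1. \]
Substituting these into the boxed formula for $[\ell_i]$ produces precisely the five equations in the statement.

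The only real work, and the place where care is needed, is orientation bookkeeping: matching the orientations of the $m_i$ and $\ell_i$ chosen in Figure~\ref{loops} against the sign conventions in the linking-number formula and in the meridian circles surrounding the punctures of $\Sigma_i$, and confirming the $0$-framing. As an internal consistency check I would verify $\mathrm{lk}(K_i,K_j)=\mathrm{lk}(K_j,K_i)$ for the values above, which holds, and note that the signs alternate around the $5$-cycle $0\text{--}1\text{--}2\text{--}3\text{--}4\text{--}0$ except on the closing edge $(4,0)$; this sign defect on an odd cycle is exactly the "minimal twisting" of the link. If one prefers to avoid Seifert surfaces altogether, the same relations can be obtained by abelianizing a Wirtinger presentation of $\pi_1(L')$ read off the diagram, but the surface argument is the cleaner route.
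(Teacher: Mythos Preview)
Your proposal is correct and is essentially a fully spelled-out version of the paper's proof: the paper treats the lemma as an immediate consequence of inspecting Figure~\ref{loops} (the statement carries a bare \qed), and what you have written is exactly the standard argument one performs when ``examining the figure''---identifying the drawn $\ell_i$ as $0$-framed longitudes, invoking $[\ell_i]=\sum_{j\neq i}\mathrm{lk}(K_i,K_j)[m_j]$, and reading off the signed crossing numbers between adjacent components. Your linking-number table is consistent with the five displayed relations (and with the symmetry $\mathrm{lk}(K_i,K_j)=\mathrm{lk}(K_j,K_i)$), so nothing is missing.
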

Let $\lbrace e_1,e_2,e_3\rbrace\subset H_1(T^3)$ denote the set of generators which lift to the coordinate axes in the universal cover $\mathbb{R}^3\to T^3$.
\begin{lemma}\label{inducedmap}
The map $H_1(L')\to H_1(T^3)$ is determined by the following values (up to automorphisms of $H_1(T^3)$ changing the signs of $e_1$, $e_2$, and $e_3$).
\begin{align*}
m_0 &\mapsto 0, \\
m_1 &\mapsto e_2-e_3, \\
m_2 &\mapsto e_3, \\
m_3 &\mapsto -e_1+e_2, \\
m_4 &\mapsto -e_2+e_3.
\end{align*}
Consequently, the values of this map on the longitudes are as follows.
\begin{align*}
\ell_0 &\mapsto 0, \\
\ell_1 &\mapsto e_3, \\
\ell_2 &\mapsto e_1-e_3, \\
\ell_3 &\mapsto -e_2, \\
\ell_4 &\mapsto -e_1+e_2.
\end{align*}
\end{lemma}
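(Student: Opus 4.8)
The plan is to determine the map $\phi\colon H_1(L')\to H_1(T^*T^3)=H_1(T^3)$ from homological data alone, combining Lemma~\ref{relations} with the description of the cusps of $L'$ supplied by Theorem~\ref{singularlift}. The map $\phi$ is induced by composing the embedding of the smooth locus $L'$ of $\Lsing$ into $T^*T^3$ with the deformation retraction of $T^*T^3$ onto the zero section $T^3$; since $\Lsing$ and $\Lsing^\epsilon$ are Hamiltonian isotopic we may compute with whichever is convenient, and I would work with $\Lsing^\epsilon$ in tropical position. By Theorem~\ref{singularlift} the cusp of $L'$ labelled $0$ is the link of the cone point, which lies in an arbitrarily small ball $B$ about the origin of $T^*T^3$; as $B$ is contractible, $\phi$ annihilates $H_1(\mathrm{cusp}_0)$, so $\phi(m_0)=\phi(\ell_0)=0$, and feeding this into the relation $\ell_0=-m_1-m_4$ of Lemma~\ref{relations} already yields $\phi(m_1)=-\phi(m_4)$.

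Next I would record the images of the other four cusps. For $i=1,2,3,4$ one may take the cusp labelled $i$ to lie in the region where $\Lsing^\epsilon$ coincides with the periodized conormal $L_{V_i}$, which is diffeomorphic to $T^2_{u_i}\times[R,\infty)$ for $T^2_{u_i}=(u_i^\perp)/(u_i^\perp\cap\mathbb{Z}^3)\subset T^3$, where $u_i$ is the primitive tangent direction of the leg $V_i$. The retraction to $T^3$ restricts on this cusp to a homotopy equivalence onto $T^2_{u_i}$, so $\phi$ carries $H_1(\mathrm{cusp}_i)$ isomorphically onto the saturated sublattice $u_i^\perp\cap\mathbb{Z}^3$ of $\mathbb{Z}^3$; in particular $\phi(m_i)$ is a primitive vector of $u_i^\perp\cap\mathbb{Z}^3$ and $\phi(\ell_i)\in u_i^\perp\cap\mathbb{Z}^3$. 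Since $u_1=(-1,0,0)$, $u_2=(0,-1,0)$, $u_3=(0,0,-1)$, $u_4=(1,1,1)$, these lattices are $\langle e_2,e_3\rangle$, $\langle e_1,e_3\rangle$, $\langle e_1,e_2\rangle$, and $\langle e_1-e_2,\,e_2-e_3\rangle$ respectively.

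To pin down the meridians I would intersect these rank-two lattices, using Lemma~\ref{relations} to transport each meridian onto a second cusp. From $\ell_0=-m_1-m_4$ and $\phi(\ell_0)=0$ we get $\phi(m_1)=-\phi(m_4)\in u_1^\perp\cap u_4^\perp\cap\mathbb{Z}^3=\mathbb{Z}(e_2-e_3)$, so primitivity gives $\phi(m_1)=\pm(e_2-e_3)$; fixing the signs of $e_1,e_2,e_3$ so that $\phi(m_1)=e_2-e_3$ then forces $\phi(m_4)=-e_2+e_3$. Rewriting the relations for $\ell_1$ and $\ell_4$ as $m_2=\ell_1+m_0$ and $m_3=\ell_4+m_0$ gives $\phi(m_2)=\phi(\ell_1)\in u_1^\perp\cap u_2^\perp\cap\mathbb{Z}^3=\mathbb{Z}e_3$ and $\phi(m_3)=\phi(\ell_4)\in u_3^\perp\cap u_4^\perp\cap\mathbb{Z}^3=\mathbb{Z}(e_1-e_2)$, so $\phi(m_2)=\pm e_3$ and $\phi(m_3)=\pm(e_1-e_2)$. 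The two remaining signs are then forced: $\phi(\ell_2)=\phi(m_1)-\phi(m_3)$ must have vanishing $e_2$-component since it lies in $\langle e_1,e_3\rangle$, which gives $\phi(m_3)=-e_1+e_2$; and $\phi(\ell_3)=-\phi(m_2)+\phi(m_4)$ must have vanishing $e_3$-component since it lies in $\langle e_1,e_2\rangle$, which gives $\phi(m_2)=e_3$. Together with $\phi(m_0)=0$ this is the asserted table for the meridians, and the longitude values follow immediately by applying $\phi$ to the formulas of Lemma~\ref{relations}.

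The one step I expect to require care is the claim in the second paragraph that $\phi$ carries $H_1(\mathrm{cusp}_i)$ isomorphically onto the full sublattice $u_i^\perp\cap\mathbb{Z}^3$: this uses the identification of the cusps of $L'$ with the conormal tori of the legs from the remark following Theorem~\ref{mainthm0}, together with the fact that $\Lsing^\epsilon$ agrees with $\bigcup_j L_{V_j}$ outside a small ball about the origin, so that a cusp neighbourhood may be pushed entirely into one of these conormal ends. Once this is in hand, the rest is bookkeeping with saturated sublattices of $\mathbb{Z}^3$ and the relations of Lemma~\ref{relations}; the sign ambiguity in the statement corresponds exactly to the single remaining choice $\phi(m_1)=\pm(e_2-e_3)$, the opposite choice being realized by the automorphism $e_j\mapsto-e_j$ of $H_1(T^3)$.
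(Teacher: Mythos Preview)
Your approach is genuinely different from the paper's and largely correct, but there is one assumption that is doing more work than you acknowledge.

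The paper argues in the opposite order: it first determines the images of the longitudes $\ell_1,\ldots,\ell_4$ directly, by identifying each $\ell_i$ with a short arc on the ideal tetrahedron $T_{0,+}$ near its $i$th ideal vertex and reading off that arc's homology class in $T^3$ from the coamoeba picture (Figures~\ref{loopsquotient} and~\ref{loopscube}); the meridian values then follow from Lemma~\ref{relations}. Your route instead uses only the two pieces of homological data (cusp $0$ bounds in a ball; cusp $i$ is a conormal torus) together with primitivity and the relations of Lemma~\ref{relations}, and then pins down signs by forcing $\phi(\ell_2)\in u_2^\perp$ and $\phi(\ell_3)\in u_3^\perp$. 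This is cleaner algebra and makes the role of Lemma~\ref{relations} very transparent.

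The gap is that your argument needs the \emph{specific} bijection $\mathrm{cusp}\ i \leftrightarrow L_{V_i}$, not merely that cusps $1,\ldots,4$ go to the four conormal tori in some order. The remark after Theorem~\ref{mainthm0} that you cite only gives the latter. If, say, cusps $1$ and $2$ were swapped with legs $V_2$ and $V_1$, your lattice intersections would produce $\phi(m_1)\in\mathbb{Z}(e_1-e_3)$ rather than $\mathbb{Z}(e_2-e_3)$, and the final table would be wrong (and not related to the stated one by a sign automorphism). Establishing the precise bijection requires tracing the chain of identifications in Section~\ref{singularconstruction}: the asymptotics of $\widetilde{dg}$ near the face $\{\theta_i=0\}$ of $\Delta_+$ show that this face corresponds to the end over $V_i$, and then the proof of Lemma~\ref{topology} together with the labeling in Figure~\ref{triangulation} matches faces of $\Delta_+$ to cusp labels. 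The paper's pictorial proof sidesteps this by working in the coamoeba model throughout. You have correctly flagged this step as the delicate one, but the justification you give is not quite sufficient; once this bijection is verified, the rest of your argument is rigorous.
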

\begin{proof}
It will be more convenient to determine the values on the longitudes first. Recall that in the ideal triangulation of $L'$ described in subsection~\ref{chainlinkdescription}, there were two ideal tetrahedra $T_{0,\pm}$ with no vertices on the zeroth cusp. Informally, we can think of these as the dual tetrahedra to the components $\Delta_{\pm}$ of the coamoeba.

For $i = 1,2,3,4$, the longitude $\ell_i$ restricts to a small arc on $T_{0,+}$ near the $i$th vertex which connects two edges adjacent to the $i$th vertex. Recall that in the ideal cubulation of $L'$ from subsection~\ref{chainlinkdescription}, each ideal cube was written as the union of five ideal tetrahedra. We can think of these five ideal tetrahedra as representatives of the ideal tetrahedra in the quotient orbifold $N$ of $L'$ by the obvious $\mathbb{Z}/2$-action on $L'$. Note that in terms of the coamoeba, this $\mathbb{Z}/2$-action comes from the involution $\theta\mapsto-\theta$ on $T^3$.

\begin{figure}

\begin{tikzpicture}[scale = 1.5]
\draw(2,0)[line width = 0.25 mm] ellipse (0.5 and 1.5);

\draw(2,0)[red, line width = 0.25 mm] ellipse (0.7 and 1.7);
\draw[rotate = 72, line width = 0.25 mm] (2,0) ellipse (0.5 and 1.5);
\draw(2,0)[rotate = 72, red, line width = 0.25 mm] ellipse (0.7 and 1.7);

\draw[rotate = 144, line width = 0.25 mm] (2,0) ellipse (0.5 and 1.5);
\draw(2,0)[rotate = 144, red, line width = 0.25 mm] ellipse (0.7 and 1.7);

\draw[rotate = 216, line width = 0.25 mm] (2,0) ellipse (0.5 and 1.5);
\draw(2,0)[rotate = 216, red, line width = 0.25 mm] ellipse (0.7 and 1.7);

\draw[rotate = 288, line width = 0.25 mm] (2,0) ellipse (0.5 and 1.5);
\draw(2,0)[rotate = 288, red, line width = 0.25 mm] ellipse (0.7 and 1.7);

\draw (2.5,0)[white, double = black, ultra thick] arc (0:100:0.5 and 1.5);
\draw (2.7,0)[white, double = red, ultra thick] arc (0:100:0.7 and 1.7);

\draw (2.5,0)[white, double= black, ultra thick] arc (0:-100:0.5 and 1.5);
\draw (2.7,0)[white, double= red, ultra thick] arc (0:-100:0.7 and 1.7);

\draw[white, double = black, ultra thick, rotate = 72] (2.5,0) arc (0:100:0.5 and 1.5);
\draw[white, double = red, ultra thick, rotate = 72] (2.7,0) arc (0:100:0.7 and 1.7);

\draw[white, double = black, ultra thick, rotate = 144] (1.5,0) arc (180:240:0.5 and 1.5);
\draw[white, double = red, ultra thick, rotate = 144] (1.3,0) arc (180:240:0.7 and 1.7);

\draw[white, double = black, ultra thick, rotate = 72] (1.5,0) arc (180:240:0.5 and 1.5);
\draw[white, double = red, ultra thick, rotate = 72] (1.3,0) arc (180:240:0.7 and 1.7);

\draw[white, double = black, ultra thick, rotate = 144] (1.5,0) arc (180:120:0.5 and 1.5);
\draw[white, double = red, ultra thick, rotate = 144] (1.3,0) arc (180:120:0.7 and 1.7);

\draw[white, double = black, ultra thick, rotate = 216] (2.5,0) arc (0:-100:0.5 and 1.5);
\draw[white, double = red, ultra thick, rotate = 216] (2.7,0) arc (0:-100:0.7 and 1.7);

\draw[white, double = black, ultra thick, rotate = 216] (2.5,0) arc (0:100:0.5 and 1.5);
\draw[white, double = red, ultra thick, rotate = 216] (2.7,0) arc (0:100:0.7 and 1.7);

\draw[white, double = black, ultra thick, rotate = 288] (1.5,0) arc (180:240:0.5 and 1.5);
\draw[white, double = red, ultra thick, rotate = 288] (1.3,0) arc (180:240:0.7 and 1.7);

\draw[white, double = black, ultra thick, rotate = 288] (1.5,0) arc (180:120:0.5 and 1.5);
\draw[white, double = red, ultra thick, rotate = 288] (1.3,0) arc (180:120:0.7 and 1.7);

\draw[blue, thick] (1.5,0) ellipse (0.4 and 0.2);
\draw[white, double = blue, ultra thick] (1.1,0) arc (180:360:0.4 and 0.2);

\draw[white, double = black, thick] (1.5,0) arc (180:160:0.5 and 1.5);
\draw[white, double = red, thick] (1.3,0) arc (180:160:0.7 and 1.7);

\draw[blue, thick, rotate = 72] (1.5,0) ellipse (0.4 and 0.2);
\draw[white, double = blue, ultra thick, rotate = 72] (1.1,0) arc (180:360:0.4 and 0.2);

\draw[white, double = black, thick, rotate = 72] (1.5,0) arc (180:160:0.5 and 1.5);
\draw[white, double = red, thick, rotate = 72] (1.3,0) arc (180:160:0.7 and 1.7);

\draw[blue, thick, rotate = 144] (1.5,0) ellipse (0.4 and 0.2);
\draw[white, double = blue, ultra thick, rotate = 144] (1.1,0) arc (180:360:0.4 and 0.2);

\draw[white, double = black, thick, rotate = 144] (1.5,0) arc (180:160:0.5 and 1.5);
\draw[white, double = red, thick, rotate = 144] (1.3,0) arc (180:160:0.7 and 1.7);

\draw[blue, thick, rotate = 216] (1.5,0) ellipse (0.4 and 0.2);
\draw[white, double = blue, ultra thick, rotate = 216] (1.1,0) arc (180:360:0.4 and 0.2);

\draw[white, double = black, thick, rotate = 216] (1.5,0) arc (180:160:0.5 and 1.5);
\draw[white, double = red, thick, rotate = 216] (1.3,0) arc (180:160:0.7 and 1.7);

\draw[blue, thick, rotate = 288] (1.5,0) ellipse (0.4 and 0.2);
\draw[white, double = blue, ultra thick, rotate = 288] (1.1,0) arc (180:360:0.4 and 0.2);

\draw[white, double = black, thick, rotate = 288] (1.5,0) arc (180:160:0.5 and 1.5);
\draw[white, double = red, thick, rotate = 288] (1.3,0) arc (180:160:0.7 and 1.7);

\node[] at (3,0) {\color{red}$\ell_1$};
\node[] at (0.93,2.86) {\color{red}$\ell_0$};
\node[] at (-2.43,1.77) {\color{red}$\ell_4$};
\node[] at (-2.43,-1.77) {\color{red}$\ell_3$};
\node[] at (0.93,-2.86) {\color{red}$\ell_2$};

\node[] at (0.9,0) {\color{blue}$m_1$};
\node[] at (0.28,0.86) {\color{blue}$m_0$};
\node[] at (-0.73,0.53) {\color{blue}$m_4$};
\node[] at (-0.63,-0.53) {\color{blue}$m_3$};
\node[] at (0.28,-0.86) {\color{blue}$m_2$};

\filldraw[white] (0,0) circle (2.3);

\draw[gray, ultra thick] (0,0) circle (2.3);

\end{tikzpicture}
\caption{The images of the longitudes in the quotient oribfold.}\label{loopsquotient}
\end{figure}
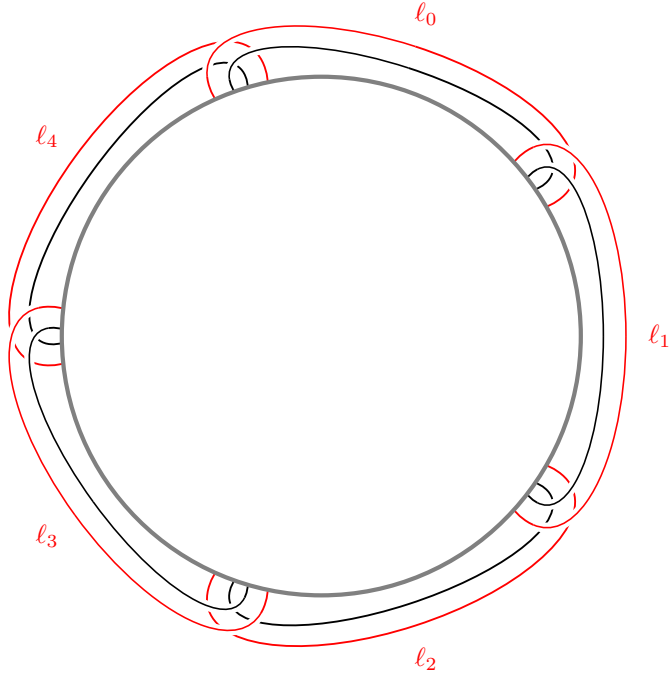

From Figure~\ref{loopsquotient}, which shows the images of the $\ell_i$'s under the orbifold quotient, we can determine the arcs on $T_{0,+}$ to which each longitude restricts. These are also drawn on the ideal cube in Figure~\ref{loopscube}. Thus the values of the induced map on homology of $\ell_i$ for $i = 1,2,3,4$ are as given in the statement of the lemma, assuming that the arcs are oriented as in Figure~\ref{loops}. We also have $\ell_0\mapsto0$ because the zeroth cusp of $\widetilde{L}$ is the link of the cone point in $\Lsing$. The values of the map on the meridians follow from Lemma~\ref{relations}.

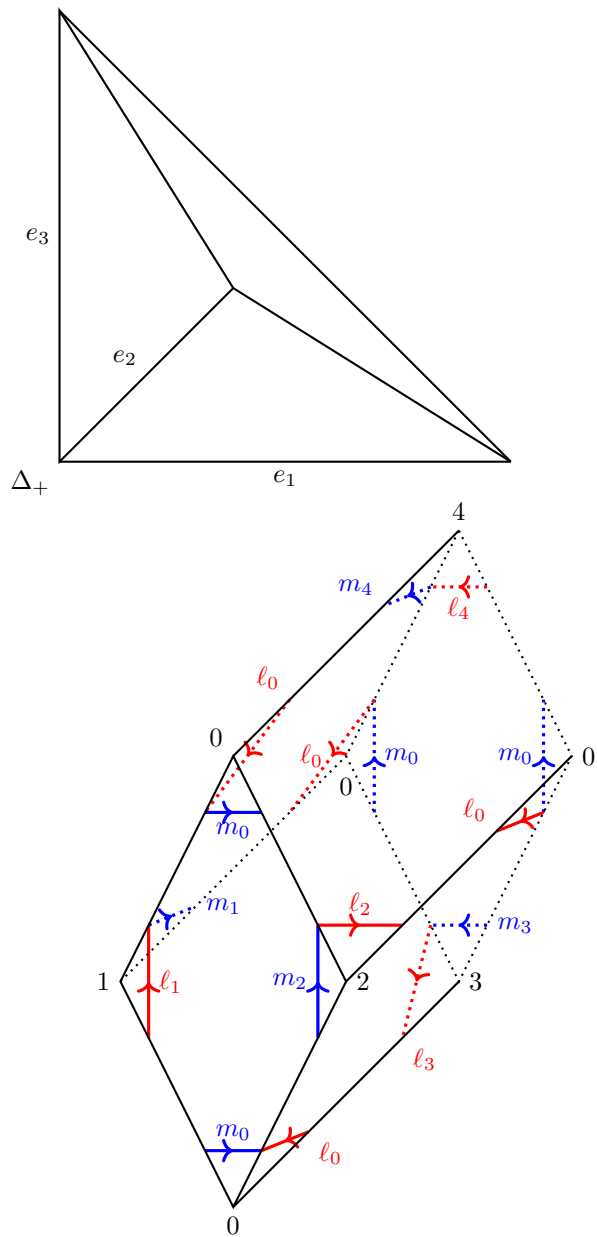
\begin{figure}

\begin{tikzpicture}
\begin{scope}[scale = 1.5]
\draw[thick] (0,0,0) -- (0,0,4) -- (0,4,4) -- (0,0,0);
\draw[thick] (0,0,0) -- (4,0,4);
\draw[thick] (0,0,4) -- (4,0,4);
\draw[thick] (0,4,4) -- (4,0,4);

\node[anchor = north east] at (0,0,4) {$\Delta_+$};

\node[anchor = east] at (0,2,4) {$e_3$};
\node[anchor = south east] at (0,0,2) {$e_2$};
\node[anchor = north] at (2,0,4) {$e_1$};
\end{scope}

\begin{scope}[scale = 1.5, yshift = -175, very thick,decoration={
    markings,
    mark=at position 0.5 with {\arrow{>}}}
    ]
    
\node[anchor = south east] at (0,2) {$0$};
\node[anchor = north] at (0,-2) {$0$};
\node[anchor = west] at (3,2) {$0$};
\node[anchor = north] at (1,1.9) {$0$};

\node[anchor = west] at (1,0) {$2$};
\node[anchor = east] at (-1,0) {$1$};
\node[anchor = south] at (2,4) {$4$};
\node[anchor = west] at (2,0) {$3$};

\draw[blue, postaction = {decorate}] (-1/4,3/2) -- (1/4,3/2);
\draw[blue, postaction = {decorate}] (-1/4,-3/2) -- (1/4,-3/2);
\draw[blue, postaction = {decorate}, dotted] (5/4,3/2) -- (5/4,5/2);
\draw[blue, postaction = {decorate}, dotted] (11/4,3/2) -- (11/4,5/2);

\node[anchor = north] at (0,3/2) {\color{blue} $m_0$};
\node[anchor = south] at (0,-3/2) {\color{blue} $m_0$};
\node[anchor = west] at (5/4,2) {\color{blue} $m_0$};
\node[anchor = east] at (11/4,2) {\color{blue} $m_0$};

\draw[blue, postaction = {decorate}, dotted] (-3/4,1/2) -- (-1/3,2/3); 
\draw[blue, postaction = {decorate}] (3/4,-1/2) -- (3/4,1/2); 
\draw[blue, postaction = {decorate}, dotted] (9/4,1/2) -- (7/4,1/2); 
\draw[blue, postaction = {decorate}, dotted] (7/4,7/2) -- (4/3,10/3); 

\node[anchor = west] at (-1/3,2/3) {\color{blue}$m_1$};
\node[anchor = east] at (3/4,0) {\color{blue}$m_2$};
\node[anchor = west] at (9/4,1/2) {\color{blue}$m_3$};
\node[anchor = south east] at (4/3,10/3) {\color{blue}$m_4$};

\draw[red, postaction = {decorate}, dotted] (1/2,5/2) -- (-1/4,3/2);
\draw[red, postaction = {decorate}] (2/3,-4/3) -- (1/4,-3/2);
\draw[red, postaction = {decorate}] (11/4,3/2) -- (7/3,4/3);
\draw[red, postaction = {decorate}, dotted]  (5/4,5/2) -- (1/2,3/2);

\node[anchor = south east] at (1/2,5/2) {\color{red} $\ell_0$};
\node[anchor = north west] at (2/3,-4/3) {\color{red} $\ell_0$};
\node[anchor = south east] at (7/3,4/3) {\color{red}$\ell_0$};
\node[anchor = west] at (1/2,2) {\color{red}$\ell_0$};

\draw[red, postaction = {decorate}] (-3/4,-1/2) -- (-3/4,1/2); 
\draw[red, postaction = {decorate}] (3/4,1/2) -- (3/2,1/2); 
\draw[red, postaction = {decorate}, dotted] (7/4,1/2) -- (3/2,-1/2); 
\draw[red, postaction = {decorate}, dotted] (9/4,7/2) -- (7/4,7/2); 

\node[anchor = west] at (-3/4,0) {\color{red}$\ell_1$};
\node[anchor = south] at (9/8,1/2) {\color{red}$\ell_2$};
\node[anchor = north west] at (3/2,-1/2) {\color{red}$\ell_3$};
\node[anchor = north] at (2,7/2) {\color{red}$\ell_4$};

\draw[thick] (0,2) -- (1,0) -- (0,-2) -- (-1,0) -- cycle;

\draw[thick, dotted] (2,4) -- (3,2) -- (2,0) -- (1,2) -- cycle;

\draw[thick] (0,2) -- (2,4);
\draw[thick] (1,0) -- (3,2);
\draw[thick] (0,-2) -- (2,0);
\draw[thick, dotted] (-1,0) -- (1,2);

\end{scope}
\end{tikzpicture}
\caption{$\Delta_+$ (top), and the corresponding ideal cube with the restrictions of $m_i$ and $\ell_i$ with orientations (bottom).}
\label{loopscube}
\end{figure}
\end{proof}
Although $m_0$ and $\ell_0$ are mapped to nullhomotopic curves in $T^*T^3$, we can still characterize their images inside  neighborhood of the cone point of $\Lsing$. Corollary~\ref{conenbhd} gives a neighborhood $B_0$ of the cone point of $\Lsing$ and a Hamiltonian isotopy carrying the link of the cone point to the link $\Lambda_{HL}$ to the link of the Harvey--Lawson cone. We will determine the images, in $\Lambda_{HL}\subset\mathbb{C}^3$, of the curves $m_0$ and $\ell_0$ under this isotopy. Consider the curves $\gamma_x(s)$ and $\gamma_z(t)$ on $\Lambda_{HL}$, which are homologous to the following two curves in $\mathbb{C}^3$:
\begin{align}\label{linkcircles}
\gamma_x(s)\coloneqq \left(\frac{1}{\sqrt{3}}\epsilon e^{is},\frac{1}{\sqrt{3}}\epsilon e^{-is},\frac{1}{\sqrt{3}}\epsilon \right) \, ,\qquad & s\in[0,2\pi]\, \\
\gamma_z(t)\coloneqq \left(\frac{1}{\sqrt{3}}\epsilon,\frac{1}{\sqrt{3}}\epsilon e^{-it},\frac{1}{\sqrt{3}}\epsilon e^{it} \right) \, , \qquad & t\in[0,2\pi] \, . \nonumber
\end{align}
\begin{remark}\label{startpoint}
Both of the curves above start at the point $\left(\frac{1}{\sqrt{3}}\epsilon,\frac{1}{\sqrt{3}}\epsilon,\frac{1}{\sqrt{3}}\epsilon\right)$ when $s = t  = 0$, but we can assume, up to a translation of $\Lambda_{HL}$, that $\gamma_x(s)$ and $\gamma_z(t)$ start at the point $\left(-\frac{1}{\sqrt{3}}\epsilon,\frac{1}{\sqrt{3}}\epsilon,-\frac{1}{\sqrt{3}}\epsilon\right)$, instead. With this convention, $\gamma_x(s)$ and $\gamma_z(t)$ project to arcs in $\mathbb{R}^3$ starting at this point and terminating on $\left(\frac{1}{\sqrt{3}}\epsilon,-\frac{1}{\sqrt{3}}\epsilon,-\frac{1}{\sqrt{3}}\epsilon\right)$ and $\left(-\frac{1}{\sqrt{3}}\epsilon,-\frac{1}{\sqrt{3}}\epsilon,\frac{1}{\sqrt{3}}\epsilon\right)$, respectively.
\end{remark} 
Let $\phi$ denote the Hamiltonian symplectomorphism of Corollary~\ref{conenbhd}. More precisely, $\phi$ acts via multiplication by~\eqref{matrixbase} on the base of $T^*\mathbb{R}^3$ and via multiplication by the matrix
\begin{align}\label{matrixfiber}
\frac{1}{2}\begin{pmatrix} 0 & 1 & 1 \\ 1 & 0 & 1 \\ 1 & 1 & 0\end{pmatrix} \, , 
\end{align}
which is the inverse transpose of~\eqref{matrixbase}, on the fibers of $T^*\mathbb{R}^3$. Recall that we have identified $T^*\mathbb{R}^3\cong\mathbb{C}^3$ by identifying the base $\mathbb{R}^3$ with the real axis in $\mathbb{C}^3$ and the cotangent fiber over $0$ with the imaginary axis. Now observe that the vectors
\begin{align*}
\begin{pmatrix} 1 \\ -1 \\ 0 \end{pmatrix} \text{ and }
\begin{pmatrix} 0 \\ -1 \\ 1 \end{pmatrix}
\end{align*}
are both eigenvectors for~\eqref{matrixfiber} with eigenvalue $-\frac{1}{2}$. This implies that the inverse of the Hamiltonian symplectomorphism of $T^*\mathbb{R}^3$ induced by~\eqref{matrixbase} scales the tangent vectors to the circles~\eqref{linkcircles} at $s = t = 0$ by a negative real number. This is because the tangent vectors to these circles at this point both point in the fiber direction in $T^*\mathbb{R}^3$, i.e. the imaginary direction of $\mathbb{C}^3$. Thus the circles $\phi^{-1}\circ\gamma_x(s)$ and $\phi^{-1}\circ\gamma_z(t)$ have the same center as $\gamma_x(s)$ and $\gamma_z(t)$, but different radii and reversed orientation. Notice in particular that the starting points of $\gamma_x(s)$ and $\gamma_z(t)$, where $s = t = 0$, are not preserved. The starting point of $\gamma_x(s)$ lies on $\mathbb{R}_{>0}\cdot(-1,0,0)$, and the starting point of $\gamma_z(t)$ lies on $\mathbb{R}_{>0}\cdot(0,0,-1)$ (neither of which corresponds to the original starting point, which would be mapped to a point on $\mathbb{R}_{>0}\cdot(0,-1,0)$, where these curves will intersect when $s = t = \pi$).

The tangent vectors to $\phi^{-1}\circ\gamma_x(s)$ and $\phi^{-1}\circ\gamma_z(t)$ at the point $s = t  = 0$ point in the fiber direction in $T^*\mathbb{R}^3$, and are given by the vectors $-e_1+e_2$ and $e_2-e_3$, respectively, where we have identified these classes in $H_1(T^3)$ with the tangent vectors to their geodesic representatives.

The homology classes in $H_1(\Lambda)$ represented by these circles can now be determined from Lemma~\ref{inducedmap} as follows. 
\begin{lemma}\label{linkh1gens}
The circles $\phi^{-1}\circ\gamma_x(s)$ and $\phi^{-1}\circ\gamma_z(t)$ are represented in $H_1(\Lambda)$ by $m_0$ and $-\ell_0$, respectively.
\end{lemma}
\begin{proof}
The starting point $\phi^{-1}\circ\gamma_x(0)$ lies on the the leg of $V$ given by the ray $\mathbb{R}_{>0}(-1,0,0)$, and the tangent vector to the curve at this point is given by $-e_1+e_2$. It follows that at the point $\phi^{-1}\circ\gamma_x(\pi)$, which lies on $\mathbb{R}_{>0}(0,-1,0)$, the tangent vector to this circle points in the direction $e_1-e_2$. Similarly, at the point $\phi^{-1}\circ\gamma_z(\pi)$, the tangent vector points in the direction $-e_2+e_3$.

The classes $m_2$ and $\ell_1$ in $H_1(L')$ are both mapped to $e_3$ by Lemma~\ref{inducedmap}. The class $m_2$ lies on the second cusp of $L'$, and is thus obtained from a class on the subtorus of $T^3$ spanned by $\lbrace e_2,e_3\rbrace$ under inclusion into $L'$. Similarly, the class $\ell_1$ lies on the first cusp of $L'$, so it comes from a class in the subtorus spanned by $\lbrace e_1,e_3\rbrace$. Taking a small translate of $e_3$ in the subtorus spanned by $\lbrace e_2,e_3\rbrace$ and translating it in the direction $e_1-e_2$ gives a small translate of $e_3$ in the subtorus spanned by $\lbrace e_1,e_3\rbrace$. In $H_1(L')$, this corresponds to adding a class in $H_1(\Lambda)$ to the initial cycle, which is represented by $\ell_1$. Since $\ell_1 = -m_0+m_2$ by Lemma~\ref{relations}, and the resulting cycle is $m_2$, it follows that the class added is $m_0$, so $\phi^{-1}\circ\gamma_x(s)$ represents this class.

Similarly, $m_2+\ell_2$ and $-m_3-\ell_3$ in $H_1(L')$ are both mapped to $e_1$, so a similar application of Lemm~\ref{inducedmap}, where we translate a copy of $e_1$ in the subtorus spanned by $e_1,e_2$ in the direction $-e_2+e_3$ to obtain a curve in the subtorus spanned by $\lbrace e_1,e_3\rbrace$, shows that $\phi^{-1}\circ\gamma_z(t)$ is represented by the class $(m_2+\ell_2)-(-m_3-\ell_3) = -\ell_0\in H_1(\Lambda)$.
\end{proof}

\section{Weinstein neighborhoods and smoothings}\label{nbdhsmthsect}
In this section, we will prove Theorem~\ref{smoothings} from the introduction. In \S\ref{nbhdsect}, we will prove, adapting the techniques of~\cite{Joyce}, a Weinstein neighborhood theorem for conical at infinity, non-compact Lagrangian submanifolds with conical singularities in Liouville manifolds. In \S\ref{smthsect}, we will use the neighborhood of $\Lsing$ obtained this way to give the proof of Theorem~\ref{smoothings}. 

\subsection{Lagrangian neighborhood theorems}\label{nbhdsect}
In this subsection, we will prove a version of the Weinstein neighborhood theorem for Lagrangian submanifolds in Liouville manifolds which have conical singular points and which are cylindrical at infinity. Although the results that we use are similar to those in~\cite{Joyce}, we will state them here for the reader's convenience.

The starting point for the proof of the Lagrangian neighborhood theorem we will use is the following statement from Lagrangian foliations, which is stated in~\cite{Joyce} and extracted from~\cite{Weinstein}. 
\begin{theorem}[Joyce~\cite{Joyce}]\label{foliation} Let $(M,\omega)$ be a symplectic manifold and $N\subset M$ a half-dimensional embedded submanifold. Let $\lbrace L_x\mid x\in N\rbrace$ be a smooth family of embedded, noncompact Lagrangian submanifolds in $M$ for which $x\in L_x$ and $T_x L_x\cap T_x N = \lbrace0\rbrace$ for all $x\in N$. Then there is an open neighborhood $U$ of the zero section $N$ in $T^*N$ such that the fibers of the restricted bundle projection $\pi\colon U\to N$ are connected along with a unique embedding $\Phi\colon U\to M$ with $\Phi(\pi^{-1}(x))\subset L_x$ for each $x\in N$ with the property that $\Phi$ restricts to the identity on $N$ and that $\Phi^*(\omega) = \hat{\omega}+\pi^*(\omega\mid_N)$, where $\hat{\omega}$ is the canonical symplectic structure on $T^*N$.
\end{theorem}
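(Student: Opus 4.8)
The plan is to adapt the standard Moser--Weinstein argument for the Lagrangian neighborhood theorem to this foliated setting; the only new features are that $N$ need not itself be Lagrangian and that one must track the cotangent fibration throughout, so that the leaves $L_x$ are matched with the fibers $\pi^{-1}(x)$. I would proceed in two stages: first produce a preliminary embedding $\Phi_0$ of a neighborhood of the zero section which carries cotangent fibers into the leaves and has the correct $1$-jet along $N$, and then correct the symplectic form by a Moser isotopy which is itself fiber-preserving.

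For the first stage, transversality $T_xL_x\oplus T_xN=T_xM$ together with smoothness of the family implies that the leaves $L_x$ foliate a neighborhood $W$ of $N$ in $M$; after shrinking $W$ this foliation is the fiber foliation of a submersion $p\colon W\to N$ with $p|_N=\id_N$ and each $p^{-1}(x)$ a connected open subset of $L_x$. Along $N$ the vertical bundle $E\coloneqq\ker(dp)|_N$ lies inside $TL_x$, and since $L_x$ is Lagrangian the pairing $T_xN\times E_x\to\mathbb{R}$, $(v,w)\mapsto\omega(v,w)$, is perfect (as $E_x=T_xL_x$ is its own $\omega$-orthogonal and $T_xM=T_xN\oplus E_x$); this gives an identification $E\cong T^*N$. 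Choosing an auxiliary Riemannian metric on $M$ and using the fiberwise exponential maps of the leaves, precomposed with this identification, produces a diffeomorphism $\Phi_0$ from a neighborhood of the zero section in $T^*N$ onto a neighborhood of $N$ inside $W$ with $\Phi_0|_N=\id_N$, with $p\circ\Phi_0=\pi$ (so cotangent fibers map into leaves), and with $d\Phi_0|_N$ equal to the identity on $TN$ and to the above identification on the vertical part. A short linear-algebra check then shows that $\omega_0\coloneqq\hat\omega+\pi^*(\omega|_N)$ and $\omega_1\coloneqq\Phi_0^*\omega$ agree as bilinear forms at every point of the zero section: both make $\ker d\pi$ Lagrangian, they agree on $TN$ since $\Phi_0|_N=\id_N$, and the mixed terms match by construction once the sign in the identification $E\cong T^*N$ is fixed appropriately.

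The second stage is a relative Moser argument, and the key point---which I expect to be the main technical obstacle---is that the interpolating isotopy must preserve the cotangent fibration, so that composing with it does not destroy the property $\Phi(\pi^{-1}(x))\subset L_x$. Since $\ker d\pi$ is Lagrangian for both $\omega_0$ and $\omega_1$, it is Lagrangian for every $\omega_t\coloneqq(1-t)\omega_0+t\omega_1$, and the closed $2$-form $\mu\coloneqq\omega_1-\omega_0$ vanishes on $\ker d\pi$ as well as on $T(T^*N)|_N$. Applying the relative Poincar\'e lemma via the radial contraction of $T^*N$ onto its zero section---integrating $\mu$ contracted with the Euler vector field of the bundle---produces a primitive $\sigma$ with $d\sigma=\mu$ which vanishes along $N$, and which, because the Euler vector field is vertical and $\mu$ annihilates $\ker d\pi$, also vanishes on $\ker d\pi$. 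Defining $X_t$ by $\iota_{X_t}\omega_t=-\sigma$, one has $\omega_t(X_t,v)=-\sigma(v)=0$ for $v\in\ker d\pi$, so $X_t\in(\ker d\pi)^{\omega_t}=\ker d\pi$, i.e. $X_t$ is vertical, and $X_t$ vanishes on $N$ since $\sigma$ does. Hence its flow $\psi_t$, defined for $t\in[0,1]$ after shrinking, fixes $N$ pointwise and carries each fiber $\pi^{-1}(x)$ into $\pi^{-1}(x)$. The usual computation $\tfrac{d}{dt}\psi_t^*\omega_t=\psi_t^*(d\iota_{X_t}\omega_t+\mu)=0$ gives $\psi_1^*\omega_1=\omega_0$, so $\Phi\coloneqq\Phi_0\circ\psi_1$ is an embedding of a neighborhood $U$ of the zero section with $\Phi^*\omega=\omega_0=\hat\omega+\pi^*(\omega|_N)$, with $\Phi|_N=\id_N$, and with $\Phi(\pi^{-1}(x))=\Phi_0(\pi^{-1}(x))\subset L_x$. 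Shrinking $U$ further arranges that the fibers of $\pi|_U$ are connected.

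For uniqueness, if $\Phi$ and $\Phi'$ both satisfy the conclusion then $\Phi^*\omega=\Phi'^*\omega=\omega_0$ and both restrict to $\id_N$, and the conditions force $d\Phi|_N=d\Phi'|_N$: the vertical part of the derivative is pinned down by the perfect pairing above (it is the unique $w\in T_xL_x$ with $\omega(v,w)=\alpha(v)$ for all $v\in T_xN$). Thus $h\coloneqq\Phi^{-1}\circ\Phi'$ is a symplectomorphism of $(T^*N,\omega_0)$ near the zero section, fiber-preserving over $\id_N$ (it carries $\pi^{-1}(x)\subset L_x$ back into $\pi^{-1}(x)$), fixing $N$ with $dh|_N=\id$. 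Writing $h(q,p)=(q,g(q,p))$ in fiber coordinates and using $\omega_0=dp_i\wedge dq^i+\tfrac12(\omega|_N)_{ij}\,dq^i\wedge dq^j$, the condition $h^*\omega_0=\omega_0$ forces $g(q,p)=p+\beta(q)$ for a closed $1$-form $\beta$ on $N$, and $h|_N=\id$ forces $\beta=0$, whence $h=\id$ near $N$. Apart from the fibration-compatibility of the Moser isotopy, every step is routine bookkeeping.
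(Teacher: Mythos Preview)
The paper does not give a proof of this theorem; it is quoted as a black box from Joyce (who in turn extracts it from Weinstein's original Lagrangian neighborhood paper), so there is no argument in the paper to compare against.

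Your proposal is a correct and essentially complete proof, and it is the expected one: a Moser--Weinstein argument in which the additional bookkeeping is that the interpolating isotopy must preserve the cotangent fibration. The crucial step---that the primitive $\sigma$ produced by the fiberwise radial retraction annihilates $\ker d\pi$, hence that the Moser vector field $X_t$ lies in $(\ker d\pi)^{\omega_t}=\ker d\pi$---is exactly right, and is the point that distinguishes this from the ordinary Lagrangian neighborhood theorem. Your uniqueness argument is also fine; note that once you have shown $h(q,p)=(q,p+\beta(q))$ with $\beta$ closed, the condition $h|_N=\id$ alone already forces $\beta=0$, so the separate verification that $dh|_N=\id$ is not needed (though it is correct).

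One small point worth making explicit for a polished write-up: the claim in Stage~1 that the $L_x$ ``foliate a neighborhood of $N$'' is not part of the hypotheses, and globally the $L_x$ may well intersect one another. What you actually use---and what does follow from transversality plus smoothness of the family via the inverse function theorem applied to the fiberwise exponential map---is that sufficiently small germs of the $L_x$ at their basepoints are pairwise disjoint and sweep out a neighborhood of $N$. This local statement is exactly what is needed both to build $\Phi_0$ and, in the uniqueness step, to conclude that $\Phi^{-1}\circ\Phi'$ is fiber-preserving.
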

One also has a neighborhood theorem for Lagrangian cones in $C\subset\mathbb{C}^n$, by which we mean singular Lagrangian submanifolds in $\mathbb{C}^n$ with the property that $tC = C$ for any $t\in\mathbb{R}_{>0}$. This can either be proven using Theorem~\ref{foliation}, as in~\cite{Joyce}, or using the Legendrian neighborhood theorem. Such neighborhoods can be made dilation invariant with respect to a certain $\mathbb{R}_{>0}$-action on $T^*(\Sigma\times\mathbb{R}_{>0})$ which we will now describe.

Let $C$ be a Lagrangian cone in $\mathbb{C}^n$ with isolated singularity at $0$ and $\Sigma = C\cap S^{2n-1}$. Define $\iota\colon\Sigma\times\mathbb{R}_{>0}\to\mathbb{C}^n$ by the formula $\iota(\sigma,s) = s\sigma$. For $\sigma\in\Sigma$, $\tau\in T^*_{\sigma}\Sigma$, $s\in\mathbb{R}_{>0}$ and $u\in\mathbb{R}$, let $(\sigma,s,\tau,u)$ denote the point $\tau+uds\in T^*_{(\sigma,r)}(\Sigma\times\mathbb{R}_{>0})$.  Then there is an action of $\mathbb{R}_{\geq0}$ on $T^*(\Sigma\times(0,\infty))$ given by
\[ t\colon(\sigma,s,\tau,u)\mapsto(\sigma,ts,t^2\tau,tu) \]
for $t\in\mathbb{R}_{>0}$. If we let $\hat{\omega}$ denote the canonical symplectic form on $T^*(\Sigma\times\mathbb{R}_{>0})$, then this action has the property that $t^*(\hat{\omega})  t^2\hat{\omega}$.
\begin{lemma}[Joyce~\cite{Joyce}]\label{Weinsteinconical}
There is an open neighborhood $U_C$ of $\Sigma\times\mathbb{R}_{>0}$ in $T^*(\Sigma\times\mathbb{R}_{>0})$ invariant under the action of $\mathbb{R}_{>0}$ and an embedding $\Phi_C\colon U_C\to\mathbb{C}^n$ which restricts to $\iota$ over the zero section, for which the pullback of the usual symplectic form on $\mathbb{C}^n$ is $\hat{\omega}$, and such that $\Phi_C\circ t = t\Phi_C$.
\end{lemma}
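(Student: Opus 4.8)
The plan is to obtain the statement as a dilation-equivariant refinement of Theorem~\ref{foliation}, reading off the equivariance from the uniqueness clause of that theorem rather than rerunning a Moser argument by hand. First I would set $M\coloneqq\mathbb{C}^n\setminus\lbrace 0\rbrace$, a symplectic manifold, and $N\coloneqq C\setminus\lbrace 0\rbrace=\iota(\Sigma\times\mathbb{R}_{>0})$, which is an embedded half-dimensional submanifold by the isolated-singularity hypothesis and is Lagrangian, so $\omega|_N=0$. Then I would construct a dilation-invariant field of Lagrangian complements to $TN$ along $N$: over the \emph{compact} link $\Sigma$ the Lagrangian subspaces of $\mathbb{C}^n$ transverse to $T_\sigma C$ form a bundle with affine, hence contractible, fibers, so a smooth choice $\sigma\mapsto\Lambda_\sigma$ exists, and extending it by $\Lambda_{s\sigma}\coloneqq\Lambda_\sigma$ (legitimate because $C$ and each $\Lambda_\sigma$ are cones and the dilations $\delta_t\colon z\mapsto tz$ of $\mathbb{C}^n$ are linear) gives a smooth field $x\mapsto\Lambda_x$ with $d\delta_t(\Lambda_x)=\Lambda_{tx}$. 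The affine Lagrangians $L_x\coloneqq x+\Lambda_x$ then form a smooth, dilation-equivariant ($L_{tx}=tL_x$) family of embedded noncompact Lagrangians with $x\in L_x$ and $T_xL_x\cap T_xN=\lbrace 0\rbrace$, and each $L_x$ avoids $0$ because $-x\in T_xC$ while $T_xC\cap\Lambda_x=\lbrace 0\rbrace$, so $L_x\subset M$.

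Applying Theorem~\ref{foliation} to this family yields a neighborhood $U'$ of the zero section in $T^*N$ (with connected fibers) and the unique embedding $\Psi\colon U'\to M$ with $\Psi(\pi^{-1}(x))\subset L_x$ for all $x$, with $\Psi|_N=\id_N$, and with $\Psi^*\omega=\hat\omega$ (using $\omega|_N=0$). Writing $\iota_*\colon T^*(\Sigma\times\mathbb{R}_{>0})\to T^*N$ for the cotangent lift of the diffeomorphism $\iota$, a symplectomorphism carrying the zero section to the zero section and covering $\iota$, I would set $\Phi_C\coloneqq\Psi\circ\iota_*$ on $\iota_*^{-1}(U')$; then $\Phi_C$ restricts to $\iota$ over the zero section and $\Phi_C^*(\omega_{\mathbb{C}^n})=\iota_*^*\hat\omega=\hat\omega$.

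The equivariance is the substantive point. I would first note that the $\mathbb{R}_{>0}$-action in the statement is the cotangent lift of the base dilation $(\sigma,s)\mapsto(\sigma,ts)$ composed with fiberwise rescaling by $t^2$; since the former is a symplectomorphism and the latter multiplies $\hat\omega$ by $t^2$, the action multiplies $\hat\omega$ by $t^2$ (as recorded just before the statement) and covers the base dilation. Transporting it through $\iota_*$ produces an action $A_t$ on $T^*N$ that multiplies $\hat\omega$ by $t^2$ and covers $x\mapsto tx$. Now $\delta_t^{-1}\circ\Psi\circ A_t$ sends $\pi^{-1}(x)$ into $\delta_t^{-1}(L_{tx})=L_x$, restricts to $\id_N$, and pulls $\omega$ back to $t^{-2}A_t^*\hat\omega=\hat\omega$, so it satisfies the three properties characterizing $\Psi$; by uniqueness it equals $\Psi$ near the zero section. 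Hence $\Psi\circ A_t=\delta_t\circ\Psi$, and conjugating by $\iota_*$ gives $\Phi_C\circ t=t\cdot\Phi_C$. Finally I would shrink to an $\mathbb{R}_{>0}$-invariant neighborhood $U_C$: the action on $T^*(\Sigma\times\mathbb{R}_{>0})$ is free and proper, since the $\mathbb{R}_{>0}$-coordinate of the base point transforms by $s\mapsto ts$, so the orbit space is a manifold containing the compact $\Sigma$ as its core, and a sufficiently small neighborhood of $\Sigma$ there pulls back to a suitable $U_C$.

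The step I expect to be the main obstacle is pinning down the weights of the $\mathbb{R}_{>0}$-action --- the factor $t^2$ on the $T^*\Sigma$ directions against the factor $t$ on the $ds$ direction --- and checking that this is precisely the action induced on $T^*(\Sigma\times\mathbb{R}_{>0})$ by $\iota$ and $\delta_t$; this compatibility is exactly what lets the uniqueness argument run, and it is dictated by $\iota$ scaling the $\Sigma$-directions by $t$ while being length-preserving in the radial direction and using the linear radius $s$ (which itself scales by $t$) as the $\mathbb{R}_{>0}$-coordinate. An essentially equivalent route would be to apply the Legendrian neighborhood theorem to the link $\Sigma$, viewed as a Legendrian in $S^{2n-1}$ with its standard contact structure, and symplectize, using that the symplectization of a contact-form-preserving germ is automatically equivariant for the scaling actions; with that approach the same weight bookkeeping becomes the principal chore.
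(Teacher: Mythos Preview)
Your proposal is correct and follows precisely the approach the paper indicates: the paper does not give its own proof but states that the lemma ``can either be proven using Theorem~\ref{foliation}, as in~\cite{Joyce}, or using the Legendrian neighborhood theorem,'' and you have carried out the former route in detail, including the key idea of extracting equivariance from the uniqueness clause of Theorem~\ref{foliation} applied to a dilation-invariant field of Lagrangian complements. Your closing remark about the Legendrian neighborhood alternative also matches the second option the paper names.
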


Now we will move on to discuss a global version of this theorem. Let $M$ be a Liouville manifold, and let $L\subset M$ be a Lagrangian which is cylindrical at infinity and embedded, except at a discrete set of conical singular points, say $x_1,\ldots,x_m\in L$, all of which are contained in $M^{in}$ away from $\partial M$. Let $\Lambda_1,\ldots,\Lambda_{\ell}$ denote the connected components of the Legendrian $\partial M\cap L$. In particular the Lagrangian submanifold $L_{M^{in}}\coloneqq L\cap M^{in}$ is compact with Legendrian boundary.

Suppose that there are Darboux charts $\Psi_1,\ldots,\Psi_m$ near $x_1,\ldots,x_m$ such that $\Psi_i\colon B_{r}(0)\to M$ is defined on a ball centered at the origin in $\mathbb{C}^n$ and takes the value $\Psi_i(0) = x_i$. Denote by $\psi_i$ the linear isomorphism $d_0\Psi_i\colon\mathbb{C}^n\to T_{x_i}M$.

Further assume that $\Psi_i^{-1}(L)$ is a (dilation-invariant) Legendrian cone in $B_{\epsilon}(0)\subset\mathbb{C}^n$ with Legendrian link denoted $\Sigma_i$. We will let $\iota_i\colon\Sigma_i(0,\epsilon)\to B_{\epsilon}(0)$ denote the map $(\sigma,s)\mapsto s\sigma$.

Define the punctured Lagrangian submanifold $L'\coloneq L\setminus\lbrace x_1,\ldots,x_m\rbrace$. Then we can write $\Psi^{-1}_i(L')$ as the image under $\Phi_{\Sigma_i}$ of the graph of the zero section in $T^*(\Sigma_i\times(0,\epsilon'))$ for some $\epsilon'\in(0,\epsilon]$. Now define $\phi\colon\Sigma_i\times(0,\epsilon')\to B_{\epsilon}(0)$ by $\phi_i(\sigma,s) =  \Phi_{\Sigma_i}(\sigma,s)$. Then $\Psi_i\circ\phi_i$ maps $\Sigma_i\times(0,\epsilon')\to L'$. Let $S_i$ denote the image of $\Psi_i\circ\phi_i$, and define $K$ to be
\[ K\coloneqq L'\setminus\left(S\cup\bigcup_{j=1}^{\ell}(\Lambda_i\times[1,\infty))\right).\]
\begin{lemma}\label{Weinsteinnbhd}
There is an open tubular neighborhood $U_{L'}\subset T^*L'$ of the zero section and a symplectic embedding $\Phi_{L'}\colon U_{L'}\to M$ which restricts to the inclusion $L'\hookrightarrow L$ over the zero section. Moreover, this embedding satisfies
\begin{equation}\label{localdefn}
\Phi_{L'}\circ(d\Psi_i\circ\phi_i) = \Psi_i\circ\Phi_{\Sigma_i}
\end{equation}
for all points $(\sigma,s,\tau,u)\in T^*(\Sigma_i\times(0,\epsilon'))$ in the open neighborhood from Lemma~\ref{Weinsteinconical}.
\end{lemma}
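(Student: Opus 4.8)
The plan is to follow the global Darboux--Weinstein patching argument of~\cite{Joyce}, adding the bookkeeping needed to match the prescribed cylindrical model near infinity. The key observation is that $L'$ already carries honest symplectic neighborhood embeddings on each of its ends: near each singular point $x_i$ the composite $\Psi_i\circ\Phi_{\Sigma_i}$ from Lemma~\ref{Weinsteinconical} realizes a dilation-invariant Weinstein neighborhood of the conical end $S_i$, and near each boundary Legendrian $\Lambda_j$ the fact that $L$ and $M$ are both products $\times[1,\infty)$ supplies the evident cotangent neighborhood of $\Lambda_j\times[1,\infty)$. Hence all of the real work takes place over the compact core $K$, which is compact with boundary, and a relative Moser argument there should suffice.

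Concretely, I would first build an auxiliary \emph{smooth} embedding $\Phi_0\colon U^0\to M$ of some open neighborhood $U^0$ of the zero section in $T^*L'$, restricting to the inclusion $L'\hookrightarrow L$ over the zero section, coinciding with $\Psi_i\circ\Phi_{\Sigma_i}$ (reparametrized via $d\Psi_i\circ\phi_i$) over a neighborhood $\pi^{-1}(W)$ of each conical end, and coinciding with the product model over a neighborhood of each $\Lambda_j\times[1,\infty)$. Such a $\Phi_0$ exists by the tubular neighborhood theorem relative to the closed subset consisting of the zero section together with these end-neighborhoods, the local models being compatible on the overlapping collars; as usual one arranges in addition that $d\Phi_0$ along the zero section carries the fiber $T^*_xL'$ onto a Lagrangian complement of $T_xL'$ in $T_xM$ compatibly with the canonical pairing, so that $\omega_0\coloneqq\Phi_0^*\omega$ agrees with the canonical form $\hat\omega$ on $T^*L'$ along the zero section (automatic near the ends, where $\Phi_0$ is already symplectic). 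Then $\hat\omega-\omega_0$ is closed, vanishes along the zero section, and vanishes identically on $\pi^{-1}(W)$ near all the ends; a relative Poincar\'e lemma using a fiberwise-dilating retraction of $U^0$ onto the zero section that preserves $\pi^{-1}(W)$ produces a primitive $\beta$ with $d\beta=\hat\omega-\omega_0$ vanishing to second order along the zero section and vanishing near the ends. The Moser vector field $X_t$ determined by $\iota_{X_t}\omega_t=-\beta$, with $\omega_t\coloneqq\omega_0+t(\hat\omega-\omega_0)$ (nondegenerate near the zero section after shrinking), is then supported in the region over $K$, hence complete on a suitably shrunk neighborhood $U_{L'}$; its time-one flow $\psi_1$ fixes the zero section pointwise, is the identity near all the ends, and satisfies $\psi_1^*\hat\omega=\omega_0$, so $\Phi_{L'}\coloneqq\Phi_0\circ\psi_1^{-1}$ is a symplectic embedding with $\Phi_{L'}^*\omega=\hat\omega$, restricting to the inclusion on the zero section, and, since $\psi_1$ is the identity near the conical ends, still satisfying~(\ref{localdefn}).

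The main obstacle I anticipate is purely the noncompact bookkeeping: arranging that $\beta$ genuinely vanishes on a full neighborhood of \emph{both} families of ends --- the conical ones, where the dilation parameter $s\to0$, and the cylindrical ones, where the radial parameter $r\to\infty$ --- so that the Moser flow is complete and is the identity precisely on the regions where the prescribed local models must be preserved. Because $\hat\omega-\omega_0$ is already supported over the compact set $K$, this reduces to choosing the retraction to respect the product structure on the collars of $K$, which is routine but must be set up with care; once this is done, noncompactness of $L'$ causes no further difficulty. One should also note at the end that $\Phi_{L'}$ is an embedding rather than merely an immersion, which holds after possibly shrinking $U_{L'}$ since $\Phi_0$ is an embedding and $\psi_1$ a diffeomorphism.
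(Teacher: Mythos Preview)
Your argument is correct, but it takes a different route from the paper. You run a relative Moser trick: build a smooth tubular neighborhood $\Phi_0$ matching the prescribed models near both families of ends, then use the fiberwise retraction to kill the discrepancy $\hat\omega-\Phi_0^*\omega$, which is already supported over the compact core $K$. The paper instead uses the Lagrangian-foliation method recorded as Theorem~\ref{foliation}: over $S$ and over the cylindrical ends, the prescribed local models already give a smooth family of transverse Lagrangian balls $L_x=\Phi_{L'}(T_x^*L'\cap U_{L'})$, and one simply extends this family over the compact set $K$ by hand; then Theorem~\ref{foliation} produces the symplectic embedding directly, and its \emph{local uniqueness} clause guarantees that the result agrees with $\Psi_i\circ\Phi_{\Sigma_i}$ on the overlaps, so equation~(\ref{localdefn}) holds automatically.

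Your approach is more self-contained and avoids invoking Theorem~\ref{foliation}; it is the standard relative-Weinstein argument. The paper's approach is shorter because the compatibility condition~(\ref{localdefn}) falls out of uniqueness rather than from having arranged $\beta\equiv0$ near the ends. Both handle the noncompactness the same way in spirit: all genuine work is confined to $K$, and the ends are frozen.
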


\begin{proof}
We take the equation~\ref{localdefn} to be a definition for $\Phi_{L'}$ over the subset $S$, and so this determines what $U_{L'}$ should be near each of the cone points. Similarly we can define $\Phi_{L'}$ and $U_{L'}$ over the cylindrical ends using the Legendrian neighborhood theorem, meaning that we need only extend $\Phi_{L'}$ over $K$.

To that end, define $L_x = \Phi_{L'}(T_x^*L'\cap U_{L'})$ for all points $x$ in $S$ or in the cylindrical ends.  We see that $L_x$ is an open Lagrangian ball which intersects $L'$ transversely at $x$, and this family depends smoothly on $x$. We can then extend this to a family parametrized by $x\in L'$, since the set where the family isn't already defined is the compact set $K$. Applying Theorem~\ref{foliation} to $\lbrace L_x\mid x\in L'\rbrace$ yields an open neighborhood $U$ of $L'$ in $T^*L'$ and a symplectic embedding $\Phi\colon U\to T^*T^3$ which restricts to the identity on $L'$. By the local uniqueness of Theorem~\ref{foliation} we see that $\Phi_{C_{HL}}$ and $\Phi_{L'}$ coincide where they are both defined.
\end{proof}

\subsection{Desingularizations and lifts of smooth tropical curves}\label{smthsect}
The Harvey--Lawson cone $C_{HL}$ has three asymptotically conical smoothings, given in coordinates by
\[ C_{HL}^i  = \lbrace|z_i|^2-\delta^2 = |z_j|^2 = |z_k|^2,  \; z_1z_2z_3\in\mathbb{R}_{\geq0}\rbrace\subset\mathbb{C}^3\]
for all $\lbrace i,j,k\rbrace = \lbrace 1,2,3\rbrace$. Notice that $C_{HL}^i$ bounds a holomorphic disk $D^i = \lbrace |z_i|^2 \leq \delta^2 \text{ and }z_j = z_k = 0\rbrace$.

We will construct three distinct smoothings of $\Lsing$ by perturbing $L'$ by a Lagrangian isotopy in a Weinstein neighborhood furnished by Lemma~\ref{Weinsteinnbhd}. The isotopy in question is given by the graph of a closed $1$-form on $L'$. We must then check that the resulting non-properly embedded Lagrangian submanifold can be compactified by gluing in one of the smoothings $C_{HL}^i(\epsilon)$. For this purpose, we will start by describing $C_{HL}^i\setminus\partial D^i$ as the graph of a $1$-form on $C_{HL}\setminus 0$.

Using the long exact sequence in cohomology, we have an isomorphism
\[ H^1(C_{HL}^i;\mathbb{R})\cong H^2(\mathbb{C}^3,C_{HL}^i;\mathbb{R}) \, .\]
Let $Y(C_{HL}^i)$ denote the class in $H^1(\Lambda_{HL})$ obtained from $[\omega]\in H^2(\mathbb{C}^3,C_{HL}^i)$, the class of the symplectic form on $\mathbb{C}^3$, under the composition of the isomorphism above with the pullback $H^1(C_{HL}^i;\mathbb{R})\to H^1(\Lambda_{HL})$, where have identified $\Lambda_{HL}$ with $\partial C_{HL}^i$.

The values of $Y(C_{HL}^i)$ are described in~\cite[Example 6.9 and \S{10}]{Joyce2}. Under the identification
\begin{align*}
(e^{is},e^{it})\mapsto\left(\frac{1}{\sqrt{3}} e^{is},\frac{1}{\sqrt{3}} e^{-is-it},\frac{1}{\sqrt{3}} e^{it} \right)
\end{align*}
we have that
\begin{align}\label{coneform}
Y(C_{HL}^1) = (\pi\delta,0) \, \quad Y(C_{HL}^2) = (0,\pi\delta) \, \quad Y(C_{HL}^3) = (-\pi\delta,-\pi\delta)
\end{align}
where $H^1(\Lambda_{HL};\mathbb{R})\cong\mathbb{R}^2$. We can thus write $C_{HL}^i\setminus\partial D^i$ as the graph of a $1$-form on $C_{HL}\setminus 0$ which represents the cohomology class $Y(C_{HL}^i)$ in a neighborhood as in~\ref{Weinsteinconical}.

To prove Theorem~\ref{smoothings}, we must now find deformations of $\Lsing\setminus 0$ corresponding to these asymptotically conical fillings. Recall that $\Lsing$ agrees with the periodized conormal bundles to the legs of $V$ away from a compact subset of $T^*T^3$ containing the cone point. Consider the tropical curve $V_3$ from the introduction. Away from a compact subset of $\mathbb{R}^3$ containing the origin, deforming $V$ to $V_3$ shifts the negative $q_1$-axis by $\epsilon$ in the $-q_2$-direction. Similarly, it shifts the negative $q_2$-axis by $\epsilon$ in the $-q_1$-direction, and the other two legs of $V$ by $\epsilon$ in the $(q_1+q_2)$-direction. These deformations correspond to Lagrangian isotopies of the periodized conormals to the legs of $V$, each of which can be described as the graph of a closed $1$-form on $\mathbb{R}\times T^2$. 

With this understood, we define a $1$-form $\alpha_3$ on an open subset of $L'$ given by a union of collar neighborhoods of the first, second, third, and fourth cusps. We assume that these collar neighborhoods are identified with (portions of) the periodized conormals to the legs of $V$ under the embedding $L'\to T^*T^3$. For example, by Lemma~\ref{inducedmap}, we have that $m_1\mapsto e_2-e_3$ and $\ell_1\mapsto e_3$. The $1$-form $\alpha_3$, restricted to a neighborhood of the first cusp, gives a $1$-form on $T^2\times\mathbb{R}_{>0}$, where we can think of the classes $m_1,\ell_1\in H_1(T^2\times\mathbb{R}_{>0};\mathbb{R})$ as generators. Viewing this $1$-form as an element of the dual space to $H_1(T^2\times\mathbb{R}_{>0};\mathbb{R})$, we see, by Lemma~\ref{inducedmap}, that it should satisfy $\alpha_3(m_1) = -\epsilon$ and $\alpha_3(\ell_1) = 0$, and consequently $\alpha_3(m_1+\ell_1) = -\epsilon$. Since $m_1+\ell_1$ is mapped to $e_2$ and $\ell_1$ is mapped to $e_3$, the $1$-form described this way restricts to the $1$-form on the periodized conormal whose graph gives the expected deformation corresponding to the deformation from $V$ to $V_3$. 

To prove Theorem~\ref{smoothings}, then, we now need only observe that there is no cohomological obstruction to extending these $1$-forms to \textit{closed} $1$-forms defined on $L'$ in such a way that they give one of forms~\eqref{coneform} near the cone point.
\begin{proof}[Proof of Theorem~\ref{smoothings}]
We begin by describing the forms $\alpha_1$, $\alpha_2$, and $\alpha_3$. By the same considerations as above, we can describe these forms in a neighborhood of the periodized conormals to the legs of $V$ by describing their values on the classes $\lbrace m_j,\ell_j\rbrace$ for $j = 1,2,3,4$. Applying Lemma~\ref{inducedmap}, this gives us the following values.
\begin{align*}
\begin{matrix}
\alpha_1(m_1) = 0, & \alpha_1(m_2) = -\epsilon, & \alpha_1(m_3) = 0, & \alpha_1(m_4) = -\epsilon \\
\alpha_1(\ell_1) = \epsilon  & \alpha_1(\ell_2) = \epsilon & \alpha_1(\ell_3) = -\epsilon & \alpha_1(\ell_4) = 0 \\
 & & & \\
\alpha_2(m_1) = \epsilon & \alpha_2(m_2) = \epsilon & \alpha_2(m_3) = \epsilon & \alpha_2(m_4) = \epsilon \\
\alpha_2(\ell_1) = -\epsilon & \alpha_2(\ell_2) = 0 & \alpha_2(\ell_3) = 0 & \alpha_2(\ell_4) = -\epsilon \\
& & & \\
\alpha_3(m_1) = -\epsilon & \alpha_3(m_2) = 0 & \alpha_3(m_3) = 0 & \alpha_3(m_4) = -\epsilon \\
\alpha_3(\ell_1) = 0 & \alpha_3(\ell_2) = -\epsilon & \alpha_3(\ell_3) = -\epsilon & \alpha_3(\ell_4) = 0 \, .
\end{matrix}
\end{align*}
These determine the values of $\alpha_1$, $\alpha_2$, and $\alpha_3$ on $m_0$ and $\ell_0$ since, by Lemma~\ref{relations}, we have that $m_0 = -\ell_1+m_2$ and $\ell_0 = -m_1-m_4$. These give us that
\begin{align*}
\begin{matrix}
\alpha_1(m_0) = 0 & \alpha_1(\ell_0) = 2\epsilon \\
\alpha_2(m_0) = 2\epsilon & \alpha_2(\ell_0) = -2\epsilon \\
\alpha_3(m_0) = -2\epsilon & \alpha_3(\ell_0) = 0 \, .
\end{matrix}
\end{align*}
Comparing this with Lemma~\ref{linkh1gens}, it follows that we can take $\alpha_3$ to agree with $Y(C_{HL}^1)$ near the cone point of $\Lsing$, for appropriately chosen values of $\epsilon$ and $\delta$. Note that the identification $T^*\mathbb{R}^3\cong\mathbb{C}^3$ we used above was \textit{anti}-symplectic, carrying the real locus in $\mathbb{C}^3$ to the base of the cotangent bundle\footnote{This is consistent with the conventions of~\cite[\S{3.2}]{TZ}, where singular Lagrangian fillings are described as branched double covers of the base of the cotangent bundle of a $3$-ball.}, which explains the minus signs in the values of the $\alpha_i$'s at $m_0$ and $\ell_0$.

Similarly, we can take $\alpha_2$ to agree with $Y(C_{HL}^3)$ and $\alpha_3$ to agree with $Y(C_{HL}^2)$ near the cone point. We can thus glue the corresponding fillings of $\Lambda_{HL}$ to the graph of these $1$-forms inside $T^*L'$. Notice that this corresponds to taking Dehn fillings of $L$ that contract the curves $m_0$, $m_0+\ell_0$, and $\ell_0$, respectively. These give the $\infty$-, $1$-, and $0$-fillings of $L'$ along the zeroth cusp, and these are precisely the fillings that yield the graph manifold~\eqref{graphmfd} by the main result of~\cite[\S{1.4}]{MPR}.
\end{proof}
\begin{remark}
We are currently unable to show that the smoothings of Theorem~\ref{smoothings} are Hamiltonian isotopic to tropical Lagrangian lifts of these smooth curves, in the sense of~\cite{HicksReal}. The main difficulty lies in constructing a Hamiltonian isotopy that makes these Lagrangian submanifolds coincide with the periodized conormal to (a portion of) the finite edge of $V_i$. Notably, these smoothings bound holomorphic disks lying over the finite edge, as one would expect from~\cite[Fig. 9]{HicksReal}, meaning that the standard argument used to put exact Lagrangian submanifolds in tropical position do not apply.
\end{remark}

\section{An Immersed Tropical Lagrangian}\label{IL}
In this section, we will construct a cleanly immersed Lagrangian submanifold $\Limm$ whose Floer theory we will later see behaves as one would expect the Floer theory of the direct sum of $\Lsing$ with itself to behave, if it were to be defined. We will also discuss gradings and unobstrucedness for this immersed Lagrangian.

\subsection{Construction} We construct the immersed Lagrangian submanifold of Theorem~\ref{main} using a `doubling' trick, similar to~\cite{AbouzaidSylvan}. We begin by describing a `double' of the Harvey--Lawson cone. This is a Lagrangian immersion in $\mathbb{C}^3$ which, morally, should be thought of as representing two copies of the Harvey--Lawson cone.

Consider the map $w\colon\mathbb{C}^3\to\mathbb{C}$ given by $q(x,y,z) = xyz$. Under this map, the Harvey--Lawson cone $C_{HL}$ is sent to the positive real axis in $\mathbb{C}$. Identify $\mathbb{C}^3\setminus\lbrace xyz = 1\rbrace$ with 
\[ X\coloneqq\lbrace(x,y,z,v)\in\mathbb{C}^3\times\mathbb{C}^*\mid xyz = 1+v\rbrace.\]
Note that this is one of the local models considered in~\cite{AbouzaidSylvan}. Let $D\coloneqq w^{-1}(0)\subset X$ be the normal crossing divisor.
\begin{lemma}\label{productcoords}
The maps $(x,z,w)$ define coordinates on $X\setminus D$ such that the following diagram commutes:
\[\begin{tikzcd}
X\setminus D\arrow{r}{\cong}\arrow{d} & \mathbb{C}^2\times(\mathbb{C}\setminus\lbrace 0,1\rbrace)\arrow{d} \\
X\arrow{r} & \mathbb{C}^2\times\mathbb{C}
\end{tikzcd}\]
\end{lemma}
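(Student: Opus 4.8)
The argument is formal once the two identifications in the statement are made explicit; the plan is to unwind them and then write down the inverse coordinate change by hand. First I would spell out $X\cong\mathbb{C}^3\setminus\{z_1z_2z_3=1\}$: the projection $(z_1,z_2,z_3,v)\mapsto(z_1,z_2,z_3)$ is a biholomorphism onto $\mathbb{C}^3\setminus\{z_1z_2z_3=1\}$, with inverse $(z_1,z_2,z_3)\mapsto(z_1,z_2,z_3,z_1z_2z_3-1)$, since $v=z_1z_2z_3-1$ lies in $\mathbb{C}^*$ exactly when $z_1z_2z_3\neq 1$. Under this identification the function $w=z_1z_2z_3$ pulls back to $1+v$ on $X$, so $D=w^{-1}(0)$ becomes $\{z_1z_2z_3=0\}=\{v=-1\}$, the union of the three coordinate hyperplanes lying over $v=-1$ --- a simple normal crossings divisor --- and $X\setminus D$ is identified with $\{(z_1,z_2,z_3):z_1z_2z_3\notin\{0,1\}\}$. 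I would record two consequences of this: on $X\setminus D$ every $z_i$ is nonzero, and $w$ takes values in $\mathbb{C}\setminus\{0,1\}$ there (it avoids $0$ by the definition of $D$ and avoids $1$ by the definition of $X$).

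Next I would produce the coordinate chart. By the above, $\Psi:=(z_1,z_2,w)$ maps $X\setminus D$ into $\mathbb{C}^2\times(\mathbb{C}\setminus\{0,1\})$, and it is injective because $z_3=w/(z_1z_2)$ is recovered from $(z_1,z_2,w)$ (legitimate since $z_1z_2\neq 0$ on $X\setminus D$). Conversely, $(z_1,z_2,w)\mapsto(z_1,z_2,\,w/(z_1z_2))$ is holomorphic on $\{z_1z_2\neq 0\}$ and produces a point of $X\setminus D$ whenever $w\notin\{0,1\}$: then $z_1z_2z_3=w\neq 0$ keeps us off $D$ and $v=w-1\neq 0$ keeps us inside $X$. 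Hence $\Psi$ is a biholomorphism of $X\setminus D$ onto the open locus $\{z_1z_2\neq 0\}$ of $\mathbb{C}^2\times(\mathbb{C}\setminus\{0,1\})$, i.e.\ onto $(\mathbb{C}^*)^2\times(\mathbb{C}\setminus\{0,1\})$; in particular $(z_1,z_2,w)$ is a global holomorphic coordinate system on $X\setminus D$.

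It remains to check that the square commutes, which is immediate once the arrows are named: the left vertical is the open inclusion $X\setminus D\hookrightarrow X$, the right vertical is the open inclusion $\mathbb{C}^2\times(\mathbb{C}\setminus\{0,1\})\hookrightarrow\mathbb{C}^2\times\mathbb{C}$, and the bottom is the everywhere-defined morphism $X\to\mathbb{C}^2\times\mathbb{C}$, $(z_1,z_2,z_3,v)\mapsto(z_1,z_2,w)=(z_1,z_2,z_1z_2z_3)$; both composites $X\setminus D\to\mathbb{C}^2\times\mathbb{C}$ equal $(z_1,z_2,w)$. There is no genuine analytic or geometric obstacle here; the only thing needing care is the bookkeeping of which loci are deleted. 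In particular, passing to $X\setminus D$ forces $z_1,z_2,z_3$ to be units, which is exactly why the image of the chart is the sublocus of $\mathbb{C}^2\times(\mathbb{C}\setminus\{0,1\})$ where the first two coordinates are nonzero, and why the relevant bottom map is the non-injective morphism $X\to\mathbb{C}^2\times\mathbb{C}$ (it contracts the $z_3$-lines comprising $D$ and simply remembers the ambient coordinates $(z_1,z_2,w)$ before the divisor is removed) rather than, say, the inclusion $X=\mathbb{C}^3\setminus\{z_1z_2z_3=1\}\hookrightarrow\mathbb{C}^3$.
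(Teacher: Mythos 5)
Your proof is correct; the paper states this lemma without any proof, and your unwinding of the two identifications is exactly the intended argument. One remark: your careful bookkeeping actually sharpens the statement — since every $z_i$ is a unit on $X\setminus D$, the chart $(z_1,z_2,w)$ is a biholomorphism onto $(\mathbb{C}^*)^2\times(\mathbb{C}\setminus\lbrace 0,1\rbrace)$ rather than onto all of $\mathbb{C}^2\times(\mathbb{C}\setminus\lbrace 0,1\rbrace)$ as the labeled $\cong$ in the diagram suggests (indeed the two spaces are not even homotopy equivalent); this is harmless for the subsequent definition of $L_{loc}$, since the Clifford torus sits inside $(\mathbb{C}^*)^2$, but your version of the top arrow is the correct one.
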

This enables us to define a local model for the immersed Lagrangian submanifold as a product. Let $L_1$ denote the immersed arc in $\mathbb{C}\setminus\lbrace 0,1\rbrace$ shown in Figure~\ref{immersedarc}. 
\begin{definition} Define the Lagrangian immersion $L_{loc}\subset X$ to be $T^2\times L_1$, where the $T^2$-factor is a Clifford torus $T^2\subset\mathbb{C}^2$.
\end{definition}
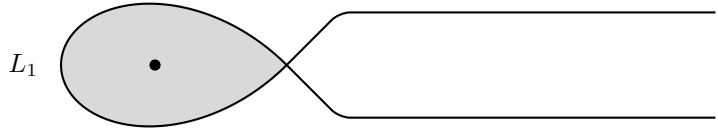
\begin{figure}
\begin{tikzpicture}\draw[smooth, thick, fill = gray!30] 
  plot[domain=135:225,samples=200] (\x:{3*cos(2*\x)});
  
  \draw[rounded corners, thick] (0,0) -- (0.7,0.7) -- (5.7,0.7);
  \draw[rounded corners, thick] (0,0) -- (0.7,-0.7) -- (5.7,-0.7);
  
  \node[] at (-3.5,0) {$L_1$};
  \node[circle, fill, inner sep = 1.5 pt] at (-1.75,0) {};
 
\end{tikzpicture}
\caption{The immersed Lagrangian submanifold $L_1\subset\mathbb{C}$, bounding a holomorphic teardrop through the origin.}\label{immersedarc}
\end{figure}
We can think of this Lagrangian immersion as lying inside the neighborhood $\Psi(B_{\epsilon'})$ constructed during the proof of Lemma~\ref{Weinsteinnbhd}. To construct $\Limm$, we will glue two copies of $L'$, the minimally-twisted five-component chain link complement, to $L_{loc}$. The two copies of $L'$ in question are obtained by perturbing the smooth part of $\Lsing$ in the Weinstein neighborhood constructed in Lemma~\ref{Weinsteinnbhd}. More precisely, they will be obtained as the graphs $\Gamma(\pm dh)$ inside the Weinstein neighborhood of $\Lsing$ constructed in Lemma~\ref{Weinsteinnbhd}, which contains $T^*L'$ as an open subdomain. This choice of perturbation will guarantee that $\Limm$ is exact, but we will also need to specify the behavior of $h$ near the cusps of $L'$ to ensure that $\Gamma(dh)$ and $\Gamma(-dh)$ can be smoothly glued to $L_{loc}$.

\begin{lemma}\label{morsefn}
There is a Morse function $h\colon L'\to[0,\infty)$ with $10$ index $0$ critical points, $20$ index $1$-critical points, and $10$ index $2$ critical points. Moreover, near each cusp of $L'$, we can arrange for $h$ to coincide with the function $h\colon T^2\times(0,\infty)\to(0,\infty)$ given by projection to the radial coordinate in suitable coordinates.
\end{lemma}
\begin{proof} This Morse function is constructed from the ideal triangulation of $L'$, with the numbers of critical points correspond to the number of $3$-cells, $2$-cells, and $1$-cells, respectively, in the triangulation. 
\end{proof}
Recall that we can identify a neighborhood of the point $(0,0)\in T^*T^3$ with an open ball via $\Phi\colon\mathbb{C}^3\cong\mathbb{R}^6\to T^*T^3$, where the second map is the universal cover. Let $U\subset T^*T^3$ denote the image of this ball. In this Darboux chart, the Liouville form on $T^*T^3$ pulls back to the Liouville form on $\mathbb{C}^3$. We can also assume that $U$ is the Darboux ball used to construct the Weinstein neighborhood of $\Lsing$ constructed in Lemma~\ref{Weinsteinnbhd}.

Now consider the graphs $\Gamma(dh)$ and $\Gamma(-dh)$ taken inside the cotangent bundle $T^*L'$. Since we have Hamiltonian isotoped $\Lsing$ near $(0,0)\in T^*T^3$ to agree with the Harvey--Lawson cone, it follows that the intersection $(\Gamma(dh)\cup\Gamma(-dh))\cap U$ pulled back to $\mathbb{C}^3$ and projected to $\mathbb{C}$ under $q\colon\mathbb{C}^3\to\mathbb{C}$ is the union of two open arcs
\[ q\circ\Phi^{-1}((\Gamma(dh)\cup\Gamma(-dh))\cap U) = \lbrace re^{i\alpha}\colon r\in(0,\epsilon)\rbrace\cup\lbrace re^{-i\alpha}\colon r\in(0,\epsilon)\rbrace, \]
for a small (in absolute value) real number $\alpha$. Next, we choose a slightly smaller Darboux ball $U'\subset U$ centered at $0$, and a Hamiltonian isotopy $\psi$ supported in a neighborhood of $U'$ such that
\begin{align*}
&q\circ\Phi^{-1}\circ\psi((\Gamma(dh)\cup\Gamma(-dh))\cap(U\setminus U'))  \\
&= \lbrace x+i\delta\colon x\in(a,b)\rbrace\cup\lbrace x-i\delta\colon x\in(a,b)\rbrace
\end{align*}
agrees with the union of two horizontal arcs in $\mathbb{C}$. Since $\psi((\Gamma(dh)\cup\Gamma(-dh))$ has this form, we can form the union
\[ \Limm\coloneqq L_{loc}\cup\left(\psi(\Gamma(dh)\cup\Gamma(-dh))\setminus U' \right), \]
which is a smooth Lagrangian immersion.

\begin{lemma} The Lagrangian immersion $\Limm$ is cylindrical at infinity, and the cylindrical ends consist of two disjoint Hamiltonian isotopic copies of the periodized conormal bundles over the infinite edges of $V$. Furthermore, $\Limm$ is exact, with a locally constant primitive over the cylindrical ends.
\end{lemma}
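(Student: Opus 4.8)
The plan is to analyze $\Limm$ in three regions: the compact core away from the cone point, the local model $L_{loc}$ near the cone point, and the cylindrical ends over the infinite legs of $V$. For cylindricality at infinity: recall that $\Lsing$ has already been put in tropical position (the lemma just before Section~\ref{firsthomology}), so outside a compact ball $B$ centered at the origin in $\mathbb{R}^3$ it coincides with $\bigcup_{i=0}^3 L_{V_i}$, where the cusp labeled $0$ is the cone point and the cusps $1,\dots,4$ lie over the infinite edges $V_1,\dots,V_4$. The Morse function $h$ on $L'$ was arranged so that $\nabla h$ points radially outward in the collar neighborhoods $T^2\times(0,\epsilon)$ of the cusps; hence $\Gamma(\pm dh)$, restricted to the four non-zeroth cylindrical ends, is $C^1$-close to the zero section there, and after the compactly supported Hamiltonian perturbations $\psi$ (which are supported near $U'$, i.e. near the cone point) it is left untouched over those ends. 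So over $T^*T^3\setminus\pi_{SYZ}^{-1}(B')$ for a slightly larger ball $B'$, $\Limm$ is a Hamiltonian-small perturbation of $\bigcup_{i=1}^4(L_{V_i}\sqcup L_{V_i})$, two disjoint copies of each periodized conormal. Each $L_{V_i}$ is already cylindrical (a periodized conormal over a ray in $\mathbb{R}^3$ is exactly $\Lambda_{V_i}\times[1,\infty)$ for a Legendrian torus $\Lambda_{V_i}$), and a $C^1$-small Hamiltonian perturbation can be taken to preserve this; so $\Limm$ is cylindrical at infinity and its ends are two disjoint Hamiltonian-isotopic copies of the periodized conormals over $V_1,\dots,V_4$, as claimed. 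One should also note these two copies really are disjoint: near the cone point they are glued to the two distinct arcs $\{x\pm i\delta\}$ under $q\circ\Phi^{-1}$, which are disjoint, and the gluing to $L_{loc}=T^2\times L_1$ respects this since the two legs of $L_1$ emanate on opposite sides.

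For exactness: $L'$ is an embedded exact Lagrangian away from the cone point — this is built into the construction, $\Lsing'=\Phi(\widetilde\Delta)$ is the graph of the exact form $d g$ over $\widetilde\Delta$, hence has primitive $f_{\Lsing}=g\circ\pi$ pulled back, constant ($=0$) over the conormal ends where $dg\to 0$. The graph $\Gamma(\pm dh)$ inside $T^*L'$ is exact with primitive $\pm h$ (using that $L'$ itself is exact with some primitive $f_0$, so $\Gamma(\pm dh)$ carries primitive $f_0\pm h$ in $T^*T^3$); and since $h$ is locally constant — indeed zero, after rescaling — near each conormal cusp by the radial-gradient condition, the primitive is locally constant over those ends. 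The two compactly supported Hamiltonian isotopies used to straighten the arcs (first to $\{re^{\pm i\alpha}\}$, then $\psi$ to $\{x\pm i\delta\}$) preserve exactness and only change the primitive by a compactly supported amount, which can be absorbed; they do not affect local constancy over the cylindrical ends since their supports are disjoint from those ends. It remains to check exactness of the local piece $L_{loc}$ and compatibility of primitives across the gluing locus. The piece $L_{loc}=T^2\times L_1\subset X\subset\mathbb{C}^3$: the Clifford torus $T^2\subset\mathbb{C}^2$ is exact (it is a product of circles $|z_j|=c_j$, which are exact for the standard Liouville form with a suitable choice of primitive), and $L_1\subset\mathbb{C}\setminus\{0,1\}$ is an embedded exact curve away from its single self-intersection (it bounds the teardrop through the origin, but being one-dimensional, exactness is the statement that $\int_{L_1}\lambda$ along any loop vanishes, which holds because $L_1$ is simply connected as an abstract manifold after resolving the double point into a boundary — more precisely $\widetilde L_1\cong\mathbb{R}$). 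So $L_{loc}$ is exact with a primitive that is locally constant over the two legs (which are the cylindrical collars of $L_1$ pushed out to infinity in $\mathbb{C}$). The final check is that on the overlap $U\setminus U'$, where $\psi(\Gamma(\pm dh))$ is glued to $L_{loc}$, the two primitives agree up to an additive constant on each of the two sheets; this follows because on the overlap both Lagrangians are equal to (a perturbation of) the same pair of horizontal arcs $\{x\pm i\delta\}$ times the Clifford torus, and one simply chooses the additive constants for the primitive on $L_{loc}$ to match those of $f_0\pm h$ propagated from the compact core. The induced primitive is then globally defined on $\Limm$ and locally constant over all cylindrical ends.

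The main obstacle is the bookkeeping at the gluing: one must verify that the Hamiltonian isotopies $\psi$ can be chosen so that $\psi(\Gamma(dh)\cup\Gamma(-dh))$ literally coincides with $T^2\times\{x\pm i\delta\}$ — not merely $C^0$-close — on an annular region, so that the union $L_{loc}\cup(\psi(\Gamma(\pm dh))\setminus U')$ is a genuinely smooth (immersed) Lagrangian rather than a Lagrangian with a corner, and simultaneously that the primitives match exactly on the overlap. This requires the Weinstein neighborhood identifications of Lemma~\ref{Weinsteinnbhd} and Lemma~\ref{Weinsteinconical} to be compatible with the product coordinates $(z_1,z_2,w)$ on $X\setminus D$, which is the content of the preceding lemma; given that, the matching of primitives is forced since a closed $1$-form on an annulus with prescribed restriction to the two ends is exact plus a harmonic representative, and here the relevant period vanishes because both arcs are contractible in $\mathbb{C}\setminus\{0,1\}$ rel their endpoints on the straightened part. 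The diffeomorphism type and clean-immersion property of $\Limm$ were already established in the construction; here we only record the symplectic-topological properties, so no further transversality input is needed.
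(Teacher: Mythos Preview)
Your overall strategy matches the paper's: show cylindricality from the construction, then build a primitive on $\psi(\Gamma(\pm dh))\setminus U'$ using exactness of $L'$ and extend it over $L_{loc}$. However, two of your key intermediate claims are false, and the correct conclusions hold for different reasons than you give.

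First, you assert that ``$h$ is locally constant --- indeed zero, after rescaling --- near each conormal cusp by the radial-gradient condition,'' and that $\Gamma(\pm dh)$ is $C^1$-close to the zero section over those ends. This contradicts the hypothesis: the gradient of $h$ points \emph{outward in the radial direction} near every cusp, so $dh$ is nonzero there and $h$ is certainly not locally constant. Consequently your formula ``primitive $=f_0\pm h$'' cannot be the reason the primitive is locally constant on the ends. The actual mechanism is this: near the end over $V_1$, the Weinstein neighborhood identifies the cotangent direction dual to the radial coordinate on $L'$ with the angular coordinate $\theta_1$ in $T^*T^3$. So if $dh$ is purely radial (and, after normalization, $h'$ constant), then $\Gamma(dh)$ maps to the conormal over $V_1$ shifted by a \emph{constant} in $\theta_1$; this is still cylindrical, and the Liouville form $\sum q_j\,d\theta_j$ restricts to zero on it (since $\theta_1$ is constant and $q_2=q_3=0$), giving a constant primitive. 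The paper compresses this to a single sentence; your attempt to unpack it went astray.

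Second, your claim that ``the Clifford torus $T^2\subset\mathbb{C}^2$ is exact'' is not correct for the standard radial Liouville form (its periods are $\pi r^2>0$), and the change of coordinates $(z_1,z_2,w)$ is not a symplectomorphism, so exactness of $L_{loc}$ does not follow from a product argument in those coordinates. The paper instead argues directly: the primitive already constructed on the annular collar $(T^2\sqcup T^2)\times(0,1)$ depends only on the interval coordinate (constant on the torus factors), hence the $T^2$-periods of $\iota^*\lambda$ vanish on the collar, and since these generate $H_1(\widetilde{L_{loc}})$ the primitive extends over $L_{loc}$ (uniquely, determined by the area of the teardrop).
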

\begin{proof}
The statement about cylindrical ends is clear. Recall that the smooth part $L'$ of the Lagrangian submanifold $\Lsing\subset T^*T^3$ is exact and cylindrical at infinity by construction. Since $\psi(\Gamma(dh))$ and $\psi(\Gamma(-dh))$ are both obtained by applying a Hamiltonian isotopy to $L'$, it follows that $\left(\psi(\Gamma(dh)\cup\Gamma(-dh))\setminus U' \right)$ has an induced primitive. By our assumption about the gradient flow of $h$ near the cusps of $L'$, it follows that $\Limm$ remains cylindrical at infinity, and hence that the primitive on $\left(\psi(\Gamma(dh)\cup\Gamma(-dh))\setminus U' \right)$ is necessarily constant on the cylindrical ends.

To finish the proof, we must show that the primitive on $\left(\psi(\Gamma(dh)\cup\Gamma(-dh))\setminus U' \right)$ extends over $L_{loc}$. Since we know that the projection $\Phi^{-1}\circ\psi((\Gamma(dh)\cup\Gamma(-dh)\cap(U\setminus U'))$ under $q$ is the union of two horizontal arcs in $\mathbb{C}$, it follows the restriction of the primitive to $\psi((\Gamma(dh)\cup\Gamma(-dh)\cap(U\setminus U'))$ can be written in the collar
\[ (T^2\sqcup T^2)\times(0,1)\cong\psi((\Gamma(dh)\cup\Gamma(-dh)\cap(U\setminus U'))\]
as the product of a linear function on the $(0,1)$-factor with a constant function on the $T^2\sqcup T^2$-factor. This extends uniquely to a primitive defined on $L_{loc}$ (which is determined by the area of this disk shown in Figure~\ref{immersedarc}.
\end{proof}

Let $\tLimm$ denote the $3$-manifold obtained as follows. Choose a collar neighborhood of the zeroth cusp of $L'$ which we identify with $T^2\times(0,1)$, so that by removing $T^2\times(0,\epsilon)$, we get a manifold with one $T^2$-boundary component. Then define $\tLimm\coloneqq (L'\setminus T^2\times(0,\epsilon))\cup_{T^2\times\lbrace\epsilon\rbrace}(L'\setminus T^2\times(0,\epsilon))$, where the gluing is by the identity on the boundary $2$-torus $T^2\times\lbrace\epsilon\rbrace$. Then we can think of $\Limm$ as the image of a Lagrangian immersion $\tLimm\to T^*T^3$.
\subsection{Grading}
In the next two subsections we will identify all local systems for which $CF^*(\Limm)$ is unobstructed. We begin by determining the gradings of generators of $CF^*(\Limm)$ associated to self-intersections.
\begin{lemma}\label{product}
$\Limm$ is graded, and the grading function $\alpha^{\#}\colon\tLimm\to\mathbb{R}$ is approximately $0$ at the critical points of $-h$ and approximately $1$ at the critical points of $h$.
\end{lemma}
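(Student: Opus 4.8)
The plan is to construct the grading function $\alpha^{\#}$ by assembling compatible gradings on the three pieces from which $\Limm$ is built --- the two sheets $\psi(\Gamma(dh))$ and $\psi(\Gamma(-dh))$, and the local model $L_{loc}$ --- and then to read off the values at the self-intersection points. First I would note that, after rescaling the defining form $\lambda\widetilde{dg}$ of $\Lsing$ and the Morse function $h$ by a sufficiently small positive constant, the smooth part $L'$ is $C^{1}$-close to the zero section $T^{3}\subset T^{*}T^{3}$ over the region $\widetilde{\Delta}$, and coincides with the periodized conormals $L_{V_{i}}$ near the cusps. The zero section and each conormal $L_{V_{i}}$ --- a product of a circle with a linear Lagrangian of the form $(i\mathbb{R})^{2}\subset\mathbb{C}^{2}$ --- lie in the zero Maslov class with grading identically $0$ with respect to the standard trivialization of $(\wedge^{3}T^{*}T^{3})^{\otimes 2}$, and agree on overlaps; since the Lagrangian Gauss map, and hence the grading, varies continuously under $C^{1}$-small perturbations, $L'$ is graded and we normalize its grading to be $C^{0}$-close to $0$. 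As $\Gamma(\pm dh)$ is $C^{1}$-close to the zero section of $T^{*}L'$ and $\psi$ is Hamiltonian, the sheets $\psi(\Gamma(\pm dh))$ inherit gradings $C^{0}$-close to $0$.

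Next I would grade the local model using the product structure $L_{loc}=T^{2}\times L_{1}$: the grading of a product is the sum of the gradings of the factors, and with respect to the natural meromorphic volume form with logarithmic poles along $D=\{w=0\}$ and $\{w=1\}$ (as in~\cite{AbouzaidSylvan}) the Clifford torus $T^{2}\subset\mathbb{C}^{2}$ has grading $0$, while the contractible arc $L_{1}\subset\mathbb{C}\setminus\{0,1\}$ is gradable with the only remaining datum being the matching of its grading at the two ends. A direct computation of the winding of the path of tangent lines along $L_{1}$, reading off Figure~\ref{immersedarc}, shows that the grading changes by $1$ in passing from one horizontal ray to the other. Matching the ends of $L_{1}$ to the boundary tori of the two sheets then forces the gradings of the two sheets to differ by $1$; and since $\tLimm$ is connected with the gluing torus separating it into the two sheets, a loop crossing the neck crosses it an even number of times and picks up cancelling contributions $\pm1$ from the two traversals of $L_{1}$, so these choices are mutually consistent and assemble to a global grading $\alpha^{\#}$ on $\tLimm$. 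Normalizing $\alpha^{\#}$ to be $C^{0}$-close to $0$ on the sheet mapping to $\psi(\Gamma(-dh))$, which carries the critical points of $-h$, forces $\alpha^{\#}$ to be $C^{0}$-close to $1$ on the sheet mapping to $\psi(\Gamma(dh))$, which carries the critical points of $h$ and the lifted self-intersection torus of $L_{1}$; since $h$ can be taken $C^{1}$-small these bounds can be made arbitrarily tight, giving the stated values.

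The main obstacle is the gradability statement, and specifically verifying that the grading shift produced in the local model $L_{loc}$ is compatible with the topology of the doubled manifold $\tLimm$ --- equivalently, that the Maslov class of $\Limm$ in $H^{1}(\tLimm;\mathbb{Z})$ vanishes. This comes down to the winding computation along $L_{1}$ together with the separating-neck argument, but it requires care with the grading conventions near the normal crossing divisor. A secondary point is the behavior near the cusps of $L'$, where $L'$ is modeled not on the zero section but on a periodized conormal bundle; there one uses that conormals of linear subspaces of $\mathbb{R}^{n}$ in $T^{*}T^{n}$ carry constant grading $0$, compatibly with the middle region, so that the grading is globally defined and $C^{0}$-close to $0$ on the entire $(-h)$-sheet.
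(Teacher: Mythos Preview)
Your strategy --- grade each sheet and the local model separately, compute the shift across $L_1$, and check consistency via the separating gluing torus --- is sound and differs from the paper's, but there is a gap in your gradability argument for $L'$. You assert that the zero section and each conormal $L_{V_i}$ carry ``grading identically $0$'' and ``agree on overlaps''; with respect to $\Omega=\bigwedge_j(dq_j+i\,d\theta_j)$ one computes $\det^2(T^3)=(-i)^2=-1$ while $\det^2(L_{V_1})=(1\cdot i\cdot i)^2=1$, so the phases disagree, and in any case these pieces do not overlap as smooth Lagrangians. Hence the $C^1$-closeness you invoke does not by itself force the Maslov class of $L'$ to vanish --- the phase of $L'$ genuinely moves between $-1$ (over the interior of the coamoeba) and $1$ (over the legs), and one must still rule out monodromy. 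The fix is short: the phase is constant on each end of $L'$ (the four conormals and the Harvey--Lawson cone are all linear or special Lagrangian), and since the meridians $m_0,\ldots,m_4$ generate $H_1(L')$, the Maslov class vanishes. With this in place the rest of your argument --- the product grading on $L_{loc}$, the winding-number-one computation along $L_1$, and the separating-neck consistency check --- goes through and yields the stated values.

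The paper's proof is shorter and sidesteps this issue by working directly with $\tLimm$: the phase is identically $1$ on the eight cylindrical ends, and the inclusion of these ends into $\tLimm$ is surjective on $H_1$, so the Maslov class of $\tLimm$ vanishes at once without any analysis of the interior or of the local model. It then reads off the shift between the sheets from the orientation reversal along $L_1$, as you do. Your piecewise construction makes the geometric origin of the shift more transparent and isolates the role of the local model, at the cost of having to justify gradability of each piece separately; the paper's argument is more economical but correspondingly less explicit about where the shift of $1$ arises.
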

\begin{proof} There is a quadratic volume form $\det^2$ on $T^*T^3$ given by
\[\bigwedge_{j=1}^3(dq_j+id\theta_j)^{\otimes 2} \, .\]
If $\mathcal{L}\to T^*T^3$ denotes the Lagrangian Grassmannian bundle over $T^*T^3$, then the Lagrangian immersion $\Limm$ determines a section $\sigma$ of $\mathcal{L}\mid_{\tLimm}$. We need to show that the composition $\alpha\coloneqq{\text{det}}^2\circ\sigma\colon \tLimm\to S^1$ admits a lift $\alpha^{\#}\colon \tLimm\to\mathbb{R}$.

The restriction of $\alpha$ to the cylindrical ends of $\Limm$ is constantly $1$, so we can take $\alpha^{\#}$ to be locally constant over the ends. Because the inclusion of the cylindrical ends into $\Limm$ induces an isomorphism on $H_1$, it follows that the lift $\alpha^{\#}$ can be defined globally. In the local model for $L_1$, the tangent spaces to $L_1$ change orientation as the curve goes around the origin, so we can take $\alpha^{\#}$ to have the values given in the statement of the lemma.
\end{proof}
The switching components of $\tLimm\times_{T^*T^3}\tLimm$ are either transverse double points, or the $2$-torus in $L_1$. Letting $\iota\colon\tLimm\to T^*T^3$ denote the immersion, each transverse double point is of the form $\iota(p_{-}) = \iota(p_+)$, where $p_{-}$ is a Morse critical point of $-h$ and $p_{+}$ is the corresponding Morse critical point of $h$. Since the two intersecting sheets of $\Limm$ at these points are locally the graphs of $-dh$ and $dh$, it follows from~\citep{seidelgraded} that the index of $p_{-}$ is given by
\begin{align*} \deg(p_{-}) &= \mu_{\mathrm{Morse}}(p_{-})-\alpha^{\#}(p_{-})+\alpha^{\#}(p_+) \\
&= \mu_{\mathrm{Morse}}(p_{-})-\alpha^{\#}(p_{-})+\alpha^{\#}(p_{-})+1 \\
&= \mu_{\mathrm{Morse}}(p_{-})+1.
\end{align*}
Symmetrically, one has that
\begin{align*}
\deg(p_+) = \mu_{\mathrm{Morse}}(p_+)-1
\end{align*}
for generators on the positive sheet. To compute gradings for the Floer generators coming from the $T^2$-family of self-intersection points, we consider the arc $U\subset\mathbb{C}$. A straightforward computation shows that the $T^2$ switching component of $\Limm$ contributes copies of $H^*(T^2)[-2]$ and $H^*(T^2)[1]$ to the Floer cochain space.

\subsection{Unobstructedness} We will see, in Lemma~\ref{unobstructed}, that $\Limm$ will only be an object of the wrapped Fukaya category for certain choices of local system. To determine these local systems, we will need to compute $\mathfrak{m}_0$. Recall that we described a particular choice of spin structure for $L'$ in Lemma~\ref{spinchoice}, and this induces a spin structure on $\tLimm$. Hereafter, we assume that $\tLimm$ is equipped with this spin structure.
\begin{lemma}\label{unobstructed}
Let $\nabla$ be a $GL(1)$-local system on $L'$, and let $\mu_0$ and $\lambda_0$ denote the holonomy of $\nabla$ about the meridian $m_0$ and longitude $\ell_0$. Then 
\[\mathfrak{m}_0 = \pm(1+\mu_0+\lambda_0^{-1}).\]
In particular, $(\tLimm,\nabla)$ is an object of the Fukaya category if and only if the holonomy of $\nabla$ satisfies
\begin{equation}
1+\mu_0^{-1}+\mu_0^{-1}\lambda_0^{-1} = 0. \label{gp-curvature}
\end{equation}
\end{lemma}
\begin{proof}
We will compute $\mathfrak{m}_0$, and show that it vanishes, using the integrable almost complex structure on $(\mathbb{C}^*)^3$, which we identify with $T^*T^3$. That this is justified will follow from our arguments below, where we classify all holomorphic teardrops in $\Limm$, and show that they are all simple. This means that $\mathfrak{m}_0$ will also vanish for a generic choice of $J$ satisfying the conclusion of Lemma~\ref{teardropregularity}.

We will start by classifying all holomorphic teardrops with boundary on $L_{loc}$. Using the decomposition in Lemma~\ref{productcoords}, we see that any holomorphic teardrop with boundary on $L_{loc}$ must be a product, where one factor is the holomorphic teardrop in $\mathbb{C}$ with boundary on $L_1$, and the other factor is either constant or a Maslov $2$ disk with boundary on the Clifford torus in $\mathbb{C}^2$. All three of these disks are regular with respect to the integrable almost complex structure on $T^*T^3\cong(\mathbb{C}^*)^3$ by the argument in~\cite{AbouzaidSylvan}.

We claim that there are no other holomorphic teardrops with boundary on $\Limm$. In particular, it will follow that all holomorphic teardrops with boundary on $\Limm$ are simple. Suppose to the contrary that there were such a teardrop. In that case, it would either need to have a corner on the $2$-dimensional switching component of $L_{loc}$, or a corner on a $0$-dimensional switching component corresponding to a critical point of the Morse function $h$ from Lemma~\ref{morsefn}. Let $U$ denote the Darboux chart around $(0,0)\in T^*T^3$ in which $L_{loc}$ was constructed.

In the former case, recall that any holomorphic teardrop with a corner on a $2$-dimensional switching component has symplectic area completely determined by the values primitive. By replacing $h$ with $\epsilon h$, for an arbitrarily small constant $\epsilon>0$, we can arrange for this \textit{a priori} energy bound to be made arbitrarily small. In particular, we can assume that no such teardrop can exit $U$, and thus it must agree with one of the teardrops already classified.

We must also rule out the existence of holomorphic teardrops with a corner at one of the $0$-dimensional switching components. Suppose that we were given such a teardrop $u\colon S\to T^*T^3$ (whose domain $S$ is the unit disk in $\mathbb{C}$ with one boundary puncture), and consider the projection $u(S)\cap U\to\mathbb{C}$ given by $w$ of Lemma~\ref{productcoords}. This must be a region in $\mathbb{C}$ whose boundary is the union of $L_1$ with a circular arc (contained in the image of $\partial U$ under this projection). Again, by exactness, we can construct $\Limm$ so that the area of this disk teardrop must be arbitrarily small. For an appropriate choice of $h$, then, it follows that the image of $u(S)\cap U$ must be the region inside $L_1$. But by the open mapping theorem, this cannot be the image of a holomorphic curve.

By our choice of coordinates near the singular point of $L$, it follows that the two Maslov $2$-disks in $\mathbb{C}^2$ will have boundary circles homologous to $m_0$ and $\ell_0^{-1}$ in $\tLimm$. The spin structure we have chosen for $L'$ induces a spin structure on $\tLimm$, and using this spin structure all three of the holomorphic teardrops described here contribute with the same sign.
\end{proof}
\section{Floer-theoretic support and the Grassmann--Pl{\"u}cker relation}\label{supportsection}
We will prove Theorem~\ref{main} in this section, showing that the Floer-theoretic support of an unobstructed Lagrangian  brane $(\Limm,\nabla)$ is given by the very affine part of a line in $\mathbb{P}^3_{\mathbb{K}}$. Recall that each local system $\nabla_p$ on the zero-section $T^3\subset T^*T^3$ corresponds, under the equivalence of Theorem~\ref{hms}, to the skyscraper sheaf $\mathcal{O}_p$ of the point $p\in(\mathbb{K}^*)^3$. Determining when the Floer cohomology group
\begin{align}\label{suppgp}
HW^*((\Limm,\nabla),(T^3,\nabla_p))
\end{align}
is nonzero thus amounts to calculating the support of the mirror sheaf to $(\Limm,\nabla)$, whose existence follows from Theorem~\ref{hms}.

It is not obvious how $\Limm$ and $T^3$ intersect each other, so instead of computing~\eqref{suppgp} directly, we will instead study the Floer cohomology of $(\Limm,\nabla)$ with other objects in the wrapped Fukaya category. These objects are supported on the periodized conormal lifts $L_{\langle q_i\rangle}$ of the coordinate lines $\langle q_i\rangle\subset Q$ in the cotangent fiber of $T^*T^3$. Any local system on $T^3$ determines a local system on $L_{\langle q_i\rangle}$.

Recall that if $\lbrace i,j,k\rbrace = \lbrace 1,2,3\rbrace$, then $L_{\langle q_i\rangle}$ can be identified with the product of $\langle q_i\rangle$ and the subtorus of $T^3$ given by $\left\lbrace\theta_i = \frac{1}{2}\right\rbrace$. Thus the generators $e_j,e_k\in H_1(T^3)$ determine a basis for $H_1(L_{\langle q_i\rangle})$.
\begin{definition} Given a local system $\nabla_p$ on $T^3$ with holonomy representation $\nabla_p(e_i) \coloneqq \alpha_i\in\mathbb{K}^*$, let $\nabla_{p,i}$ denote the local system whose holonomy representation is determined by $\hol_{\nabla_{p,i}}(e_j) = \alpha_j$ and $\hol_{\nabla_{p,i}}(e_k) = \alpha_k$.
\end{definition}
By Lemma~\ref{conormalsupport}, the objects $(L_{\langle q_i\rangle},\nabla_{p,i})$ are mirror to sheaves supported on algebraic subtori of $(\mathbb{K}^*)^3$ of the form
\[ C_{p,i}\coloneqq\lbrace z_j = \alpha_j\text{ and }z_k = \alpha_k\rbrace\subset(\mathbb{K}^*)^3 \, .\]
Such a curve contains the point $p\in\mathbb{K}^*$, and the mirror sheaf to $(L_{\langle q_i\rangle},\nabla_{p,i})$ turns out to be the structure sheaf $\mathcal{O}_{C_{p,i}}$, since this sheaf has rank $1$ at all points in its support by Lemma~\ref{conormalsupport}, and because $C_{p,i}\cong\mathbb{K}^*$. We will determine the support of the mirrors to $(\Limm,\nabla)$ by determining when
\begin{align}\label{testgroup}
HW^0((\Limm,\nabla),(L_{\langle q_i\rangle},\nabla_{p,i}))
\end{align}
is nonzero. The nonvanishing of this Floer cohomology group implies the nonvanishing of $\Ext^0$ between the corresponding mirror sheaves, so by the local-to-global spectral sequence, there must be a point in the intersection of their supports. Conversely, one has that $\mathcal{O}_{C_{p,j}}\otimes\mathcal{O}_{C_{p,k}}\cong\mathcal{O}_p\in D^b\mathrm{Coh}((\mathbb{K}^*)^3)$, so by hom-tensor adjunction in the derived category (and mirror symmetry), any point for which one of the associated Floer cohomology groups~\eqref{testgroup} vanishes cannot lie in the support of the mirror sheaf to $(\Limm,\nabla)$.
\begin{remark}
A previous version of this article phrased the calculation of the cohomology groups~\eqref{testgroup} in terms of Lagrangian correspondences, but, after carefully perturbing $\Limm$, we can determine when they are nonzero using elementary methods more easily.
\end{remark}
The advantage of considering~\eqref{testgroup}, is that part of the Floer cochain complex can be naturally identified with the Floer cochain complex of a tropical Lagrangian pair of pants in $T^*T^2$ with the $0$-section (cf. Lemma~\ref{pantssupport}). One can think of~\eqref{pantsdiff} as a `leading order term' in the wrapped Floer differential, in the sense the corresponding strips with boundary on $\Limm\cup L_{\langle q_i\rangle}$ will be the lowest-energy strips. The relevant brane structures on $\Lpants$ and $T^2$ are determined by the given brane data on $\Limm$ and $T^3$. We will compute this leading order term first, and then explain why this computation yields the expected (non-)vanishing results for~\eqref{testgroup}.

Recall that both $\Lsing$ and $\Lpants$ can be thought of as (closures of) the graphs of exact $1$-forms defined on coamoebae in $T^3$ and $T^2$, respectively.
\begin{definition}
In this section, we denote by $\Delta_3$ the coamoeba for $\Lsing$ $\Delta_2$ denote the coamoeba for $\Lpants$. We let $g_3\colon\Delta_3\to\mathbb{R}$ denote~\eqref{coamoebaprimitive} and $g_2\colon\Delta_2\to\mathbb{R}$ denote~\eqref{coamoebaprimitivepants}.
\end{definition}

Let $\lbrace i,j,k\rbrace = \lbrace 1,2,3\rbrace$. By intersecting $\Delta_3$ with the subtorus $T^2\cong\left\lbrace\theta_i = \frac{1}{2}\right\rbrace$ inside $T^3$, we obtain a copy of $\Delta_2$ (cf. Figure~\ref{Coamoeba}). Additionally, restricting $g_3$ to this subtorus, by setting $\theta_i = \frac{1}{2}$, yields a real-valued function on $\Delta_2$, which coincides with $g_2$ up to relabeling coordinates. Examining~\eqref{coamoeba1form}, it then follows that there is a subsurface of $\Lsing$, which we call $\Sigma_{i}$, which can be identified with the graph of a smooth function, namely $\frac{\partial g}{\partial\theta_i}$, on $\Lpants$. It will be convenient, for the purpose of calculating~\eqref{testgroup}, to modify $\Lsing$ by a Hamiltonian isotopy so that this subsurface looks like the graph of a constant function.
\begin{remark}\label{totallygeodesic}
In fact, the surface $\Sigma_{i}$ can be identified with a totally geodesic surface in the minimally-twisted five-component chain link complement: one can take the union of the faces of the ideal tetrahedra $T_{0,-}$ and $T_{0,+}$ in the ideal triangulation of $L'$ which intersect the $j$th, $k$th, and fourth vertices.
\end{remark}
In the following we identify $T^*T^3\cong(T^*S^1)^3\cong(\mathbb{R}\times S^1)^3$, where the $i$th factor comes last.
\begin{lemma}\label{intersections3d}
Up to Hamiltonian isotopy, we can assume that the intersection of $\Lsing$ with $(T^*S^1)^2\times\left(\mathbb{R}\times\left\lbrace\frac{1}{2}\right\rbrace\right)$ coincides with $\Lpants\subset T^*T^2$, and that $\Lsing\cap L_{\langle q_3\rangle}$ coincides with $\Lpants\cap T^2\subset T^*T^2$.
\end{lemma}
\begin{proof}
First observe that the intersection $\Lsing\cap(T^*S^1)^2\times\left(\mathbb{R}\times\left\lbrace\frac{1}{2}\right\rbrace\right)$, where $\Lsing$ is constructed using~\eqref{coamoeba1form}, consists of precisely two points, which project to the barycenters of the triangles in $\Delta_2 = \Delta_3\cap\left\lbrace\theta_i = \frac{1}{2}\right\rbrace$. These two intersection points correspond to the zeroes of the first two components of~\eqref{coamoeba1form}.

Recall that in the constructions of $\Lpants$ and $\Lsing$, we can replace $g_2$ and $g_3$ with $\lambda g_2$ and $\lambda g_3$, respectively, for an arbitrarily small real constant $\lambda>0$. Choosing sufficiently small $\lambda$, it follows that the pair of pants $\Sigma_i$ in $\Lsing$ given by the intersection $\Sigma_i\coloneqq\Lsing\cap(T^*S^1)\times(\mathbb{R}\times\left\lbrace\frac{1}{2}\right\rbrace)$, as discussed above, is arbitrarily $C^1$-close to $\Lpants\times\left\lbrace\left(0,\frac{1}{2}\right)\right\rbrace$. Abusing notation, we will call this isotropic surface as $\Lpants$ as well. 

The normal bundle $N\Sigma_{jk}$ of $\Sigma_{jk}$ in $\Lsing$ and $\Lpants\times\mathbb{R}\subset T^*T^3$ are smoothly isotopic, and since both of these are exact Lagrangian submanifolds, this isotopy has zero flux. Since these two submanifolds are also $C^1$-close, it follows that there is a Hamiltonian isotopy carrying $N\Sigma_{jk}$ to $\Lpants\times\mathbb{R}$. We can extend this by the identity outside of a small open neighborhood containing these two submanifolds, so that no new intersection points between $\Lsing$ and $(T^*S^1)^2$ lying away from $N\Sigma_{jk}$ are introduced. Thus $\Lsing$ will intersect $\Lpants\times\mathbb{R}$ in $\Lpants\times(-\epsilon,\epsilon)$, for some interval $(-\epsilon,\epsilon)\subset\mathbb{R}$. We can compose this with another Hamiltonian isotopy which makes $\Lsing$ intersect with $(T^*S^1)^2$ and $L_{\langle q_i\rangle}$ as desired.
\end{proof}
This lemma allows us to identify some of the holomorphic strips with boundary on $\Lsing\cup L_{\langle q_i\rangle}$. Recall that there is a symplectomorphism
\begin{align*}
T^*T^3 &\cong (\mathbb{C}^*)^3 \\
(q_j,\theta_j) &\mapsto e^{q_j+i\theta_j} \,,\quad j = 1,2,3 \,.
\end{align*}
\begin{corollary}\label{strips3d}
The only holomorphic strips bounded by $\Lsing\cup L_{\langle q_i\rangle}$ are contained in $(\mathbb{C}^*)^2\subset(\mathbb{C}^*)^3$, and coincide with the holomorphic strips bounded by $\Lpants\cup T^2$.
\end{corollary}
\begin{proof}
Any such strip must be the graph of a holomorphic function on a strip bounded by $\Lpants\cup T^2$. By Lemma~\ref{intersections3d}, this function must be constant on the boundary of the strip, and by the maximum principle it is also constant over the interior of the strip.
\end{proof}
The Morse function $h\colon L'\to[0,\infty)$ of Lemma~\ref{morsefn} used to construct $\Limm$ was obtained from the ideal triangulation $L'$ discussed in \S\ref{chainlinkdescription}. By Remark~\ref{totallygeodesic}, we can arrange $h$ so that it restricts to a Morse function on each of the surfaces $\Sigma_i$ (with two index $0$ and three index $1$ critical points each). This fact allows us to determine the intersection points of $\Limm$ with $L_{\langle q_i\rangle}$ and the holomorphic strips they bound using the results above.
\begin{corollary}\label{strips3dimm}
The set $\Limm\cap L_{\langle q_i\rangle}$ coincides with $\Gamma(dh)\cap L_{\langle q_i\rangle}\coprod\Gamma(-dh) L_{\langle q_i\rangle}$, which can in turn be naturally identified with
\[(\Lpants\cap T_2)\coprod(\Lpants\cap T_2) \,. \]
Similarly, $\Limm\cup L_{\langle q_i\rangle}$ bounds six holomorphic strips, three of which have a boundary component on $\Gamma(dh)$ and three of which have a boundary component on $\Gamma(-dh)$. Both sets of strips are in natural bijection with the set of strips identified in Corollary~\ref{strips3d}.
\end{corollary}
\begin{proof}
We can arrange for $\Gamma(\pm dh)$ to intersect $L_{\langle q_i\rangle}$ as described by choosing $h$ to be sufficiently $C^2$-small. This also implies the claim about holomorphic strips with boundary on $\Limm\cup L_{\langle q_i\rangle}$. There are no other points of intersection between $\Limm$ and $L_{\langle q_i\rangle}$, since the cone point of $\Lsing$ lies away from $L_{\langle q_i\rangle}$.
\end{proof}

The wrapped Floer complex will be generated by these intersection points, together with chords at infinity of a quadratic Hamiltonian, which we will now construct. Let $\pi\colon T^*T^3\to Q$ denote projection to the cotangent fiber, and fix a ball $D\subset Q$ centered at the origin. Using the notation of \S\ref{immersionbackground}, let $(T^*T^3)^{in}\coloneqq\pi^{-1}(D)$. We can assume without loss of generality that there is another ball $D'\subset D$ for which $\Limm$ can be written as the cone over a Legendrian (in $\pi^{-1}(\partial D')$) outside of $\pi^{-1}(D')$. This follows from our assumptions about the form of the Morse function of Lemma~\ref{morsefn} near the cusps of $L'$, which imply that outside $\pi^{-1}(D')$, we can assume that $\Limm$ is in tropical position. In other words, its restriction to the complement of $\pi^{-1}(D')$ is given by two disjoint copies of the periodized conormal lifts to the legs of $V$. These two copies differ from each other by a translation in the $T^3$-direction.

\begin{lemma}\label{wrappedcx}
For an appropriate choice wrapping Hamiltonian $H\colon T^*T^3\to\mathbb{R}$, the wrapped Floer cochain complex can be identified with
\begin{align*}
CW^*(\Limm,L_{\langle q_i\rangle}) \cong & CW^*(\Lpants,T^2)\oplus CW^*(\Lpants,T^2)[-1] \\
&\oplus \bigoplus_{n\in\mathbb{N}}H^*(T^2)\oplus\bigoplus_{n\in\mathbb{N}}H^*(T^2)[-1] \, .
\end{align*}
as a $\mathbb{K}$-vector space.
\end{lemma}
\begin{proof}
We will use a wrapping Hamiltonian which vanishes vanishes in $\pi^{-1}(D')$, and is quadratic outside of $\pi^{-1}(D)$. Note that the natural radial variable in the end of $T^*T^3$ is just the radial coordinate in the base $Q$. Let $\phi^1_H$ denote the time-$1$ flow of such a Hamiltonian. We can assume, without loss of generality, that $\Limm$ and $(\phi^1_H)^{-1}(L_{\langle q_i\rangle})$ do not intersect in $\pi^{-1}(D)\setminus\pi^{-1}(D')$. The intersection $\Limm\cap(\phi^1_H)^{-1}(L_{\langle q_i\rangle})$ is non-transverse, consisting of two countably infinite families of $2$-tori (where one family of intersections lies in $\Gamma(dh)$, and the other in $\Gamma(-dh)$). We can perturb the Legendrian link $\pi^{-1}(\partial D)\cap\Limm$ using perfect Morse functions on each of the (eight) $T^2$-components. The new Hamiltonian chords in the cylindrical end thus correspond to critical points of (copies of) this Morse function. The grading shift in one of the summands follows from Lemma~\ref{product}.
\end{proof}

The local system $\nabla$ restricts to a local system on each of the surfaces $\Sigma_{i}$. Letting $e_i,e_j,e_k\in H_1(T^3;\mathbb{Z})$ denote the generators considered in Lemma~\ref{inducedmap}, we see that we can identify $e_j$ and $e_k$ with generators for $H_1(\Sigma_{i};\mathbb{Z})$. Moreover, the holonomies of these local systems about these generators can be determined directly from Lemma~\ref{inducedmap}.
\begin{figure}
\begin{tikzpicture}
\begin{scope}[scale =  4]
 \draw[dashed] (0,0,0) -- (1,0,0);
 \draw[dashed] (0,0,0) -- (0,1,0);
 \draw (1,0,0) -- (1,1,0);
 \draw (1,0,0) -- (1,0,1);
 \draw (0,1,0) -- (1,1,0);
 \draw (0,1,0) -- (0,1,1);
 \draw (0,1,1) -- (1,1,1);
 \draw (1,0,1) -- (1,1,1);
 \draw (1,1,0) -- (1,1,1);

 \draw[red] (0,0,0) -- (1,0,1) -- (0,1,1) -- cycle;
 \draw[red] (0,0,0) -- (0,0,1);
 \draw[red] (1,0,1) -- (0,0,1);
 \draw[red] (0,1,1) -- (0,0,1);
 
 \draw[blue] (1,1,1) -- (1,0,0) -- (0,1,0) -- cycle;
 \draw[blue] (1,1,1) -- (1,1,0);
 \draw[blue] (1,0,0) -- (1,1,0);
 \draw[blue] (0,1,0) -- (1,1,0);

\draw[thick, green] (0,0.5,1)--(1,0.5,1);
\draw[thick, green] (0,0.5,1)--(0,0.5,0);

\node[anchor = south east] at (1,0.5,1) {\color{green} $m_2+\ell_2$};
\node[anchor = west] at (0,0.5,0) {\color{green} $m_1+\ell_1$};
\end{scope}

\begin{scope}[yshift = -200]
\draw[thick] (0,0,0) -- (-4,0,0);
 \draw[thick] (0,0,0) -- (0,0,5);
 \draw[thick] (0,0,0) -- (3,0,-3);

\node at (-2,0,0) [circle,fill = green,inner sep=1.5pt]{};

\node at (0,0,2.5) [circle,fill = green,inner sep=1.5pt]{};

\draw[thick, green] (-2,2,-2)--(-2,2,0.5);
\draw[] (-2,2.5,-2)--(-2,2.5,0.5);
\draw[] (-2,1.5,-2)--(-2,1.5,0.5);
\draw[] (-2,1.5,-2) -- (-2,2.5,-2);
\draw[] (-2,1.5,0.5) -- (-2,2.5,0.5);

\node[anchor = west] at (-2,2,-2) {\color{green} $m_1+\ell_1 = e_2$};

\draw[thick, green] (3,0,2.5) -- (1,0,2.5);
\draw[] (3,0.5,2.5) -- (1,0.5,2.5);
\draw[] (3,-0.5,2.5) -- (1,-0.5,2.5);
\draw[] (3,0.5,2.5) -- (3,-0.5,2.5);
\draw[] (1,0.5,2.5) -- (1,-0.5,2.5);

\node[anchor = west] at (3,0,2.5) {\color{green} $m_2+\ell_2 = e_1$};

\end{scope}
\end{tikzpicture}
\caption{The classes $e_1,e_2\in H_1(\Sigma_3)$, drawn in the fibers over points on the legs of a tropical pair of pants (bottom), and their images on the coamoeba $\Delta_3$ labeled by the corresponding elements of $H_1(L')$.}\label{restrictedlsfig}
\end{figure}
\begin{lemma}\label{restrictedls}
Let $\nabla_{i}$ denote the restriction of the local system $\nabla$ on $\Limm$ to $\Sigma_{i}$. These local systems are determined by the holonomy representations 
\[\hol_{\nabla_{i}}\colon H_1(\Sigma_{i};\mathbb{Z})\to\mathbb{K}^{*}\]
whose values on the generators described above are
\begin{align*}
\begin{matrix}
\hol_{\nabla_{1}}(e_2) = \lambda_3^{-1} \,, & \hol_{\nabla_{1}}(e_3) = \mu_2 \,, \\
\hol_{\nabla_{2}}(e_1) = \mu_3^{-1}\lambda_3^{-1}  \,, & \hol_{\nabla_{2}}(e_3) = \lambda_1 \,, \\
\hol_{\nabla_{3}}(e_1) = \mu_2\lambda_2 \,, & \hol_{\nabla_{3}}(e_2) =  \mu_1\lambda_1\,.
\end{matrix}
\end{align*}
Define $\rho_{i,j}\coloneqq\hol_{\nabla_{i}}(e_j)$ and $\rho_{i,k}\coloneqq\hol_{\nabla_{i}}(e_k)$.
\end{lemma}
\begin{proof}
We will determine the values of $\hol_{\nabla_3}(e_1)$ and $\hol_{\nabla_3}(e_2)$, referring to Figure~\ref{restrictedlsfig}. The class $e_1\in H_1(\Sigma_3)$ is, by definition, represented by a closed curve lying over the leg of a tropical pair of pants in the $-q_2$-direction (cf. Figure~\ref{restrictedlsfig}(bottom)). This means that, under the embedding $\Sigma_3\subset L'$, this homology class is represented by a curve lying on the second cusp of $L'$, which is mapped to a conormal fiber over the corresponding leg of $V$ (cf. Figure~\ref{restrictedlsfig}(top)). Let $T^2_2$ denote this cusp. The value of $\hol_{\nabla_3}(e_1)$ is the value of $\hol_{\nabla}(\gamma)$, where $\gamma\in H_1(L')$ is the unique class which lies in the image of $H_1(T^2_2)\to H_1(L')$ and which is mapped to $e_1$ under $H_1(L')\to H_1(T^3)$ (cf. Figure~\ref{restrictedls}). The result of Lemma~\ref{inducedmap} shows that this class must be $m_2+\ell_2$, since the result of that Lemma gives an isomorphism between $H_1(T^2_2)$ and the homology of the subtorus of $\mathrm{span}\lbrace e_1,e_2\rbrace\subset T^3$. Thus $\hol_{\nabla_3}(e_1) = \mu_2\lambda_2$. Similarly, one has that $\hol_{\nabla_3}(e_2) = \mu_1\lambda_1$, since the class $m_1+\ell_1$ in the homology of the first cusp of $L'$ is mapped to $e_2\in H_1(T^3)$, and hence is the image of $e_2\in H_1(\Sigma_3)$, by Lemma~\ref{inducedmap}. The holonomy representations for $\nabla_1$ and $\nabla_2$ are calculated similarly, and we omit the details.
\end{proof}

With the above understood, the computation of the leading order term of the Floer differential now reduces to the same argument as in the proof of Lemma~\ref{pantssupport}.
\begin{lemma}\label{hfcomp}
Let $\nabla_p$ be a local system on $T^3$, and consider the induced local systems $\nabla_{p,i}$ on $L_{\langle q_i\rangle}$ as above. The wrapped Floer cohomology group~\eqref{testgroup} is nonzero in the following cases:
\begin{description}
\item[(i)] $HW^*((\Limm,\nabla),(L_{\langle q_1\rangle},\nabla_{p,1}))\neq0$ if
\begin{align}
\lambda_{3}x_2-\mu_{2}^{-1}x_3+1 = 0 \,. \label{rel1}
\end{align}

\item[(ii)] $HW^*(\Limm,\nabla),(L_{\langle q_2\rangle},\nabla_{p,2}))\neq0$ if
\begin{align}
-\mu_{3}\lambda_{3}x_1+\lambda_{1}^{-1}x_3+1 = 0 \,. \label{rel2}
\end{align}

\item[(iii)] $HW^*(\Limm,\nabla),(L_{\langle q_3\rangle},\nabla_{p,3}))\neq0$ if
\begin{align}
\mu_{2}^{-1}\lambda_{2}^{-1}x_1-\mu_{1}^{-1}\lambda_{1}^{-1}x_2+1 = 0 \,. \label{rel3}
\end{align}

\end{description}
If any of these conditions are satisfied, the corresponding wrapped Floer cohomology group contains a nonzero summand in degrees $0$ and $1$.
\end{lemma}
\begin{proof}
By Corollary~\ref{strips3dimm}, the Floer cochain complex contains, as a $\mathbb{K}$-vector space, a direct summand naturally isomorphic to
\begin{align*}
CW^*(\Lpants,T^2)\oplus CW^*(\Lpants,T^2)[-1]
\end{align*}
where the grading shift on the second factor follows from Lemma~\ref{product}. Let $x_{-,0}\in CF^0(\Lpants,T^2)$ and $x_{+,0}\in CF^1(\Lpants,T^2)$ denote the generators corresponding to these intersection points. Similarly, let $x_{-,1}\in CF^0(\Lpants,T^2)[-1]$ and $x_{+,1}\in CF^1(\Lpants,T^2)[-1]$ denote the generators in the shifted summand. This is a subcomplex of $CW^*((\Limm,\nabla),(L_{\langle q_i\rangle},\nabla_{p,i}))$ since, for action reasons, there are no inhomogeneous strips with an input at $x_{-,0}$ or $x_{-,1}$ and with output on a chord in the cylindrical end. For degree reasons, we also see that $x_{-,0}$ and $x_{-,1}$ cannot be coboundaries.

Thus to determine when the generators of this subcomplex are nonzero in cohomology, it suffices to compute the leading order term of the Floer differential using the integrable almost complex structure on $T^*T^3\cong(\mathbb{C}^*)^3$. Again by Corollary~\ref{strips3dimm}, and by the argument in the proof of Lemma~\ref{pantssupport}, there is one possibly nontrivial Floer differential on each summand of this complex, and by Lemma~\ref{restrictedls}, they are given by 
\begin{align*}
x_{-,0}\mapsto(\pm\rho_{i,j}^{-1}x_j\pm\rho_{i,k}^{-1}x_k\pm 1) x_{+,0}\\
x_{-,1}\mapsto(\pm\rho_{i,j}^{-1}x_j\pm\rho_{i,k}^{-1}x_k\pm 1)x_{+,1} \, .
\end{align*}
Ignoring signs, these differentials coincide with the expressions appearing in the statement of the lemma.

To determine the signs, we first consider the restriction of the spin structure on $L'$ chosen in Lemma~\ref{spine} to a spin structure on the normal bundles to the surfaces $\Sigma_i$. Observe that the trivalent graph drawn on the right-hand side of Figure~\ref{strips} determines the $1$-skeleton in a CW decomposition of $\Sigma_i$. The intersection of the smoothed spine of Lemma~\ref{spine} with the faces $T_{0,\pm}$ can be thought of as a smoothing of this $1$-skeleton. A small thickening of this smoothed $1$-skeleton supports three canonically defined vector fields, which determine the spin structure. By our choice of branching in Lemma~\ref{spinchoice}, this smoothing always contains a smooth arc which looks like a smoothing of the concatenation of the red and green arcs in Figure~\ref{strips}. The other arc of the smoothed spine on $\Sigma_i$ looks like a smoothing of the concatenation of the blue arc with one of the other arcs (determined by the branching). This means that the sign of the last monomial, in each of~\eqref{rel1}--\eqref{rel3}, will always have the same sign as exactly one of the other monomials (depending on whether the two sheets of the smoothed spine intersect in the red arcs or green arcs of Figure~\ref{strips}). Hence the differentials are given by the left-hand sides of~\eqref{rel1}--\eqref{rel3}, up to an overall sign.
\end{proof}
\begin{remark}[Signs]
Although we have fixed a specific choice of spin structure, any other spin structure we could have chosen on $\tLimm$ would have had similar symmetries with respect to different pairs of coordinates.

In the proof of Theorem~\ref{main}, we will identify the space of unobstructed $GL(1)$-local systems on $\tLimm$ with a subspace of the Grassmannian $Gr(2,4)\subset\mathbb{P}^5$. There are $32$ automorphisms of $\mathbb{P}^5$ induced by scaling coordinates by $-1$. These correspond to the $32$ local systems on $M$, since $H_1(M;\mathbb{Z}/2)\cong(\mathbb{Z}/2)^5$. Hence a change of spin structure on $\tLimm$ corresponds to acting on $Gr(2,4)$ by an ambient automorphism of $\mathbb{P}^5$, which gives us canonical isomorphisms between the resulting moduli spaces of brane structures. 
\end{remark}
The geometry of the moduli space of lines in $\mathbb{P}^3$ is reflected in the Floer-theoretic results we have obtained so far, even before incorporating wrapping.
\begin{proposition}\label{plucker-unobs}
Suppose that $(\Limm,\nabla)$ is unobstructed. Then  the (homogenized) linear relations~\eqref{rel1}-\eqref{rel3} define a line in $\mathbb{P}^3$.
\end{proposition}
\begin{proof}
The relations~\eqref{rel1}-\eqref{rel3}, determine a subvariety of $(\mathbb{K}^*)^3$. We will check that this subvariety is the very affine part of a line in $\mathbb{P}^3$. Multiplying Equation~\eqref{rel1} by $\mu_1^{-1}$, Equation~\eqref{rel2} by $\mu_1^{-1}\mu_3^{-1}$, and Equation~\eqref{rel3} by $\mu_3^{-1}$, and applying Lemma~\ref{relations}, we obtain:
\begin{align*}
\mu_1^{-1}\mu_2^{-1}\mu_4 x_2-\mu_1^{-1}\mu_2^{-1}x_3+\mu_1^{-1} &= 0 \,,\\
-\mu_1^{-1}\mu_2^{-1}\mu_4 x_1+\mu_0\mu_1^{-1}\mu_2^{-1}\mu_3^{-1}x_3+\mu_1^{-1}\mu_3^{-1} &=0 \,, \\
\mu_1^{-1}\mu_2^{-1}x_2 - \mu_0\mu_1^{-1}\mu_2^{-1}\mu_3^{-1}x_2 + \mu_3^{-1} &= 0 \,.
\end{align*}
Setting
\begin{align}\label{gpemb}
\begin{matrix}
\phi_{12} \coloneqq \mu_1^{-1}\mu_2^{-1}\mu_4 \,, & \phi_{13} \coloneqq -\mu_1^{-1}\mu_2^{-1} \,, & \phi_{14} \coloneqq \mu_1^{-1} \,, \\
\phi_{23} \coloneqq \mu_0\mu_1^{-1}\mu_2^{-1}\mu_3^{-1} \,, & \phi_{24}\coloneqq \mu_1^{-1}\mu_3^{-1} & \phi_{34}\coloneqq \mu_3^{-1} \,,
\end{matrix}
\end{align}
we can rewrite these linear relations more compactly as
\begin{align*}
\phi_{12}x_2+\phi_{13}x_3+\phi_{14} &= 0, \\
-\phi_{12}x_1+\phi_{23}x_3+\phi_{24} &= 0, \\
-\phi_{13}x_1-\phi_{23}x_2+\phi_{34} &= 0.
\end{align*}
To check that these define a line in $\mathbb{P}^3$, it suffices to check that the Grassmann--Pl{\"u}cker relation
\[ \phi_{12}\phi_{34}-\phi_{13}\phi_{24}+\phi_{14}\phi_{23} = 0,\]
is satisfied. Note that replacing $\phi_{12}$ with $-\phi_{12}$ yields the usual sign conventions for the Pl{\"u}cker relations. One verifies this relation by computing
\begin{align*}
&\phi_{23}^{-1}\phi_{14}^{-1}\phi_{12}\phi_{34} \\
&= (\mu_0^{-1}\mu_1\mu_2\mu_3)\mu_1(\mu_1^{-1}\mu_2^{-1}\mu_4)(\mu_3^{-1}) \\
&= \mu_0^{-1}\mu_1\mu_4 \\
&= \mu_0^{-1}\lambda_0^{-1}
\end{align*}
and
\begin{align*}
&\phi_{23}^{-1}\phi_{14}^{-1}\phi_{13}\phi_{24} \\
&= (\mu_0^{-1}\mu_1\mu_2\mu_3)\mu_1(-\mu_1^{-1}\mu_2^{-1})(\mu_1^{-1}\mu_3^{-1}) \\
&= -\mu_0^{-1}.
\end{align*}
But by Lemma~\ref{unobstructed}, the relation $\mu_0^{-1}\lambda_0^{-1}+\mu_0^{-1}+1$ says precisely that $(\Limm,\nabla)$ is unobstructed.
\end{proof}
\begin{remark}\label{zariskiopen}
The assignment~\eqref{gpemb} determines a map from the space of unobstructed local systems on $\Limm$ to $(\mathbb{K}^*)^5\subset\mathbb{P}^5$. This map is injective, since $\phi_{14} = \mu_1^{-1}$ and $\phi_{34} = \mu_3^{-1}$, and these, together with the other $\phi_{ij}$ determine the values of $\mu_0$, $\mu_2$, and $\mu_4$. Since the space of unobstructed local systems on $\Limm$ is $4$-dimensional, it follows that~\eqref{gpemb} maps this space surjectively onto $Gr(2,4)\cap(\mathbb{K}^*)^5\subset\mathbb{P}^5$. Thus line corresponding to such a point in $Gr(2,4)$ also corresponds to an unobstructed local system, which we denote by $\nabla_C$, on $\Limm$.
\end{remark}
We still need to rule out the possibility that the wrapped Floer cohomology can be nonzero even when the Floer cohomology groups of Lemma~\ref{hfcomp} vanish. In other words, we must show that there are not more points in the support of the mirror sheaf to $(\Limm,\nabla)$ than expected. There are inhomogeneous strips in the end of $T^*T^3$ which contribute to the Floer differential. For example, one has a family of strips which correspond to flowlines of the perfect Morse function on $T^2$ chosen in the proof of Lemma~\ref{wrappedcx}. The existence of these strips implies that none of Hamiltonian chords in the cylindrical end can be cocycles, unless the holonomy of $\nabla_{p,i}$ restricts to the same local system on $T^2$ as $\nabla$ does near (one of the copies of) the $i$th cusp of $L'$. This is because for any other choice of local system, the Floer differential restricted to an $H^*(T^2)$-summand of the cochain complex will be nonzero. The potential non-vanishing of these Floer cocycles thus gives us a constraint on the local system $\nabla_p$ we chose on $T^3$. By considering the Floer cohomology groups of $(\Limm,\nabla)$ with the Lagrangian branes supported on the \textit{other} submanifolds $L_{\langle q_j\rangle}$ and $L_{\langle q_k\rangle}$, we can check that this constraint would be inconsistent if it were the case that $\mathcal{O}_p$ were in the support of the mirror to $(\Limm,\nabla)$.
\begin{lemma}\label{support-necessary}
Fix an unobstructed local system $\nabla$ on $\Limm$, and suppose that there is a local system $\nabla_p$ on $T^3$ which does not satisfy one of the relations~\eqref{rel1}-\eqref{rel3}. Then the Floer cohomology group~\eqref{suppgp} taken with these local systems vanishes. 
\end{lemma}
\begin{proof}
If the point $p\in(\mathbb{K})^*$ were contained in the support of the mirror object to $(\Limm,\nabla)$, then it would follow, by applying hom-tensor adjunction in the derived category to $\mathcal{O}_{C_{p,j}}$, $\mathcal{O}_{C_{p,i}}$, and the mirror sheaf with grading shifted down by $3$, that for all $i = 1,2,3$, the wrapped Floer cohomology groups~\eqref{testgroup} must be nonzero. This would imply that some degree $0$ wrapped Floer cochain must represent a nonzero cohomology class.

If it were the case that, say,~\eqref{rel1} were not satisfied, then this cocycle would have to contain an element of one of the $H^0(T^2)$-summands as a summand. Denote such an element by $m\in H^0(T^2)$. The value of the Floer differential at $m$ is potentially nonzero, with image contained in $H^1(T^2)$. We claim that there are no other pseudoholomorphic strips which have an element of $H^1(T^2)$ as an output. There cannot be such a pseudoholomorphic strip with an input in $CW^0(\Lpants,T^2)$, as this would not respect the action filtration. For topological reasons there cannot be a strip with an input at one of the other degree $0$ generators in the cylindrical end. If there were, it would project to a bigon in $\mathbb{C}^*$ with a boundary component on the positive real locus and another boundary component on its image under the usual wrapping Hamiltonian on $\mathbb{C}^*$, but no such bigons exist.

This implies that the local system on $L_{\langle q_1\rangle}$ must be chosen so that this component of the Floer differential vanishes. By Lemma~\ref{inducedmap}, this can only happen if the holonomies of $\nabla_{p,1}$ are given by $x_2 = \mu_1\lambda_1$ and $x_3 = \lambda_1$. This implies that the local system $\nabla_p$ on $T^3$, is partially determined, as it must have holonomies $x_2 = \mu_1\lambda_1$ and $x_3 = \lambda_1$.

Next, suppose that we have a nonzero element of $HW^0((\Limm,\nabla),(L_{\langle q_2\rangle},\nabla_{p,2}))$ which is represented by a degree zero cochain that has an element of $\bigoplus_{n\in\mathbb{N}} H^0(T^2)$ as a summand. By the argument above, and Lemma~\ref{inducedmap}, this would imply that $x_3 = \mu_2$ (and also that $x_1 = \mu_2\lambda_2$) and by Lemma~\ref{relations}, this would in turn imply that $\mu_0 = 1$. In that case, it would follow that $\lambda_0^{-1}$, and hence $\mu_0\lambda^{-1}$, cannot be $1$, by Lemma~\ref{unobstructed}. Since the Floer cohomology group $HW^0((\Limm,\nabla),(L_{\langle q_3\rangle},\nabla_{p,3}))$ is also nonzero, it follows that $\nabla$ and $\nabla_p$ must satisfy~\eqref{rel3} (as $\nabla_p$ can not also be made to coincide with the restriction of $\nabla$ to a link of the third cusp). We claim that this is a contradiction, since the holonomies $\rho_{3,1}$ and $\rho_{3,2}$ of $\nabla$ are, by definition, the holonomies of cycles coming from the first and second cusps of $L'$, respectively. Our discussion above implies that these agree with $x_1$ and $x_2$, and so~\eqref{rel3} cannot vanish.

Therefore any degree zero cocycle must lie in the $CW^0(\Lpants,T^2)$-summand. This implies that $\nabla$ and $\nabla_p$ must satisfy~\eqref{rel2}, and we can assume that~\eqref{rel3} is also satisfied, since we have already considered the case where it is not. By the Grassmann--Pl{\"u}cker relation, which holds by Proposition~\ref{plucker-unobs}, we know that the ideal in $\mathbb{K}[x_1,x_2,x_3]$ generated by the linear forms appearing in~\eqref{rel2} and~\eqref{rel3} must also contain~\eqref{rel1}, and this is a contradiction.  Note that we can repeat this argument with different permutations of the coordinates on $(\mathbb{C}^*)^3$.
\end{proof}
\begin{proof}[Proof of Theorem~\ref{main}]
Consider a point of $Gr(2,4)$ that is not mapped to a coordinate hyperplane in $\mathbb{P}^5$ under the Pl{\"u}cker embedding. Let $\nabla_C$ denote the unobstructed local system on $\Limm$ described in Remark~\ref{zariskiopen}. By Lemma~\ref{hfcomp}, a sufficient condition for an object $(T^3,\nabla_p)$ to lie in the Floer-theoretic support of $(\Limm,\nabla_C)$ is that the holonomy of $\nabla_p$ satisfies the relations given in Lemma~\ref{hfcomp}. That these conditions are necessary follows from Lemma~\ref{support-necessary}.
\end{proof}

\bibliographystyle{abbrv}
\bibliography{myref}
\end{document}